\subjclass[2010]{Primary~11R23, Secondary~11R29}
\keywords{Iwasawa theory; ideal class group; Selmer group; 
higher Fitting ideals; elliptic unit; Euler systems; 
equivariant Tamagawa number conjecture}
\title[Euler systems 
of elliptic units and
the pseudo-isomorphism class]{On higher Fitting ideals of 
Iwasawa modules of ideal class groups 
over imaginary quadratic fields and 
Euler systems of elliptic units II}
\author{Tatsuya Ohshita}
\address{Department of Mathematics, 
Graduate School of Science and Engineering,
Ehime University 2--5,
Bunkyo-cho, Matsuyama-shi, Ehime 790--8577, Japan}
\email{ohshita.tatsuya.nz@ehime-u.ac.jp}
\date{\today}
\newtheorem{thm}{Theorem}[section]
\newtheorem{prop}[thm]{Proposition}
\newtheorem{cor}[thm]{Corollary}
\newtheorem{lem}[thm]{Lemma}
\newtheorem{conj}[thm]{Conjecture}
\newtheorem{claim}[thm]{Claim}
\theoremstyle{definition}
\newtheorem{dfn}[thm]{Definition}
\newtheorem{exa}[thm]{Example}
\newtheorem{rem}[thm]{Remark}
\def\Gal{\mathop{\mathrm{Gal}}\nolimits}
\def\Fitt{\mathop{\mathrm{Fitt}}\nolimits}
\def\det{\mathop{\mathrm{det}}\nolimits}
\def\Im{\mathop{\mathrm{Im}}\nolimits}
\def\Ker{\mathop{\mathrm{Ker}}\nolimits}
\def\Hom{\mathop{\mathrm{Hom}}\nolimits}
\def\ann{\mathop{\mathrm{ann}}\nolimits}
\def\Fr{\text{\rm Fr}}
\def\Het[#1](#2){H_{\text{\'et}}^{#1}(#2)}
\newcommand{\mf}[1]{{\mathfrak{#1}}}
\newcommand{\bb}[1]{{\mathbb{#1}}}
\newcommand{\mca}[1]{{\mathcal{#1}}}
\begin{document}

\begin{abstract}
In our previous work, 
by using Kolyvagin derivatives 
of elliptic units,
we constructed ideals 
$\mf{C}_{i}^{\mathrm{ell}}$
of the Iwasawa algebra, 
and proved that 
the ideals
$\mf{C}_{i}^{\mathrm{ell}}$
become ``upper bounds" of
the higher Fitting ideals of
the one and two variable 
$p$-adic unramified Iwasawa module $X$ 
over an abelian extension field $K_0$ of 
an imaginary quadratic field $K$.
In this article, 
by using ``non-arithmetic"
specialization arguments, 
we prove that 
the ideals
$\mf{C}_{i}^{\mathrm{ell}}$
also become ``lower bounds" of
the higher Fitting ideals of $X$.
In particular, we show that
the ideals
$\mf{C}_{i}^{\mathrm{ell}}$ determine 
the pseudo-isomorphism class of $X$.
Note that in this article, we also treat
the cases when the $p$-part of 
the equivariant Tamagawa number conjecture 
$(\mathrm{ETNC})_p$
is not proved yet.

In the cases when 
$(\mathrm{ETNC})_p$ is proved,
stronger results have already been obtained by 
Burns, Kurihara and Sano:  
under the assumption of $(\mathrm{ETNC})_p$ 
and certain conditions on 
the character $\psi$ on $\Gal(K_0/K)$, 
they have given a complete description of 
the higher Fitting ideals of the $\psi$-component of $X$
by using Rubin--Stark elements.
In our article, 
we also prove that the $\psi$-part of 
$\mf{C}_{i}^{\mathrm{ell}}$
coincide with  the ideals constructed by 
Burns, Kurihara and Sano 
in certain cases when $(\mathrm{ETNC})_p$ is proved.
As a corollary of this comparison results,
we also deduce that the annihilator ideal of
the $\psi$-part of 
the maximal pseudo-null submodule of $X$
coincides with the initial Fitting ideal
in certain situations.
\end{abstract}

\maketitle


\section{Introduction}\label{secintro}

Let $K$ be an imaginary quadratic field, 
and $K_0/K$ a finite abelian extension. 
We put $\Delta:=\Gal(K_0/K)$. 
For each prime ideal $\mf{l}$ of $K$, 
we denote by $D_{\Delta, \mf{l}}$
the decomposition group of $\Delta$
at $\mf{l}$.
We fix a prime number $p$
not dividing
$ \# \Delta \cdot 
\# (\mca{O}_{K_0}^\times)_{\mathrm{tor}}$.
Let $K_\infty/K$ be an abelian extension 
such that $K_\infty$ contains $K_0$, 
and the Galois group 
$\Gamma:=\Gal(K_\infty/K_0)$
is isomorphic to $\bb{Z}_p$ or $\bb{Z}_p^2$.
We put $\mca{G}:=\Gal(K_\infty / K) 
=\Delta \times \Gamma$, and 
define the completed group ring
$\widetilde{\Lambda}:=\bb{Z}_p[[\mca{G}]]$.

We put $\widehat{\Delta}:=\Hom(\Delta,\overline{\bb{Q}}_p^\times)$.
For any $\psi \in \widehat{\Delta}$, 
we define $\mca{O}_{\psi}:=\bb{Z}_p[\mathrm{Im} \psi]$
to be the $\bb{Z}_p[\Delta]$-algebra 
where $\Delta$ acts via the character $\psi$.
We put $\Lambda_\psi :=\mca{O}_\psi[[\Gamma]]$. 
Since $p \nmid \# \Delta$, 
we have the direct product decomposition
\[
\widetilde{\Lambda}=(\bb{Z}_p[[\Gamma]])[\Delta]
\simeq 
\prod_{G_{\bb{Q}_p}\psi \in G_{\bb{Q}_p} \backslash 
\widehat{\Delta}}
\Lambda_\psi,
\] 
where the group 
$G_{\bb{Q}_p}:=\Gal(\overline{\bb{Q}}_p/\bb{Q}_p)$ 
acts on 
$\widehat{\Delta}$ by 
$\sigma\psi:=\sigma \circ \psi$
for any $\sigma \in G_{\bb{Q}_p}$ 
and $\psi \in \widehat{\Delta}$.
For any $\Lambda$-module $M$, 
we define $M_\psi:=M \otimes_{\Lambda} \Lambda_\psi$.

For any number field $F$, we denote
the ideal class group of $\mca{O}_F$ 
by $\mathrm{Cl}(F)$, and put 
$A(F):=\mathrm{Cl}(F) \otimes_{\bb{Z}} \bb{Z}_p$.
Let $\mca{IF}$ be the set of all intermediate fields $F$ 
of $K_\infty^{\Delta}/K$ satisfying 
$[F:K]< \infty$.
We define the unramified Iwasawa module $X$ by
\(
X:=\varprojlim_{F \in \mca{IF}} A(K_0F)
\).
Note that $X$ is 
a finitely generated torsion $\Lambda$-module.

In this article, we study
the pseudo-isomorphism class of 
the $\Lambda_\psi$-module $X_\psi$.
The pseudo-isomorphism class of
a finitely generated torsion 
$\Lambda_\psi$-module
is determined by the higher Fitting ideals.
In our paper, by using Kolyvagin derivatives 
of elliptic units,
we study the higher Fitting ideals
$\{ \Fitt_{\Lambda_\psi,i}(X_\psi) 
\}_{i \in \bb{Z}_{\ge 0}}$
of the $\Lambda_\psi$-module $X_\psi$.
Note that in this article, we also treat
the cases when the $p$-part of 
the equivariant
Tamagawa number conjecture $(\mathrm{ETNC})_p$
is not proved yet, 
namely 
the cases when $p$ does not split in $K$, 
or the cases when $p$ divides the class number of $K$.
(As we shall see later, 
under assumption of $(\mathrm{ETNC})_p$, 
Burns, Kurihara and Sano 
have already given the description of 
$\Fitt_{\Lambda_\psi,i}(X_\psi)$
by using Rubin--Stark elements.)

In order to state the assertion of 
our results, let us introduce 
the following notation.
Let $I$ and $J$ be ideals of $\Lambda$.
We write $I \prec J$ if and only if 
there exists an ideal $\mca{A}$ of $\Lambda$
of height at least two 
satisfying $\mca{A} I \subseteq J$.
We write $I \sim J$ if and only if 
$I \prec J$ and $J \prec I$.
Note that by the structure theorem, 
the equivalent classes 
of the higher Fitting ideals 
with respect to the relation $\sim$ determine 
the pseudo-isomorphism class of
a finitely generated torsion 
$\Lambda_\psi$-module. 
(For instance, see \cite{Oh2} Remark 2.4.)

In our previous work \cite{Oh1}, 
by using the Kolyvagin derivatives 
of the Euler system of elliptic units,
we defined the ideals $\mf{C}_{i,\psi}^{\mathrm{ell}}$
for each $i \in \bb{Z}_{\ge 0}$.
(For details, see \cite{Oh1} Definition 4.5.
See also Definition \ref{theideal2} 
and Remark \ref{remCiell} in our article.)
By using the Euler system arguments 
developed by Kurihara in \cite{Ku} and \cite{Ku2},
we proved that
\begin{equation}\label{eqoh1thm}
\Fitt_{\Lambda_\psi, i}(X_\psi)
\prec 
\mf{C}_{i,\psi}^{\mathrm{ell}}
\end{equation}
for any $i \in \bb{Z}_{\ge 0}$
under certain assumptions.
(For details, see \cite{Oh1} Theorem 1.1.)
In this article, we shall prove 
the opposite inequality of (\ref{eqoh1thm}).

\begin{thm}\label{thmmainthm}
Let $\psi \in \widehat{\Delta}$ 
be a character
satisfying $\psi \vert_{D_{\Delta,\mf{p}}} \ne 1$
for any prime $\mf{p}$ of $\mca{O}_K$ above $p$,
If $K_0$ contains $\mu_p$, 
we also assume that $\psi 
\ne \omega\psi^{-1}$
and $\psi \vert_{D_{\Delta,\mf{p}}} 
\ne \omega \vert_{{D_{\Delta,\mf{p}}}} $
for any prime $\mf{p}$ of $\mca{O}_K$ above $p$,
where 
\(
\omega \colon 
\Delta \longrightarrow \Gal(K(\mu_p)/K)
\longrightarrow \bb{Z}_p^\times
\) 
denotes the Teichm\"uller character. 
Then, for any $i \in \bb{Z}_{\ge 0}$, 
we have
\[
\mf{C}_{i,\psi}^{\mathrm{ell}} \prec
\Fitt_{\Lambda_\psi, i}(X_\psi).
\]
\end{thm}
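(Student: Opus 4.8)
The plan is to derive the ``$\prec$'' of the theorem from the opposite inequality \eqref{eqoh1thm} of \cite{Oh1} together with the $\psi$-component of the Iwasawa main conjecture for elliptic units over $\Lambda_\psi$ (which, unlike $(\mathrm{ETNC})_p$, is available under our hypotheses), by a localization argument — the ``non-arithmetic specialization'' of the abstract — reducing everything to a sharp computation over discrete valuation rings. The first observation is formal: since $\Lambda_\psi$ is Noetherian and the colon ideal $(J:I)$ localizes correctly when $I$ is finitely generated, for ideals $I,J$ of $\Lambda_\psi$ one has $I \prec J$ if and only if $I_\mf{P} \subseteq J_\mf{P}$ for every prime $\mf{P}$ of $\Lambda_\psi$ of height at most one (indeed $(J:I)$ is then contained in no such $\mf{P}$, hence has height $\ge 2$). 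As formation of Fitting ideals commutes with localization, it therefore suffices to fix a height-one prime $\mf{P}$, put $R:=(\Lambda_\psi)_\mf{P}$, and prove
\[
(\mf{C}_{i,\psi}^{\mathrm{ell}})_{\mf{P}} \subseteq \Fitt_{R,i}\bigl(X_\psi \otimes_{\Lambda_\psi} R\bigr).
\]
The crucial point is that here $\mf{P}$ is \emph{arbitrary}; in particular it need not be ``arithmetic'', i.e.\ attached to a finite-order character of $\Gamma$. Ranging over all such $\mf{P}$ — rather than only over the arithmetic ones that one would access through $(\mathrm{ETNC})_p$ and finite-level descent — is exactly what lets us pin down the pseudo-isomorphism class unconditionally.

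Now fix such a $\mf{P}$ and let $R$ be the discrete valuation ring $(\Lambda_\psi)_\mf{P}$, with maximal ideal $\mf{m}_R$. Since $X_\psi$ is finitely generated and torsion over $\Lambda_\psi$, the module $M:=X_\psi \otimes_{\Lambda_\psi} R$ has finite $R$-length, say $M \simeq \bigoplus_{j=1}^{r} R/\mf{m}_R^{\,e_j}$ with $e_1 \ge \dots \ge e_r \ge 1$, so that $\mathrm{ord}_\mf{P}\bigl(\Fitt_{R,i}(M)\bigr)=\sum_{j>i}e_j$; in particular $\mathrm{ord}_\mf{P}\bigl(\Fitt_{R,0}(M)\bigr)=\mathrm{length}_R(M)=\mathrm{ord}_\mf{P}\bigl(\cha_{\Lambda_\psi}(X_\psi)\bigr)$. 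Under the hypotheses on $\psi$ — which, at the primes above $p$, force the local conditions of the relevant Selmer-type module to be the unramified ones — the relevant main conjecture over $\Lambda_\psi$ holds for elliptic units even in the cases where $(\mathrm{ETNC})_p$ is open, and it identifies $\mf{C}_{0,\psi}^{\mathrm{ell}}$ with $\cha_{\Lambda_\psi}(X_\psi)$ up to $\sim$. Hence $\mathrm{ord}_\mf{P}(\mf{C}_{0,\psi}^{\mathrm{ell}})=\mathrm{length}_R(M)$, which settles the case $i=0$ (and, with \eqref{eqoh1thm}, gives equality there); the real content is the higher layers $i \ge 1$.

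For $i \ge 1$ the idea is to re-run, over $R$, the Kolyvagin-derivative analysis of \cite{Oh1} (modelled on Kurihara's method in \cite{Ku}, \cite{Ku2}), extracting this time a \emph{lower} bound on $\mathrm{ord}_\mf{P}(\mf{C}_{i,\psi}^{\mathrm{ell}})$ in place of the upper bound of \eqref{eqoh1thm}. Concretely, the images in $R$ of the Kolyvagin derivative classes of elliptic units, indexed by square-free products $\mf{n}$ of admissible Kolyvagin primes with a prescribed number $\nu(\mf{n})$ of prime factors, form a Kolyvagin-system-type family for $M$ over $R$: they obey the Kolyvagin recursion modulo each $\mf{l}\mid\mf{n}$, and by the previous paragraph their bottom term already generates $\Fitt_{R,0}(M)$, so the system is primitive. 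One then applies the structure theorem for primitive Kolyvagin systems over a discrete valuation ring in the core rank one situation — the hypotheses $\psi\vert_{D_{\Delta,\mf{p}}}\ne1$, and, when $\mu_p\subseteq K_0$, $\psi\ne\omega\psi^{-1}$ and $\psi\vert_{D_{\Delta,\mf{p}}}\ne\omega\vert_{D_{\Delta,\mf{p}}}$, are precisely what ensure core rank one and the expected local behaviour at $p$ — to conclude that the ideal of $R$ generated by the classes with $\nu(\mf{n})=i$ equals $\Fitt_{R,i}(M)$. Since this ideal is, by construction, the image of $\mf{C}_{i,\psi}^{\mathrm{ell}}$ in $R$, we obtain $(\mf{C}_{i,\psi}^{\mathrm{ell}})_{\mf{P}}=\Fitt_{R,i}(M)$, hence the desired inclusion; as $\mf{P}$ was arbitrary, $\mf{C}_{i,\psi}^{\mathrm{ell}}\prec\Fitt_{\Lambda_\psi,i}(X_\psi)$ — indeed $\mf{C}_{i,\psi}^{\mathrm{ell}}\sim\Fitt_{\Lambda_\psi,i}(X_\psi)$, which yields the pseudo-isomorphism-class statement announced in the introduction.

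I expect the main obstacle to be the step over the discrete valuation ring. First, one must verify that localizing the elliptic-unit Euler system at a genuinely non-arithmetic $\mf{P}$ still yields a Kolyvagin system with the correct local conditions for $M$: there is no underlying Galois representation to specialize in the usual sense, so this must be done at the level of the $\Lambda_\psi$-linear cohomological constructions of \cite{Oh1} and their behaviour under $\otimes_{\Lambda_\psi}R$. Second — the crux — one needs enough admissible Kolyvagin primes $\mf{l}$ to detect all of $M$, a Chebotarev argument that has to be made uniform in $\mf{P}$; for arithmetic $\mf{P}$ it reduces to the classical finite-level statement, but for the non-arithmetic ones it must be run directly over $R$. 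Finally, the finitely many ``bad'' height-one primes — those containing $p$, or where auxiliary cohomology acquires extra torsion — have height one and so cannot be swept into the $\prec$-relation; they are handled separately, once more using the hypotheses on $\psi$ to exclude degeneracies.
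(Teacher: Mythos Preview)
Your localization strategy has a genuine gap at precisely the point you yourself call ``the crux'': over $R=(\Lambda_\psi)_\mf{P}$ for an arbitrary height-one prime $\mf{P}$ there is no Galois representation, and hence no way to run the Chebotarev argument that produces Kolyvagin primes. The Mazur--Rubin structure theorem for primitive Kolyvagin systems over a DVR applies when the DVR is the coefficient ring $\mca{O}$ (or $\mca{O}/\pi^N$) of an actual $p$-adic representation; the very notion of an admissible prime $\mf{l}$ is a Frobenius congruence condition in such a representation, and has no meaning when the ``coefficient ring'' is the localization of $\Lambda_\psi$ at, say, an irreducible distinguished polynomial of degree $>1$. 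Saying that ``this must be done at the level of the $\Lambda_\psi$-linear cohomological constructions'' does not help: the construction of $\mf{C}_{i,\psi}^{\mathrm{ell}}$ in \S\ref{secCi} already lives at finite level $(F,N)$ and is assembled by a limit, so there is nothing to tensor with $R$ before the limit, and after the limit you have only an ideal of $\Lambda_\psi$, not a Kolyvagin system over $R$.

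The paper never localizes at $\mf{P}$. After enlarging $\mca{O}$ so that $\mf{P}=(\gamma-(1+c))$ (Lemma~\ref{lemOO'}), it specializes along the nearby \emph{characters} $\rho'_n\colon\Gamma\to 1+\pi\mca{O}$, $\rho'_n(\gamma)=1+c+\pi^n$. Each specialization gives a genuine one-dimensional Galois representation $\rho\rho'_n$ (this is the meaning of ``non-arithmetic specialization'' here: $\rho\rho'_n$ need not be motivic, but it \emph{is} a $G_K$-character), so the ground-level inclusion $\mf{C}_i(K,Z\otimes\rho'_n)\subseteq\Fitt_{\mca{O},i}(X(K,\rho\rho'_n))$ of Proposition~\ref{propgrlev} applies. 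Control at $\mf{P}$ is then recovered asymptotically as $n\to\infty$ via Lemma~\ref{lemasymineq} and the weak specialization compatibility (Lemma~\ref{lemredCitodvr}), in the style of \cite{MR}~\S5.3; in the two-variable case one first passes to a suitable one-variable quotient (Lemma~\ref{lemforred}) and uses \eqref{eqoh1thm} only to reduce to a single specialization. No form of the Iwasawa main conjecture is invoked.
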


Note that 
in the setting of 
Theorem \ref{thmmainthm}, 
the inequality 
(\ref{eqoh1thm})
also holds
since 
under the assumption that
$\psi \vert_{D_{\Delta,\mf{p}}} 
\ne 1 $
for any $\mf{p} \mid p$, 
the ideal $\mca{I}_{T,\psi}$ appearing in 
\cite{Oh1} Theorem 1.1 
is equal to $\Lambda_\psi$.
Hence by combining with Theorem \ref{thmmainthm}
in our article, 
we obtain the following corollary.

\begin{cor}\label{thmpi}
Let $\psi \in \widehat{\Delta}$ 
be as in Theorem \ref{thmmainthm}.
Then, for any $i \in \bb{Z}_{\ge 0}$, we have 
\(
\Fitt_{\Lambda_\psi, i}(X_\psi)
\sim 
\mf{C}_{i,\psi}^{\mathrm{ell}}.
\)
In particular, 
the pseudo-isomorphism class of 
the $\Lambda_\psi$-module
$X_\psi$ is determined by the collection  
$\{
\mf{C}_{i,\psi}^{\mathrm{ell}}
\}_{i \ge 0}$.
\end{cor}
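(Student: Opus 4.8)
The plan is to combine Theorem \ref{thmmainthm} with the reverse inequality \eqref{eqoh1thm} recorded in \cite{Oh1}, and then feed the resulting $\sim$-equivalence into the structure theorem for finitely generated torsion $\Lambda_\psi$-modules.

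First I would check that, under the hypotheses on $\psi$ in Theorem \ref{thmmainthm}, both inequalities $\mf{C}_{i,\psi}^{\mathrm{ell}} \prec \Fitt_{\Lambda_\psi, i}(X_\psi)$ and $\Fitt_{\Lambda_\psi, i}(X_\psi) \prec \mf{C}_{i,\psi}^{\mathrm{ell}}$ are simultaneously in force. The first is precisely the conclusion of Theorem \ref{thmmainthm}. For the second I would invoke \cite{Oh1} Theorem 1.1: as noted in the remark following Theorem \ref{thmmainthm}, the auxiliary ideal $\mca{I}_{T,\psi}$ occurring there equals $\Lambda_\psi$ as soon as $\psi \vert_{D_{\Delta,\mf{p}}} \ne 1$ for every prime $\mf{p}$ of $\mca{O}_K$ above $p$, and this is one of the standing hypotheses of Theorem \ref{thmmainthm}; the extra conditions imposed when $\mu_p \subseteq K_0$ (namely $\psi \ne \omega\psi^{-1}$ and $\psi\vert_{D_{\Delta,\mf{p}}} \ne \omega\vert_{D_{\Delta,\mf{p}}}$) are likewise already assumed. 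Hence \eqref{eqoh1thm} holds with no further restriction, and combining the two inequalities yields $\Fitt_{\Lambda_\psi, i}(X_\psi) \sim \mf{C}_{i,\psi}^{\mathrm{ell}}$ for every $i \in \bb{Z}_{\ge 0}$, which is the first assertion of the corollary.

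For the \emph{in particular} clause, recall that $X$ is a finitely generated torsion $\Lambda$-module, so $X_\psi$ is a finitely generated torsion $\Lambda_\psi$-module, and that $\Lambda_\psi = \mca{O}_\psi[[\Gamma]]$ with $\Gamma \cong \bb{Z}_p$ or $\bb{Z}_p^2$ is a regular local ring. By the structure theorem (cf. \cite{Oh2} Remark 2.4, quoted above), the pseudo-isomorphism class of such a module is determined by the $\sim$-equivalence classes of its higher Fitting ideals $\{\Fitt_{\Lambda_\psi, i}(X_\psi)\}_{i \ge 0}$; since these coincide, class by class, with $\{\mf{C}_{i,\psi}^{\mathrm{ell}}\}_{i \ge 0}$ by the first part, the latter collection carries the same information and therefore also determines the pseudo-isomorphism class of $X_\psi$. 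There is no genuine obstacle here: the mathematical content lies entirely in Theorem \ref{thmmainthm} and in \cite{Oh1} Theorem 1.1, and the only point needing a moment's care is the verification that the hypotheses of Theorem \ref{thmmainthm} trivialize the defect ideal $\mca{I}_{T,\psi}$ of \cite{Oh1}, so that both directions of the comparison hold at once.
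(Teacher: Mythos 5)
Your proposal is correct and follows exactly the paper's route: the paper likewise combines Theorem \ref{thmmainthm} with the inequality (\ref{eqoh1thm}) from \cite{Oh1} Theorem 1.1, noting that the hypothesis $\psi\vert_{D_{\Delta,\mf{p}}}\ne 1$ for all $\mf{p}\mid p$ forces $\mca{I}_{T,\psi}=\Lambda_\psi$, and then invokes the structure theorem (via \cite{Oh2} Remark 2.4) for the final clause. No gaps.
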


The strategy of the proof of 
Theorem \ref{thmmainthm}
is as follows. 
\begin{enumerate}[(I)]
\item First, by using standard arguments of Euler systems
as in \cite{Ru1} \S 4,
we shall prove Proposition \ref{propgrlev}, 
which is the ``non-variable" version of 
Theorem \ref{thmmainthm}
for general one dimensional Galois representations.  
\item As in \cite{Oh2},
by using the arguments 
in \cite{MR} \S 5.3,
we shall reduce the proof 
of Theorem \ref{thmmainthm}
in the one variable cases, 
namely the cases when $\Gamma \simeq \bb{Z}_p$,
for general one dimensional Galois representations
to the results on the non-variable cases.
(See \S \ref{ssonevar}.)
\item Finally, by specializing at 
a ``good" height one prime, 
we shall reduce the proof of
Theorem \ref{thmmainthm}
for the two variable cases
to that for one variable cases.
Note that in this step, 
use a lemma close to \cite{Oc} Lemma 3.5 
in order to choose a good hight one primes.
(See \S \ref{sstwovar}.)
\end{enumerate}
In the steps (II) and (III), 
we may use ``non-arithmetic" specializations
in the following sense: 
the module obtained after 
the specialization  is not  
``the dual fine Selmer group" of 
(a Galois deformation
containing) a Galois representation coming from a motive.
So, in the steps (I) and (II), 
we need to work in the setting of 
general one dimensional Galois representations.

Note that a certain ``weak specialization compatibility" 
of $\mf{C}_{i,\psi}^{\mathrm{ell}}$
(Lemma \ref{lemredCi} and Lemma \ref{lemredCitodvr})
is a key of the proof of Theorem \ref{thmmainthm}.
By using weak specialization compatibility and 
the class number formula 
in terms of elliptic units, 
we can also obtain the following Theorem \ref{thm+P}, 
which implies that 
the ideal $\mf{C}_{i,\psi}^{\mathrm{ell}}$
is ``close" to $\Fitt_{\Lambda_\psi, i}(X_\psi)$
in the different sense form the relation $\sim$.
(For the proof of Theorem \ref{thm+P}, 
see \S \ref{ssthmP}.)

\begin{thm}\label{thm+P}
We denote by
$I(\Gamma)$ the augmentation ideal
of $\Lambda_\psi=\mca{O}_\psi[[\Gamma]]$.
Let $\psi \in \widehat{\Delta}$ be as in  
Theorem \ref{thmmainthm}.
Then, for any $i \in \bb{Z}_{\ge 0}$, 
we have 
\[
\Fitt_{\Lambda_\psi, i}(X_\psi)+ I(\Gamma)
= \mf{C}_{i,\psi}^{\mathrm{ell}}+ I(\Gamma).
\]
\end{thm}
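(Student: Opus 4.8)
The plan is to derive Theorem~\ref{thm+P} from Theorem~\ref{thmmainthm} (and its companion inequality \eqref{eqoh1thm}) together with a ``weak specialization compatibility'' of the ideals $\mf{C}_{i,\psi}^{\mathrm{ell}}$ under the augmentation map $\Lambda_\psi \to \Lambda_\psi/I(\Gamma) = \mca{O}_\psi$. Combining Theorem~\ref{thmmainthm} with \eqref{eqoh1thm} we already know $\mf{C}_{i,\psi}^{\mathrm{ell}} \sim \Fitt_{\Lambda_\psi,i}(X_\psi)$, so there is an ideal $\mca{A}$ of height at least two with $\mca{A}\,\mf{C}_{i,\psi}^{\mathrm{ell}} \subseteq \Fitt_{\Lambda_\psi,i}(X_\psi)$ and vice versa. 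The point is that reducing modulo $I(\Gamma)$ kills the defect coming from $\mca{A}$ once one knows that the image of $\mca{A}$ in $\mca{O}_\psi$ is the unit ideal; since $\mca{O}_\psi$ is a discrete valuation ring (or at worst a product of such, but $\psi$ being a single orbit representative it is a DVR), an ideal of $\Lambda_\psi$ of height $\ge 2$ either maps onto all of $\mca{O}_\psi$ or is contained in $I(\Gamma)$ — and the latter cannot happen for an ideal genuinely of height two because $I(\Gamma)$ is a height-one prime (when $\Gamma \simeq \bb{Z}_p$) or has height one (when $\Gamma \simeq \bb{Z}_p^2$, using that $\Lambda_\psi$ is a regular local ring and $I(\Gamma)$ is generated by a regular sequence of length $\mathrm{rank}\,\Gamma$; one should be slightly careful in the two-variable case and instead use a height-one prime contained in $I(\Gamma)$).

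The concrete steps are as follows. First I would record the weak specialization compatibility for the $\mf{C}_{i,\psi}^{\mathrm{ell}}$: by Lemma~\ref{lemredCi} and Lemma~\ref{lemredCitodvr}, the image of $\mf{C}_{i,\psi}^{\mathrm{ell}}$ under $\Lambda_\psi \to \mca{O}_\psi$ is computed from the Kolyvagin derivatives of elliptic units specialized to the base field, and equals the corresponding ``non-variable'' ideal $\mf{C}_{i}^{\mathrm{ell}}(\mca{O}_\psi)$ attached to $K_0$ itself; equivalently $\mf{C}_{i,\psi}^{\mathrm{ell}} + I(\Gamma)$ depends only on this specialized data. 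Second, I would invoke the analogous compatibility for the Fitting ideals: $\Fitt_{\Lambda_\psi,i}(X_\psi) + I(\Gamma) = \Fitt_{\mca{O}_\psi,i}(X_\psi/I(\Gamma)X_\psi)$, which is just the standard base-change property $\Fitt_{R,i}(M) \cdot S = \Fitt_{S,i}(M\otimes_R S)$ applied to $R = \Lambda_\psi$, $S = \mca{O}_\psi$. Third — this is where the class number formula enters — one identifies $X_\psi/I(\Gamma)X_\psi$ with (a quotient closely related to) $A(K_0)_\psi$, and uses the class number formula in terms of elliptic units, i.e. the relation between the index of the group of elliptic units and the $p$-part of the class number, to match $\Fitt_{\mca{O}_\psi,i}(X_\psi/I(\Gamma)X_\psi)$ with $\mf{C}_{i}^{\mathrm{ell}}(\mca{O}_\psi)$ on the nose. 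Over the DVR $\mca{O}_\psi$ both sides are principal ideals determined by a single valuation (a ``Kolyvagin-style'' valuation count), and Proposition~\ref{propgrlev} — the non-variable version of the main theorem — already gives one inequality; the class number formula pins down the total valuation and forces equality.

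I expect the main obstacle to be the third step: controlling the error between $X_\psi/I(\Gamma)X_\psi$ and $A(K_0)_\psi$ (capitulation kernels, the difference between $\varprojlim A(K_0F)$ taken mod the augmentation ideal and $A(K_0)$ itself) and checking that this error does not affect the Fitting ideal over $\mca{O}_\psi$, together with assembling the class number formula in the precise equivariant form needed. This is exactly the point at which the hypotheses on $\psi$ in Theorem~\ref{thmmainthm} — $\psi|_{D_{\Delta,\mf{p}}} \ne 1$ for all $\mf{p}\mid p$, and the extra condition involving $\omega$ when $\mu_p \subset K_0$ — are used: they guarantee that the relevant local cohomology and ``semi-local'' terms vanish in the $\psi$-component, so that the descent from $X_\psi$ to its $I(\Gamma)$-quotient is clean and no spurious factors appear. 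A secondary, more technical obstacle is handling the two-variable case ($\Gamma \simeq \bb{Z}_p^2$), where $I(\Gamma)$ is not prime; there I would first reduce to a well-chosen height-one prime $\mf{q} \subset I(\Gamma)$ (as in step~(III) of the proof of Theorem~\ref{thmmainthm}, using the analogue of \cite{Oc} Lemma~3.5) and argue that $\Fitt_{\Lambda_\psi,i}(X_\psi) + \mf{q} = \mf{C}_{i,\psi}^{\mathrm{ell}} + \mf{q}$ for enough such $\mf{q}$, then pass back up to $I(\Gamma)$; but since the statement only concerns the quotient by $I(\Gamma)$, one can also simply specialize $\Gamma$ to a $\bb{Z}_p$-quotient and quote the one-variable case, then intersect.
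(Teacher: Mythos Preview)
Your opening plan---deducing Theorem~\ref{thm+P} from the relation $\mf{C}_{i,\psi}^{\mathrm{ell}} \sim \Fitt_{\Lambda_\psi,i}(X_\psi)$ by arguing that the height-$\ge 2$ defect ideal $\mca{A}$ maps to a unit modulo $I(\Gamma)$---does not work. Already in the one-variable case $\Lambda_\psi \simeq \mca{O}_\psi[[T]]$, take $\mca{A} = (\pi,T)^2$: this has height two, yet its image in $\mca{O}_\psi = \Lambda_\psi/(T)$ is $(\pi^2)$, not the unit ideal. So knowing $\mca{A}\,\mf{C}_i \subseteq \Fitt_i$ only yields $\pi^2\cdot(\text{image of }\mf{C}_i) \subseteq \text{image of }\Fitt_i$, which is useless for proving equality. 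The same obstruction persists in the two-variable case. You should drop this line entirely; in fact Theorem~\ref{thmmainthm} plays no role in the paper's proof of Theorem~\ref{thm+P}, which is carried out in \S\ref{ssthmP} \emph{before} Theorem~\ref{thmmainthm} is proved.

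Your ``concrete steps'' are essentially the paper's argument, but two points need sharpening. First, Lemmas~\ref{lemredCi} and~\ref{lemredCitodvr} only give a \emph{containment} of the image of $\mf{C}_{i,\psi}^{\mathrm{ell}}$ in the ground-level ideal; the paper instead uses Corollary~\ref{corC_iIm}, which identifies the image exactly with $\bigcap_{F\in\mca{IF}} \mf{C}_i^{\mathrm{ell},F}(K)_\psi$. Second---and this is the real gap---the class number formula only pins down the case $i=0$: it gives $\mf{C}_0(K,Z^{\mathrm{ell}}_\psi) = \Fitt_{\mca{O}_\psi,0}(A(K_0)_\psi)$ (equation~\eqref{eqcnf}, via \cite{Ru3}). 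To get equality for all $i$, the paper invokes the \emph{second} assertion of Proposition~\ref{propgrlev}: once the $i=0$ equality holds, an Euler-system induction bootstraps it to $\mf{C}_i^{\mathrm{ell},F}(K)_\psi = \Fitt_{\mca{O}_\psi,i}(A(K_0)_\psi)$ for every $i$ and every $F$. Your step three reads as if the class number formula alone fixes the valuation for each $i$, which it does not. With these two corrections, and with Lemma~\ref{lemXred} supplying $X_\psi/I(\Gamma)X_\psi \simeq A(K_0)_\psi$ directly in both the one- and two-variable cases (so no auxiliary reduction through a height-one prime inside $I(\Gamma)$ is needed here), the argument is complete.
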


By Theorem \ref{thm+P}, we immediately obtain 
the following corollary, 
which says that 
the ideals $\mf{C}_{i,\psi}^{\mathrm{ell}}$
determines the cardinality of 
a minimal system of generators of $X_\psi$.

\begin{cor}
Let $\psi \in \widehat{\Delta}$ be as in  
Theorem \ref{thmmainthm}.
Let $r \in \bb{Z}_{\ge 0}$. 
We have $\Fitt_{\Lambda_\psi, r}=\Lambda_\psi$
if and only if $\mf{C}_{r,\psi}^{\mathrm{ell}}=\Lambda_\psi$.
In particular, the cardinality of 
a minimal system of generators of 
the $\Lambda_\psi$-module $X_\psi$ is equal to
the minimum of
\(
\{ r \in \bb{Z}_{\ge 0} \mathrel{\vert}
\mf{C}_{r,\psi}^{\mathrm{ell}}=\Lambda_\psi\}.
\)
\end{cor}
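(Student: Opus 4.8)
The plan is to derive both statements as purely formal consequences of Theorem \ref{thm+P} together with the fact that $\Lambda_\psi = \mca{O}_\psi[[\Gamma]]$ is a Noetherian local ring. The first step I would record is the elementary observation that for any ideal $I$ of $\Lambda_\psi$ one has $I = \Lambda_\psi$ if and only if $I + I(\Gamma) = \Lambda_\psi$. One direction is trivial; for the other, since $\Lambda_\psi/I(\Gamma) \cong \mca{O}_\psi$ the augmentation ideal $I(\Gamma)$ is contained in the maximal ideal $\mf{m}$ of $\Lambda_\psi$, so any element of $I + I(\Gamma)$ that is $\equiv 1 \pmod{I(\Gamma)}$ has the form $1 + y$ with $y \in \mf{m}$, hence is a unit, which forces $1 \in I$.

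Applying this observation to $I = \Fitt_{\Lambda_\psi, r}(X_\psi)$ and to $I = \mf{C}_{r,\psi}^{\mathrm{ell}}$, and using the identity $\Fitt_{\Lambda_\psi, r}(X_\psi) + I(\Gamma) = \mf{C}_{r,\psi}^{\mathrm{ell}} + I(\Gamma)$ furnished by Theorem \ref{thm+P}, I obtain
\[
\Fitt_{\Lambda_\psi, r}(X_\psi) = \Lambda_\psi
\iff \Fitt_{\Lambda_\psi, r}(X_\psi) + I(\Gamma) = \Lambda_\psi
\iff \mf{C}_{r,\psi}^{\mathrm{ell}} + I(\Gamma) = \Lambda_\psi
\iff \mf{C}_{r,\psi}^{\mathrm{ell}} = \Lambda_\psi,
\]
which is precisely the first assertion of the corollary.

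For the final clause, let $d$ denote the minimal cardinality of a system of generators of the finitely generated $\Lambda_\psi$-module $X_\psi$; this is well defined since $\Lambda_\psi$ is local, and equals $\dim_{\kappa} X_\psi \otimes_{\Lambda_\psi} \kappa$ by Nakayama, where $\kappa$ is the residue field of $\Lambda_\psi$. By the standard behaviour of Fitting ideals over a local ring, $\Fitt_{\Lambda_\psi, r}(X_\psi) = \Lambda_\psi$ holds exactly for $r \ge d$; hence $d = \min\{ r \in \bb{Z}_{\ge 0} : \Fitt_{\Lambda_\psi, r}(X_\psi) = \Lambda_\psi \}$, and combining with the first assertion gives $d = \min\{ r \in \bb{Z}_{\ge 0} : \mf{C}_{r,\psi}^{\mathrm{ell}} = \Lambda_\psi \}$. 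Every step here is formal, so there is no substantial obstacle; the only point demanding a little care is checking that the indexing convention for Fitting ideals in force in the paper is the one under which ``$\Fitt_r = \Lambda_\psi$'' is equivalent to ``$X_\psi$ is generated by $r$ elements'', which it is.
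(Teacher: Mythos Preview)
Your proof is correct and matches the paper's approach: the paper simply asserts that the corollary follows immediately from Theorem~\ref{thm+P}, and your argument spells out exactly the local-ring/Nakayama reasoning that makes this immediate. (One tiny wording slip: in the first paragraph you want an element of $I$, not of $I + I(\Gamma)$, that is $\equiv 1 \pmod{I(\Gamma)}$; equivalently just write $1 = a + b$ with $a \in I$, $b \in I(\Gamma) \subseteq \mf{m}$, so $a$ is a unit---but this is clearly what you meant.)
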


In the cases when  
$(\mathrm{ETNC})_p$ holds, 
stronger assertions were proved 
by Burns, Kurihara and Sano. 
By using (higher order) Rubin--Stark elements, 
they constructed ideals which coincides 
with the higher Fitting ideals 
of (a quotient of) 
ray class group. (See \cite{BKS} Corollary 1.7.)
Bley proved $(\mathrm{ETNC})_p$
over imaginary quadratic fields
in the case when $p$ splits completely in $K/\bb{Q}$,
and does not divide the class number of $K$.
(The case when $p$ does not split, 
equivariant Tamagawa number conjecture
is still open.)
In this paper, 
when $p$ splits completely in $K/\bb{Q}$,
we compare our ideals 
$\mf{C}_{i,\psi}^{\mathrm{ell}}$
Burns--Kurihara--Sano's ideals
$\Theta^{\mathrm{RS}}_{S,T,i}(M/K)_\psi$
(written in our notation, 
see Definition \ref{defTheta}),
and proved that 
our ideals 
coincides with 
the ideal 
$\Theta^{\mathrm{RS}}_{S,T,i}(M/K)_\psi$.
Roughly speaking, the assertion of 
the comparison result is as follows.
(For the precise statement, see 
Theorem \ref{thmTheta=C}.)

\begin{thm}\label{thmTheta=Crough}
Suppose that $p$ splits completely in $K/\bb{Q}$, 
and $p$ is prime to $\# \mathrm{Cl}(K)$.
Let $F \in \mca{IF}$  be any element.
We take  a character $\psi$ and
finite sets $S,T$ of places of $K$ 
suitably.
Then, for any $i \in \bb{Z}_{\ge 0}$, we have
\[
\mf{C}^{\mathrm{ell}}_{i}(F/K)_\psi
=\Theta^{\mathrm{RS}}_{S,T,i}(K_0F/K)_\psi.
\]
\end{thm}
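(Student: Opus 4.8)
The plan is to prove the comparison Theorem~\ref{thmTheta=Crough} (precisely, Theorem~\ref{thmTheta=C}) by showing that both sides are built from the \emph{same} Euler system input, once the appropriate normalizations are matched. First I would unwind the definitions: the ideal $\mf{C}^{\mathrm{ell}}_{i}(F/K)_\psi$ is generated by $\psi$-components of Kolyvagin derivatives of elliptic units over squarefree products of Kolyvagin primes (see Definition~\ref{theideal2}), while $\Theta^{\mathrm{RS}}_{S,T,i}(K_0F/K)_\psi$ is generated by the analogous derivatives of higher-rank Rubin--Stark elements (Definition~\ref{defTheta}). Since $p$ splits completely in $K/\mathbb{Q}$ and $p \nmid \#\mathrm{Cl}(K)$, Bley's theorem gives $(\mathrm{ETNC})_p$, and the rank-one Rubin--Stark element at the base level is, up to a unit, the elliptic unit itself (this is essentially the analytic class number formula in terms of elliptic units, which Robert's / Rubin's work makes explicit). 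So the key point is to propagate this identification from the bottom level to all the derivative levels indexed by Kolyvagin primes.

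The main steps, in order, would be: (1) Choose the sets $S$ and $T$ and the character $\psi$ so that the Rubin--Stark setup is admissible --- $S$ contains the archimedean place, the ramified places, and the places above $p$; $T$ is chosen so that $\mathcal{O}_{S,T}^\times$ is $p$-torsion-free; and $\psi$ satisfies the hypotheses of Theorem~\ref{thmmainthm} so that the relevant local conditions at $p$ are trivial. (2) Invoke $(\mathrm{ETNC})_p$ (via Bley) to identify, at the base field $K_0 F$, the Rubin--Stark element of the minimal rank with the elliptic unit (more precisely with the relevant exterior-power combination of elliptic units), using that under our hypotheses on $\psi$ the $\psi$-component has exactly the expected rank. (3) Check that both the elliptic unit Euler system and the Rubin--Stark Euler system satisfy the \emph{same} distribution (norm-compatibility) relations along the tower of Kolyvagin-prime conductors --- this is where one uses the Euler factors at the auxiliary primes $\ell$, which for both systems are the same local $L$-factor $1 - \mathrm{Fr}_\ell^{-1}\cdot(\text{Nl})$, evaluated on the $\psi$-twist. (4) Conclude that the Kolyvagin derivative operators applied to the two systems agree $\psi$-componentwise modulo the relevant modulus, hence the ideals they generate in $\Lambda_\psi$ (or in the relevant finite quotient) coincide for every $i$.

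The hard part will be step~(3) combined with bookkeeping in step~(2): matching the normalizations. The Rubin--Stark elements live in exterior powers $\bigwedge^r \mathcal{O}_{S,T}^\times \otimes \mathbb{Z}_p$ whose rank $r$ depends on how many places of $S$ split completely in the relevant extension, while the elliptic units naturally give elements of rank one; reconciling these requires that, after passing to the $\psi$-component and using $\psi|_{D_{\Delta,\mathfrak{p}}} \ne 1$, the extra coordinates collapse so that the higher-rank Rubin--Stark element is forced (by $(\mathrm{ETNC})_p$ and a rank count) to be a ``wedge'' whose only nontrivial slot is the elliptic unit. One must also be careful that the $T$-modification (the Euler factor removing $p$-torsion from units) is applied compatibly on both sides, and that Burns--Kurihara--Sano's indexing of higher Fitting ideals via the number of Kolyvagin primes matches the indexing in \cite{Oh1} Definition~4.5. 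I expect these to be resolved by a direct but somewhat delicate comparison of the two constructions at a single well-chosen level, after which functoriality of Kolyvagin derivatives propagates the equality to all levels and all~$i$.
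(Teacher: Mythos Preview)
Your plan has a genuine gap: it misidentifies what $\Theta^{\mathrm{RS}}_{S,T,i}(M/K)_\psi$ actually is, and therefore misses the mechanism that makes the comparison work. By Definition~\ref{defTheta}, $\Theta^{\mathrm{RS}}_{S,T,i}$ is \emph{not} built from Kolyvagin derivatives of Rubin--Stark elements. It is generated by values $\Phi(\epsilon^{U\cup W}_{M/K,S\cup W,T})$ where $W$ is a set of $i$ auxiliary primes and $\epsilon^{U\cup W}$ is the rank-$(i{+}1)$ Rubin--Stark element \emph{at level $M$} (no tower, no derivative operator). Your expectation that ``the extra coordinates collapse so that the higher-rank Rubin--Stark element is forced to be a wedge whose only nontrivial slot is the elliptic unit'' is incorrect: the $\psi$-hypothesis kills the contributions of the finite places in $S$, but the $i$ auxiliary Kolyvagin primes in $W$ split completely in $M/K$ by construction, so $\epsilon^{U\cup W}_\psi$ genuinely lives in $\bigcap^{i+1}$ and does not reduce to a rank-one object. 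Consequently, norm-compatibility of the two Euler systems (your step~(3)) is not the relevant link.

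The bridge the paper uses is the Mazur--Rubin--Sano conjecture (valid here via Bley's $\mathrm{ETNC}_p$): $\mathrm{MRS}(M\langle\mf{n}\rangle/M/K,T,W)$ equates $\mca{N}_H(\epsilon^{U}_{M\langle\mf{n}\rangle/K,S',T})$ with $\mathrm{Rec}_W(\epsilon^{U\cup W}_{M/K,S',T})$. On the left, $\epsilon^{U}$ at level $M\langle\mf{n}\rangle$ \emph{is} (up to a unit) the elliptic unit $c^{\mf{l}}_{\mf{f}}(F;\mf{n})$ by Theorem~\ref{thmellRS}, and Lemma~\ref{lemsnDn} identifies $\mca{N}_H$ (after projecting by $s_{\mf{n}}$) with the Kolyvagin operator $D_{\mf{n}}$; this is how Kolyvagin derivatives enter. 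On the right, $\mathrm{Rec}_W$ is a particular element of $\bigwedge^{i}\mca{H}_\psi$, which gives one inclusion $\mf{C}_{F,N}(\mf{n};\boldsymbol{c})\subseteq\Theta_N(\mf{n})$. The reverse inclusion is not automatic and requires a separate Chebotarev argument: given arbitrary $\Phi_1,\dots,\Phi_{i+1}\in\mca{H}_\psi$, one must realize them as reciprocity maps $\overline{\mathrm{Rec}}_{\mf{q}_\nu}$ at new primes $\mf{q}_\nu$, and then swap the original primes $\mf{l}_j$ for the $\mf{q}_j$ one at a time (Lemma~\ref{lemVV}) using explicit relations among Rubin--Stark elements at nearby $S'$. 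Your outline does not anticipate this second half at all.
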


Here, 
$\mf{C}^{\mathrm{ell}}_{i}(F/K)_\psi$
is the finite layer version of 
$\mf{C}_{i,\psi}^{\mathrm{ell}}$.
(See Definition \ref{theideal2}.)
By this comparison, 
the following corollary immediately
follows from 
\cite{BKS} Corollary 1.7.

\begin{cor}\label{corFitt=}
Suppose that $p$ splits completely in $K/\bb{Q}$, 
and $p$ is prime to $\# \mathrm{Cl}(K)$.
Let $\psi \in \widehat{\Delta}$
be a non-trivial faithful character 
such that $\psi \vert_{D_{\Delta,v}} \ne 1$
for any finite place $v$ in $S$.
Moreover we assume that if $K_0$ contains $\mu_p$, 
then $\psi \ne  \omega$.
We have 
\[
\Fitt_{\Lambda_\psi, i}(X_\psi)
= \mf{C}_{i,\psi}^{\mathrm{ell}}
\]
for any $i \in \bb{Z}_{\ge 0}$.
\end{cor}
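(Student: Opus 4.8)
The plan is to deduce Corollary~\ref{corFitt=} by feeding the comparison result of Theorem~\ref{thmTheta=Crough} (in its precise form, Theorem~\ref{thmTheta=C}) into \cite{BKS} Corollary~1.7, and then passing to the inverse limit over $F \in \mca{IF}$.

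First I would fix $F \in \mca{IF}$ and choose the finite sets $S$, $T$ of places of $K$ as in Theorem~\ref{thmTheta=C}, so that $\mf{C}^{\mathrm{ell}}_{i}(F/K)_\psi = \Theta^{\mathrm{RS}}_{S,T,i}(K_0F/K)_\psi$ for every $i$. Under the present hypotheses --- $p$ splits completely in $K/\bb{Q}$, $p \nmid \# \mathrm{Cl}(K)$, $\psi$ faithful and non-trivial with $\psi\vert_{D_{\Delta,v}} \ne 1$ for $v \in S$, and $\psi \ne \omega$ when $\mu_p \subseteq K_0$ --- the hypotheses of \cite{BKS} Corollary~1.7 are satisfied, and that corollary identifies $\Theta^{\mathrm{RS}}_{S,T,i}(K_0F/K)_\psi$ with $\Fitt_i$ of the $\psi$-component of the $T$-modified $S$-ray class group of $K_0F$. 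The conditions on $\psi$ are exactly what is needed to make the various modifications disappear after taking $\psi$-parts: the Euler factors at $v \in S \cup T$ are units in $\Lambda_\psi$ because $\psi\vert_{D_{\Delta,v}} \ne 1$, and $A(K)_\psi = 0$ because $\psi$ is non-trivial and $p \nmid \# \mathrm{Cl}(K)$, so the $\psi$-component of the $T$-modified $S$-ray class group coincides with $A(K_0F)_\psi$. This yields $\mf{C}^{\mathrm{ell}}_{i}(F/K)_\psi = \Fitt_{i}(A(K_0F)_\psi)$ for all $i$ and all $F$.

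Next I would pass to the limit over $F \in \mca{IF}$. On the Euler-system side the ideals $\mf{C}^{\mathrm{ell}}_{i}(F/K)_\psi$ are compatible under the transition maps by the weak specialization compatibility (Lemma~\ref{lemredCi}), and $\mf{C}_{i,\psi}^{\mathrm{ell}}$ is their inverse limit by construction; on the other side the finite $\Lambda_\psi$-modules $A(K_0F)_\psi$ form a projective system with surjective transition maps whose limit is the finitely generated torsion $\Lambda_\psi$-module $X_\psi$, so $\Fitt_{\Lambda_\psi,i}(X_\psi) = \varprojlim_F \Fitt_i(A(K_0F)_\psi)$ by the standard compatibility of Fitting ideals with such inverse limits. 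Taking the limit of the layer-by-layer equalities from the previous step then gives $\mf{C}_{i,\psi}^{\mathrm{ell}} = \Fitt_{\Lambda_\psi,i}(X_\psi)$ for every $i \in \bb{Z}_{\ge 0}$, which is the assertion.

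The step that requires the most care is the first one: one must verify that the sets $S$ and $T$, and the $T$-smoothing and $S$-ramification built into Burns--Kurihara--Sano's module, really wash out once the $\psi$-part is taken, so that their module specializes to $A(K_0F)_\psi$ and hence the comparison is with the \emph{unramified} module $X_\psi$ in the limit. This is precisely where the faithfulness and local non-triviality of $\psi$ and the hypothesis $p \nmid \# \mathrm{Cl}(K)$ are used; granting it together with the (routine) commutation of Fitting ideals with the limit, the corollary follows immediately, as stated in the text.
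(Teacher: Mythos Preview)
Your approach is essentially the same as the paper's: the paper simply says the corollary ``immediately follows from \cite{BKS} Corollary 1.7'' once Theorem~\ref{thmTheta=C} is in hand, and you have correctly spelled out the two steps (finite-level comparison via Theorem~\ref{thmTheta=C} and Corollary~\ref{corFittRS}, then passage to the inverse limit). One small correction: the compatibility of the ideals $\mf{C}^{\mathrm{ell}}_{i}(F/K)_\psi$ under the transition maps is not Lemma~\ref{lemredCi} (which concerns specialization via characters of $\Gamma$), but is built directly into the construction in Definition~\ref{theideal1} via Lemma~\ref{liftinghom}; with that citation fixed, your argument is complete.
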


By combining Theorem \ref{thmTheta=C} 
with our previous result 
(\cite{Oh1} Theorem 1.1),
we obtain a results on 
the structure of the pseudo-null part of
the unramified Iwasawa module
in one-variable cases.
Keep the notation and assumptions as in 
Corollary \ref{corFitt=}. 
Furthermore, we assume that $\Gamma \simeq \bb{Z}_p$.
Let $X_{\mathrm{fin},\psi}$ be the maximal pseudo-null
$\Lambda_\psi$-submodule of  $X_\psi$.
In \cite{Oh1}, we proved that
\[
\ann_{\Lambda_\psi}(X_{\mathrm{fin},\psi})
\Fitt_{\Lambda_\psi, 0}(X_\psi
/X_{\mathrm{fin},\psi})
\subseteq \mf{C}_{0,\psi}^{\mathrm{ell}}.
\]
Since we have
\begin{align*}
\Fitt_{\Lambda_\psi, 0}(X_\psi)
&=\Fitt_{\Lambda_\psi,0}(X_{\mathrm{fin},\psi})
\Fitt_{\Lambda_\psi, 0}(X_\psi
/X_{\mathrm{fin},\psi}) \\
&\subseteq 
\ann_{\Lambda_\psi}(X_{\mathrm{fin},\psi})
\Fitt_{\Lambda_\psi, 0}(X_\psi
/X_{\mathrm{fin},\psi})
\end{align*}
Theorem \ref{thmTheta=C} 
implies the following corollary,
which constrains
the structure of 
the pseudo-null part
$X_{\mathrm{fin},\psi}$.

\begin{cor}\label{corXfin}
Let $(K_0/K,p,\psi)$ be as in Corollary \ref{corFittRS}.
We assume that $\Gamma \simeq \bb{Z}_p$.
Then, we have 
$\Fitt_{\Lambda_\psi, 0}(X_{\mathrm{fin},\psi})
=\ann_{\Lambda_\psi}(X_{\mathrm{fin},\psi})$.
\end{cor}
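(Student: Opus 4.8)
The plan is to combine the Euler-system divisibility of \cite{Oh1} with the identification $\Fitt_{\Lambda_\psi,0}(X_\psi)=\mf{C}_{0,\psi}^{\mathrm{ell}}$ from Corollary \ref{corFitt=}, and then to cancel a common non-zero-divisor. First I would record the structure theory needed. Since $p\nmid\#\Delta$, the ring $\mca{O}_\psi$ is unramified over $\bb{Z}_p$, so $\Lambda_\psi=\mca{O}_\psi[[\Gamma]]$ is a two-dimensional regular local domain. We may assume $X_\psi$ is not pseudo-null, the remaining case being the instance $\mca{F}=\Lambda_\psi$ of the argument below. By construction $X_\psi/X_{\mathrm{fin},\psi}$ has no nonzero pseudo-null $\Lambda_\psi$-submodule, so $\mathrm{depth}_{\Lambda_\psi}(X_\psi/X_{\mathrm{fin},\psi})\ge 1$, whence $\mathrm{pd}_{\Lambda_\psi}(X_\psi/X_{\mathrm{fin},\psi})\le 1$ by Auslander--Buchsbaum; being $\Lambda_\psi$-torsion, the module thus admits a presentation
\[
0\longrightarrow \Lambda_\psi^{\oplus n}\xrightarrow{A}\Lambda_\psi^{\oplus n}\longrightarrow X_\psi/X_{\mathrm{fin},\psi}\longrightarrow 0
\]
with $f:=\det A\ne 0$. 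Hence $\mca{F}:=\Fitt_{\Lambda_\psi,0}(X_\psi/X_{\mathrm{fin},\psi})=(f)$ is principal and generated by a non-zero-divisor of $\Lambda_\psi$, and from the exact sequence $0\to X_{\mathrm{fin},\psi}\to X_\psi\to X_\psi/X_{\mathrm{fin},\psi}\to 0$ (whose right-hand term has projective dimension $\le 1$) one obtains the product formula $\Fitt_{\Lambda_\psi,0}(X_\psi)=\Fitt_{\Lambda_\psi,0}(X_{\mathrm{fin},\psi})\cdot\mca{F}$ recalled above the statement.

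Next I would assemble the chain of (in)equalities
\[
\Fitt_{\Lambda_\psi,0}(X_\psi)=\Fitt_{\Lambda_\psi,0}(X_{\mathrm{fin},\psi})\cdot\mca{F}\subseteq\ann_{\Lambda_\psi}(X_{\mathrm{fin},\psi})\cdot\mca{F}\subseteq\mf{C}_{0,\psi}^{\mathrm{ell}}=\Fitt_{\Lambda_\psi,0}(X_\psi),
\]
where the first inclusion is the universal inclusion $\Fitt_{\Lambda_\psi,0}\subseteq\ann$, the second is the divisibility of \cite{Oh1} recalled above, and the last equality is Corollary \ref{corFitt=} with $i=0$. Consequently every step is an equality, so in particular $\Fitt_{\Lambda_\psi,0}(X_{\mathrm{fin},\psi})\cdot(f)=\ann_{\Lambda_\psi}(X_{\mathrm{fin},\psi})\cdot(f)$. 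Given $a\in\ann_{\Lambda_\psi}(X_{\mathrm{fin},\psi})$, the element $af$ lies in $\ann_{\Lambda_\psi}(X_{\mathrm{fin},\psi})\cdot(f)=\Fitt_{\Lambda_\psi,0}(X_{\mathrm{fin},\psi})\cdot(f)$, so $af=bf$ for some $b\in\Fitt_{\Lambda_\psi,0}(X_{\mathrm{fin},\psi})$; since $f$ is a non-zero-divisor of the domain $\Lambda_\psi$, this forces $a=b\in\Fitt_{\Lambda_\psi,0}(X_{\mathrm{fin},\psi})$. Hence $\ann_{\Lambda_\psi}(X_{\mathrm{fin},\psi})\subseteq\Fitt_{\Lambda_\psi,0}(X_{\mathrm{fin},\psi})$, and together with the reverse inclusion this yields $\Fitt_{\Lambda_\psi,0}(X_{\mathrm{fin},\psi})=\ann_{\Lambda_\psi}(X_{\mathrm{fin},\psi})$, as desired.

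Almost all of the substance is imported — the elliptic-unit Euler-system divisibility of \cite{Oh1} and the exact computation $\Fitt_{\Lambda_\psi,0}(X_\psi)=\mf{C}_{0,\psi}^{\mathrm{ell}}$ of Corollary \ref{corFitt=} (which rests on Theorem \ref{thmTheta=C}) — so the only point I would need to treat with care is the structural claim that $\Fitt_{\Lambda_\psi,0}(X_\psi/X_{\mathrm{fin},\psi})$ is principal and generated by a non-zero-divisor: this is precisely what legitimizes the cancellation and thereby upgrades the chain of inclusions into the asserted equality of ideals. It is the standard fact that over the two-dimensional regular local ring $\Lambda_\psi$ the quotient of a finitely generated torsion module by its maximal pseudo-null submodule has projective dimension $\le 1$; the remaining manipulations are purely formal.
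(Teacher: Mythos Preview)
Your argument is correct and follows precisely the approach the paper sketches in the paragraph preceding Corollary \ref{corXfin}: the chain $\Fitt_{\Lambda_\psi,0}(X_\psi)=\Fitt_{\Lambda_\psi,0}(X_{\mathrm{fin},\psi})\cdot\mca{F}\subseteq\ann_{\Lambda_\psi}(X_{\mathrm{fin},\psi})\cdot\mca{F}\subseteq\mf{C}_{0,\psi}^{\mathrm{ell}}=\Fitt_{\Lambda_\psi,0}(X_\psi)$ combined with cancellation of the principal ideal $\mca{F}$. You are in fact more explicit than the paper about why the cancellation is legitimate (the Auslander--Buchsbaum argument showing $X_\psi/X_{\mathrm{fin},\psi}$ has projective dimension $\le 1$, so that $\mca{F}$ is principal generated by a non-zero-divisor), which the paper leaves tacit.
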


Since the philosophy of construction 
is very close, 
the proof of Theorem \ref{thmTheta=C}
is given by ``translation" of generators.
The keys of the proof of the comparison results
are Mazur--Rubin--Sano conjecture, 
which is valid 
if $(\mathrm{ETNC})_p$ holds, 
and the explicit description of Stark units.

\begin{rem}
Let $L/\bb{Q}$ be 
any totally real finite abelian extension.
Then, the Stark units of $L$ are given 
by circular units, and 
$(\mathrm{ETNC})_p$ for $L/\bb{Q}$
was proved by Burns and Greither \cite{BG}.
(Moreover, the $2$-part 
$(\mathrm{ETNC})_2$ for $L/\bb{Q}$ 
was proved by Flach \cite{Fl}.)
The arguments in the proof of
Theorem \ref{thmTheta=C} work
when we have an Euler system consisting of 
Stark units, and when $(\mathrm{ETNC})_p$ holds.
So, in particular, similar comparison results 
to Theorem \ref{thmTheta=C}
hold for $L/\bb{Q}$. 
Namely, under suitable conditions, 
the higher cyclotomic ideals
$\mf{C}_{i,\chi}$ defined in \cite{Oh2}
coincide with (the limit of) 
Burns--Kurihara--Sano's ideals.
\end{rem}

Let us see the contents of our article.
In \S \ref{secSelIw}, 
we introduce Selmer groups 
and Iwasawa modules which we study.
In \S \ref{secES}, we recall the definition of 
Euler systems and their Kolyvagin derivatives. 
In \S \ref{secCi},
we construct certain ideals $\mf{C}_i(Z)$ of
the Iwasawa algebra  
by using Kolyvagin derivatives, 
and show some basic properties of $\mf{C}_i(Z)$
including the specialization compatibility.
In \S \ref{secpgl}, we address the step (I) 
in the above strategy, and prove Theorem \ref{thm+P}.
In \S \ref{secpf},
we deal with the steps (II) and (III), 
and completes the proof of 
Theorem \ref{thmmainthm}.
In \S \ref{seccomparison}, 
we compare
our ideals with 
Burns--Kurihara--Sano's ideals.

\subsection*{Notation}

Throughout this paper, we fix
$K_0/K$ and $K_\infty /K$ be the extensions of 
an imaginary quadratic field 
introduced in \S \ref{secintro},
and let define $\mca{G}=\Delta \times \Gamma$
similarly.
We fix an algebraic closure $\overline{\bb{Q}}=
\overline{K}$ of $\bb{Q}$ (and $K$).
In this article,
an algebraic number field 
is an intermediate field of 
$\overline{\bb{Q}}/\bb{Q}$
which is finite over $\bb{Q}$.
For any field $F$, 
we denote by $G_F$ 
the absolute Galois group $F$.

We fix  embeddings
$\overline{\bb{Q}} \hookrightarrow \bb{C}$
and $\overline{\bb{Q}} \hookrightarrow 
\overline{\bb{Q}}_p$, 
and we regard 
$\overline{\bb{Q}} \subseteq \bb{C}$
and $\overline{\bb{Q}} \subseteq 
\overline{\bb{Q}}_p$
via the fixed embedding.

For any number field $F$, we denote
by $P(F)$ the set of all places of $F$.
Let $L/F$ be a finite extension of number fields, and 
$\mf{n}$ a non-zero ideal of $\mca{O}_F$. 
We denote $P(L)^{\mf{n}}$ be the set of all places of $F$
prime to $\mf{n}$, and $\mathrm{Prime}_L(\mf{n})$
be the set of all prime ideals of $\mca{O}_L$
dividing $\mf{n}$.
For simplicity, we write 
$\mathrm{Prime}(\mf{n}):=
\mathrm{Prime}_F(\mf{n})$.

Let $F$ be an algebraic number field, 
and $L/F$ a Galois extension. 
Let $\lambda$ be a prime ideal of $\mca{O}_F$, 
and $\lambda'$ a prime ideal of $L$ above $\lambda$. 
We denote the completion of $F$ at $\lambda$ by $F_{\lambda}$. 
If $\lambda$ is unramified in $L/F$,  
the arithmetic Frobenius 
at $\lambda'$ is denoted by 
$(\lambda', L/K) \in \Gal(L/K)$. 
We fix a collection of embeddings 
$\{\mf{l}_{\overline{K}}\colon\overline{K}\hookrightarrow 
\overline{K}_\mf{l} \}_{\mf{l}: \rm{prime}}$ 
satisfying the condition (Chb) as follows:
\begin{itemize}
\item[(Chb)] {\em For any subfield $F \subset \overline{K}$ 
which is a finite Galois extension of $K$ 
and any element $\sigma \in \Gal(F/K)$, 
there exist infinitely many primes $\mf{l}$ 
such that $\ell$ is unramified in $F/K$ 
and $(\mf{l}_F, F/K)= \sigma$, 
where $\mf{l}_F$ is the prime ideal of $\mca{O}_F$ 
corresponding to the  embedding $\mf{l}_{\overline{K}}\vert_F$.}
\end{itemize}
We can easily show 
the existence of the collection satisfying the condition (Chb) 
by using the Chebotarev density theorem.

Let $R$ be a commutative ring
and $\rho\colon G_K \longrightarrow \mca{O}^\times$ 
a continuous character.
We denote by $R(\rho)$ a free $R$-module  
of rank one with a fixed basis $e_{R,\rho}$
equipped with a continuous action of $G_K$
via $\rho$.
(We often write $e_\rho :=e_{R,\rho}$
for simplicity.)
For any $R$-algebra $S$, 
we identify $S \otimes_R R(\rho)$
with $S(\rho)$ by the isomorphism
defined by $1 \otimes e_{R,\rho} \mapsto e_{S,\rho}$.
We  define a $G_{K}$-equivariant pairing
\[
R(\rho) \times R(\rho^{-1})
\longrightarrow R;\ 
(ae_{\rho},be_{\rho^{-1}})\longmapsto ab.
\]
By this pairing, we identify 
the $R[G_{K}]$-module 
$\mca{O}(\rho)$ with 
the contragredient $R[G_{K}]$-module
$\Hom_{\mca{O}}(\mca{O}(\rho), \mca{O})$.
For two $R^\times$-valued characters
$\rho_1$ and $\rho_2$ of $G_K$,
we identify 
$R(\rho_1)\otimes R(\rho_2)$
with $R(\rho_1\rho_2)$
via the $R[G_K]$-isomorphism sending 
$e_{\rho_1}\otimes e_{\rho_2}$
to $e_{\rho_1\rho_2}$.

For each $N \in \bb{Z}_{>0}$, 
we denote the group of $N$-th roots of unity
contained in $\overline{\bb{Q}}$ by
$\mu_N:=\boldsymbol{\mu}_N(\overline{\bb{Q}})$, 
and put $\zeta_N:=e^{2 \pi i/N} \in 
\overline{\bb{Q}} \subseteq \bb{C}$.
(Recall that we have fixed an embedding
$\overline{\bb{Q}} \hookrightarrow \bb{C}$.)
Then, we obtain a basis 
$(\zeta_{p^{n}})_n \in \varprojlim_n \mu_{p^n}$. 
We denote by $\chi_{\mathrm{cyc}}\colon 
G_K \longrightarrow \bb{Z}_p^\times$
the cyclotomic character.
We identify the $\bb{Z}_p[[G_K]]$-module
$\varprojlim_n \mu_{p^n}$
with $\bb{Z}_p(\chi_{\mathrm{cyc}})$
via the isomorphism defined by 
$(\zeta_p^{n})_n \mapsto e_{\chi_{\mathrm{cyc}}}$.

Let $M,N$ be $R[G_K]$-modules, and 
$f \colon M \longrightarrow N$ 
an $R[G_K]$-linear map.
We write $M\otimes \rho:=M\otimes_{R} R(\rho)$, 
and $m\otimes \rho:=m \otimes e_\rho$
for any  $m \in M$.
We define a homomorphism 
\(
f \otimes \rho \colon 
M \otimes \rho \longrightarrow N \otimes \rho 
\)
by
$m\otimes \rho \mapsto f(m)\otimes \rho$
for any $m \in M$.

Let $R$ be a commutative ring, and  $M$ an $R$-module.
For any $a \in R$, let $M[a]$ be the $R$-submodule of $M$ consisting of 
all $a$-torsion elements.
For each $x \in M$, the annihilator ideal of $x$
is denoted by $\ann_R(x;M)$. 
We denote the ideal of $R$ consisting of all annihilators of $M$
by $\ann_R(M)$. 
Let $G$ be a group, and suppose 
that $G$ acts on the $R$-module $M$.
Then, we denote by $M^G$ the maximal subgroup of $M$ 
fixed by the action of $G$.

\section*{Acknowledgment}
It is a pleasure to thank 
Takamichi Sano for some
useful information on his works, 
as well as 
Takashi Taniguchi 
for his encouragement.
This work is supported by 
JSPS KAKENHI Grant Number 26800011.

\if0

\section{Higher Fitting ideals}\label{secFitt}

Here, we recall the definition 
and basic properties of
higher Fitting ideals.

\begin{dfn}
Let $R$ be a commutative ring, 
and $M$ an $R$-module given by
a finite presentation
\(
R^m \xrightarrow{\ A \ }R^n
\longrightarrow M \longrightarrow 0 
\)
with $m \ge n$.
Then, for any $i \in \bb{Z}_{\ge 0}$,
we define the $i$-th Fitting ideal 
$\Fitt_{R,i}(M)$ to be the ideal of $R$
generated by all $(n-i) \times (n-i)$-minors
of $A$.
\if0
(If $n \le i$, we define $\Fitt_{R,i}(M) :=R$.) 
Note that the definition of
$\Fitt_{R,i}(M)$ is independent of the choice of 
the presentation of $M$.
\fi
\end{dfn}

Let us recall some basic properties of 
higher Fitting ideals.

\begin{rem}\label{remFitt}
Let $R$ be a commutative ring, 
and $M$ a finitely presented $R$-module.
\begin{enumerate}[{\rm (i)}]
\item Let $S$ be an $R$-algebra. 
Then, by the right exactness of the tensor product, 
we have
\(
\Fitt_{S,i}(S\otimes_R M) =\Fitt_{R,i}(M) S 
\)
for any $i \in \bb{Z}_{\ge 0}$.
\if0
\item Suppose $R$ is a PID, and 
\(
M=\bigoplus_{i=1}^r R/d_i R
\)
with $d_{i+1} \mid d_i$ for each $i$.
Then, by definition, we have 
\[
\Fitt_{R,i}(M)= \left( \prod_{j=i+1}^r d_j \right) R
\]
for each $i $.
In particular, the higher Fitting ideals
$\{ \Fitt_{R,i}(M) \}_{i \in \bb{Z}_{\ge 0}}$
determine the isomorphism class of $M$.
\fi
\item Let $\mca{O}$ is the ring of integers 
of a finite extension field $L$ of $\bb{Q}_p$.
Suppose that $R$ is $\mca{O}[[\bb{Z}_p]]$ or 
$\mca{O}[[\bb{Z}_p^2]]$, and $M$ is 
a finitely generated torsion $R$-module 
pseudo-isomorphic to an elementary $R$-module
\(
E = \bigoplus_{i=1}^r 
R/d_iR
\)
with $d_{i+1} \mid d_i$ for each $i$.
Then, for any $i$, we have 
\(
\Fitt_{R,i}(M) \sim 
\Fitt_{R,i}(E)
=\left( \prod_{j=i+1}^r d_j \right) 
\).
In particular, the higher Fitting ideals
$\{ \Fitt_{R,i}(M) \}_{i \in \bb{Z}_{\ge 0}}$
determine the pseudo-isomorphism class of $M$.
\label{remFittLambda}
\end{enumerate}
\end{rem}

By the definition of higher Fitting ideals,
we can easily show the following lemma.

\begin{lem}\label{lemFittquot}
Let $R$ be a commutative ring, 
and $M$ an $R$-module
given by
a finite presentation
\(
R^m \longrightarrow R^n
\longrightarrow M \longrightarrow 0 
\)
\if0
\[
R^m \xrightarrow{\ A \ }R^n
\xrightarrow{\ g\ } M \longrightarrow 0 
\]
\fi
with $m \ge n$.
Let $r\in \bb{Z}_{>0}$ be any positive integer, and $N$ 
an $R$-submodule of $M$
generated by $r$ elements $x_1, \dots , x_r \in M$.
Then, we have
\(
\Fitt_{R,0}(M/N) \subseteq 
\Fitt_{R,r}(M).
\)
\end{lem}

\if0

\begin{proof}
Let $\{ e_1 ,\dots, e_n \}$ be 
the standard basis of $R^n$.
For each $i \in \{ 1, \dots, r \}$,
we write 
$x_i=\sum_{j=1}^n a_{ij}g(e_j)$, 
where $a_{ij} \in R$.
We define an $n \times (m+r)$-matrix $\widetilde{A}$ by
\[
\widetilde{A}:=
\left(
\begin{array}{c|ccc}
&a_{11}& \cdots &a_{1r}\\
A & \vdots & & \vdots  \\
&a_{n1}&\cdots &a_{nr}
\end{array}
\right).
\]
Then we have a presentation
\[
R^{m+r} \xrightarrow{\ \widetilde{A} \ }R^n
\xrightarrow{\ g\ } M/N \longrightarrow 0 
\]
of the $R$-module $M/N$.
By cofactor expansion,
an $n \times n$-minor of $\widetilde{A}$
is written in an 
$R$-linear combination of 
$(n-r) \times (n-r)$-minors of $A$.
Hence we obtain the assertion of 
Lemma \ref{lemFittquot}.
\end{proof}

\fi

\fi

\section{Selmer groups and Iwasawa module}\label{secSelIw}

Here, we set the notations of
Selmer groups and Iwasawa modules 
which we study.
Throughout this section, 
we use the same notations
as in the previous section.
For instance, we fix
$K_0/K$ and $K_\infty /K$ be the extensions of 
an imaginary quadratic field 
introduced in \S \ref{secintro},
and put $\mca{G}:=\Gal(K_\infty /K)=\Delta \times \Gamma$.
We denote by $\mca{IF}$ the set of all intermediate fields $F$ 
of $K_\infty^{\Delta}/K$ satisfying 
$[F:K]< \infty$.
We fix a finite extension field $\mca{L}$ of $\bb{Q}_p$,
and define $\mca{O}$ to be the ring of integers in $\mca{L}$.
We fix a uniformizer $\pi \in \mca{O}$, 
and put $k:=\mca{O} /\pi \mca{O}$. 
We define $\Lambda:=\mca{O}[[\Gamma]]$.

In \S \ref{sssel}, we set our notation of Selmer group, 
and in \S \ref{ssImod}, we introduce the Iwasawa modules 
arising from our Selmer groups.

\subsection{Selmer groups}\label{sssel}
Here, we recall the notation of 
Selmer groups, and review
some basic properties briefly.

Let $\rho \colon G_{K} 
\longrightarrow \mca{O}^\times$
be a continuous character.
Let $\Sigma(\rho)$ be the set of primes of $\mca{O}_K$
consisting of all primes above $p$
and all primes above where the character $\rho$ is 
ramified.
We denote by $\bar{\rho} \colon G_{K} 
\longrightarrow k^\times$
the modulo-$\pi$ reduction of $\rho$.
Let $\bar{\chi}_{\mathrm{cyc}} \colon G_{K}
\longrightarrow k^\times$ is 
be the modulo-$\pi$ cyclotomic character.
We assume the following hypotheses:
\begin{itemize}
\item[(C1)] We have $\bar{\rho} \ne \bar{\chi}_{\mathrm{cyc}}\bar{\rho}^{-1}$
as characters on $G_{K}$.
\item[(C2)] For any prime  $\mf{p}$ of $\mca{O}_K$
dividing $p\mca{O}_K$,
we have $\bar{\rho} \vert_{G_{K_{\mf{p}}}} \ne 1$
and $\bar{\rho} \vert_{G_{K_{\mf{p}}}} 
\ne \bar{\chi}_{\mathrm{cyc}} \vert_{G_{K_{\mf{p}}}}$. 
\item[(C3)] If $\bar{\rho}$ is unramified at a place $v \in P_K$,
then $\rho$ is unramified at $v$.
\end{itemize}

Here, we denote by $W$ the $\mca{O}[G_{K}]$-module 
$(\mca{L}/\mca{O})\otimes \chi_{\mathrm{cyc}}\psi^{-1}$
or $(\mca{O}/\pi^N \mca{O})\otimes \chi_{\mathrm{cyc}}\psi^{-1}$
for some $N \in \bb{Z}_{>0}$.

\begin{dfn}
Let $F$ be any algebraic number field containing $K$, and
$\mf{n}$ a non-zero ideal of $\mca{O}_F$.
We define 
\begin{align*}
\mathrm{Sel}^{\mf{n}}(F,W)
:&=\Ker \left(
H^1(F,W) \longrightarrow 
\prod_{v \in P(F)^{\mf{n}}} 
\frac{H^1(F_v,W)}{H^1_f(F_v,W)}
\right), \\
\mathrm{Sel}_{\mf{n}}(F,W)
:&=\Ker \left(
\mathrm{Sel}^{\mf{n}}(F,W)
\longrightarrow 
\prod_{w \in \mathrm{Prime}_F(\mf{n})} 
H^1(F_w,W)
\right)
\end{align*}
where $H^1_f(F_v,W)$ is the finite local condition
on $H^1(F_v,W)$ in the sense of Bloch--Kato \cite{BK} \S 3.
We put 
\begin{align*}
X(F,\rho)_{\mca{O}}:&=
\Hom_{\mca{O}}(\mathrm{Sel}_{p\mca{O}_K}
(F,(\mca{L}/\mca{O})\otimes \rho),
\mca{L}/\mca{O}) \\
X_{N}(F,\rho)_{\mca{O}}:&=
\Hom_{\mca{O}}(\mathrm{Sel}_{p\mca{O}_K}(F,(\mca{O}/\pi^N \mca{O})
\otimes \rho),\mca{O}/\pi^N\mca{O}).
\end{align*}
If no confusion arises, we write 
$X(F,\rho):=X(F,\rho)_{\mca{O}}$ and 
$X_N(F,\rho):=X_N(F,\rho)_{\mca{O}}$.
\end{dfn}

Let $F \in \mca{IF}$. 
For each $N_1,N_2, \in \bb{Z}_{>0}$ 
with $N_1<N_2$, we define
\[
\nu_{N_1,N_2}\colon 
\mca{O}/\pi^{N_1}\mca{O}
\longrightarrow \mca{O}/\pi^{N_2}\mca{O};
a \mapsto a\pi^{N_2-N_1}.
\]
Since $[F:K]$ is a power of $p$, 
the assumption (C2) implies that 
$H^0(K,k\otimes \rho)=0$.
So, the map $\nu_{N_1,N_2}$ induces an injection
\[
H^1(\nu_{N_1,N_2})\colon 
H^1(F,(\mca{O}/\pi^{N_1} \mca{O})
\otimes \rho) \hookrightarrow 
H^1(F,(\mca{O}/\pi^{N_2} \mca{O})
\otimes \rho).
\]

\begin{lem}\label{lemSelsurj}\label{lemSellim}
Let $F \in \mca{IF}$ be any element, and 
$\mf{n}$ a non-zero ideal of $\mca{O}_K$.
\begin{enumerate}[{\rm (i)}]
\item Let $N_1,N_2, \in \bb{Z}_{>0}$ be 
positive integers satisfying $N_1<N_2$.
The restriction of the injection $H^1(\nu_{N_1,N_2})$ 
induces an isomorphism
\[
\mathrm{Sel}^{\mf{n}}(F,(\mca{O}/\pi^{N_1} \mca{O})
\otimes \rho) \xrightarrow{\ \simeq \ } 
\mathrm{Sel}^{\mf{n}}(F,(\mca{O}/\pi^{N_2} \mca{O})
\otimes \rho)[\pi^{N_1}].
\]
{\rm (See \cite{Ru5}  Lemma 1.5.4.)}
\item We have natural isomorphisms
\begin{align*}
\varinjlim_{N} \mathrm{Sel}^{\mf{n}}(F,(\mca{O}/\pi^{N} \mca{O})
\otimes \rho) & \simeq 
\mathrm{Sel}^{\mf{n}}(F,(\mca{L}/\mca{O})
\otimes \rho),  \\
\varinjlim_{N} \mathrm{Sel}_{\mf{n}}(F,(\mca{O}/\pi^{N} \mca{O})
\otimes \rho) & \simeq 
\mathrm{Sel}_{\mf{n}}(F,(\mca{L}/\mca{O})
\otimes \rho),
\end{align*}
\end{enumerate}
where the inductive limits are taken with respect to 
the systems given by the restriction of
the injections $H^1(\nu_{N_1,N_2})$.
{\rm (See \cite{Ru5} Lemma 1.3.6.)}
\end{lem}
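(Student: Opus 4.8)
The plan is to read both parts off the long exact cohomology sequences of two short exact sequences of $\mca{O}[G_K]$-modules. Put $j=N_2-N_1$. The map $\nu_{N_1,N_2}$, which is multiplication by $\pi^{j}$, together with reduction modulo $\pi^{j}$ gives
\[
0 \to (\mca{O}/\pi^{N_1}\mca{O})\otimes\rho \xrightarrow{\ \nu_{N_1,N_2}\ } (\mca{O}/\pi^{N_2}\mca{O})\otimes\rho \to (\mca{O}/\pi^{j}\mca{O})\otimes\rho \to 0,
\]
and multiplication by $\pi^{N_1}$, with cokernel $(\mca{O}/\pi^{N_1}\mca{O})\otimes\rho$, gives
\[
0 \to (\mca{O}/\pi^{j}\mca{O})\otimes\rho \xrightarrow{\ \cdot\pi^{N_1}\ } (\mca{O}/\pi^{N_2}\mca{O})\otimes\rho \to (\mca{O}/\pi^{N_1}\mca{O})\otimes\rho \to 0.
\]
As in the discussion preceding the statement, (C2) together with the fact that $[F:K]$ is a power of $p$ gives $\bar\rho\ne 1$, hence $H^0(F',(\mca{O}/\pi^{m}\mca{O})\otimes\rho)=0$ for all $m\ge1$, where $F'$ is $F$ or $F_v$ for a prime $v\mid p$; in particular $H^1(\nu_{N_1,N_2})$ is injective. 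Since the endomorphism ``multiplication by $\pi^{N_1}$'' of $(\mca{O}/\pi^{N_2}\mca{O})\otimes\rho$ has kernel equal to the image of $\nu_{N_1,N_2}$ and factors through the second sequence, feeding the $H^0$-vanishing into the two long exact sequences identifies the image of $H^1(\nu_{N_1,N_2})$ with $H^1(F,(\mca{O}/\pi^{N_2}\mca{O})\otimes\rho)[\pi^{N_1}]$; this is the global form of (i).

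To upgrade this to the Selmer groups I would invoke the strict compatibility of the Bloch--Kato finite local conditions with the maps $\nu_{N_1,N_2}$: for every place $v$ of $F$, the local map $H^1(\nu_{N_1,N_2})_v$ carries $H^1_f(F_v,(\mca{O}/\pi^{N_1}\mca{O})\otimes\rho)$ into $H^1_f(F_v,(\mca{O}/\pi^{N_2}\mca{O})\otimes\rho)$, and its preimage of the latter is exactly the former; at the places dividing $\mf{n}$ the local condition of $\mathrm{Sel}^{\mf{n}}$ is the whole of $H^1(F_v,-)$, so there is nothing to check there. This is \cite{Ru5} Lemma~1.5.4, the point being that the finite condition on the torsion coefficients is \emph{propagated} from the integral Bloch--Kato condition of \cite{BK} through the natural quotient and multiplication maps; this is also the step I expect to require the most care, since at a place $v\nmid p$ at which $\bar\rho$ is locally trivial the local $H^0$ need not vanish. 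Granting it, a class $c$ in the source lies in $\mathrm{Sel}^{\mf{n}}$ if and only if $H^1(\nu_{N_1,N_2})(c)$ does, which together with the global statement yields (i).

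Part (ii) is then the filtered direct limit over $N$ of the isomorphisms in (i). Continuous $G_F$- and $G_{F_v}$-cohomology of discrete torsion modules commutes with filtered colimits, so $\varinjlim_N H^1(F,(\mca{O}/\pi^{N}\mca{O})\otimes\rho)\simeq H^1(F,(\mca{L}/\mca{O})\otimes\rho)$ via $\mca{L}/\mca{O}=\varinjlim_N\mca{O}/\pi^N\mca{O}$, and by the construction of the Bloch--Kato conditions (cf.\ \cite{Ru5} Lemma~1.3.6) the finite local conditions for $(\mca{L}/\mca{O})\otimes\rho$ are the colimits of those for the $(\mca{O}/\pi^N\mca{O})\otimes\rho$. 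Since $\varinjlim$ is exact and, by (i), the transition maps of the system $\{\mathrm{Sel}^{\mf{n}}(F,(\mca{O}/\pi^N\mca{O})\otimes\rho)\}_N$ are the $H^1(\nu_{N_1,N_2})$, the colimit is $\mathrm{Sel}^{\mf{n}}(F,(\mca{L}/\mca{O})\otimes\rho)$. The same limit argument applied to the exact sequence $0\to\mathrm{Sel}_{\mf{n}}(F,-)\to\mathrm{Sel}^{\mf{n}}(F,-)\to\prod_{w\in\mathrm{Prime}_F(\mf{n})}H^1(F_w,-)$ yields the statement for $\mathrm{Sel}_{\mf{n}}$ as well.
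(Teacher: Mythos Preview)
The paper does not give its own proof of this lemma; it simply records the two references to \cite{Ru5} (Lemma~1.5.4 and Lemma~1.3.6) and moves on. Your argument is the standard one underlying those references: the two short exact sequences of coefficients, the vanishing of $H^0(F,k\otimes\rho)$ (which, as you note, follows from (C2) together with the fact that a nontrivial $k^\times$-valued character cannot factor through the $p$-group $\Gal(F/K)$), and then the strict functoriality of the propagated Bloch--Kato local conditions under $\nu_{N_1,N_2}$, followed by a filtered colimit for part (ii). So your approach is essentially the same as the one the paper defers to.

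One small remark: your aside that $H^0(F_v,(\mca{O}/\pi^m\mca{O})\otimes\rho)=0$ for $v\mid p$ is correct but is not actually needed for the global identification of the image with the $\pi^{N_1}$-torsion, nor for the Selmer statement as you have organized it (the local compatibility you invoke from \cite{Ru5} Lemma~1.5.4 handles all places uniformly). It does no harm, but you could drop it without loss.
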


\if0
\begin{rem}
By lemma \ref{lemSellim} and 
injectivity of the maps $H^1(\nu_{N_1,N_2})$, 
we regard $\mathrm{Sel}_{\mf{n}}(F,(\mca{O}/\pi^{N} \mca{O})
\otimes \rho)$
as an $\mca{O}$-submodule of 
$\mathrm{Sel}_{\mf{n}}(F,(\mca{L}/\mca{O})
\otimes \rho)$, 
and $X_N(K,\rho)$
as a quotient of $X(K,\rho)$.
\end{rem}
\fi

\begin{lem}\label{lemH2X}
For any $F \in \mca{IF}$, 
we have 
\(
X(F,\rho)
\simeq 
H^2(K_{\Sigma(\rho)}/ F,\mca{O}(\rho)).
\)
\end{lem}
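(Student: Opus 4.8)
The plan is to pass to the $\pi^N$-torsion level and compute there, then take inductive limits. First I would recall that by definition $X(F,\rho) = \Hom_{\mca{O}}(\mathrm{Sel}_{p\mca{O}_K}(F,(\mca{L}/\mca{O})\otimes\rho),\mca{L}/\mca{O})$, and by Lemma \ref{lemSellim}(ii) the Selmer group $\mathrm{Sel}_{p\mca{O}_K}(F,(\mca{L}/\mca{O})\otimes\rho)$ is the inductive limit over $N$ of $\mathrm{Sel}_{p\mca{O}_K}(F,(\mca{O}/\pi^N\mca{O})\otimes\rho)$. The key point is that the local condition at primes above $p$ in $\mathrm{Sel}_{p\mca{O}_K}$ is the strictest one (the kernel in all local cohomology at $p$, and the unramified/finite condition elsewhere in $\Sigma(\rho)$), so I expect this Selmer group to coincide with the subgroup of $H^1(K_{\Sigma(\rho)}/F, (\mca{L}/\mca{O})\otimes\rho)$ that is trivial at every place of $F$ above $\Sigma(\rho)$ and automatically unramified outside $\Sigma(\rho)$ — i.e. with the Galois cohomology with restricted ramification and strict local conditions at $\Sigma(\rho)$.

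Next I would invoke Poitou--Tate duality for the finite modules $(\mca{O}/\pi^N\mca{O})\otimes\rho$. Writing $W_N^\ast := \Hom(\mca{O}/\pi^N\mca{O})\otimes\rho, \mu_{p^\infty})$ for the Cartier dual, the global duality exact sequence
\[
0 \to \mathrm{Sel}^{\mathrm{str}}(F,W_N) \to H^1(K_{\Sigma(\rho)}/F, W_N) \to \bigoplus_{v\in\Sigma(\rho)} H^1(F_v,W_N) \to H^1(K_{\Sigma(\rho)}/F, W_N^\ast)^\vee \to \cdots
\]
together with the vanishing of $H^0$ (which follows from hypothesis (C2): since $[F:K]$ is a $p$-power, $H^0(F,k\otimes\rho)=0$, and similarly the dual module has no invariants because of (C1), (C2) comparing $\bar\rho$ with $\bar\chi_{\mathrm{cyc}}\bar\rho^{-1}$) identifies the $\mca{O}$-dual of the strict Selmer group $\mathrm{Sel}_{p\mca{O}_K}(F,W_N)$ with $H^2(K_{\Sigma(\rho)}/F, \Hom(W_N,\mca{O}/\pi^N\mca{O})\otimes\chi_{\mathrm{cyc}})$. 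Using the identification $\Hom_{\mca{O}}(\mca{O}(\rho),\mca{O})\simeq\mca{O}(\rho)$ from the Notation section together with $\mu_{p^\infty}=\mca{O}(\chi_{\mathrm{cyc}})$-twist bookkeeping, one checks the dual module is $(\mca{O}/\pi^N\mca{O})(\rho)$ itself, so the $N$-th level computation reads $X_N(F,\rho)\simeq H^2(K_{\Sigma(\rho)}/F,(\mca{O}/\pi^N\mca{O})(\rho))$.

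Finally I would take the inductive limit over $N$: the left side gives $X(F,\rho)$ by definition and Lemma \ref{lemSellim}, and on the right side, since $\mca{O}(\rho)=\varinjlim_N (\mca{O}/\pi^N\mca{O})(\rho)$ and $G_{K_{\Sigma(\rho)}/F}$ has finite cohomological dimension with $H^3$ of finite modules controlled, $H^2$ commutes with the filtered colimit, yielding $H^2(K_{\Sigma(\rho)}/F,\mca{O}(\rho))$. The main obstacle I anticipate is the careful bookkeeping of twists and duals in the Poitou--Tate sequence — verifying that the strict local condition at $p$ together with the finite/unramified conditions at the rest of $\Sigma(\rho)$ really dualizes to no local conditions at all (the full $H^1_{\Sigma(\rho)}$) on the dual side, so that the orthogonal complement is precisely the unrestricted global $H^1$, making the cokernel term exactly $H^2$ — and checking the requisite $H^0$-vanishing at each place to kill the error terms in the nine-term sequence. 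This is where hypotheses (C1)--(C3) do the real work.
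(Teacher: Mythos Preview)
Your plan has the right skeleton (Poitou--Tate plus local vanishing), but there are two concrete errors and a misplaced emphasis.

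First, the twist bookkeeping is wrong. You write that ``one checks the dual module is $(\mca{O}/\pi^N\mca{O})(\rho)$ itself,'' invoking an identification $\Hom_{\mca{O}}(\mca{O}(\rho),\mca{O})\simeq\mca{O}(\rho)$. But the pairing $R(\rho)\times R(\rho^{-1})\to R$ in the Notation section identifies $\Hom_{\mca{O}}(\mca{O}(\rho),\mca{O})$ with $\mca{O}(\rho^{-1})$, not $\mca{O}(\rho)$; hence the Cartier dual of $(\mca{O}/\pi^N)(\rho)$ is $(\mca{O}/\pi^N)(\chi_{\mathrm{cyc}}\rho^{-1})$. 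Your finite-level computation therefore does not land in $H^2$ with $(\mca{O}/\pi^N)(\rho)$-coefficients, so the argument as written does not prove the stated isomorphism.

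Second, the limit step is backwards: $\mca{O}(\rho)=\varprojlim_N(\mca{O}/\pi^N)(\rho)$, not $\varinjlim_N$; the direct limit of the $(\mca{O}/\pi^N)(\rho)$ under the maps $\nu_{N_1,N_2}$ is $(\mca{L}/\mca{O})(\rho)$. To recover $H^2$ with $\mca{O}$-coefficients you must take the inverse limit on the $H^2$ side (and correspondingly the inverse limit of the $X_N$'s on the Selmer side), not the direct limit.

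Third, and most importantly for the comparison: the paper's proof does not pass to finite level at all. It works directly with $\mca{O}(\rho)$ and $(\mca{L}/\mca{O})\otimes\rho$, and essentially its entire content is the local computation you deferred. Specifically, the paper shows (i) $H^1_f(F_w,\mca{O}(\rho))=0$ at every $w$ above a prime in $\Sigma(\rho)$ not dividing $p$, because (C3) forces $H^0(F_w^{\mathrm{ur}},(\mca{L}/\mca{O})\otimes\rho)=0$; and (ii) $H^2(F_{w'},\mca{O}(\rho))=0$ at every $w'$ above a prime in $\Sigma(\rho)$, by local duality together with (C2) and (C3). With both of these in hand, a single invocation of the Poitou--Tate exact sequence yields the isomorphism directly, no limits required. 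The ``careful bookkeeping of twists and duals'' you flag as the obstacle is not where the work lies; the actual content is the explicit local vanishing, and that is what you should supply.
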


\begin{proof}
Let $\mf{l} \in {\Sigma(\rho)}$ be any element
prime to $p$, 
and $w$ a place of $F$ above $\mf{l}$. 
Note that the prime $\mf{l}$ is ramified in $K_0/K$.
By the assumption (C3),
we have 
$H^0(F_w^{\mathrm{ur}},
(\mca{L}/\mca{O}) \otimes \rho)=0$.
So, by \cite{Ru5} Lemma 1.3.5 (iii),
we obtain
$H^1_f(F_w,\mca{O}(\rho))=0$.
Moreover, 
if $w' \in P_F$ is a place above $p$, 
or if $w' \in P_F$ is a place where 
$\rho$ is ramified, 
then it follows from the local duality and 
the assumptions (C2) and (C3) 
on $\rho$ that we have
$H^2(F_{w'} ,\mca{O}(\rho) )=0$.
Hence by the Poitou-Tate exact sequence, 
we obtain the isomorphism as desired.
\end{proof}

\begin{exa}
Let $\psi \in \widehat{\Delta}$
be a character as in Theorem \ref{thmmainthm}.
We set $\mca{O}_\psi:=\bb{Z}_p[\Im \psi]$.
Then, 
the character 
$\chi_{\mathrm{cyc}}\psi^{-1} \colon 
G_{K} \longrightarrow \mca{O}^\times$
satisfies the conditions (C1)--(C3).
Let $F \in \mca{IF}$ be any element. 
Namely, let $F$ be an intermediate field of 
$K_\infty^{\Delta} /K$ satisfying $[F:K]< \infty$.
By the global class field theory and
the Poitou--Tate exact sequence, we  have
\(
H^2(F_{\Sigma(\rho)}/F,\mca{O}(\chi_{\mathrm{cyc}}\psi^{-1}))
\simeq X(\chi_{\mathrm{cyc}}\psi^{-1})
\simeq A_{F,\psi}
\).
\end{exa}

\subsection{Iwasawa modules}\label{ssImod}

Now, let us recall some basic facts 
in the Iwasawa theoretical setting.
First, let us define 
the Iwasawa module which we mainly study.

\begin{dfn}
We define 
\[
X(K_\infty^\Delta /K,\rho)_{\mca{O}}:=
\Hom_\mca{O}\left(
\varinjlim_{F \in \mca{IF}}
\mathrm{Sel}_{p\mca{O}_K}
(F,(\mca{L}/\mca{O})\otimes \rho),\mca{L}/\mca{O}\right).
\]
Note that $X(K_\infty^\Delta /K,\rho)$
is a finitely generated $\Lambda$-module.
If no confusion arises, we write 
$X(K_\infty^\Delta /K,\rho):=
X(K_\infty^\Delta /K,\rho)_{\mca{O}}$
for simplicity.
\end{dfn}

Let $\rho' \colon \Gamma
\longrightarrow 1 +\pi \mca{O} 
\subseteq \mca{O}^\times$
be a continuous character.
Since the extension $K_\infty^{\Delta} /K$
is unramified outside $p$, 
we have $\Sigma(\rho)=\Sigma(\rho\rho')$. 
Note that the assumptions (C1)--(C3) on the character $\rho$
imply that the character
$\rho\rho' \colon G_{K}
\longrightarrow \mca{O}^\times$
also satisfies (C1)--(C3).
The following lemma plays
an important role 
in the reduction arguments.

\begin{lem}\label{lemXred}
Let $\rho' \colon \Gamma
\longrightarrow 1 +\pi \mca{O}$
be a continuous character.
\begin{enumerate}[{\rm (i)}]
\item Suppose that $\Gamma \simeq \bb{Z}_p$. 
Let $\gamma$ be a topological generator of $\Gamma$.
Then, we have a natural $\mca{O}$-isomorphism
\[
X(K_\infty^\Delta /K,\rho) \otimes_{\Lambda} 
\Lambda/(\gamma - \rho'(\gamma))
\simeq X(K,\rho\rho').
\]
\item Suppose that $\Gamma \simeq \bb{Z}_p^2$, 
and $\gamma_1,\gamma_2$ are topological generators of $\Gamma$
with $\rho'(\gamma_2)=1$.
For each $i \in \{ 1,2 \}$, 
we denote by $\Gamma_i$ the closed subgroup of $\Gamma$
topologically generated by $\gamma_i$.
We put 
\(
\overline{\Lambda}:=
\Lambda/(\gamma_1 - \rho'(\gamma_1))
\simeq \mca{O}[[\Gamma_2]].
\)
Then, we have a natural 
$\overline{\Lambda}$-isomorphism
\[
X(K_\infty^{\Delta} /K,\rho) \otimes_{\Lambda} 
\overline{\Lambda}
\simeq X(K_\infty^{\Delta \times \Gamma_1} 
 /K,\rho\rho').
\]
\end{enumerate}
\end{lem}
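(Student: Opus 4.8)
The plan is to reduce the statement to a descent computation in Galois cohomology, using Lemma~\ref{lemH2X} and its Iwasawa-theoretic counterpart. First I would note that, since $\Hom_{\mca{O}}(-,\mca{L}/\mca{O})$ converts inductive limits into projective limits, $X(K_\infty^\Delta/K,\rho)\simeq\varprojlim_{F\in\mca{IF}}X(F,\rho)$, the transition maps being dual to restriction, i.e.\ corestrictions; by Lemma~\ref{lemH2X} this is $\varprojlim_{F}H^2(K_{\Sigma(\rho)}/F,\mca{O}(\rho))$. Shapiro's lemma at each finite layer, together with the usual passage to the limit over the finite quotients (legitimate by Mittag--Leffler, the cohomology of each finite quotient being finite), then gives an isomorphism of $\Lambda$-modules
\[
X(K_\infty^\Delta/K,\rho)\ \simeq\ H^2(K_{\Sigma(\rho)}/K,\bb{T}_\rho),\qquad \bb{T}_\rho:=\mca{O}(\rho)\otimes_{\mca{O}}\Lambda,
\]
where $G_K$ acts on the $\Lambda$-factor through $G_K\twoheadrightarrow\Gamma\hookrightarrow\Lambda^{\times}$ (as group-like elements) and the $\Lambda$-module structure is multiplication there; note that $\bb{T}_\rho$ is $\Lambda$-free of rank one. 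Running the identical argument for the tower $K_\infty^{\Delta\times\Gamma_1}/K$, whose Galois group is $\Gamma/\Gamma_1\simeq\Gamma_2$, and for the character $\rho\rho'$ (which still satisfies (C1)--(C3) and has $\Sigma(\rho\rho')=\Sigma(\rho)$) yields $X(K_\infty^{\Delta\times\Gamma_1}/K,\rho\rho')\simeq H^2(K_{\Sigma(\rho)}/K,\overline{\bb{T}}_{\rho\rho'})$ with $\overline{\bb{T}}_{\rho\rho'}:=\mca{O}(\rho\rho')\otimes_{\mca{O}}\overline{\Lambda}$ (tautological $\Gamma_2$-action), and likewise $X(K,\rho\rho')\simeq H^2(K_{\Sigma(\rho)}/K,\mca{O}(\rho\rho'))$.

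Next I would unwind the twist. In case~(i), $\bb{T}_\rho/(\gamma-\rho'(\gamma))\bb{T}_\rho$ is free of rank one over $\mca{O}\simeq\Lambda/(\gamma-\rho'(\gamma))$, and the group-like action on the $\Lambda$-factor, specialized at $\gamma\mapsto\rho'(\gamma)$, becomes the character $\rho'$; hence there is a canonical $\mca{O}[G_K]$-isomorphism $\bb{T}_\rho/(\gamma-\rho'(\gamma))\bb{T}_\rho\simeq\mca{O}(\rho\rho')$. In case~(ii), the same computation --- where $\rho'(\gamma_2)=1$ is precisely what makes the $\Gamma_2$-part of the action persist modulo $\gamma_1-\rho'(\gamma_1)$ --- gives an $\overline{\Lambda}[G_K]$-isomorphism $\bb{T}_\rho/(\gamma_1-\rho'(\gamma_1))\bb{T}_\rho\simeq\overline{\bb{T}}_{\rho\rho'}$. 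Since $\Lambda$ is a domain, $\gamma-\rho'(\gamma)$ (resp.\ $\gamma_1-\rho'(\gamma_1)$) is a nonzerodivisor on the free module $\bb{T}_\rho$, so I obtain short exact sequences of $\Lambda[G_K]$-modules
\[
0\to\bb{T}_\rho\xrightarrow{\ \gamma-\rho'(\gamma)\ }\bb{T}_\rho\to\mca{O}(\rho\rho')\to 0,\qquad 0\to\bb{T}_\rho\xrightarrow{\ \gamma_1-\rho'(\gamma_1)\ }\bb{T}_\rho\to\overline{\bb{T}}_{\rho\rho'}\to 0
\]
in cases~(i) and~(ii) respectively.

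Finally I would feed these into the long exact sequence of $H^{\bullet}(K_{\Sigma(\rho)}/K,-)$. Because $p$ is odd (as $p\nmid\#(\mca{O}_{K_0}^{\times})_{\mathrm{tor}}$ forces $p\neq 2$) and $K$ is a number field with no real place, $\mathrm{cd}_p(G_{K,\Sigma(\rho)})\leq 2$; hence $H^3(K_{\Sigma(\rho)}/K,\bb{T}_\rho)$, being the projective limit of the vanishing $H^3$'s of the finite quotients of $\bb{T}_\rho$, is zero. The long exact sequence then collapses to $H^2(K_{\Sigma(\rho)}/K,\bb{T}_\rho)\otimes_{\Lambda}\Lambda/(\gamma-\rho'(\gamma))\simeq H^2(K_{\Sigma(\rho)}/K,\mca{O}(\rho\rho'))$ in case~(i), and similarly $H^2(K_{\Sigma(\rho)}/K,\bb{T}_\rho)\otimes_{\Lambda}\overline{\Lambda}\simeq H^2(K_{\Sigma(\rho)}/K,\overline{\bb{T}}_{\rho\rho'})$ in case~(ii). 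Combined with the identifications of the first paragraph these are exactly the asserted isomorphisms, and since every map in sight is $\Lambda$- (resp.\ $\overline{\Lambda}$-) linear, so are they.

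I expect the genuine --- though standard --- care to be needed in the limit step of Shapiro's lemma and in the vanishing of $H^3$: one must compute the cohomology of the infinite coefficient module $\bb{T}_\rho$ as the projective limit of the cohomology of its finite quotients (cf.\ \cite{Ru5}). The twisting identification of the second paragraph and the descent via the long exact sequence are then purely formal; the only place a subtlety could hide is the matching of the $\Lambda$-module structures under Shapiro's lemma, which I would verify by tracing the action of $\Gamma$ through the isomorphism $\varprojlim_F\mca{O}[\Gal(F/K)]\simeq\Lambda$.
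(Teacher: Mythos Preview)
Your proposal is correct and follows essentially the same route as the paper: identify $X$ with a projective limit of $H^2$'s via Lemma~\ref{lemH2X}, use Shapiro's lemma to rewrite this as $H^2(K_{\Sigma(\rho)}/K,\Lambda\otimes\rho)$, and then descend using that the $p$-cohomological dimension of $G_{K,\Sigma(\rho)}$ is $2$. The only organizational difference is that the paper first proves the descent $H^2(K_{\Sigma(\rho)}/K,\Lambda\otimes\chi)\otimes_\Lambda\Lambda/(\gamma_1-1)\simeq H^2(K_{\Sigma(\rho)}/K,\mca{O}[[\Gamma_2]]\otimes\chi)$ for a generic $\chi$ (no twist), and then separately establishes a twisting isomorphism $X(K_\infty^\Delta/K,\rho)\otimes_\Lambda\Lambda'\simeq X(K_\infty^\Delta/K,\rho\rho')$ via the ring automorphism $\mathrm{tw}\colon\Lambda\to\Lambda$, $\sigma\mapsto\rho'(\sigma)^{-1}\sigma$, combining the two at the end; you instead absorb the twist directly into the coefficient identification $\bb{T}_\rho/(\gamma_1-\rho'(\gamma_1))\bb{T}_\rho\simeq\overline{\bb{T}}_{\rho\rho'}$ before descending. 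Both packagings are standard and equivalent; yours is marginally more streamlined, while the paper's makes the twisting step reusable as a separate statement.
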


\begin{proof}
We omit the proof of the first 
assertion since it is proved similarly 
to the second one.
Suppose that $\Gamma \simeq \bb{Z}_p^2$.
We denote $\rho$ or $\rho\rho'$ by $\chi$.
Then, by Lemma \ref{lemH2X}, 
Shapiro's lemma and \cite{Ta} Corollary 2.2, 
we have 
\[
X(K_\infty^{\Delta }  /K,\chi)
\simeq \varprojlim_{F \in \mca{IF}}
H^2(K_{\Sigma(\rho)} /F, \mca{O}( \chi))
\simeq H^2(K_{\Sigma(\rho)} /K, \Lambda \otimes \chi).
\]
Since the $p$-cohomological dimension of $G_{K,{\Sigma(\rho)}}$
is $2$, we obtain 
\[
H^2(K_{\Sigma(\rho)} /K, \Lambda \otimes \chi)
\otimes_{\Lambda}\left(
\Lambda/(\gamma -1 )
\right)
\simeq H^2 \left(
K_{\Sigma(\rho)} /K, 
\mca{O}[[\Gal(K_\infty^{\Delta \times \Gamma_1})]](\chi)
\right).
\]
Hence we obtain 
\(
X(K_\infty^{\Delta} /K,\chi ) \otimes_{\Lambda} 
\left(
\Lambda/(\gamma -1 )
\right)
\simeq X(K_\infty^{\Delta \times \Gamma_1} 
 /K,\chi).
\)

Let $\mathrm{tw}\colon 
\Lambda \longrightarrow \Lambda$
be the continuous homomorphism 
of topological $\mca{O}$-algebras
defined by 
$\mathrm{tw}(\sigma)=
{\rho'}(\sigma)^{-1}\sigma$
for any $\sigma \in \Gamma$.
Let $\Lambda'=(\Lambda,\mathrm{tw})$
be the $\Lambda$-algebra 
whose underlying ring is $\Lambda$,
and whose structure map is $\mathrm{tw}$.
Then, we have a $\Lambda'$-isomorphism
\begin{align*}
\left(
\varprojlim_{F \in \mca{IF}}
H^2(K_{\Sigma(\rho)} /F, \mca{O} (\rho) )
\right)\otimes_{\Lambda} 
\Lambda' 
\simeq 
\varprojlim_{F \in \mca{IF}}
H^2(K_{\Sigma(\rho)} /F, \mca{O} (\rho\rho') ),
\end{align*}
where we regard the right hand side as a $\Lambda'$-module
via the equality $\Lambda=\Lambda'$ of the underlying rings.
Hence we obtain
\(
X(K_\infty^{\Delta} /K,\rho ) \otimes_{\Lambda} \Lambda'
\simeq X(K_\infty^{\Delta} /K,\rho \rho')
\).
Since we have 
$\overline{\Lambda} \simeq 
\Lambda' \otimes \Lambda/(\gamma_1-1)$, 
we obtain the isomorphism as desired.
\end{proof}

\section{Euler systems}\label{secES}

In this section, 
we recall the notion 
of Euler systems 
(in the sense of \cite{Ru5})
for one dimensional Galois representations, 
and their Kolyvagin derivatives.
We use the same notations
as in the previous section.
Let $\rho \colon G_{K}
\longrightarrow \mca{O}^\times$
be a continuous character satisfying 
the conditions (C1), (C2) and (C3)
introduced in \S \ref{sssel}.
We fix a finite set $\Sigma$
of primes of $\mca{O}_K$
containing $\Sigma(\rho)$.

In \S \ref{ssdefES}, we recall 
the definition of  Euler systems.
In \S \ref{ssKdiv}, we recall 
the Kolyvagin derivatives of Euler systems.
In \S \ref{ssellunits}, 
we recall the Euler systems of 
elliptic units.

\subsection{Euler systems}\label{ssdefES}

In this section, we recall 
the definition of Euler systems.
We denote by $\mathcal{N}$ the set of 
all ideals of $\mca{O}_K$ decomposed into 
square-free products of prime ideals not contained in $\Sigma$.
For each prime ideal $\mf{l} \in \mca{N}$, 
we denote the $p$-ray class field modulo $\mf{l}$ 
by $K\langle \mf{l}\rangle$. 
Then, for any $\mf{n} \in \mca{N}$ 
with prime ideal decomposition
$\mf{n} = \mf{l}_1  \cdots \mf{l}_r$, 
we define the composite field
$K\langle \mf{n} \rangle :=
K \langle \mf{l}_1\rangle \cdots K\langle \mf{l}_r \rangle$.
For any $\mf{n} \in \mca{N}$ and $F \in \mca{IF}$, 
we define $F\langle \mf{n} \rangle:=F \cdot 
K\langle \mf{n} \rangle$
to be the composite field.

\begin{dfn}\label{defES}
We call a family
\[
\boldsymbol{c}:=\left\{ 
c_F( \mf{n} ) \in H^1(F \langle \mf{n} \rangle 
,\mca{O}(\rho)) 
\right\}_{(F,\mf{n}) \in \mca{IF} \times \mathcal{N}}
\]
of cohomology classes {\it an Euler system for 
$(K_\infty^\Delta/K,\Sigma,\rho)$} 
if $\boldsymbol{c}$ satisfies the following conditions:
\begin{itemize}\setlength{\leftskip}{4mm}
\item[$({\rm ES1})$] For any $F,F' \in \mca{IF}$ with 
$F \subseteq F'$ and 
$\mf{n} \in \mca{N}$, 
we have 
\[
\mathrm{Cor}_{F' \langle \mf{n} \rangle 
/F\langle \mf{n} \rangle}\left(
c_{F'} ( \mf{n} ) \right) = 
c_F ( \mf{n} ).
\]
\item[$({\rm ES2})$] Let $F \in \mca{IF}$ and 
$\mf{n} \in \mca{N}$.
Then, for any prime divisor $\mf{l}$ of $\mf{n}$, we have 
\[
\mathrm{Cor}_{F \langle \mf{n} \rangle / 
F \langle \mf{n}/\mf{\mf{l}} \rangle}\left(
c_{F}(\mf{n}) \right) = 
(1-N(\mf{l})^{-1}\rho(\mathrm{Fr}_{\mf{l}})
\mathrm{Fr}_{\mf{l}}^{-1})
 \cdot c_m(\mf{n}/\mf{l}),
\]
where $\mathrm{Fr}_{\mf{l}} \in \Gal \left( 
F \langle \mf{n}/\mf{l} \rangle / K \right)$ is 
the arithmetic Frobenius element at $\mf{l}$.
\end{itemize}
We denote by 
$\mathrm{ES}_\Sigma(K_\infty^\Delta/K; \mca{O}(\rho))$
the set of all Euler systems for 
$(K_\infty^\Delta/K,\Sigma,\mca{O}(\rho))$.
\end{dfn}

Twist of Euler systems is
a key in the specialization arguments
in \S \ref{secpf}.

\begin{dfn}
Let $\boldsymbol{c}:=\{ 
c_F( \mf{n} ) \}_{F,\mf{n}} 
\in \mathrm{ES}_\Sigma(K_\infty^\Delta/K; \mca{O}(\rho))$
be any Euler system, and 
\(
\rho' \colon \Gamma
\longrightarrow 1+ \pi\mca{O}
\)
a continuous character.
Then, we can define an Euler system 
\(
\boldsymbol{c}\otimes \rho :=\{ 
(c\otimes \rho)_F( \mf{n} ) \} 
\in \mathrm{ES}_\Sigma(K_\infty^\Delta/K; \mca{O}(\rho\rho'))
\)
such that $(c\otimes \rho)_F( \mf{n} )$ is 
equal to 
the image of 
\[
(c_{F'}(\mf{n}))_{F'\in \mca{IF}}\otimes \rho
\in \varprojlim_{F' \in \mca{IF}} 
H^1(F' \langle \mf{n} \rangle, \mca{O}(\rho))\otimes \rho'
\simeq \varprojlim_{F' \in \mca{IF}} 
H^1(F' \langle \mf{n} \rangle, \mca{O}(\rho\rho'))
\]
in $H^1(F \langle\mf{n} \rangle,\mca{O}(\rho\rho'))$
for each $F \in \mca{IF}$ and $\mf{n} \in \mca{N}$.
We call $\boldsymbol{c}\otimes \rho$ 
{\em the twist} of the Euler system $\boldsymbol{c}$
by the character $\rho$.
\end{dfn}

\subsection{Kolyvagin derivatives}\label{ssKdiv}

Let us recall the definition of 
Kolyvagin derivatives.
For any prime $\mf{l}$ of $\mca{O}_K$ not contained in $\Sigma$, 
let $I_{\mf{l}}$ be the ideal of 
$\mca{O}$ generated by $N(\mf{l})-1$ and 
$N(\mf{l})^{-1}\rho(\mathrm{Fr}_{\mf{l}}) -1$. 

Let $\mf{n} \in \mca{N}$ be any element. 
Suppose that  $\mf{n} \in \mathcal{N}_N$  
has prime factorization $\mf{n}=\mf{l}_1 \cdots \mf{l}_r$.
We define the ideal 
$I_{\mf{n}} := \sum_{i=1}^rI_{\mf{l}_i}$ 
of $\mca{O}$.
We define $H_{\mf{n}}:=\mathrm{Gal}\left( 
K \langle \mf{n} \rangle /K \langle \mca{O}_K \rangle \right)$.
We define $H_{\mf{n}}:=\mathrm{Gal}\left( 
K \langle \mf{n} \rangle /K \langle \mca{O}_K \rangle \right)$.
For any $F \in \mca{IF}$, 
we have natural isomorphisms
\[
\mathrm{Gal}(F \langle \mf{n} \rangle /
F \langle \mca{O}_K \rangle) \simeq H_{\mf{n}}
\simeq  H_{\mf{l}_1}\times \dots \times H_{\mf{l}_r}.
\]
These groups are identified via the above natural isomorphisms.

\begin{dfn}
Let $N \in \bb{Z}_{>0}$ and $F \in \mca{IF}$ 
be any element.
We define a set $\mathcal{P}_{F,N}(\rho)$ 
of prime numbers by 
\[
\mathcal{P}_{F,N}(\rho) := \left\{\mf{l} 
\in P(K) \mathrel{\bigg\vert} 
\begin{array}{l}
\text{$\mf{l}$ splits completely in 
$F\langle \mca{O}_K \rangle /K$, } \\
\mf{l} \notin \Sigma, \ \text{and}
I_{\mf{l}} \subseteq \pi^N \mca{O}.
\end{array}
\right\}. 
\]
Then, we define
\[
\mathcal{N}_{F,N}(\rho) := \left\{
\prod_{i=1}^r \mf{l}_i \mathrel{\bigg\vert} 
\begin{array}{l}
\text{$r \in \mathbb{Z}_{>0}$, 
$\mf{l}_i \in \mathcal{P}_{F,N}(\rho)$ 
($i=1,\dots,r$)} \\ 
\text{and $\mf{l}_i\ne \mf{l}_j$ if $i\ne j$} 
\end{array}
\right\} \cup\{ \mca{O}_K \}. 
\]
For simplicity,
we write $\mathcal{P}_{F,N}:=\mathcal{P}_{F,N}(\rho)$
and 
$\mathcal{N}_{F,N}:=\mathcal{N}_{F,N}(\rho)$.
We also write 
$\mathcal{P}_{N}:=\mathcal{P}_{K,N}$
and 
$\mathcal{N}_{N}:=\mathcal{N}_{K,N}$.
\end{dfn}

\begin{rem}\label{remPN}
If $\rho=\chi_{\mathrm{cyc}}\psi^{-1}$, 
then our $\mca{P}_{F,N}$ coincides with
the set
$\mca{S}^{\mathrm{prime}}_N(F)$
defined in \cite{Oh1} \S 3.1. 
\end{rem}

Let $\mf{l} \in \mca{P}_N$ be any element. 
We shall take a generator 
$\sigma_{\mf{l}}$ of  
$H_\mf{l}$ as follows.

\begin{dfn}\label{dfnsigmaell}
Let $N_{\mf{l} }$ be the positive integer satisfying
\(
(N(\mf{l})-1)\bb{Z}_p
=p^{N_{ \mf{l} }}\bb{Z}_p. 
\)
Note that $\mf{l}$ splits completely in 
$K(\mu_{p^{N_{\mf{l}}}})/K$, so
by the fixed embedding 
$\mf{l}_{\overline{K}}\colon 
\overline{K}\hookrightarrow \overline{K}_{\mf{l}}$, 
we can regard 
$\mu_{p^{N_{\mf{l}}}}$ as a subset of $K_\mf{l}$. 
Let $\lambda_0:=\mf{l}_{K \langle \mca{O} \rangle}$ 
be the place of $K \langle \mca{O} \rangle$ 
below $\mf{l}_{\overline{K}}$, and  
$\lambda_1:=\mf{l}_{K \langle \mf{l} \rangle}$ 
be the place of $K \langle \mf{l} \rangle$ 
below $\mf{l}_{\overline{K}}$. 
We identify $\Gal \big(K \langle \mf{l} \rangle_{\lambda_1}/
K \langle \mca{O} \rangle_{\lambda_0} \big)$ with $H_\mf{l}$ 
by the isomorphism induced from the natural embeddings of fields. 
Let $\varpi$ be a uniformizer of $K \langle \mf{l} \rangle_{\lambda_1}$. 
We fix a generator $\sigma_{\mf{l}}$ of $H_{\mf{l}}$ 
such that 
\[
\varpi^{\sigma_\mf{l}-1} \equiv 
\zeta_{p^{N_{\mf{l }}} } 
\pmod{\mf{m}_
{\lambda_1}},
\]
where $\mf{m}_{{\lambda_1}}$ is the maximal ideal of 
$K \langle \mf{l} \rangle_{\lambda_1}$. 
Note that the definition of $\sigma_\mf{l}$ 
does not depend on the choice of $\varpi$.
\end{dfn}

In order to review the definition of Kolyvagin derivatives,
we need to introduce Kolyvagin operators. 

\begin{dfn}\label{Dn}
For $\mf{l} \in \mathcal{P}_N$, we define 
\(
D_\mf{l}:=\sum_{i=1}^{\# H_{\mf{l}}-1}i\cdot \sigma_{\mf{l}}^i 
\in \mathbb{Z}[H_{\mf{l}}].
\)
Let $\mf{n} \in \mathcal{N}_N$
be any element with prime  factorization
$\mf{n} = \prod_{i=1}^r \mf{l}_i $. 
Then, we define {\em the Kolyvagin operator} 
$D_{\mf{n}}\in \mathbb{Z}[H_{\mf{n}}]$ by 
\(
D_{\mf{n}}:=
\prod_{i=1}^r D_{\mf{l}_i} 
\).
\end{dfn}

Take any $F \in \mca{IF}$ and 
$\mf{n} \in \mathcal{N}_{F,N}$.

\begin{lem}[\cite{Ru5} Lemma 4.4.2]\label{lemDninv}
The image of $D_{\mf{n}} c_F(\mf{n})$  
in $H^1(F \langle \mf{n} \rangle, 
(\mca{O}/\pi^N \mca{O})\otimes\rho)$
is fixed by the action of $H_{\mf{n}}$. 
\end{lem}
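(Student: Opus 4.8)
The plan is to reproduce, in the present setting of one-dimensional Galois representations, the classical computation of Kolyvagin derivative classes; this is exactly \cite{Ru5} Lemma~4.4.2, and the only inputs are the Euler system relation $(\mathrm{ES2})$ and the defining properties of $\mca{P}_{N}$. I would argue by induction on the number $r$ of prime factors of $\mf{n}$, the case $r=0$ (that is, $\mf{n}=\mca{O}_K$, where $D_{\mf{n}}=1$ and $H_{\mf{n}}$ is trivial) being vacuous. Since $H_{\mf{n}}\simeq\prod_{\mf{l}\mid\mf{n}}H_{\mf{l}}$ with each $H_{\mf{l}}$ cyclic generated by $\sigma_{\mf{l}}$, it suffices to show that for every prime $\mf{l}\mid\mf{n}$ the class $(\sigma_{\mf{l}}-1)D_{\mf{n}}c_F(\mf{n})$ has zero image in $H^1(F\langle\mf{n}\rangle,(\mca{O}/\pi^N\mca{O})\otimes\rho)$. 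Fix such an $\mf{l}$ and write $\mf{n}=\mf{l}\mf{m}$ with $\mf{m}=\mf{n}/\mf{l}$, so that $D_{\mf{n}}=D_{\mf{l}}D_{\mf{m}}$; note that all the Galois groups over $F$ occurring below are abelian, being subquotients of $\Gal(K\langle\mf{n}\rangle/K)$, so that $D_{\mf{l}}$, $D_{\mf{m}}$ and $\sigma_{\mf{l}}$ commute and (a lift of) $D_{\mf{m}}$ commutes with $\mathrm{Cor}_{F\langle\mf{n}\rangle/F\langle\mf{m}\rangle}$ and $\mathrm{Res}_{F\langle\mf{n}\rangle/F\langle\mf{m}\rangle}$.

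A direct computation in $\bb{Z}[H_{\mf{l}}]$ gives the telescoping identity $(\sigma_{\mf{l}}-1)D_{\mf{l}}=\#H_{\mf{l}}-N_{\mf{l}}$, where $N_{\mf{l}}:=\sum_{i=0}^{\#H_{\mf{l}}-1}\sigma_{\mf{l}}^{i}$ is the norm element of $H_{\mf{l}}$. Applying this to $c_F(\mf{n})$,
\[
(\sigma_{\mf{l}}-1)D_{\mf{n}}c_F(\mf{n})=\#H_{\mf{l}}\cdot D_{\mf{m}}c_F(\mf{n})-N_{\mf{l}}D_{\mf{m}}c_F(\mf{n}).
\]
For the first term I would note that the condition $I_{\mf{l}}\subseteq\pi^N\mca{O}$ built into $\mca{P}_{N}$ forces $N(\mf{l})-1\in\pi^N\mca{O}$, and since $p\nmid\#(\mca{O}_{K}^{\times})_{\mathrm{tor}}$ the order $\#H_{\mf{l}}$ is the $p$-part of $N(\mf{l})-1$; hence $\#H_{\mf{l}}\in\pi^N\mca{O}$ and the first term dies in $H^1(F\langle\mf{n}\rangle,(\mca{O}/\pi^N\mca{O})\otimes\rho)$.

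For the second term I would use that $\mathrm{Res}_{F\langle\mf{n}\rangle/F\langle\mf{m}\rangle}\circ\mathrm{Cor}_{F\langle\mf{n}\rangle/F\langle\mf{m}\rangle}$ coincides with the norm $N_{\mf{l}}$ of $H_{\mf{l}}=\Gal(F\langle\mf{n}\rangle/F\langle\mf{m}\rangle)$, together with $(\mathrm{ES2})$, to get
\[
N_{\mf{l}}D_{\mf{m}}c_F(\mf{n})=\mathrm{Res}_{F\langle\mf{n}\rangle/F\langle\mf{m}\rangle}\Bigl(D_{\mf{m}}\bigl(1-N(\mf{l})^{-1}\rho(\mathrm{Fr}_{\mf{l}})\mathrm{Fr}_{\mf{l}}^{-1}\bigr)c_F(\mf{m})\Bigr).
\]
Since $N(\mf{l})^{-1}\rho(\mathrm{Fr}_{\mf{l}})-1\in I_{\mf{l}}\subseteq\pi^N\mca{O}$, modulo $\pi^N$ the right-hand side reduces to $\mathrm{Res}_{F\langle\mf{n}\rangle/F\langle\mf{m}\rangle}\bigl((1-\mathrm{Fr}_{\mf{l}}^{-1})D_{\mf{m}}c_F(\mf{m})\bigr)$. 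Now $\mf{l}$ splits completely in $F\langle\mca{O}_K\rangle/K$ — this is part of the definition of $\mca{P}_{F,N}$ — so $\mathrm{Fr}_{\mf{l}}$ acts trivially on $F\langle\mca{O}_K\rangle$ and therefore lies in $\Gal(F\langle\mf{m}\rangle/F\langle\mca{O}_K\rangle)\simeq H_{\mf{m}}$. By the inductive hypothesis the image of $D_{\mf{m}}c_F(\mf{m})$ in $H^1(F\langle\mf{m}\rangle,(\mca{O}/\pi^N\mca{O})\otimes\rho)$ is fixed by $H_{\mf{m}}$, hence $(1-\mathrm{Fr}_{\mf{l}}^{-1})D_{\mf{m}}c_F(\mf{m})\equiv0\pmod{\pi^N}$, so the second term also vanishes modulo $\pi^N$. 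This finishes the inductive step, and with it the proof.

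The argument is essentially formal, and I do not expect a serious obstacle. The single point that genuinely uses the construction is the membership $\mathrm{Fr}_{\mf{l}}\in H_{\mf{m}}$ — which is precisely the reason the complete-splitting condition is imposed in the definition of $\mca{P}_{F,N}$ — together with the two numerical congruences $\#H_{\mf{l}}\equiv0$ and $N(\mf{l})^{-1}\rho(\mathrm{Fr}_{\mf{l}})\equiv1$ modulo $\pi^N$, both read off from $I_{\mf{l}}\subseteq\pi^N\mca{O}$. The only thing requiring care is the bookkeeping of the various identifications of subgroups of $\Gal(F\langle\mf{n}\rangle/K)$ and the compatibility of $\mathrm{Cor}$, $\mathrm{Res}$ and multiplication by $D_{\mf{m}}$, all of which goes through because every Galois group in sight is abelian over $F$.
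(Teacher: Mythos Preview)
Your proof is correct and is precisely the standard argument from \cite{Ru5} Lemma~4.4.2; the paper does not give its own proof but simply cites that reference, so your write-up is in fact a faithful expansion of what is being invoked. The key inputs you identify---the telescoping identity $(\sigma_{\mf{l}}-1)D_{\mf{l}}=\#H_{\mf{l}}-N_{\mf{l}}$, the two congruences read off from $I_{\mf{l}}\subseteq\pi^N\mca{O}$, and the membership $\mathrm{Fr}_{\mf{l}}\in H_{\mf{m}}$ coming from the complete-splitting condition in $\mca{P}_{F,N}$---are exactly the ones used in Rubin's proof.
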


By (C2)  and
Hochschild--Serre spectral sequence, 
the restriction map
\[
\mathrm{Res}^{(\mf{n})}_{F,N,\rho}\colon 
H^1(F,(\mca{O}/\pi^N \mca{O}) \otimes \rho) 
\longrightarrow
H^1(F \langle \mf{n} \rangle,
(\mca{O}/\pi^N \mca{O}) \otimes \rho)^{
\Gal(F \langle \mf{n} \rangle /F)}
\]
is an isomorphism.

\begin{dfn}[Kolyvagin derivative]\label{defKD}
Let $\widetilde{\mathrm{N}}_F \in \bb{Z}[\Gal(F 
\langle \mf{n} \rangle /F)]$ 
be a lift of the norm element 
\[
\mathrm{N}_F := \sum_{\sigma \in \Gal(F 
\langle \mca{O}_K \rangle
/F)} \sigma
\in \bb{Z}[\Gal(F \langle \mca{O}_K \rangle/F)].
\]
We define
\[
\kappa_{F,N}(\mf{n};\boldsymbol{z}):=
(\mathrm{Res}^{(\mf{n})}_{F,N,\rho})^{-1} \left( 
\widetilde{\mathrm{N}}_F 
D_n \boldsymbol{c}_F(\mf{n}) \right) \in 
H^1(F,(\mca{O}/\pi^N \mca{O}) \otimes \rho). 
\] 
By Lemma \ref{lemDninv},
the definition of 
the cohomology class
$\kappa_{F,N}(\mf{n};\boldsymbol{z}) $
is independent of the choice 
of the lift $\widetilde{\mathrm{N}}_F$.
We call the cohomology class $\kappa_{F,N}(
\mf{n};\boldsymbol{c})$ 
{\em the Kolyvagin derivative}.
\end{dfn}

\begin{prop}[\cite{Ru5} Theorem 6.5.1]\label{lemKDlc}
For any $F \in \mca{IF}$ and 
any $\mf{n} \in \mathcal{N}_{F,N}$, 
we have
\(
\kappa_{F,N}(\mf{n};\boldsymbol{c})
\in \mathrm{Sel}^{p\mf{n}}(F,
(\mca{O}/\pi^N \mca{O}) \otimes \rho).
\)
\end{prop}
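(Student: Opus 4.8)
The plan is to verify the defining local conditions of $\mathrm{Sel}^{p\mf{n}}$ one place at a time. Write $W:=(\mca{O}/\pi^N\mca{O})\otimes\rho$ and $\kappa:=\kappa_{F,N}(\mf{n};\boldsymbol{c})$. Since $\kappa$ already lies in $H^1(F,W)$ by Definition \ref{defKD}, and $\mathrm{Sel}^{p\mf{n}}(F,W)$ is by definition the kernel of the localization map into $\prod_{v}H^1(F_v,W)/H^1_f(F_v,W)$ over the finite places $v\nmid p\mf{n}$ (the archimedean places of the totally imaginary field $F$ contribute nothing, as $H^1(F_v,W)=0$ there), it is enough to show $\mathrm{loc}_v(\kappa)\in H^1_f(F_v,W)$ for each such $v$. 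As $\Sigma(\rho)$ contains every prime above $p$, such a $v$ is automatically prime to $p$, and I would distinguish the cases $v\notin\Sigma(\rho)$ and $v\in\Sigma(\rho)$.

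For $v\notin\Sigma(\rho)$ the character $\rho$ is unramified at $v$, and the plan is to prove that $\mathrm{loc}_v(\kappa)$ is unramified; that this suffices is \cite{Ru5} Lemma 1.3.5 (iii), exactly as in the proof of Lemma \ref{lemH2X}. One uses that $K_\infty^{\Delta}/K$ is unramified outside $p$ and $K\langle\mf{n}\rangle/K$ is unramified outside $p\mf{n}$, so that $v$ is unramified in $F\langle\mf{n}\rangle/F$, together with the fact that the Euler system class $c_F(\mf{n})\in H^1(F\langle\mf{n}\rangle,\mca{O}(\rho))$ is unramified above $v$ (this being part of the Euler system formalism of \cite{Ru5}; for the Euler system of elliptic units it is also clear from the construction). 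Then the Kolyvagin operator $D_{\mf{n}}$, a lift $\widetilde{\mathrm{N}}_F$ of the norm element, and the inverse of the restriction isomorphism $\mathrm{Res}^{(\mf{n})}_{F,N,\rho}$ are all Galois operations over fields in which $v$ is unramified, hence they preserve the unramified local condition; this yields $\mathrm{loc}_v(\kappa)\in H^1_{\mathrm{ur}}(F_v,W)\subseteq H^1_f(F_v,W)$.

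For $v\in\Sigma(\rho)$ with $v\nmid p$, the character $\rho$, and hence by the contrapositive of (C3) also $\bar{\rho}$, is ramified at $v$. Here the plan is simply to observe that the condition at $v$ is vacuous: the inertia group at $v$ has no nonzero fixed vector on $k\otimes\rho$, so filtering $W$ by the powers of $\pi$ shows that it has none on $W$ either, whence $H^1_{\mathrm{ur}}(F_w,W)=0$ and, as in the proof of Lemma \ref{lemH2X} (using \cite{Ru5} Lemma 1.3.5 (iii)), $H^1_f(F_w,W)=0$ for every place $w$ of $F$ above $v$. Combining the two cases gives $\kappa\in\mathrm{Sel}^{p\mf{n}}(F,W)$.

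I expect the only genuine subtlety to lie in the first case, where one must run the entire argument with $\mca{O}/\pi^N\mca{O}$-coefficients: the class $D_{\mf{n}}c_F(\mf{n})$ is $H_{\mf{n}}$-invariant only after reduction modulo $\pi^N$ (Lemma \ref{lemDninv}), and it is hypothesis (C2) that, via the Hochschild--Serre spectral sequence, makes $\mathrm{Res}^{(\mf{n})}_{F,N,\rho}$ an isomorphism, so that $\kappa$ is well defined and its local behaviour at $v$ can be read off from that of $D_{\mf{n}}c_F(\mf{n})$. Everything else is formal; this is precisely the argument of \cite{Ru5}, Chapter 4, which in the present generality is \cite{Ru5} Theorem 6.5.1.
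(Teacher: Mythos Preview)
The paper itself gives no proof, simply citing \cite{Ru5} Theorem 6.5.1; your sketch is essentially the argument behind that citation, and the approach (checking the local condition place by place, splitting into $v\notin\Sigma(\rho)$ and $v\in\Sigma(\rho)\setminus\{\mf{p}\mid p\}$) is the right one.

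However, there is a genuine gap in your second case. You write that the condition at a ramified prime $v\nmid p$ is ``vacuous'' and then argue that $H^1_f(F_w,W)=0$. These two assertions are opposites: if $H^1_f(F_w,W)=0$, the Selmer condition at $w$ becomes $\mathrm{loc}_w(\kappa)=0$, which is the strictest possible requirement, not a vacuous one. What you actually need is that the \emph{full} local cohomology $H^1(F_w,W)$ vanishes, so that there is nothing to check. This is true, but it requires one more step: since $\chi_{\mathrm{cyc}}$ is unramified at $w\nmid p$ while $\bar\rho$ is ramified there, the same filtration argument gives $(W^*(1))^{I_w}=((\mca{O}/\pi^N\mca{O})\otimes\chi_{\mathrm{cyc}}\rho^{-1})^{I_w}=0$, hence $H^0(F_w,W^*(1))=0$ and by local Tate duality $H^2(F_w,W)=0$. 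Combined with your observation $H^0(F_w,W)\subseteq W^{I_w}=0$ and the local Euler characteristic formula for $w\nmid p$, this yields $H^1(F_w,W)=0$. With this correction your argument goes through; the first case is fine as written.
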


\subsection{Elliptic units}\label{ssellunits}

Here, we recall  Euler systems of elliptic units briefly, 
and set some notation.
We consider the following conditions 
on the pair $(\mf{a},\mf{g})$ of ideals of $\mca{O}_K$.
\begin{itemize}
\item[$(\mathrm{I})_1$] The ideal $\mf{a}$ is prime to $6\mf{g}$,
and the natural map
$\mca{O}_K^\times \longrightarrow (\mca{O}_K/\mf{g})^\times$
is injective.
\item[$(\mathrm{I})_2$]  The ideal $\mf{g}$ is 
a divisor of the conductor $\mf{f}$ of $K_0/K$.
\end{itemize}
Suppose that a pair $(\mf{a}, \mf{g})$
satisfies the condition $(\mathrm{I})_1$.
Let $K(\mf{g})$ be the ray class field of $K$
modulo $\mf{g}$.
Then, we have an elliptic unit 
${_\mf{a}}z_{\mf{g}} 
\in K (\mf{g} )^\times$ 
in the sense of \cite{Oh1} Definition 2.3.
The  element 
${_\mf{a}}z_{\mf{g}} $
satisfies the following properties.

\begin{prop}\label{propell}
Let $(\mf{a}, \mf{g})$ and $(\mf{b}, \mf{g})$ be 
pairs the conditions $(\mathrm{I})_1$.
\begin{enumerate}[{\rm (i)}]
\item The element ${_\mf{a}}z_{\mf{g}}$ is a $\mf{f}$-unit of 
$K (\mf{f} )$.  
Moreover, if $\mf{g}$ is not 
a power of a prime of $\mca{O}_K$, 
then, ${_\mf{a}}z_{\mf{g}}$ becomes a unit.
{\rm (See, for instance, \cite{dS} 2.4 Proposition.)}
\item Let $\mf{l}$ be a prime of $\mca{O}_K$ 
not dividing $\mf{a}$, 
then, we have
\[
N_{K (\mf{gl})/ 
K (\mf{g})}({_\mf{a}}z_{\mf{gl}})=
\begin{cases}
{_\mf{a}}z_{\mf{g}} & (\mf{l} \mid \mf{g}), \\
(1-\Fr_{\mf{l}}^{-1})\cdot {_\mf{a}}z_{\mf{g}}
& (\mf{l} \nmid \mf{g}),
\end{cases}
\]
where we write the scalar action of 
$1-\Fr_{\mf{l}}^{-1} \in 
\bb{Z}[\Gal(K (\mf{g} )/K)]$ to $K(\mf{g})^\times$
in the multiplicative way.
{\rm (See, for instance,  \cite{dS} 2.5 Proposition.)}
\item We have 
\[
(N\mf{b}-\tau_{\mf{b}})\cdot 
{_\mf{a}}z_{\mf{g}}
=(N\mf{a}-\tau_{\mf{a}})\cdot 
{_\mf{b}}z_{\mf{g}},
\] 
where for any ideal $\mf{c}$ of $\mca{O}_K$
prime to $\mf{g}$, 
we write $\tau_{\mf{c}}:=(\mf{c}, K (\mf{g} )/K)$.
{\rm (See, for instance,  \cite{dS} 2.4 Proposition, or 
\cite{Ka} the equality (15.4.4) in page 253.)}
\end{enumerate}
\end{prop}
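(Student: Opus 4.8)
The plan is to prove the three properties of the elliptic units by reducing them to the classical norm-compatibility and twisting relations for Robert–Kato–de Shalit elliptic units, which are recorded in the references cited in the statement; the content here is to match our normalization of ${_\mf{a}}z_{\mf{g}}$ from \cite{Oh1} Definition 2.3 with the normalizations in \cite{dS} and \cite{Ka}. For part (i), I would first recall the construction: ${_\mf{a}}z_{\mf{g}}$ is built from values of a Siegel unit (the $\theta$-function attached to the ideal $\mf{a}$) at torsion points of a suitable elliptic curve with CM by $\mca{O}_K$, and such values are known to be $\mf{f}$-units in $K(\mf{f})$, and global units precisely when the modulus $\mf{g}$ has at least two distinct prime divisors (the obstruction to being a global unit is a product of differences of Frobenius eigenvalues, which degenerates only in the prime-power-modulus case). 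This is exactly the content of \cite{dS} 2.4 Proposition once the definitions are lined up, so here I would only need to check that our ${_\mf{a}}z_{\mf{g}}$ agrees with de Shalit's up to the action of $\Gal(K(\mf{f})/K)$ and a unit, which preserves both the $\mf{f}$-unit property and the unit property.

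For part (ii), the norm compatibility, I would invoke the distribution relation for Siegel units: lowering the level from $\mf{gl}$ to $\mf{g}$ corresponds to summing the $\theta$-function values over the kernel of the isogeny, which by the standard computation (see \cite{dS} 2.5 Proposition) produces the factor $1$ when $\mf{l}\mid\mf{g}$ (since then no new ramification is introduced and the norm is essentially trivial on that component) and the factor $(1-\Fr_{\mf{l}}^{-1})$ when $\mf{l}\nmid\mf{g}$ (the Euler factor at the good prime $\mf{l}$, coming from the $\mf{l}$-th Hecke operator acting on the one-dimensional space via $\Fr_\mf{l}^{-1}$). Again the only genuine step is to verify that our normalization introduces no spurious Euler-type correction factor; this follows by comparing the explicit formula in \cite{Oh1} Definition 2.3 with de Shalit's. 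Part (iii), the $\mf{a}$-versus-$\mf{b}$ comparison, is the ``primitivity'' relation $(N\mf{b}-\tau_\mf{b})\cdot{_\mf{a}}z_\mf{g}=(N\mf{a}-\tau_\mf{a})\cdot{_\mf{b}}z_\mf{g}$, which expresses that the elliptic unit, after killing the ambiguity in the auxiliary ideal, is independent of the choice of $\mf{a}$; this is a direct consequence of the functional equation for the $\theta$-function under the two isogenies labelled by $\mf{a}$ and $\mf{b}$, proved in \cite{dS} 2.4 Proposition and restated as equality (15.4.4) of \cite{Ka}.

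The main obstacle is purely bookkeeping: aligning our sign conventions, the direction of the arithmetic Frobenius, and the precise definition of $\tau_\mf{c}=(\mf{c},K(\mf{g})/K)$ with those in \cite{dS} and \cite{Ka}, since different sources place the twist on $\Fr_\mf{l}$ or $\Fr_\mf{l}^{-1}$ and normalize the $\theta$-function differently. Once the identification of ${_\mf{a}}z_{\mf{g}}$ with the classical elliptic unit is fixed — which we did in \cite{Oh1} — all three assertions are immediate quotations, so I would simply write ``this is \cite{dS} 2.4 Proposition (resp.\ 2.5 Proposition, resp.\ \cite{Ka} (15.4.4)), translated into our notation via \cite{Oh1} Definition 2.3,'' and spend the bulk of the write-up on that translation rather than reproving the distribution relations.
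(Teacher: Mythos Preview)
Your proposal is correct and matches the paper's approach exactly: the paper does not give a proof of this proposition at all, but simply records the three properties with citations to \cite{dS} 2.4 Proposition, \cite{dS} 2.5 Proposition, and \cite{Ka} (15.4.4), treating them as standard facts about elliptic units in the normalization fixed by \cite{Oh1} Definition 2.3. Your plan to reduce everything to those references via the identification in \cite{Oh1} is precisely what the author intends.
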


\begin{dfn}[elliptic units]\label{defEU}
We denote by $\Sigma(K_0/K)$ by the set of primes of $\mca{O}_K$
consisting of all primes dividing $p\mca{O}_K$, 
and all ramified primes in $K_0/K$.
Let $\psi \in \widehat{\Delta}$ be any element,
and put $\mca{O}_\psi:=\bb{Z}_p[\mathrm{Im} \psi ]$.
Let $F \in \mca{IF}$, and $(\mf{a}, \mf{g})$ 
a pair of ideals of 
$\mca{O}_K$ satisfying 
the conditions $(\mathrm{I})_1$ and $(\mathrm{I})_2$.
We denote by $\mf{h}$ by the conductor of $F/K$.
For any ideal $\mf{n}$ of $\mca{O}_K$
prime to $\mf{a}$, we define
\[
{c^{\mf{a}}_{\mf{g}}}(F;\mf{n}):=N_{K (p\mf{gn} \mf{h})/ 
(K_0F \langle \mf{n} \rangle
\cap K (p \mf{gn}\mf{h} ))}
({_\mf{a}}z_{p\mf{gnh}} ) \in K_0F 
\langle \mf{n} \rangle^\times.
\]
Then, we obtain an Euler system
\[
\boldsymbol{c}^{\mf{a}}_{\mf{g},\psi}:=
\left\{
{c^{\mf{a}}_{\mf{g}}}(F; \mf{n})_\psi \in 
\left(
\mca{O}_{K_0F \langle \mf{n} \rangle}
\left[ p^{-1}
\right]^\times 
\otimes_{\bb{Z}}\bb{Z}_p
\right)_\psi \subseteq 
H^1\left(
F, \mca{O}_\psi
\left(
\chi_{\mathrm{cyc}}\psi^{-1}
\right)
\right)
\right\}_{F,\mf{n}}
\]
for $(K_\infty^{\Delta}/K,
\Sigma(K_0/K) \cup \mathrm{Prime}(\mf{a}), 
\mca{O}_\psi
\left(
\chi_{\mathrm{cyc}}\psi^{-1}
\right))$.
We define the set $Z^{\mathrm{ell}}$ 
of Euler systems of elliptic units by
\[
Z^{\mathrm{ell}}_\psi :=
\left\{
\boldsymbol{c}^{\mf{a}}_{\mf{g},\psi} 
\mathrel{\vert} 
\text{the pair $(\mf{a}, \mf{g})$ satisfies  
$(\mathrm{I})_1$ and $(\mathrm{I})_2$} 
\right\}.
\]
\end{dfn}

\section{Construction of the ideal $\mathfrak{C}_i(Z)$}\label{secCi}

Let $\rho\colon 
G_{K} \longrightarrow \mca{O}^\times$
be a continuous character satisfying 
the assumptions (C1), (C2) and (C3).
We fix a finite set $\Sigma$
of primes of $\mca{O}_K$
containing $\Sigma(\rho)$.
We fix a non-empty set $Z= \bigcup_{\mf{a}} Z_{\mf{a}}$
of Euler systems, 
where $\mf{a}$ runs through all ideals of $\mca{O}_K$
not divided by primes contained in $\Sigma$, and
\(
Z_{\mf{a}} \subseteq 
\mathrm{ES}_{\Sigma \cup \mathrm{Prime}(\mf{a})}(K_\infty^\Delta/K;
\mca{O}(\rho))
\).

In this section, we shall construct ideals 
$\mf{C}_i(Z)$ of the Iwasawa algebra 
$\Lambda=\mca{O}[[\Gamma]]$
by using Kolyvagin derivatives, 
and show some basic properties of $\mf{C}_i(Z)$.

In \S \ref{ssCi}, we define the ideals $\mf{C}_i(Z)$, 
and in \S \ref{sspropCi}, 
we show some basic properties of 
the ideals $\mf{C}_i(Z)$.

\subsection{Construction of $\mf{C}_i(Z)$}\label{ssCi}
First, we fix  $F,F' \in \mca{IF}$ 
with $F \subseteq F'$, and 
$N \in \bb{Z}_{>0}$. 
We put \(
R_{F,N}:=
\mca{O}/\pi^N\mca{O}[\Gal(F/K)]
\).
We shall construct an ideal 
$\mathfrak{C}_{i,F,N}^{F'}(Z)$
of $R_{F,N}$ for each $i \in  \bb{Z}_{\ge 0}$.
We need the following notion introduced 
by Kurihara \cite{Ku}.

\begin{dfn}
An element $\mf{n} \in\mathcal{N}_{F',N}(\rho)_{\mca{O}}$ 
is {\em well-ordered} if and only if 
$\mf{n}$ has a prime factorization
$\mf{n}= \prod_{i=1}^r \mf{l}_i$
such that $\mf{l}_{i}$ splits completely 
in $K(\prod_{j=1}^{i-1}\mf{l}_j)$ for 
each $i \in \{1,2, \dots, r \}$.
We denote by $\mf{n} \in \mathcal{N}^{\mathrm{wo}}_{F',N}$
the subset of $\mathcal{N}_{F,N}$
consisting of all the well-ordered elements.
\end{dfn}

Let $\mf{n} \in\mathcal{N}^{\mathrm{wo}}_{F,N}(\rho)_{\mca{O}}$ with 
the prime factorization $\mf{n}=\prod_{j=1}^r \mf{l}_j$.  
We write $Z ^{\mf{n}}= \bigcup_{\mf{a}} Z_{\mf{a}}$, 
where $\mf{a}$ runs through all ideals of $\mca{O}_K$ not divided by
any prime contained in $\Sigma \cup \mathrm{Prime}(\mf{n})$.  
Let $\boldsymbol{c} \in Z_{\mf{a}}$ be any element.
We denote the number of prime divisors of $\mf{n}$ 
by $\epsilon (\mf{n})$, that is, $\epsilon (\mf{n}):=r$.
We define an ideal 
$\mathfrak{C}_{F,N}(\mf{n};\boldsymbol{c})_{\mca{O}}$ of $R_{F,N}$ by 
\[
\mathfrak{C}_{F,N}(\mf{n};\boldsymbol{c})_{\mca{O}}:= \left\{
f(\kappa_{F,N}(\mf{n};\boldsymbol{c})) \mathrel{\vert} 
f \in \mathrm{Hom}_{R_{F,N}}(H^1(F,(\mca{O}/\pi^N\mca{O})\otimes \rho),R_{F,N})
\right\}.
\]

\begin{dfn}\label{theideal0}
For any $i \in \bb{Z}_{\ge 0}$,
we denote by $\mathfrak{C}^{F'}_{i,F,N}(Z)_{\mca{O}}$
the ideal of $R_{F,N}$ generated by 
\(
\bigcup_{\mf{n}} 
\bigcup_{\boldsymbol{c} \in Z^{\mf{n}}}
\mathfrak{C}_{F,N}
(\mf{n};\boldsymbol{c})_{\mca{O}},
\)
where $\mf{n}$ runs through all the elements of 
$\mathcal{N}^{\mathrm{wo}}_{F,N}$ 
satisfying $\epsilon (\mf{n})\leq i$, 
and $\mf{a}+\mf{n}=\mca{O}_K$. 
\end{dfn}

Now, we shall vary $F$ and $N$, 
and construct the ideal 
$\mathfrak{C}_{i}(K_\infty^{\Delta}/K,Z)_{\mca{O}}$ of $\Lambda$. 
As \cite{Oh1} Claim 4.4 and \cite{Oh2} Lemma 4.13, 
the following lemma holds.

\begin{lem}\label{liftinghom}\label{lemma}
Suppose that $F_1, F_2 \in \mca{IF}$ and  
$N_1, N_2 \in  \bb{Z}_{>0}$ 
are elements satisfying  
$F_2 \supseteq F_1$ and $N_2 \ge N_1$. 
We put $W_i:=(\mca{O}/\pi^{N_i}\mca{O})\otimes \rho$
for each $i \in \{1,2 \} $. 
\begin{itemize}
\item[(i)] For any $R_{F_2,N_2}$-linear map
\(
f_2 \colon H^1(F_2,W_2)
\longrightarrow R_{F_2,N_2},
\)
there exists an $R_{F_1,N_1}$-linear map
\(
f_1 \colon 
H^1(F_1,W_1) \longrightarrow 
R_{F_1,N_1,}
\)
which makes the diagram
\[
\xymatrix{
H^1(F_2,W_2) \ar[r]^(0.6){f_2} 
\ar[d]_{\mathrm{Cor}_{F_2/F_1}} & 
R_{F_2,N_2} \ar@{->>}[d]  \\
H^1(F_1,W_1) 
\ar@{-->}[r]^(0.6){f_1} & 
R_{F_1,N_1}
}
\]
commute, where the left vertical arrow 
$\mathrm{Cor}_{F_2/F_1}$
is the corestriction map, and
the right one is the natural projection.
\item[(ii)] Assume that $N_1=N_2$, and put $W:=W_1=W_2$. 
Then, For any $R_{F_1,N}$-linear map
\(
g_1 \colon 
H^1(F_1 ,W) \longrightarrow 
R_{F_1,N},
\)
there exists an $R_{F_2,N}$-linear map 
\[
g_2 \colon 
H^1(F_2 ,W) \longrightarrow 
R_{F_2,N}
\]
which makes the diagram
\[
\xymatrix{
H^1(F_2,W) \ar@{-->}[r]^(0.6){g_2} 
\ar[d]_{\mathrm{Cor}_{F_2/F_1}} & 
R_{F_2,N,\psi} \ar@{->>}[d]  \\
H^1(F_1,W) \ar[r]^(0.6){g_1} & 
R_{F_1,N,\psi}
}
\]
commute. 
\end{itemize}
\end{lem}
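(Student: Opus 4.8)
The lemma is the exact counterpart, for a general one-dimensional character $\rho$ satisfying (C1)--(C3), of \cite{Oh1} Claim 4.4 and \cite{Oh2} Lemma 4.13, which treated $\rho=\chi_{\mathrm{cyc}}\psi^{-1}$; the plan is to re-run those arguments while checking that only (C1)--(C3) enter. Two structural inputs are used throughout. First, since $[F:K]$ is a power of $p$ and $\mca{O}/\pi^N\mca{O}$ is a zero-dimensional Gorenstein local ring, the group ring $R_{F,N}=(\mca{O}/\pi^N\mca{O})[\Gal(F/K)]$ is a finite symmetric, in particular self-injective, $\mca{O}/\pi^N\mca{O}$-algebra; hence $R_{F,N}$ is an injective $R_{F,N}$-module, so every $R_{F,N}$-linear map out of a submodule of an $R_{F,N}$-module extends over the whole module, and the ``coefficient of $1$'' form $\varepsilon\colon R_{F,N}\to\mca{O}/\pi^N\mca{O}$ gives a natural isomorphism $\Hom_{R_{F,N}}(M,R_{F,N})\simeq\Hom_{\mca{O}/\pi^N\mca{O}}(M,\mca{O}/\pi^N\mca{O})$, with inverse $\phi\mapsto\bigl(m\mapsto\sum_{\sigma\in\Gal(F/K)}\phi(\sigma^{-1}m)\sigma\bigr)$. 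Second, $H^0(F,(\mca{O}/\pi^m\mca{O})\otimes\rho)=0$ for every $F\in\mca{IF}$ and $m\ge1$: the character $\bar{\rho}$ has order prime to $p$ (as $k^\times$ does) and $\bar{\rho}\vert_{G_{K_{\mf{p}}}}\ne1$ by (C2), so $\bar{\rho}\vert_{G_F}\ne1$ because $[F:K]$ is a power of $p$. Together with Shapiro's lemma this makes all restriction maps $\mathrm{Res}_{F_2/F_1}$ injective and identifies $\mathrm{Cor}_{F_2/F_1}$ on $H^1$ with the map induced by the natural surjection of induced $G_K$-modules.

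For part (ii), where $N_1=N_2=N$ and $W:=W_1=W_2$, I would argue directly. Writing $g_1$ through $\varepsilon$ as $g_1(y)=\sum_{\sigma\in\Gal(F_1/K)}\phi_1(\sigma^{-1}y)\sigma$ for an $\mca{O}/\pi^N\mca{O}$-functional $\phi_1$ on $H^1(F_1,W)$, and using that $\mathrm{Res}_{F_2/F_1}$ is injective while $\mathrm{Res}_{F_2/F_1}\circ\mathrm{Cor}_{F_2/F_1}$ is multiplication by the norm $N_{F_2/F_1}=\sum_{\rho\in\Gal(F_2/F_1)}\rho$, the rule $N_{F_2/F_1}x\mapsto\phi_1(\mathrm{Cor}_{F_2/F_1}(x))$ is a well-defined $\mca{O}/\pi^N\mca{O}$-linear functional on the submodule $N_{F_2/F_1}H^1(F_2,W)\subseteq H^1(F_2,W)$. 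Extending it to a functional $\phi_2$ on $H^1(F_2,W)$ by self-injectivity of $\mca{O}/\pi^N\mca{O}$, and setting $g_2(x):=\sum_{\sigma\in\Gal(F_2/K)}\phi_2(\sigma^{-1}x)\sigma$, one gets an $R_{F_2,N}$-linear map; grouping this sum over the fibres of the projection $\Gal(F_2/K)\to\Gal(F_1/K)$ and using $\sum_{\rho\in\Gal(F_2/F_1)}\phi_2(\rho^{-1}\widetilde{\tau}^{-1}x)=\phi_1(\mathrm{Cor}_{F_2/F_1}(\widetilde{\tau}^{-1}x))=\phi_1(\tau^{-1}\mathrm{Cor}_{F_2/F_1}(x))$ for a lift $\widetilde{\tau}$ of $\tau$ shows $\mathrm{proj}\circ g_2=g_1\circ\mathrm{Cor}_{F_2/F_1}$, where $\mathrm{proj}\colon R_{F_2,N}\to R_{F_1,N}$ is the natural projection.

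For part (i), I would build $f_1$ by descent along $\mathrm{Cor}$. As $f_2$ is $R_{F_2,N_2}$-linear and the natural projection $\mathrm{proj}\colon R_{F_2,N_2}\to R_{F_1,N_1}$ annihilates both $\pi^{N_1}R_{F_2,N_2}$ and the ideal generated by the elements $\sigma-1$ for $\sigma\in\Gal(F_2/F_1)$, the composite $\mathrm{proj}\circ f_2$ kills $\pi^{N_1}H^1(F_2,W_2)$ and $\sum_{\sigma\in\Gal(F_2/F_1)}(\sigma-1)H^1(F_2,W_2)$; moreover $\mathrm{Cor}$ kills these too (the first since $H^1(F_1,W_1)$ is $\pi^{N_1}$-torsion, the second by semilinearity of corestriction), so both maps factor through the same quotient of $H^1(F_2,W_2)$. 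The non-formal point is then that $\mathrm{proj}\circ f_2$ annihilates all of $\Ker(\mathrm{Cor})$, not just those two submodules; the gap between them is controlled, via the $H^0$-vanishing above, by Hochschild--Serre for $\Gal(F_2/F_1)$ and by the long exact sequence attached to $0\to W_2[\pi^{N_1}]\to W_2\to W_1\to0$, and one checks it dies after composing with $\mathrm{proj}$. Granting this, $\mathrm{proj}\circ f_2$ descends to an $R_{F_1,N_1}$-linear map on the submodule $\mathrm{Im}(\mathrm{Cor})\subseteq H^1(F_1,W_1)$, which one extends over $H^1(F_1,W_1)$ by self-injectivity of $R_{F_1,N_1}$ to get $f_1$ with $f_1\circ\mathrm{Cor}=\mathrm{proj}\circ f_2$.

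I expect the main obstacle to be precisely that last step of part (i): showing $\mathrm{proj}\circ f_2$ kills the whole of $\Ker(\mathrm{Cor})$, which comes down to showing that a Tate-cohomology obstruction of the type $\widehat{H}^{-1}(\Gal(F_2/F_1),H^1(F_2,W_1))$ dies after applying $\mathrm{proj}$ --- equivalently, that the induced map from the relevant quotient of $H^1(F_2,W_2)$ into $H^1(F_1,W_1)$ is injective on the part that matters. This is the technical heart of \cite{Oh1} Claim 4.4 and \cite{Oh2} Lemma 4.13, and re-proving the lemma is exactly to confirm that this step, together with all of the above, uses nothing about $\rho$ beyond (C1)--(C3).
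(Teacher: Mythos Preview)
Your plan matches the paper's own proof in all essentials: the same three inputs are singled out (self-injectivity of $R_{F,N}$, the $H^0$-vanishing coming from (C2), and the resulting injectivity/isomorphism properties of restriction), and both proofs ultimately defer the technical core to \cite{Oh2} Lemma 4.13. Your treatment of part (ii) is a correct direct argument.

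The one organizational point worth taking from the paper is that, rather than attacking part (i) in full generality, it first reduces to the two minimal cases
\[
\text{(A)}\ F_1=F_2,\ N_2=N_1+1,\qquad\text{(B)}\ [F_2:F_1]=p,\ N_1=N_2,
\]
and then composes. This makes your ``main obstacle'' much lighter. In case (A) the kernel of $H^1(F,W_2)\to H^1(F,W_1)$ is exactly $H^1(F,W_2)[\pi]$ (by the long exact sequence and $H^0(F,k\otimes\rho)=0$), and $R_{F,N_2}$-linearity of $f_2$ sends this into $R_{F,N_2}[\pi]=\pi^{N_1}R_{F,N_2}\subseteq\Ker(\mathrm{proj})$, so the descent is immediate. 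In case (B) you are working with a cyclic group of order $p$ and fixed coefficients, where the Hochschild--Serre/Tate-cohomology bookkeeping you allude to is as simple as it gets; this is exactly the situation handled in \cite{Oh2}. So your sketch is correct, but inserting this reduction step turns the acknowledged obstacle into two short computations rather than one general one.
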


\begin{proof}
By (C2), 
the restriction map
\(
H^1(F_1,W_2)  \longrightarrow
H^1(F_2,W_2)^{\Gal(F_2/F_1)}
\)
is an isomorphism.
Moreover, by (C2) 
the map
\(
H^1(\times \pi^{N_2-N_1})\colon 
H^1(F_2,W_1)
\longrightarrow
H^1(F_2,W_2)
\)
becomes an injection.
Note that 
the $R_{F_2,N}$-module
$R_{F_2,N}$
is injective 
since the injective 
$R_{F_2,N}$-module
\(
\Hom_{\bb{Z}_p}(R_{F_2,N},
\bb{Q}_p/\bb{Z}_p)
\) 
is free of rank one.
By taking care of these facts, 
we can prove Lemma \ref{liftinghom}
by similar arguments to those
in the proof of \cite{Oh2} Lemma 4.13.
Indeed, the proof of 
Lemma \ref{liftinghom} is reduced to 
the following two cases:
\begin{itemize}
\item[(A)] $F_1=F_2$ and $N_2=N_1+1$,
\item[(B)] $[F_2:F_1]=p$ and $N_1=N_2$.
\end{itemize}
In each case, 
we can deduce the assertions as desired 
similarly to loc.\ cit.\ 
by using the facts noted above.
\end{proof}

Let $F_1,F_2, N_1,N_2$ and $\mf{n}$ be as above. 
Then, Lemma \ref{lemma} and 
the norm compatibility (ES1) of 
the Euler systems imply that
the image of 
$\mathfrak{C}^{F_2}_{i,F_2,N_2}(Z)_{\mca{O}}$ in
$R_{F_1,N_1}$ is contained in 
$\mathfrak{C}^{F_1}_{i,F_1,N_1}(Z)_{\mca{O}}$.
We obtain the projective system 
of the natural homomorphisms 
\[
\left\{
\mathfrak{C}^{F_2}_{i,F_2,N_2}(Z)_{\mca{O}} \longrightarrow 
\mathfrak{C}^{F_1}_{i,F_1,N_1}(Z)_{\mca{O}} \mathrel{\big\vert}
\text{
$F_2 \supseteq F_1$ and $N_2 \ge N_1$}
\right\}
\]
of $\Lambda$-modules.
We define  
$\mathfrak{C}_{i}(Z)$ as follows.

\begin{dfn}\label{theideal1}
For any $i \in \bb{Z}_{\ge 0}$, we define the ideal
$\mathfrak{C}_{i}(K_\infty^\Delta /K, Z)_{\mca{O}}
$ of $\Lambda=\varprojlim R_{F,N}$ 
by the projective limit
\(
\mathfrak{C}_{i}(K_\infty^\Delta /K, Z)_{\mca{O}}:=
\varprojlim_{F,N} \mathfrak{C}^F_{i,F,N}(Z)_{\mca{O}}.
\)
If no confusion arises, we denote 
$\mathfrak{C}_{i}(K_\infty^\Delta /K, Z)_{\mca{O}}$
by $\mathfrak{C}_{i}(Z)$
or $\mathfrak{C}_{i}(K_\infty^\Delta /K, Z)$
for simplicity.
For each $i \in \bb{Z}_{\ge 0}$ and 
$F,F' \in \mca{IF}$ with $F \subseteq F'$,  
we also define 
\[
\mf{C}^{F'}_{i}(F/K,Z)
:= \varprojlim_{N}\mf{C}^{F'}_{i,F,N}(Z)
\subseteq \varprojlim_{N} 
(\mca{O}/\pi^N\mca{O})[\Gal(F/K)]
=\mca{O}[\Gal(F/K)].
\]
We put $\mf{C}_{i}(F/K,Z)
=\mf{C}^{F}_{i}(F/K,Z)$.
When $F=K$,  
we write $\mf{C}^{F'}_{i}(K,Z)
:=\mf{C}^{F'}_{i}(K/K,Z)$, 
and $\mf{C}_{i}(K,Z)
:=\mf{C}^{K}_{i}(K,Z)$.
\end{dfn}

The definition of the ideal
$\mathfrak{C}_{i,\psi}^{\mathrm{ell}}$
is as follows.

\begin{dfn}\label{theideal2}
Let $\psi \in \widehat{\Delta}$ be any element, and
$Z^{\mathrm{ell}}_\psi$ be the set of 
Euler systems of elliptic units
introduced in Definition \ref{defEU}.
We put $\mca{O}_\psi:=\bb{Z}_p[\mathrm{Im} \psi]$.
Then, for any 
$i \in \bb{Z}_{\ge 0}$, we define the ideal
\(
\mathfrak{C}_{i,\psi}^{\mathrm{ell}}:=
\mathfrak{C}_{i}(K_\infty^\Delta /K, Z^{\mathrm{ell}}_\psi
)_{\mca{O}_\psi }.
\)
For each $i \in \bb{Z}_{\ge 0}$ and $F,F' \in \mca{IF}$
with $F \subseteq F'$,  
we define 
\(
\mf{C}^{\mathrm{ell},F'}_{i}(F/K)_\psi
:= \mathfrak{C}^{F'}_{i}(F/K, Z^{\mathrm{ell}}_\psi
)_{\mca{O}_\psi}
\subseteq \mca{O}_\psi [\Gal(F/K)].
\)
We put $\mf{C}^{\mathrm{ell},F'}_{i}(F/K)_\psi
=\mf{C}^{{\mathrm{ell}},F'}_{i}(F/K)_\psi$.
When $F=K$,  
we write $\mf{C}^{\mathrm{ell},F'}_{i}(K)_\psi
:=\mf{C}^{\mathrm{ell},F'}_{i}(K/K)_\psi$, 
and $\mf{C}^{{\mathrm{ell}}}_{i}(K)_\psi
:=\mf{C}^{{\mathrm{ell}},K}_{i}(K)_\psi$.
\end{dfn}

\begin{rem}\label{remCiell}
Under the assumption that
$\psi \vert_{D_{\Delta,\mf{p}}} \ne 1$
for any prime $\mf{p}$ above $p$,
we have 
\(
{c^{\mf{a}}_{\mf{g}}}(F;\mf{n}) \in 
\left(
\mca{O}_{K_0F \langle \mf{n} \rangle}
\left[ p^{-1}
\right]^\times 
\otimes_{\bb{Z}}\bb{Z}_p
\right)_\psi
=\left(
\mca{O}_{K_0F \langle \mf{n} \rangle}^\times 
\otimes_{\bb{Z}}\bb{Z}_p
\right)_\psi
\)
for any $F \in \mca{IF}$, 
any pair $(\mf{a},\mf{g})$ and any $\mf{n}$.
So, the Euler systems of elliptic units which 
we use coincides with those in \cite{Oh1}.
By the fact noted in Remark \ref{remPN},
we can easily show that our ideals 
$\mathfrak{C}_{i,\psi}^{\mathrm{ell}}$
coincides with those defined in 
\cite{Oh1} Definition 4.5.
\end{rem}

\begin{rem}\label{remCigen}
Let $F \in \mca{IF}$, and $N \in \bb{Z}_{>0}$.
Let $\psi \in \widehat{\Delta}$
be a character 
such that $\psi\vert_{D_{\Delta, \mf{l}}}\ne 1$
for any $\mf{l} \in \Sigma(K_0/K)$, and 
$\psi \ne  \omega$ as a character of $G_K$.
In this situation, we can write 
$\mf{C}_{i,F,N}(Z^{\mathrm{ell}}_{\psi} )$
in simpler form. 
Let $\mf{f}$ be the conductor of 
the extension $K_0/K$, and 
$\mf{g}_\psi$ the conductor of $\psi$.
By Chebotarev density theorem,
we can choose a prime $\mf{l}$ of $\mca{O}_K$
satisfying 
$\psi\vert_{G_{K_v}} \ne \omega\vert_{G_{K_v}}$.
Then, we have 
$N(\mf{l})-\psi(\Fr_\mf{l}) \in \mca{O}_\psi^\times$.
Moreover, by the assumption of $\psi$,
we have
$N(\mf{l})-\psi(\Fr_\mf{q}) \in \mca{O}_\psi^\times$
for any prime $\mf{q}$ dividing $p \mf{f} \mf{g}_\psi^{-1}$.
So, by Proposition \ref{propell},  
for any pair $(\mf{a},\mf{h})$ satisfying the condition 
$(\mathrm{I})_1$ and $(\mathrm{I})_2$, 
there exists $x \in \mca{O}_\psi[\Gal(F/K)]$ such that
$c^{\mf{a}}_{\mf{h}}(F;\mca{O}) 
=x\cdot c^{\mf{l}}_{\mf{f}}(F;\mca{O})$.
Hence for any $i \in \bb{Z}_{\ge 0}$ and 
any $N \in \bb{Z}_{\ge 0}$, we have
\(
\mf{C}_{i,F,N}(Z^{\mathrm{ell}}_{\psi} )=
\mf{C}_{i,F,N}( \{\mf{c}^{\mf{l}}_{\mf{f},\psi} \})
\).
\end{rem}

\subsection{Basic Properties of $\mf{C}_i(K_\infty^\Delta/K, Z)$}\label{sspropCi}
Here, we show two basic properties 
of the ideals $\mf{C}_i(K_\infty^\Delta/K, Z)$,
namely the compatibility for scalar extension of
the coefficient ring and
that for specialization via the character of $\Gamma$.

First, let us show the property
on the scalar extension of the coefficient ring.
Let $\mca{L}'$ be a finite extension field of $\mca{L}$,
and $\mca{O}'$ the ring of integers of $\mca{L}'$.
We naturally regard
$\mathrm{ES}_\Sigma(K_\infty^\Delta/K;\rho)_{\mca{O}}$
as a subset of 
$\mathrm{ES}_\Sigma(K_\infty^\Delta/K;\rho)_{\mca{O}'}$.
So, we can define the ideal
$\mathfrak{C}_{i}(K_\infty^\Delta /K, Z)_{\mca{O}'}$
for each $i \in \bb{Z}_{\ge 0}$.

\begin{lem}\label{lemOO'}
For each $i \in \bb{Z}_{\ge 0}$, 
we have 
\(
\mca{O}'\mathfrak{C}_{i}(K_\infty^\Delta /K, Z)_{\mca{O}}
=\mathfrak{C}_{i}(K_\infty^\Delta /K, Z)_{\mca{O}'}.
\)
\end{lem}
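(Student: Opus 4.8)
The plan is to reduce the identity to the finite levels $(F,N)$ on which $\mathfrak{C}_i$ is built, to note that at each such level every base change involved is free — hence flat — because $\mca{O}'$ is a finite free $\mca{O}$-module, and then to pass to the projective limit. The only bookkeeping nuisance is that the uniformizers do not match: writing $e:=e(\mca{L}'/\mca{L})$ for the ramification index, we have $\pi^N\mca{O}'=\pi'^{\,eN}\mca{O}'$, so the level-$N$ data over $\mca{O}$ must be compared with the level-$eN$ data over $\mca{O}'$. Throughout I set $R'_{F,N}:=R_{F,N}\otimes_{\mca{O}}\mca{O}'=(\mca{O}'/\pi^N\mca{O}')[\Gal(F/K)]=(\mca{O}'/\pi'^{\,eN}\mca{O}')[\Gal(F/K)]$, which is exactly the group ring occurring at level $eN$ in the $\mca{O}'$-version of the construction, and which is finite free over $R_{F,N}$.

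First I would isolate the algebraic input: if $S$ is a finite free algebra over a commutative ring $R$, then for \emph{every} $R$-module $M$ the natural map
\[
\Hom_R(M,R)\otimes_R S\longrightarrow\Hom_S(M\otimes_R S,\,S)
\]
is an isomorphism (by adjunction $\Hom_S(M\otimes_R S,S)\cong\Hom_R(M,S)$, and $S\cong R^{\oplus d}$ as an $R$-module; no finiteness hypothesis on $M$ is needed, which is convenient since $H^1(F,\,\cdot\,)$ need not be a finitely generated $R_{F,N}$-module). I would apply this with $R=R_{F,N}$, $S=R'_{F,N}$ and $M=H^1(F,(\mca{O}/\pi^N\mca{O})\otimes\rho)$. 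Since $\mca{O}'$ is $\mca{O}$-free, $-\otimes_{\mca{O}}\mca{O}'$ is exact and commutes with the formation of continuous Galois cohomology, so $M\otimes_{R_{F,N}}R'_{F,N}\simeq H^1(F,(\mca{O}'/\pi^N\mca{O}')\otimes\rho)$ as $R'_{F,N}$-modules, compatibly with the Selmer conditions.

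Next I would verify that the Kolyvagin-derivative apparatus is functorial in the coefficient ring. The index sets are coefficient-free: $\Sigma$, the integer $\epsilon(\mf{n})$, well-orderedness and the families $Z^{\mf{n}}$ are unchanged, while $I_{\mf{l}}\mca{O}'\subseteq\pi'^{\,eN}\mca{O}'$ if and only if $I_{\mf{l}}\subseteq\pi^N\mca{O}$ (both say $v_{\pi}(I_{\mf{l}})\ge N$), whence $\mathcal{P}_{F,N}(\rho)_{\mca{O}}=\mathcal{P}_{F,eN}(\rho)_{\mca{O}'}$ and likewise for $\mathcal{N}^{\mathrm{wo}}_{F,N}$. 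Moreover the operators $D_{\mf{n}}$, the restriction isomorphisms $\mathrm{Res}^{(\mf{n})}_{F,N,\rho}$ and their inverses, and the chosen lift $\widetilde{\mathrm{N}}_F$ of the norm element are all natural in the coefficient module; and the $\mca{O}'$-avatar of an Euler system $\boldsymbol{c}\in Z$ is by definition its image under $H^1(\,\cdot\,,\mca{O}(\rho))\to H^1(\,\cdot\,,\mca{O}'(\rho))$. It follows that $\kappa_{F,eN}(\mf{n};\boldsymbol{c})$ computed over $\mca{O}'$ is the image of $\kappa_{F,N}(\mf{n};\boldsymbol{c})$ under $H^1(F,(\mca{O}/\pi^N\mca{O})\otimes\rho)\to H^1(F,(\mca{O}'/\pi^N\mca{O}')\otimes\rho)$. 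Writing an arbitrary $\mca{O}'$-version functional $f'$ as $\sum_j f_j\otimes s_j$ with the $f_j$ defined over $\mca{O}$ (via the displayed $\Hom$-isomorphism), its value at $\kappa_{F,eN}(\mf{n};\boldsymbol{c})$ is $\sum_j s_j\,f_j\big(\kappa_{F,N}(\mf{n};\boldsymbol{c})\big)$; conversely each $f_j$ extends $R'_{F,N}$-linearly. Taking the union over the (common) $\mf{n}$ with $\epsilon(\mf{n})\le i$ and over $\boldsymbol{c}\in Z^{\mf{n}}$, and generating the ideal, this gives
\[
\mathfrak{C}^{F}_{i,F,eN}(Z)_{\mca{O}'}=R'_{F,N}\cdot\mathrm{Im}\big(\mathfrak{C}^{F}_{i,F,N}(Z)_{\mca{O}}\big)=\mca{O}'\otimes_{\mca{O}}\mathfrak{C}^{F}_{i,F,N}(Z)_{\mca{O}},
\]
the last equality by flatness of $\mca{O}'$ over $\mca{O}$.

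Finally I would pass to the limit. Applying $\varprojlim$ over $N$ — the levels $eN$ are cofinal among all levels over $\mca{O}'$, and $\varprojlim_N R'_{F,N}=\mca{O}'[\Gal(F/K)]$ — and using that $\mca{O}'\otimes_{\mca{O}}-$ commutes with these projective limits because $\mca{O}'\cong\mca{O}^{\oplus d}$ as $\mca{O}$-modules, I obtain $\mathfrak{C}^{F}_{i}(F/K,Z)_{\mca{O}'}=\mca{O}'\otimes_{\mca{O}}\mathfrak{C}^{F}_{i}(F/K,Z)_{\mca{O}}=\mca{O}'[\Gal(F/K)]\cdot\mathfrak{C}^{F}_{i}(F/K,Z)_{\mca{O}}$. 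Taking $\varprojlim$ over $F\in\mca{IF}$ then yields $\mathfrak{C}_{i}(K_\infty^\Delta/K,Z)_{\mca{O}'}=\mca{O}'[[\Gamma]]\cdot\mathfrak{C}_{i}(K_\infty^\Delta/K,Z)_{\mca{O}}=\mca{O}'\,\mathfrak{C}_{i}(K_\infty^\Delta/K,Z)_{\mca{O}}$, as claimed. I expect the only genuine (though not deep) obstacle to be the careful check that the Kolyvagin derivative is natural in the coefficients, together with the attendant bookkeeping forced by the uniformizer mismatch; the rest is the standard flat-base-change package for $\Hom$ and for Galois cohomology.
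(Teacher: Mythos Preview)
Your proof is correct and follows essentially the same strategy as the paper: reduce to the finite level $(F,N)$, match the index sets via the ramification identity $\mathcal{N}^{\mathrm{wo}}_{F,N}(\rho)_{\mca{O}}=\mathcal{N}^{\mathrm{wo}}_{F,eN}(\rho)_{\mca{O}'}$, and then decompose an $\mca{O}'$-valued functional along an $\mca{O}$-basis of $\mca{O}'$. The only stylistic difference is that you package the decomposition via the abstract isomorphism $\Hom_R(M,R)\otimes_R S\simeq\Hom_S(M\otimes_R S,S)$ for $S$ finite free over $R$, whereas the paper does it by hand: given $f\colon H^1(F,(\mca{O}'/\pi'^{eN}\mca{O}')\otimes\rho)\to R'_{F,N}$, it writes $f(\kappa)=\sum_i y_ib_i$, defines each $g_i$ only on the cyclic submodule $R_{F,N}\cdot\kappa$, and then invokes the self-injectivity of $R_{F,N}$ to extend $g_i$ to all of $H^1$. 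Your formulation is slightly cleaner in that it avoids this injectivity step altogether; the paper's version has the minor advantage of making explicit that one only cares about the value at $\kappa$, so well-definedness on the cyclic submodule is all that matters.
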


\begin{proof}
Let $F \in \mca{IF}$ and $N \in \bb{Z}_{>0}$ 
be any elements.
We denote by $e$ 
the ramification index of $\mca{L}'/\mca{L}$.
By definition, we have 
$\mca{N}_{F,N}^{\mathrm{w.o.}}(\rho)_{\mca{O}}
=\mca{N}_{F,eN}^{\mathrm{w.o.}}(\rho)_{\mca{O}'}$.
Let $\mf{n} \in 
\mca{N}_{F,N}^{\mathrm{w.o.}}(\rho)_{\mca{O}}$
be any element, and 
and
take any 
$\boldsymbol{c} \in Z^{\mf{n}}$.
In order to prove Lemma \ref{lemOO'}, 
it suffices to show that
\(
\mca{O}'\mathfrak{C}_{F,N}
(\mf{n};\boldsymbol{c})_{\mca{O}}
=\mathfrak{C}_{F,eN}
(\mf{n};\boldsymbol{c})_{\mca{O}'}.
\)
By definition, we have
$\mca{O}'\mathfrak{C}_{F,N}
(\mf{n};\boldsymbol{c})_{\mca{O}}
\subseteq \mathfrak{C}_{F,eN}
(\mf{n};\boldsymbol{c})_{\mca{O}'}$
obviously.
So, it is sufficient to show that
$\mca{O}'\mathfrak{C}_{F,N}
(\mf{n};\boldsymbol{c})_{\mca{O}}
\supseteq \mathfrak{C}_{F,eN}
(\mf{n};\boldsymbol{c})_{\mca{O}'}$.

Let $\pi$ be a prime element of $\mca{O}$, 
and $\pi'$ that of $\mca{O}'$. 
We put 
\(
R:=
\mca{O}/\pi^N\mca{O}[\Gal(F/K)]
\) and $R':=
\mca{O}'/{\pi'}^{eN}\mca{O}'[\Gal(F/K)]$.
We identify $R'$ with 
$\mca{O}'\otimes_{\mca{O}}R$.
We put $[\mca{L}':\mca{L}]:=n$, and
fix a basis $B:=\{ b_1, \dots , b_n \}$
of the free $\mca{O}$-module $\mca{O}'$.
Then, the image $\overline{B}:=\{ 
\bar{b_1}, \dots, \bar{b}_n\}$
of $B$ in $R'$ forms a basis of 
the free $R$-module $R'$.
Let $f\colon H^1(F,
(\mca{O}'/\pi^{eN}\mca{O}')\otimes \rho)
\longrightarrow R'$
be any $R'$-linear map, and put
\[
f \left(
\kappa_{F,N}(\mf{n};\boldsymbol{c})_{\mca{O}}
\right)
=f \left(
\kappa_{F,eN}(\mf{n};\boldsymbol{c})_{\mca{O}'}
\right)
=:\sum_{i=1}^n \bar{y}_ib_i,
\]
where $y_i \in R$ for any $i\in \{ 1, \dots, n \}$.
We put 
\[
W:=R \cdot \kappa_{F,N}
(\mf{n};\boldsymbol{c})_{\mca{O}}
\subseteq 
H^1(F,
(\mca{O}/\pi^{N}\mca{O})\otimes \rho).
\] 
Then, for each $i\in \{ 1, \dots, n \}$,
we can define the $R$-linear map
\[
g_i\colon W \longrightarrow R;\ 
\bar{a} \cdot \kappa_{F,N}
(\mf{n};\boldsymbol{c})_{\mca{O}}
\longmapsto \bar{a}y_i.
\]
Since $R$ is an injective $R$-module,
for each $i\in \{ 1, \dots, n \}$,
we the exists an $R$-linear map
\(
\tilde{g}_i\colon H^1(F,
(\mca{O}/\pi^{N}\mca{O})\otimes \rho)
\longrightarrow R
\)
satisfying $\tilde{g}_i \vert_W=g_i$. 
So, we obtain
\[
f \left(
\kappa_{F,N}(\mf{n};\boldsymbol{c})_{\mca{O}}
\right)
=\sum_{i=1}^n b_i
g_i(\kappa_{F,N}
(\mf{n};\boldsymbol{c})_{\mca{O}}) \in 
\mca{O}'\mathfrak{C}_{F,N}
(\mf{n};\boldsymbol{c})_{\mca{O}}.
\]
This completes the proof of Lemma \ref{lemOO'}.
\end{proof}

Next, let us study the (weak) stability of
$\mf{C}_i(K_\infty^{\Delta}/K,Z)$
under the specialization via 
$\mca{O}^\times$-valued continuous characters of $\Gamma$.
For each continuous character
$\rho' \colon \Gamma \longrightarrow 1+ \pi\mca{O}$,
we put 
\(
Z\otimes \rho'
:=\{ \boldsymbol{c}\otimes\rho'
\mathrel{\vert} \boldsymbol{c} \in Z \}.
\)
First, we state 
the two-variable version.

\begin{lem}[The weak specialization compatibility]\label{lemredCi}
Suppose that $\Gamma \simeq \bb{Z}_p^2$, 
and let $\gamma_1,\gamma_2$ be 
topological generators of $\Gamma$.
For each $i \in \{ 1,2 \}$, 
we define $\Gamma_i$ to be 
the closed subgroup of $\Gamma$
topologically generated by $\gamma_i$,
and identify $\Gamma_2$
with $\Gal(K_\infty^{\Delta \times \Gamma_1}/K)$.
Let 
$\rho' \colon \Gamma:= \Gal(K_\infty^{\Delta}/K)
\longrightarrow 1+ \pi\mca{O}$
be a continuous character satisfying
$\Ker \rho' \subseteq \Gamma_2$.
Then, the image of $\mf{C}_i(K_\infty^{\Delta}/K,Z)$ in 
\(
\Lambda/(\gamma_1-\rho'(\gamma_1)) = \mca{O}[[\Gamma_2]]
\)
is contained in 
$\mf{C}_i(K_\infty^{\Delta \times \Gamma_1}/K,Z\otimes\rho')$.
\end{lem}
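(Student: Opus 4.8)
The plan is to trace through the construction of $\mf{C}_i$ layer by layer and show that each finite-layer ideal $\mf{C}^{F'}_{i,F,N}(Z)$ maps into the corresponding ideal for the twisted Euler system after reduction modulo $\gamma_1 - \rho'(\gamma_1)$. First I would fix $F, F' \in \mca{IF}$ and $N \in \bb{Z}_{>0}$, and recall that by Definition \ref{theideal1}, $\mf{C}_i(K_\infty^{\Delta}/K, Z)$ is the projective limit of the $\mf{C}^{F'}_{i,F,N}(Z)$ over the rings $R_{F,N} = \mca{O}/\pi^N\mca{O}[\Gal(F/K)]$. The key observation is that for $F$ in $\mca{IF}$ (an intermediate field of $K_\infty^{\Delta}/K$), the action of $\Gamma$ on the cohomology group $H^1(F, (\mca{O}/\pi^N\mca{O})\otimes\rho)$ already factors through $\Gal(F/K)$, so at each finite layer the twist by $\rho'$ — which is built from the isomorphism $H^1(F'\langle\mf{n}\rangle, \mca{O}(\rho))\otimes\rho' \simeq H^1(F'\langle\mf{n}\rangle, \mca{O}(\rho\rho'))$ — interacts with the specialization $\gamma_1 \mapsto \rho'(\gamma_1)$ in a controlled way. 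Since $\Ker\rho' \subseteq \Gamma_2$, the character $\rho'$ is "constant along $\Gamma_1$", which is exactly the direction we kill when passing to $\Lambda/(\gamma_1 - \rho'(\gamma_1)) = \mca{O}[[\Gamma_2]]$.

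Next I would show the compatibility at the level of Kolyvagin derivatives: for $\mf{n} \in \mca{N}^{\mathrm{wo}}_{F',N}(\rho)$ and $\boldsymbol{c} \in Z^{\mf{n}}$, one needs $\mca{P}_{F,N}(\rho) = \mca{P}_{F,N}(\rho\rho')$ and $\mca{N}^{\mathrm{wo}}_{F,N}(\rho) = \mca{N}^{\mathrm{wo}}_{F,N}(\rho\rho')$ (which holds because $\rho' \equiv 1 \pmod{\pi}$, so $I_{\mf{l}}$ for $\rho$ and $\rho\rho'$ agree modulo $\pi^N$ — the Frobenius values differ by $\rho'(\Fr_{\mf{l}})$ which is a $1$-unit, and $\Fr_{\mf{l}}$ acts trivially enough at the relevant level). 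Then the Kolyvagin operators $D_{\mf{n}}$ are literally the same, and the twisting isomorphism carries $\kappa_{F,N}(\mf{n}; \boldsymbol{c})$ for $\boldsymbol{c}$ to $\kappa_{F,N}(\mf{n}; \boldsymbol{c}\otimes\rho')$ up to the comparison of $(\mca{O}/\pi^N\mca{O})$-modules. The crucial point is that after reducing $R_{F,N}$ modulo the ideal generated by $\gamma_1 - \rho'(\gamma_1)$, any $R_{F,N}$-linear functional $f$ used to define $\mf{C}_{F,N}(\mf{n};\boldsymbol{c})$ descends to an $\overline{R}_{F,N}$-linear functional, and conversely — using the injectivity of $R_{F,N}$ and $\overline{R}_{F,N}$ as modules over themselves, exactly as in Lemma \ref{liftinghom} — functionals lift, so every generator of $\mf{C}_i(K_\infty^{\Delta\times\Gamma_1}/K, Z\otimes\rho')$ at a finite layer is hit. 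Taking the projective limit over $F$ and $N$ then yields the containment of ideals in $\mca{O}[[\Gamma_2]]$.

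The main obstacle I anticipate is bookkeeping the identification of modules after twisting: one must be careful that the twist $\boldsymbol{c} \mapsto \boldsymbol{c}\otimes\rho'$ is defined via a limit over $F' \in \mca{IF}$ of cohomology classes, so matching $\kappa_{F,N}(\mf{n}; \boldsymbol{c}\otimes\rho')$ with the image of $\kappa_{F,N}(\mf{n};\boldsymbol{c})$ requires commuting the twist past the Kolyvagin derivative operator $\widetilde{\mathrm{N}}_F D_{\mf{n}}$ and past the inverse of the restriction isomorphism $\mathrm{Res}^{(\mf{n})}_{F,N,\rho}$ — these all commute with the $G_K$-equivariant twisting map, but verifying this cleanly (especially that the choice of $\sigma_{\mf{l}}$ and hence $D_{\mf{l}}$ is unchanged, since it depends only on the cyclotomic structure at $\mf{l}$, not on $\rho$) takes some care. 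Once that is in place, the reduction modulo $\gamma_1 - \rho'(\gamma_1)$ and the lifting/descent of $R_{F,N}$-linear functionals is routine, following the template of the proof of Lemma \ref{liftinghom} and Lemma \ref{lemOO'}. I would also note that we only get a containment, not an equality, precisely because functionals on a submodule need not extend to the ambient $H^1$ compatibly through the whole projective system — this is the source of the "weak" in "weak specialization compatibility".
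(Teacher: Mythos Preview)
Your overall strategy is right --- pass to finite layers, identify the Kolyvagin derivatives of $\boldsymbol{c}$ and $\boldsymbol{c}\otimes\rho'$ under the twisting isomorphism, and use Lemma~\ref{liftinghom} to descend functionals --- and this is exactly what the paper does. But there is a genuine gap in your treatment of the sets $\mca{P}_{F,N}$.

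You claim that $\mca{P}_{F,N}(\rho) = \mca{P}_{F,N}(\rho\rho')$ for \emph{arbitrary} $F\in\mca{IF}$ and $N\in\bb{Z}_{>0}$, arguing that ``$\rho'\equiv 1\pmod{\pi}$, so $I_{\mf{l}}$ for $\rho$ and $\rho\rho'$ agree modulo $\pi^N$''. This is false. The ideal $I_{\mf{l}}$ for $\rho\rho'$ involves $N(\mf{l})^{-1}\rho(\Fr_{\mf{l}})\rho'(\Fr_{\mf{l}})-1$, and comparing with the corresponding generator for $\rho$ produces an extra term $\rho'(\Fr_{\mf{l}})-1$. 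The hypothesis $\rho'\equiv 1\pmod\pi$ only gives $\rho'(\Fr_{\mf{l}})-1\in\pi\mca{O}$, not $\pi^N\mca{O}$. For a fixed small $F$ and large $N$, there will be primes $\mf{l}$ splitting completely in $F\langle\mca{O}_K\rangle$ with $\rho'(\Fr_{\mf{l}})\not\equiv 1\pmod{\pi^N}$, so $\mf{l}\in\mca{P}_{F,N}(\rho)$ but $\mf{l}\notin\mca{P}_{F,N}(\rho\rho')$. Your parenthetical ``$\Fr_{\mf{l}}$ acts trivially enough at the relevant level'' is exactly the point that needs work and is not automatic.

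The paper fixes this by choosing a \emph{specific cofinal system}: with $m:=\mathrm{ord}(\rho'(\gamma_1)-1)$, it works at the layer $(F,N)=(K_{n,n},mn)$ where $K_{n,n}$ is the subfield fixed by $\gamma_1^{p^n}$ and $\gamma_2^{p^n}$. Then if $\mf{l}$ splits completely in $K_{n,n}$, the Frobenius $\Fr_{\mf{l}}$ lies in the closed subgroup generated by $\gamma_1^{p^n},\gamma_2^{p^n}$, which forces $\rho'(\Fr_{\mf{l}})\in 1+\pi^{mn}\mca{O}$, and \emph{then} $\mca{P}_{K_{n,n},mn}(\rho)=\mca{P}_{K_{n,n},mn}(\rho\rho')$ holds. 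The evaluation map $\mathrm{ev}_{\rho'}$ and the diagram~(\ref{eqdiag}) are set up precisely so that $\rho'$ factors through $\Gal(K_{n,n}/K)$ modulo $\pi^{mn}$. Once you insert this coupling of $F$ and $N$, the rest of your outline (matching Kolyvagin derivatives under twist, descending functionals via Lemma~\ref{liftinghom}(i)) goes through as you describe. Incidentally, your remark that ``$\Ker\rho'\subseteq\Gamma_2$ means $\rho'$ is constant along $\Gamma_1$'' is backwards: it means $\rho'$ is \emph{nontrivial} on $\gamma_1$, which is what makes $\gamma_1-\rho'(\gamma_1)$ a genuine height-one relation.
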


\begin{proof}
We denote by $\mathrm{ord}_\mca{L} 
\colon \mca{L}^\times \twoheadrightarrow \bb{Z}$
the additive valuation. 
We put $u:=\rho'(\gamma_1)$, and
$m:=\mathrm{ord}_p(u-1) \in \bb{Z}_{>0}$.
For each $n_1,n_2 \in \bb{Z}_{\ge 0}$,
let $K_{n_1,n_1}$ be 
the maximal subfield of $K_\infty^\Delta$
fixed by $\gamma_1^{p^{n_1}}$ and $\gamma_2^{p^{n_2}}$.
Fix $n \in \bb{Z}_{\ge 0}$.
The character $\rho'$ induces a character
\[
\bar{\rho}'_n \colon 
\Gal(K_{n,n}/K)=\Gal(K_{n,0}/K) \times \Gal(K_{0,n}/K) 
\longrightarrow 
(\mca{O}/\pi^{mn}\mca{O})^\times, 
\]
given by $\bar{\rho}'_n(\bar{\sigma})  
= \rho'(\sigma) \mod \pi^{mn}$, 
where $\sigma$ denotes a lift of $\bar{\sigma}$
in $\Gamma$.
We put 
$R_n:=\mca{O}/\pi^{mn}\mca{O}[\Gal(K_{n,n}/K)]$ 
and $\overline{R}_n:=\mca{O}/\pi^{mn}\mca{O}[\Gal(K_{0,n}/K)]$.
By definition, we have 
natural isomorphisms
$\mca{O}[[\Gamma]] \simeq \varprojlim_n R_n$
and $\mca{O}[[\Gamma_2]] \simeq \varprojlim_n \overline{R}_n$.
We can extend the character $\bar{\rho}_n$
to a homomorphism
\[
\mathrm{ev}_{\rho'_n}\colon
R_n=\overline{R}_n[\Gal(K_{n,0}/K)]
\longrightarrow \overline{R}_n
\]
of $\overline{R}_n$-algebras, and obtain 
an $\mca{O}[[\Gamma_2]]$-algebra homomorphism
\[
\mathrm{ev}_{\rho'}:=(\mathrm{ev}_{\rho_n})_n \colon 
\mca{O}[[\Gamma]]=\mca{O}[[\Gamma_2]] [[\Gamma_1]]
\longrightarrow 
\mca{O}[[\Gamma_2]];\ 
\gamma_1 \longmapsto \rho (\gamma_1)
\]
whose kernel is  $(\gamma_1-u)$.
In order to prove Lemma \ref{lemredCi},
it suffices to show that
\[
\mathrm{ev}_{\rho'}(\mf{C}_i(K_\infty^{\Delta}/K,Z))
\subseteq  
\mf{C}_i(K_\infty^{\Delta \times \Gamma_1}/K,Z\otimes\rho').
\]

Note that we have
$\mca{P}_{K_{n,n},mn}(\rho)
= \mca{P}_{K_{n,n},mn}(\rho\rho')$.
Indeed, if a prime $\mf{l}$ of $\mca{O}_K$ 
splits completely in $K_{n,n}/K$, 
then we have $\rho'(\Fr_\mf{l}) \in 1+\pi^{mn}\mca{O}$.
Hence we obtain 
\(
\mca{N}_{K_{n,n},mn}^{\mathrm{w.o.}}(\rho)
=\mca{N}_{K_{n,n},mn}^{\mathrm{w.o.}}(\rho\rho')
\subseteq \mca{N}_{K_{0,n},mn}^{\mathrm{w.o.}}(\rho\rho')
\).

Let $\mf{n} \in \mca{N}_{K_{n,n},mn}^{\mathrm{w.o.}}(\rho)$
be any element.
Take any element
$\boldsymbol{c} \in Z_{\mf{a}}\subseteq Z^{\mf{n}}$.
We fix an $R_n$-linear map
\(
f \colon 
H^1(K_{n,n},(\mca{O}/\pi^{mn}\mca{O})\otimes \rho)
\longrightarrow R_n
\).
In order to prove Lemma \ref{lemredCi},
it is sufficient to prove that
$\mathrm{ev}_{\bar{\rho}'_n}(\mf{C}_{K_{n,n},mn}
(\mf{n};\boldsymbol{c}))
\subseteq  
\mf{C}_{K_{0,n},mn}
(\mf{n};\boldsymbol{c}\otimes\rho')$.
By the definition of $\boldsymbol{c}\otimes\rho'$ and 
Kolyvagin derivatives,
we have
\(
\kappa_{K_{n,n},mn}(\mf{n};\boldsymbol{c}\otimes \rho')
=\kappa_{K_{n,n},mn}(\mf{n};\boldsymbol{c})
\otimes \bar{\rho}'_n
\)
under the natural identification 
\[
H^1(K_{n,n},(\mca{O}/\pi^{mn}\mca{O})\otimes \rho\rho')
=H^1(K_{n,n},(\mca{O}/\pi^{mn}\mca{O})\otimes \rho)
\otimes \bar{\rho}'_n.
\]
By Lemma \ref{liftinghom} (i), 
we have an $\overline{R}_n$-linear map
$g \colon 
H^1(K_{0,n},(\mca{O}/\pi^{mn}\mca{O})\otimes \rho\rho')
\longrightarrow \overline{R}_n$
which makes the diagram
\begin{equation}\label{eqdiag}
\xymatrix{
H^1(K_{n,n},(\mca{O}/\pi^{mn}\mca{O})\otimes \rho) \ar[r]^(0.70)
{f\otimes \bar{\rho}'_n} 
\ar[d]_{\mathrm{Cor}_{K_{n,n}/K_{0,n}}} & 
R_{n}\otimes \bar{\rho}'_n \ar[d]_{\mathrm{ev}_{\bar{\rho}'_n}}  
\ar[r]^(0.6){\mathrm{tw}_{\bar{\rho}'_n}}
& R_n  \ar[dl]^{\mathrm{pr}}  \\
H^1(K_{0,n},(\mca{O}/\pi^{mn}\mca{O})\otimes \rho\rho') 
\ar[r]^(0.75){g} & 
\overline{R}_n &
}
\end{equation}
commute, where
$\mathrm{pr}$ is the projection, and 
$\mathrm{tw}_{\bar{\rho}'_n}$
denotes the homomorphism of 
$\mca{O}$-modules given by
$\mathrm{tw}_{\bar{\rho}'_n}(\sigma \otimes \bar{\rho}'_n)=
{\bar{\rho}'_n}(\sigma)^{-1}\sigma$
for each $\sigma \in \Gal(K_{n,n}/K)$.
So, we obtain
\begin{align*}
\mathrm{ev}_{\bar{\rho}'_n}(
f(\kappa_{K_{n,n},mn}(\mf{n};\boldsymbol{c}))\otimes \bar{\rho}'_n)
&=g(\mathrm{Cor}_{K_{n,n}/K_{0,n}}
(\kappa_{K_{0,n},mn}(\mf{n};\boldsymbol{c} \otimes \rho'))) \\
& \in   \mf{C}_{K_{0,n},mn}
(\mf{n};\boldsymbol{c}\otimes\rho').
\end{align*}
This completes the proof Lemma \ref{lemredCi}.
\end{proof}

Similarly, 
we obtain the following 
one variable version.

\begin{lem}[The weak specialization compatibility]\label{lemredCitodvr}
Suppose that $\Gamma \simeq \bb{Z}_p$, 
and let $\gamma$ be 
a topological generator of $\Gamma$.
Let $\rho' \colon \Gamma:= \Gal(K_\infty^{\Delta}/K)
\longrightarrow 1+ \pi \mca{O}$
be a continuous character.
Then, the image of $\mf{C}_i(K_\infty^{\Delta}/K,Z)$ in 
\(
\Lambda/(\gamma-\rho'(\gamma)) = \mca{O}
\)
is contained in 
$\mf{C}_i(K,Z\otimes\rho')$.
\end{lem}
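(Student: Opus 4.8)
The plan is to imitate the proof of Lemma~\ref{lemredCi} essentially word for word, the only simplification being that specializing the single variable $\gamma$ collapses the residual Iwasawa algebra $\mca{O}[[\Gamma_2]]$ occurring there to $\mca{O}$ itself, and the residual Galois group to the trivial group; this is exactly what is signalled by the word ``similarly''. First I would put $u:=\rho'(\gamma)$ and $m:=\mathrm{ord}_\mca{L}(u-1)$, and for each $n$ choose a layer $K_n$ of $K_\infty^{\Delta}/K$ large enough that $\rho'$ reduces modulo $\pi^{mn}$ to a genuine character $\bar\rho'_n\colon\Gal(K_n/K)\to(\mca{O}/\pi^{mn}\mca{O})^\times$, the set of such pairs $(n,K_n)$ being cofinal, so that $\Lambda\simeq\varprojlim_n R_n$ with $R_n:=\mca{O}/\pi^{mn}\mca{O}[\Gal(K_n/K)]$. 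Extending $\bar\rho'_n$ to an $\mca{O}/\pi^{mn}\mca{O}$-algebra homomorphism $R_n\to\mca{O}/\pi^{mn}\mca{O}$ and passing to the limit yields the specialization map $\mathrm{ev}_{\rho'}\colon\Lambda\to\mca{O}$, $\gamma\mapsto u$, whose kernel is $(\gamma-u)$. Hence it suffices to prove $\mathrm{ev}_{\rho'}(\mf{C}_i(K_\infty^{\Delta}/K,Z))\subseteq\mf{C}_i(K,Z\otimes\rho')$, and --- arguing on generators as in the cited proof --- it is enough to show, for every $n$, every well-ordered $\mf{n}\in\mca{N}^{\mathrm{w.o.}}_{K_n,mn}(\rho)$ with $\epsilon(\mf{n})\le i$, and every $\boldsymbol{c}\in Z^{\mf{n}}$, that $\mathrm{ev}_{\bar\rho'_n}(\mf{C}_{K_n,mn}(\mf{n};\boldsymbol{c}))\subseteq\mf{C}_{K,mn}(\mf{n};\boldsymbol{c}\otimes\rho')$, and then pass to $\varprojlim_n$.

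The verification of this inclusion runs just as in Lemma~\ref{lemredCi}. One first notes, exactly as there, that a prime $\mf{l}$ of $\mca{O}_K$ splitting completely in $K_n/K$ satisfies $\rho'(\mathrm{Fr}_\mf{l})\in 1+\pi^{mn}\mca{O}$, so $\mca{P}_{K_n,mn}(\rho)=\mca{P}_{K_n,mn}(\rho\rho')$, whence $\mca{N}^{\mathrm{w.o.}}_{K_n,mn}(\rho)=\mca{N}^{\mathrm{w.o.}}_{K_n,mn}(\rho\rho')\subseteq\mca{N}^{\mathrm{w.o.}}_{K,mn}(\rho\rho')$; in particular the right-hand ideal above really is a constituent of $\mf{C}_i(K,Z\otimes\rho')$ modulo $\pi^{mn}$. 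Then, fixing $\mf{n}$, $\boldsymbol{c}$, and an $R_n$-linear functional $f\colon H^1(K_n,(\mca{O}/\pi^{mn}\mca{O})\otimes\rho)\to R_n$, I would use the identity $\kappa_{K_n,mn}(\mf{n};\boldsymbol{c}\otimes\rho')=\kappa_{K_n,mn}(\mf{n};\boldsymbol{c})\otimes\bar\rho'_n$ (immediate from the definitions of the twist and of the Kolyvagin derivative) under the canonical identification $H^1(K_n,(\mca{O}/\pi^{mn}\mca{O})\otimes\rho\rho')=H^1(K_n,(\mca{O}/\pi^{mn}\mca{O})\otimes\rho)\otimes\bar\rho'_n$, and then invoke Lemma~\ref{liftinghom}(i), applied with $(F_2,F_1)=(K_n,K)$, $N_1=N_2=mn$, and representation $\rho\rho'$, to produce an $\mca{O}/\pi^{mn}\mca{O}$-linear functional $g\colon H^1(K,(\mca{O}/\pi^{mn}\mca{O})\otimes\rho\rho')\to\mca{O}/\pi^{mn}\mca{O}$ fitting into the one-variable analogue of diagram~(\ref{eqdiag}), with $\mathrm{Cor}_{K_n/K}$ as the left-hand vertical arrow. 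Chasing that diagram gives $\mathrm{ev}_{\bar\rho'_n}\bigl(f(\kappa_{K_n,mn}(\mf{n};\boldsymbol{c}))\otimes\bar\rho'_n\bigr)=g\bigl(\mathrm{Cor}_{K_n/K}(\kappa_{K,mn}(\mf{n};\boldsymbol{c}\otimes\rho'))\bigr)\in\mf{C}_{K,mn}(\mf{n};\boldsymbol{c}\otimes\rho')$, and taking $\varprojlim_n$ finishes the proof.

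I do not anticipate any genuinely new obstacle; the one point calling for attention is the same as in Lemma~\ref{lemredCi}, namely that the lifted functional $g$ exists because the finite rings $R_n$ and $\mca{O}/\pi^{mn}\mca{O}$ are self-injective, and this is already packaged into Lemma~\ref{liftinghom}. It is worth recording one structural feature, however: since the whole group $\Gamma$ has been specialized away, the target of the inclusion is the finite-layer ideal $\mf{C}_i(K,Z\otimes\rho')\subseteq\mca{O}$ attached to $F=K$ in the sense of Definition~\ref{theideal1}, rather than a full Iwasawa-algebra ideal as in the two-variable case; no change to the argument is required to accommodate this.
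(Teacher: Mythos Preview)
Your proposal is correct and follows exactly the approach the paper intends: the paper gives no separate proof of Lemma~\ref{lemredCitodvr}, merely stating that it is obtained ``similarly'' to Lemma~\ref{lemredCi}, and your adaptation --- collapsing the residual $\mca{O}[[\Gamma_2]]$ to $\mca{O}$ and the target field $K_{0,n}$ to $K$, while keeping the use of Lemma~\ref{liftinghom}(i) and the identity $\kappa_{K_n,mn}(\mf{n};\boldsymbol{c}\otimes\rho')=\kappa_{K_n,mn}(\mf{n};\boldsymbol{c})\otimes\bar\rho'_n$ --- is precisely that parallel argument. One cosmetic slip: in your final displayed chain you write $\mathrm{Cor}_{K_n/K}(\kappa_{K,mn}(\ldots))$, but the input to $\mathrm{Cor}_{K_n/K}$ lives at level $K_n$, so this should read $\kappa_{K_n,mn}$ (the paper's own proof of Lemma~\ref{lemredCi} has the analogous typo); the intended identity $\mathrm{Cor}_{K_n/K}\kappa_{K_n,mn}(\mf{n};\boldsymbol{c}\otimes\rho')=\kappa_{K,mn}(\mf{n};\boldsymbol{c}\otimes\rho')$ follows from (ES1) and the definition of the Kolyvagin derivative.
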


By the arguments in the proof of Lemma \ref{lemredCi},
we immediately obtain the following.

\begin{cor}\label{corC_iIm}
Let $\Gamma$ be isomorphic to 
$\bb{Z}_p$ or $\bb{Z}_p^2$.
Let $I(\Gamma)$ be the augmentation ideal of 
$\Lambda=\mca{O}[[\Gamma]]$.
Then, the image of $\mf{C}_i(K_\infty^{\Delta}/K,Z)$ 
in $\mca{O}=\Lambda/I(\Gamma)$ coincides with 
$\bigcap_{F \in \mca{IF}} \mf{C}_i^F(K,Z)$.
\end{cor}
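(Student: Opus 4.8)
The plan is to prove the finite‑level version of the identity first and then pass to the projective limit. Write $\mathrm{aug}\colon\Lambda=\mca{O}[[\Gamma]]\twoheadrightarrow\mca{O}=\Lambda/I(\Gamma)$ for the augmentation; it is the inverse limit, over $(F,N)\in\mca{IF}\times\bb{Z}_{>0}$, of the augmentations $\mathrm{aug}_{F,N}\colon R_{F,N}=(\mca{O}/\pi^N\mca{O})[\Gal(F/K)]\twoheadrightarrow\mca{O}/\pi^N\mca{O}=R_{K,N}$. Since by construction $\mf{C}_i(K_\infty^\Delta/K,Z)=\varprojlim_{F,N}\mf{C}^{F}_{i,F,N}(Z)$ and $\mf{C}^{F}_i(K,Z)=\varprojlim_N\mf{C}^{F}_{i,K,N}(Z)$, the first step I would carry out is the finite‑level identity
\[
\mathrm{aug}_{F,N}\bigl(\mf{C}^{F}_{i,F,N}(Z)\bigr)=\mf{C}^{F}_{i,K,N}(Z)
\qquad(F\in\mca{IF},\ N\in\bb{Z}_{>0}).
\]

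For $\subseteq$ I take a generator $f(\kappa_{F,N}(\mf{n};\boldsymbol{c}))$ of $\mf{C}^{F}_{i,F,N}(Z)$, where $\mf{n}\in\mca{N}^{\mathrm{wo}}_{F,N}$ has $\epsilon(\mf{n})\le i$ and is prime to $\mf{a}$, $\boldsymbol{c}\in Z_{\mf{a}}$, and $f$ is an $R_{F,N}$‑linear map $H^1(F,(\mca{O}/\pi^N\mca{O})\otimes\rho)\to R_{F,N}$. By Lemma \ref{liftinghom}~(i) applied to $F_2=F\supseteq F_1=K$ with $N_1=N_2=N$ there is an $\mca{O}/\pi^N\mca{O}$‑linear map $g$ on $H^1(K,(\mca{O}/\pi^N\mca{O})\otimes\rho)$ with $g\circ\mathrm{Cor}_{F/K}=\mathrm{aug}_{F,N}\circ f$, and the corestriction compatibility $\mathrm{Cor}_{F/K}(\kappa_{F,N}(\mf{n};\boldsymbol{c}))=\kappa_{K,N}(\mf{n};\boldsymbol{c})$ of Kolyvagin derivatives — a consequence of (ES1), used in exactly this way in the proof of Lemma \ref{lemredCi} — then gives $\mathrm{aug}_{F,N}(f(\kappa_{F,N}(\mf{n};\boldsymbol{c})))=g(\kappa_{K,N}(\mf{n};\boldsymbol{c}))$, which is a generator of $\mf{C}^{F}_{i,K,N}(Z)$ because the same $\mf{n}\in\mca{N}^{\mathrm{wo}}_{F,N}$ is admissible there. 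The inclusion $\supseteq$ is the mirror image: given a generator $g(\kappa_{K,N}(\mf{n};\boldsymbol{c}))$ of $\mf{C}^{F}_{i,K,N}(Z)$, Lemma \ref{liftinghom}~(ii) produces an $R_{F,N}$‑linear lift $f$ with $\mathrm{aug}_{F,N}\circ f=g\circ\mathrm{Cor}_{F/K}$, and the same computation shows $g(\kappa_{K,N}(\mf{n};\boldsymbol{c}))\in\mathrm{aug}_{F,N}(\mf{C}^{F}_{i,F,N}(Z))$.

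Granting the finite‑level identity, I would pass to the limit. For $\subseteq$ in the corollary: if $x=(x_{F,N})\in\mf{C}_i(K_\infty^\Delta/K,Z)$ then for each $N$ the class $\mathrm{aug}(x)\bmod\pi^N$ equals $\mathrm{aug}_{F,N}(x_{F,N})\in\mf{C}^{F}_{i,K,N}(Z)$ for \emph{every} $F$, hence lies in $\bigcap_F\mf{C}^{F}_{i,K,N}(Z)$; taking $\varprojlim_N$ and using that inverse limits commute with intersections gives $\mathrm{aug}(x)\in\bigcap_F\varprojlim_N\mf{C}^{F}_{i,K,N}(Z)=\bigcap_F\mf{C}^{F}_i(K,Z)$. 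For $\supseteq$: given $y\in\bigcap_F\mf{C}^{F}_i(K,Z)$, set $T_{F,N}:=\{z\in\mf{C}^{F}_{i,F,N}(Z)\mid\mathrm{aug}_{F,N}(z)\equiv y\pmod{\pi^N}\}$; by the finite‑level identity each $T_{F,N}$ is non‑empty, it is finite because $R_{F,N}$ is finite, and the structure maps of the system $\{\mf{C}^{F}_{i,F,N}(Z)\}_{F,N}$ — which are defined precisely because they send $\mf{C}^{F_2}_{i,F_2,N_2}(Z)$ into $\mf{C}^{F_1}_{i,F_1,N_1}(Z)$, cf.\ the discussion before Definition \ref{theideal1} — restrict to structure maps $T_{F_2,N_2}\to T_{F_1,N_1}$. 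Since $\mca{IF}\times\bb{Z}_{>0}$ is directed (a composite of two finite intermediate fields of $K_\infty^\Delta/K$ is again one), a standard compactness argument shows that the inverse limit of the non‑empty finite sets $T_{F,N}$ is non‑empty, and any element of it is an $x\in\mf{C}_i(K_\infty^\Delta/K,Z)$ with $\mathrm{aug}(x)=y$.

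The only genuinely non‑formal point is this last passage to the limit: one may not a priori identify $\mathrm{aug}\bigl(\varprojlim_{F,N}\mf{C}^{F}_{i,F,N}(Z)\bigr)$ with $\varprojlim_{F,N}\mathrm{aug}_{F,N}\bigl(\mf{C}^{F}_{i,F,N}(Z)\bigr)$, and it is the finiteness of the modules $\mf{C}^{F}_{i,F,N}(Z)$ together with the directedness of the index set that makes the relevant inverse limit of non‑empty sets non‑empty and rescues the equality. Everything else is a direct transcription of Lemma \ref{liftinghom} and of the corestriction compatibility of Kolyvagin derivatives already exploited in the proof of Lemma \ref{lemredCi}, which is why the corollary follows "by the arguments in the proof of Lemma \ref{lemredCi}"; the argument is uniform in whether $\Gamma\simeq\bb{Z}_p$ or $\Gamma\simeq\bb{Z}_p^2$, invoking the finite‑level computation underlying Lemma \ref{lemredCitodvr} or Lemma \ref{lemredCi} respectively.
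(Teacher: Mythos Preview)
Your argument is correct and is precisely the elaboration the paper has in mind when it says the corollary follows ``by the arguments in the proof of Lemma \ref{lemredCi}''. You correctly identify that the finite-level equality $\mathrm{aug}_{F,N}\bigl(\mf{C}^{F}_{i,F,N}(Z)\bigr)=\mf{C}^{F}_{i,K,N}(Z)$ is the crux, that the inclusion $\subseteq$ is exactly the computation in the proof of Lemma \ref{lemredCi} specialized to $\rho'=1$ (using Lemma \ref{liftinghom}~(i) and the corestriction compatibility of Kolyvagin derivatives), and that the reverse inclusion needs the companion lifting statement Lemma \ref{liftinghom}~(ii); the compactness argument you give for passing to the limit is the standard one and is indeed required because surjectivity need not be preserved under inverse limits in general.
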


\section{Preliminary results on the ground level}\label{secpgl}

In this section, 
we introduce a preliminary result
on non-variable cases, 
namely  Proposition \ref{propgrlev}.
In this section, 
all notations follow to 
those in \ref{secCi}.
In particular, 
we fix a character $\rho\colon 
G_{K} \longrightarrow \mca{O}^\times$
satisfying the assumptions (C1), (C2) and (C3),
and fix a set $Z= \bigcup_{\mf{a}} Z_{\mf{a}}$
of Euler systems for $(K_\infty^\Delta/K,\mca{O}(\rho))$.
The following proposition 
is obtained by standard arguments of 
Euler systems.

\begin{prop}\label{propgrlev}
Suppose $\# X (K,\rho) < \infty$. 
For any $i \in \bb{Z}_{\ge 0}$ 
and  $F \in \mca{IF}$, we have 
\[
\mf{C}^F_{i}(K,Z) \subseteq
\Fitt_{\mca{O},i}(X(K,\rho)).
\]
Moreover, if we have 
$\mf{C}_{0}(KZ) =
\Fitt_{\mca{O},0}(X(K,\rho))$, 
then it holds that 
\[
\mf{C}^F_{i}(K,Z) =
\Fitt_{\mca{O},i}(X(K,\rho)) 
\]
for any $i \in \bb{Z}_{\ge 0}$ 
and  $F \in \mca{IF}$.
\end{prop}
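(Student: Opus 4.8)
The plan is to establish this ``ground level'' (non-variable) statement by the standard Euler-system descent of \cite{Ru5}, Chapter~IV (compare \cite{Ru1}, \S 4), combined with Kurihara's refinement reading off the higher Fitting ideals from Kolyvagin derivatives. Two reductions come first. Since every $\mf{n} \in \mca{N}^{\mathrm{wo}}_{F,N}(\rho)$ also belongs to $\mca{N}^{\mathrm{wo}}_{K,N}(\rho)$ and the class $\kappa_{K,N}(\mf{n};\boldsymbol{c})$ does not depend on $F$, we have $\mf{C}^F_i(K,Z) \subseteq \mf{C}^K_i(K,Z) = \mf{C}_i(K,Z)$ for every $F \in \mca{IF}$, so the first inclusion is reduced to $F=K$. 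Moreover, since $R_{K,N}=\mca{O}/\pi^N\mca{O}$ is a self-injective ring (its Pontryagin dual is free of rank one over itself, as noted in the proof of Lemma \ref{liftinghom}), the ideal $\mf{C}_{K,N}(\mf{n};\boldsymbol{c})_{\mca{O}} \subseteq R_{K,N}$ equals $\pi^{\,d_N(\mf{n};\boldsymbol{c})}R_{K,N}$ for an integer $d_N(\mf{n};\boldsymbol{c})$ measuring the divisibility of $\kappa_{K,N}(\mf{n};\boldsymbol{c})$ in $H^1(K,(\mca{O}/\pi^N\mca{O})\otimes\rho)$; a standard argument shows $d_N(\mf{n};\boldsymbol{c})$ stabilizes for $N\gg 0$, and passing to the limit gives $\mf{C}_i(K,Z)=\pi^{\,m_i}\mca{O}$ with $m_i = \min\{\, d(\mf{n};\boldsymbol{c}) : \epsilon(\mf{n})\le i,\ \boldsymbol{c}\in Z^{\mf{n}}\,\}$. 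Everything is thus reduced to estimating the divisibility indices $d(\mf{n};\boldsymbol{c})$.

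For the inclusion I would combine Proposition \ref{lemKDlc} with Kolyvagin's recursion: for a prime $\mf{l}\nmid\mf{n}$ with $\mf{n}\mf{l}$ well-ordered, the singular component of $\mathrm{loc}_{\mf{l}}(\kappa_{K,N}(\mf{n}\mf{l};\boldsymbol{c}))$ is, up to a unit, the image under the finite--singular comparison isomorphism of the finite component of $\mathrm{loc}_{\mf{l}}(\kappa_{K,N}(\mf{n};\boldsymbol{c}))$ (see \cite{Ru5}, \S 4). Feeding this into the Chebotarev selection of auxiliary primes built on (Chb) --- where (C1)--(C3) supply exactly the vanishing of the $H^0$'s needed for the restriction--corestriction isomorphisms of \S \ref{secES} and for the Poitou--Tate duality between $\mathrm{Sel}^{p\mf{n}}(K,(\mca{O}/\pi^N\mca{O})\otimes\rho)$ and the relevant reduction of $X(K,\rho)\simeq H^2(K_{\Sigma(\rho)}/K,\mca{O}(\rho))$, which is finite by hypothesis --- one obtains the estimate
\[
d(\mf{n};\boldsymbol{c}) \ \ge\ \mathrm{ord}_\pi\bigl(\Fitt_{\mca{O},\epsilon(\mf{n})}(X(K,\rho))\bigr)
\]
for every well-ordered $\mf{n}$ and every $\boldsymbol{c}$. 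As the higher Fitting ideals of $X(K,\rho)$ form an increasing chain, $\epsilon(\mf{n})\le i$ forces $d(\mf{n};\boldsymbol{c})\ge \mathrm{ord}_\pi(\Fitt_{\mca{O},i}(X(K,\rho)))$, hence $m_i\ge \mathrm{ord}_\pi(\Fitt_{\mca{O},i}(X(K,\rho)))$ and $\mf{C}^F_i(K,Z)\subseteq \Fitt_{\mca{O},i}(X(K,\rho))$.

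For the equality statement, the hypothesis $\mf{C}_0(K,Z)=\Fitt_{\mca{O},0}(X(K,\rho))$ says precisely that some Euler system $\boldsymbol{c}_0\in Z$ satisfies $d(\mca{O}_K;\boldsymbol{c}_0)=\mathrm{ord}_\pi(\Fitt_{\mca{O},0}(X(K,\rho)))=\mathrm{length}_{\mca{O}}X(K,\rho)$, i.e.\ the Euler-system bound is sharp at the bottom level. Fixing such a $\boldsymbol{c}_0$ and writing $X(K,\rho)\simeq\bigoplus_{j=1}^{r}\mca{O}/\pi^{a_j}\mca{O}$ with $a_1\ge\cdots\ge a_r>0$, I would run the Kolyvagin descent: inductively choose primes $\mf{l}_1,\dots,\mf{l}_i$, each selected by (Chb) to split completely in $F$ (allowed since $[F:K]<\infty$) and in $K(\prod_{k<j}\mf{l}_k)$, to lie in $\mca{P}_{F,N}(\rho)$, and to be prime to the defining ideal of $\boldsymbol{c}_0$, in such a way that at each step the localization at $\mf{l}_j$ of the descended class cleanly peels off one more cyclic summand of the dual Selmer group. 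A bookkeeping with the finite--singular comparison and Poitou--Tate duality, whose base case is the sharpness hypothesis on $\boldsymbol{c}_0$, then shows that the divisibility index drops by exactly $a_j$ at the $j$-th step, so that $\mf{n}:=\mf{l}_1\cdots\mf{l}_i\in\mca{N}^{\mathrm{wo}}_{F,N}(\rho)$ satisfies
\[
d(\mf{n};\boldsymbol{c}_0) \ =\ \mathrm{length}_{\mca{O}}X(K,\rho)-\sum_{j=1}^{i}a_j \ =\ \sum_{j=i+1}^{r}a_j \ =\ \mathrm{ord}_\pi\bigl(\Fitt_{\mca{O},i}(X(K,\rho))\bigr).
\]
Hence $\mf{C}^F_i(K,Z)\supseteq\pi^{\,d(\mf{n};\boldsymbol{c}_0)}\mca{O}=\Fitt_{\mca{O},i}(X(K,\rho))$, and with the inclusion already proved this yields equality for all $i\in\bb{Z}_{\ge 0}$ and all $F\in\mca{IF}$.

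The main obstacle is this last descent. The delicate points are to arrange, by an iterated Chebotarev argument, that each successive auxiliary prime really does split off one cyclic summand of the (suitably twisted) dual Selmer group while simultaneously obeying every side condition --- splitting completely in the prescribed $F$, well-orderedness, primality to the defining ideal, membership in $\mca{P}_{F,N}(\rho)$ --- and to verify that the divisibility index decreases by precisely $a_j$ at each step, which requires playing the finite--singular comparison off against the global Poitou--Tate pairing and genuinely uses the level-$0$ sharpness provided by the hypothesis on $\mf{C}_0(K,Z)$. Once the formalism of \S\S \ref{secES}--\ref{secCi} is in place, the remaining verifications are routine.
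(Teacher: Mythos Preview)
Your proposal is correct and follows essentially the same approach as the paper: the standard Rubin--Kurihara Euler-system descent, selecting auxiliary primes via Chebotarev so that the finite--singular comparison together with global duality yields the required divisibility at each step, with the level-$0$ sharpness hypothesis then forcing equality at every higher level. The paper makes explicit the one genuinely delicate point you flag---that the singular localization of $\kappa_{K,N_{j-1}}(\mf{n}_j;\boldsymbol{c})$ at the newly chosen prime $\mf{l}_j$ is divisible by $\pi^{e_j}$---as a separate lemma (its Lemma~\ref{lemdivES}), proved by lifting to level $N_{j-1}$ and invoking Poitou--Tate duality for the relaxed Selmer group $\mathrm{Sel}^{p\mf{n}_j}$; otherwise your outline matches the paper's argument.
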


Proposition \ref{propgrlev}
is a well-known fact, which is 
a variant of 
\cite{Ru1} Theorem 4.4
for one dimensional representations of $G_{K}$. 
The proof of Proposition \ref{propgrlev}
is also similar to that of \cite{Ru1} Theorem 4.4.
The author does not know the reference
which treat the assertion 
of Proposition \ref{propgrlev}
in our setting,
we review the proof of 
Proposition \ref{propgrlev}.

In \S \ref{ssl-fs}, we recall
the evaluation maps 
which map elements of the Galois group
to linear forms on  Selmer groups.
In \S \ref{ssCheb}, we prove 
Proposition \ref{Cheb}, 
which becomes a key of 
the induction arguments using Euler systems.
In \S \ref{sspfprpgrlev}, 
we complete the proof of 
Proposition \ref{propgrlev}.
In \S \ref{ssthmP}, we prove 
Theorem \ref{thm+P} by using 
Proposition \ref{propgrlev}
and the analytic class number formula.

\subsection{Evaluation maps}\label{ssl-fs}
In this and the next subsections, we fix 
a positive integer $N \in \bb{Z}_{>0}$.
Here, we introduce some ``evaluation maps" 
from a Galois groups
to the dual of a Galois cohomology groups. 
Note that various important maps 
in the theory of Euler systems, 
like finite singular comparison maps, 
are described in terms of evaluation maps.

Let $\bar{\rho}_N\colon G_{K}
\longrightarrow (\mca{O}/\pi^N\mca{O})^\times$
be the modulo-$\pi^N$ reduction of $\rho$, 
and $\bar{\chi}_{\mathrm{cyc},N}$ 
the modulo-$\pi^N$ reduction of the cyclotomic character.
We define $\Omega_{N}$ to be the composite field
of $K(\mu_{p^N})$ and $(K_{\Sigma})^{\Ker \bar \rho_N}$.
By (C2), the restriction maps
\begin{align*}
H^1(K,
(\mca{O}/\pi^N\mca{O})\otimes \rho)
&\longrightarrow 
\Hom(G_{\Omega_N},(\mca{O}/\pi^N\mca{O})\otimes \rho) \\
H^1(K,
(\mca{O}/\pi^N\mca{O})\otimes
\chi_{\mathrm{cyc}}\rho^{-1})
& \longrightarrow 
\Hom(G_{\Omega_N},(\mca{O}/\pi^N\mca{O})\otimes
\chi_{\mathrm{cyc}}\rho^{-1})
\end{align*}
are injective.
By the identifications 
\[
(\mca{O}/\pi^N\mca{O}) \otimes \rho
= \mca{O}/\pi^N\mca{O}=
(\mca{O}/\pi^N\mca{O})\otimes
\chi_{\mathrm{cyc}}\rho^{-1}
\]
via the choice of basis, 
the restriction maps induce 
the group homomorphisms
\begin{align*}
\mathrm{Ev}_{N} \colon  G_{\Omega_{N}}
& \longrightarrow \Hom_{\mca{O}}\left(H^1(K,
(\mca{O}/\pi^N\mca{O})\otimes \rho), 
\mca{O}/\pi^N\mca{O} \right), \\
\mathrm{Ev}_{N}^* \colon  G_{\Omega_{N}}
& \longrightarrow \Hom_{\mca{O}}\left(H^1(K,
(\mca{O}/\pi^N\mca{O})\otimes
\chi_{\mathrm{cyc}}\rho^{-1}), 
\mca{O}/\pi^N\mca{O} \right)
\end{align*}
called {\em the evaluation maps}.
Note that 
the assumption (C2) implies that
\[
H^1(\Omega_{N}/K,
(\mca{O}/\pi^N\mca{O})\otimes\rho)
=H^1(\Omega_{N}/K,
(\mca{O}/\pi^N\mca{O})\otimes
\chi_{\mathrm{cyc}}\rho^{-1})
=0.
\]
So, the following lemma 
follows from similar arguments to 
those in the proof of \cite{Ru5} Lemma 7.2.4.

\begin{lem}\label{lemevsurj}
For any integer $N_0$ satisfying 
$N \ge N_0$, we have 
\begin{align*}
\mca{O}\mathrm{Ev}_{N_0}(G_{\Omega_{N}})& =
\Hom_{\bb{Z}_p}\left(H^1(K,(\mca{O}/\pi^{N_0}\mca{O})\otimes \rho), 
\mca{O}/\pi^{N_0}\mca{O} \right), \\
\mca{O}\mathrm{Ev}_{N_0,X}^* (G_{\Omega_{N}})
&=X_{N_0}(K,\rho)
\end{align*}
\end{lem}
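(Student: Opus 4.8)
The plan is to reproduce, in the present setting, the argument of \cite{Ru5} Lemma 7.2.4, whose engine is linear duality over the self-injective (Gorenstein) artinian ring $\mca{O}/\pi^{N_0}\mca{O}$. I would first put $W':=(\mca{O}/\pi^{N_0}\mca{O})\otimes\rho$, $H:=H^1(K,W')$ (a finite $\mca{O}/\pi^{N_0}\mca{O}$-module, cohomology over $K_{\Sigma}/K$), and $D:=\Hom_{\mca{O}}(H,\mca{O}/\pi^{N_0}\mca{O})$, and record that because $\mca{O}/\pi^{N_0}\mca{O}$ is injective over itself, the functor $(-)^{\vee}:=\Hom_{\mca{O}}(-,\mca{O}/\pi^{N_0}\mca{O})$ is exact and biduality holds on finite modules; hence an $\mca{O}$-submodule $V\subseteq D$ equals $D$ exactly when $\{h\in H\mid \varphi(h)=0\ \text{for all}\ \varphi\in V\}=0$. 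So to prove the first equality I would take $V=\mca{O}\,\mathrm{Ev}_{N_0}(G_{\Omega_N})$ and show that any $c\in H$ annihilated by every $\mathrm{Ev}_{N_0}(\sigma)$, $\sigma\in G_{\Omega_N}$, vanishes.

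\textbf{Triviality of the annihilator.} By construction $\mathrm{Ev}_{N_0}(\sigma)(c)$ is the value at $\sigma$ of the restriction $c|_{G_{\Omega_N}}$, which is a genuine homomorphism $G_{\Omega_N}\to W'$ since $\Ker\bar{\rho}_N\subseteq\Ker\bar{\rho}_{N_0}$ makes $G_{\Omega_N}$ act trivially on $W'$; thus the hypothesis forces $c|_{G_{\Omega_N}}=0$, i.e.\ $c\in\Ker\bigl(H^1(K,W')\to H^1(\Omega_N,W')\bigr)=H^1(\Omega_N/K,W')$. It then remains to check $H^1(\Omega_N/K,W')=0$. For $N=N_0$ this is the vanishing displayed just before the Lemma. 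For $N>N_0$ I would feed the short exact sequence $0\to W'\to(\mca{O}/\pi^{N}\mca{O})\otimes\rho\to(\mca{O}/\pi^{N-N_0}\mca{O})\otimes\rho\to 0$ into $\Gal(\Omega_N/K)$-cohomology: from $H^1(\Omega_N/K,(\mca{O}/\pi^{N}\mca{O})\otimes\rho)=0$ one gets that $H^1(\Omega_N/K,W')$ is a quotient of the invariants $\bigl((\mca{O}/\pi^{N-N_0}\mca{O})\otimes\rho\bigr)^{\Gal(\Omega_N/K)}$, which vanish because (C2) forces $\bar{\rho}\ne 1$. This is the only place where the hypothesis $N\ge N_0$ is used, and it yields $c=0$, hence the first equality.

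\textbf{The second equality.} Running the same argument with $\rho$ replaced by $\chi_{\mathrm{cyc}}\rho^{-1}$ (which inherits the consequences of (C1)--(C3) that were used), together with the companion injectivity and vanishing statements recorded before the Lemma, gives $\mca{O}\,\mathrm{Ev}_{N_0}^{*}(G_{\Omega_N})=\Hom_{\mca{O}}\bigl(H^1(K,(\mca{O}/\pi^{N_0}\mca{O})\otimes\chi_{\mathrm{cyc}}\rho^{-1}),\mca{O}/\pi^{N_0}\mca{O}\bigr)$. To finish I would identify this dual with $X_{N_0}(K,\rho)$ by Poitou--Tate global duality: exactly as in the proof of Lemma \ref{lemH2X}, the hypotheses (C1)--(C3) and the choice of $\Sigma$ force the Bloch--Kato local conditions at the primes of $\Sigma$, and their orthogonal complements, to collapse, which exhibits $X_{N_0}(K,\rho)$ as a natural quotient of $\Hom_{\mca{O}}\bigl(H^1(K,(\mca{O}/\pi^{N_0}\mca{O})\otimes\chi_{\mathrm{cyc}}\rho^{-1}),\mca{O}/\pi^{N_0}\mca{O}\bigr)$; composing $\mathrm{Ev}_{N_0}^{*}$ with the quotient map recovers $\mathrm{Ev}_{N_0,X}^{*}$, and surjectivity upgrades the previous equality to $\mca{O}\,\mathrm{Ev}_{N_0,X}^{*}(G_{\Omega_N})=X_{N_0}(K,\rho)$. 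The step I expect to be most delicate is precisely this identification of local conditions, carried out for \emph{finite} coefficient modules rather than divisible ones; the remainder is the formal duality manipulation above.
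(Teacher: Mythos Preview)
Your proposal is correct and follows the same approach as the paper, which simply invokes \cite{Ru5} Lemma~7.2.4 together with the vanishing of $H^1(\Omega_N/K,(\mca{O}/\pi^N\mca{O})\otimes\rho)$ and $H^1(\Omega_N/K,(\mca{O}/\pi^N\mca{O})\otimes\chi_{\mathrm{cyc}}\rho^{-1})$ coming from (C2). Your short exact sequence reduction from level $N$ to level $N_0$ is valid; alternatively, since $\Gal(\Omega_N/K)$ is abelian and (C2) produces an element $\sigma$ with $\rho(\sigma)-1\in\mca{O}^\times$, Sah's lemma kills $H^1(\Omega_N/K,(\mca{O}/\pi^{N_0}\mca{O})\otimes\rho)$ for every $N_0$ in one stroke.
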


Let $\mf{l} \in \mca{P}_{K,N}$. 
By the Hochschild--Serre spectral sequence,
we have isomorphisms
\begin{align*}
H^1_f(K_{\mf{l}}, (\mca{O}/\pi^N\mca{O})\otimes \rho) 
& \simeq H^{1}(\mca{O}_K /\mf{l}, (\mca{O}/\pi^N\mca{O})\otimes \rho), \\
H^1_s(K_{\mf{l}}, 
(\mca{O}/\pi^N\mca{O})\otimes \rho) 
& \simeq \Hom(\Gal(K_{\mf{l}}^{\mathrm{ab}}/
K_\mf{l}^{\mathrm{ur}}),
(\mca{O}/\pi^N\mca{O})\otimes \rho),  
\end{align*}
where $K_{\mf{l}}^{\mathrm{ab}}$ 
(resp.\ $K_{\mf{l}}^{\mathrm{ur}}$)
denotes
the maximal abelian 
(resp.\ unramified)
extension field of $K_{\mf{l}}$
So, we have the following lemma.

\begin{lem}[\cite{Ru5} Lemma 1.4.7]\label{fs_free}
Let $\mf{l} \in \mca{P}_{K,N}$. 
Fix a lift $\Fr_\mf{l} \in 
\Gal(\overline{K}_{\mf{l}}/K_{\mf{l}})$
of Frobenius element and 
a lift $\tilde{\sigma}_{\mf{l}} \in 
\Gal(\overline{K}_{\mf{l}}/K_{\mf{l}})$ of 
$\sigma \in 
\Gal(K \langle \mf{l} \rangle_{\mf{l}_{K \langle \mf{l} \rangle}
}/K_{\mf{l}})=H_{\mf{l}}$.
Evaluating cocycles on $\Fr_\mf{l}$ and $\sigma_\mf{l}$
induces isomorphisms 
\begin{align*}
\mathrm{Ev}^{\mf{l},f}_{F,N}(\Fr_\mf{l})\colon 
H^1_f(K_{\mf{l}}, (\mca{O}/\pi^N\mca{O})\otimes \rho) 
& \xrightarrow{\ \simeq \ } 
(\mca{O}/\pi^N\mca{O})\otimes \rho= \mca{O}/\pi^N\mca{O}, \\
\mathrm{Ev}^{\mf{l},s}_{F,N}(\tilde{\sigma}_\mf{l})\colon
H^1_s(K_{\mf{l}}, 
(\mca{O}/\pi^N\mca{O})\otimes \rho) 
& \xrightarrow{\ \simeq \ } 
(\mca{O}/\pi^N\mca{O})\otimes \rho
= \mca{O}/\pi^N\mca{O}
\end{align*}
of $\mca{O}$-modules respectively.
The definition of 
$\mathrm{Ev}_{F,N}(\Fr_\mf{l})$
and $\mathrm{Ev}_{F,N}(\tilde{\sigma}_\mf{l})$
is independent of the choice of the lifts $\Fr_\mf{l}$
and $\tilde{\sigma}_{\mf{l}}$, but 
depends on the choice of $e$.
\end{lem}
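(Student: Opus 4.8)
The plan is to carry out the local computation at $\mf{l}$, following the argument of \cite{Ru5} Lemma 1.4.7, after recording what the defining conditions of $\mca{P}_{K,N}(\rho)$ provide. Since $\mf{l} \notin \Sigma \supseteq \Sigma(\rho)$, the prime $\mf{l}$ does not divide $p$ and $\rho$ is unramified at $\mf{l}$; since $I_{\mf{l}} \subseteq \pi^N\mca{O}$ we have $N(\mf{l}) \equiv 1$, hence $\rho(\Fr_{\mf{l}}) \equiv N(\mf{l}) \equiv 1 \pmod{\pi^N}$, so that $M := (\mca{O}/\pi^N\mca{O})\otimes\rho$ is the \emph{trivial} $G_{K_\mf{l}}$-module $\mca{O}/\pi^N\mca{O}$ (the identification being induced by the fixed basis $e_\rho$, which is the choice the statement alludes to). Writing $p^{N_{\mf{l}}}$ for the $p$-part of $N(\mf{l})-1$ as in Definition \ref{dfnsigmaell}, the same inclusion forces $\mrm{ord}_\pi(p^{N_{\mf{l}}}) \ge N$, so that $M$ is killed by $p^{N_{\mf{l}}}$; this divisibility is the one point requiring a little care, the rest being formal.

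For the finite part, I would combine the already-recorded isomorphism $H^1_f(K_\mf{l},M) \simeq H^1(\mca{O}_K/\mf{l},M)$ with the triviality of the action: the latter identifies $H^1(\mca{O}_K/\mf{l},M)$ with $\Hom_{\mrm{cont}}(\Gal(K_\mf{l}^{\mrm{ur}}/K_\mf{l}),M)$, and since $\Gal(K_\mf{l}^{\mrm{ur}}/K_\mf{l}) \cong \widehat{\bb{Z}}$ is topologically generated by $\Fr_{\mf{l}}$, evaluation at $\Fr_{\mf{l}}$ is an isomorphism onto $M = \mca{O}/\pi^N\mca{O}$. Independence of the chosen lift of $\Fr_{\mf{l}}$ is then immediate, since a cocycle representing a class in $H^1_f(K_\mf{l},M)$ is unramified and hence vanishes on the inertia subgroup.

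For the singular part, I would start from the already-recorded isomorphism $H^1_s(K_\mf{l},M) \simeq \Hom(\Gal(K_\mf{l}^{\mrm{ab}}/K_\mf{l}^{\mrm{ur}}),M)$ and restrict homomorphisms to the tame inertia. Since $M$ is a finite $p$-group, such a homomorphism factors through the maximal pro-$p$ quotient $\bb{Z}_p(1)$ of tame inertia, and since $M$ is killed by $p^{N_{\mf{l}}}$ it factors further through the cyclic quotient of order $p^{N_{\mf{l}}}$, which is exactly $H_{\mf{l}} = \Gal(K\langle\mf{l}\rangle_{\lambda_1}/K_\mf{l})$ (using that $\mf{l}$ splits completely in $K\langle\mca{O}_K\rangle/K$, so $K\langle\mca{O}_K\rangle_{\lambda_0} = K_\mf{l}$, and that $K\langle\mf{l}\rangle$ is the $p$-part of the ray class field of conductor $\mf{l}$). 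As $\Fr_{\mf{l}}$ acts on $\bb{Z}_p(1)$ by $N(\mf{l}) \equiv 1 \pmod{\pi^N}$, every such homomorphism is Frobenius-stable, so $H^1_s(K_\mf{l},M) \cong \Hom(H_{\mf{l}},M)$; since $\sigma_{\mf{l}}$ generates the cyclic group $H_{\mf{l}}$ of order $p^{N_{\mf{l}}}$ and $M[p^{N_{\mf{l}}}] = M$, evaluation at $\sigma_{\mf{l}}$ is an isomorphism onto $M$. Taking $\tilde{\sigma}_{\mf{l}}$ inside the inertia subgroup $I_{\mf{l}}$ (possible since $K\langle\mf{l}\rangle_{\lambda_1}/K_\mf{l}$ is totally ramified), independence of the lift follows because the restriction to $I_{\mf{l}}$ of a cocycle representing a class in $H^1_s(K_\mf{l},M)$ factors through $H_{\mf{l}}$, so its value on $\tilde{\sigma}_{\mf{l}}$ depends only on the image $\sigma_{\mf{l}}$ in $H_{\mf{l}}$.

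I expect the only genuine obstacle to be the bookkeeping of the first paragraph: deducing $\pi^N \mid p^{N_{\mf{l}}}$ from $I_{\mf{l}} \subseteq \pi^N\mca{O}$, which is precisely what makes every relevant homomorphism factor through $H_{\mf{l}}$, and matching $\sigma_{\mf{l}}$ of Definition \ref{dfnsigmaell} with the generator of $\bb{Z}_p(1)/p^{N_{\mf{l}}}$ determined by $(\zeta_{p^n})_n$ so that the two normalizations agree. Once this is in place, both isomorphisms and their independence of the lifts are exactly as in \cite{Ru5} Lemma 1.4.7, to which one may also simply appeal; the closing remark about dependence on $e$ is just that replacing the basis $e_\rho$ used to trivialize $(\mca{O}/\pi^N\mca{O})\otimes\rho$ rescales both maps by the same unit of $\mca{O}$.
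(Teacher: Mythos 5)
Your proposal is correct, and it is essentially the argument the paper relies on: the paper gives no proof of its own here, simply recording the Hochschild--Serre identifications of $H^1_f$ and $H^1_s$ and citing \cite{Ru5} Lemma 1.4.7, whose standard local computation (triviality of $(\mca{O}/\pi^N\mca{O})\otimes\rho$ as a $G_{K_\mf{l}}$-module from $I_\mf{l}\subseteq\pi^N\mca{O}$, evaluation of unramified classes at Frobenius, and factoring of the singular part through the tame quotient $H_\mf{l}$ killed by $p^{N_\mf{l}}$) is exactly what you spell out. Your bookkeeping points --- $\mathrm{ord}_\pi(p^{N_\mf{l}})\ge N$, the identification of $H_\mf{l}$ with the totally ramified local Galois group via the complete splitting of $\mf{l}$ in $K\langle\mca{O}_K\rangle$, taking the lift $\tilde{\sigma}_{\mf{l}}$ in inertia for the well-definedness, and reading the dependence on $e$ as the choice of basis $e_\rho$ --- are all accurate and consistent with how the lemma is used later in the paper.
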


\begin{dfn}
Let $\mf{l} \in \mca{N}_{K,N}$ be any element.
We call the composite map 
\begin{align*}
(\cdot)^{\mf{l},s}_{N,}\colon
H^1(K, (\mca{O}/\pi^N\mca{O})\otimes \rho) 
& \xrightarrow{\ \mathrm{res.} \ } 
H^1(K_{\mf{l}}, (\mca{O}/\pi^N\mca{O})\otimes \rho) \\
& \longrightarrow  
H^1_s(K_{\mf{l}}, (\mca{O}/\pi^N\mca{O})\otimes \rho) \\
& \xrightarrow{\mathrm{Ev}_{F,N}^{\mf{l},s}
(\tilde{\sigma}_\mf{l}) } 
\mca{O}/\pi^N\mca{O}
\end{align*}
{\em the localization map}, and the composite map 
\begin{align*}
\phi^{\mf{l}}_{N} \colon 
\Ker(\cdot)^{\mf{l},s}_{N}
& \xrightarrow{\ \mathrm{res.} \ }
H^1_f(K_{\mf{l}}, (\mca{O}/\pi^N\mca{O})\otimes \rho) \\
& \xrightarrow{\mathrm{Ev}^{\mf{l},f}_{N}(\Fr_\mf{l})} 
\mca{O}/\pi^N\mca{O}
\end{align*}
{\em the finite-singular comparison map}.
\end{dfn}

\begin{rem}\label{evlfs}
Let $\mf{l} \in \mca{P}_N(\rho)$.
Then, we have $\tilde{\sigma}_{\mf{l}}, 
\Fr_{\mf{l}} \in G_{\Omega_N}$, and 
the following hold.
\begin{itemize}
\item[{\rm (i)}] For any element 
$x\in H^1(K, (\mca{O}/\pi^N\mca{O})\otimes \rho) $, we have 
\(
(x)^{\mf{l},s}_{N}
= \mathrm{Ev}_{N}(\tilde{\sigma}_\mf{l})(x)
\).
\item[{\rm(ii)}] 
For any element $x\in \Ker (-)^{\mf{l},s}_{N}$, we have
\(
\phi^\mf{l}_{N}(x)
=  \mathrm{Ev}_{N}(\mathrm{Fr}_\mf{l})(x)
\).
\end{itemize}
\end{rem}

\begin{prop}[\cite{Ru5} Theorem 4.5.1 and \cite{Ru5} Theorem 4.5.4]
\label{kappalfs}
Let $\mf{n} \in \mathcal{N}_{N}$.
Then, for any prime divisor $\mf{l}$ of $\mf{n}$, 
we have 
\(
\left( \kappa_{K,N}(\mf{n};\boldsymbol{c}) \right)^{\mf{l},s}_{N}
=\phi^{\mf{l}}_{N} \left( 
\kappa_{K,N}(\mf{n}/\mf{l};\boldsymbol{c}) \right).
\)
\end{prop}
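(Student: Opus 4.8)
The asserted identity is \cite{Ru5} Theorems~4.5.1 and~4.5.4, transported from Rubin's general framework to our rank one representation $(\mca{O}/\pi^N\mca{O})\otimes\rho$ over the imaginary quadratic field $K$, with $\mca{P}_{K,N}(\rho)$ playing the role of Rubin's set of Kolyvagin primes; so the plan is to recall that argument and verify it applies here. First I would note that the right-hand side is well defined: by Proposition~\ref{lemKDlc} the class $\kappa_{K,N}(\mf{n}/\mf{l};\boldsymbol{c})$ lies in $\mathrm{Sel}^{p\mf{n}/\mf{l}}(K,(\mca{O}/\pi^N\mca{O})\otimes\rho)$, hence is unramified (that is, finite) at $\mf{l}$, so it belongs to $\Ker(\cdot)^{\mf{l},s}_{N}$ and $\phi^{\mf{l}}_{N}$ may be applied to it.

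Both sides are elements of $\mca{O}/\pi^N\mca{O}$, extracted by localising at $\mf{l}$, projecting onto the singular (resp.\ finite) part of $H^1(K_\mf{l},W)$, and using the evaluation isomorphisms of Lemma~\ref{fs_free}; so it suffices to prove a purely local statement at $\mf{l}$ relating $\mathrm{loc}_{\mf{l}}(\kappa_{K,N}(\mf{n};\boldsymbol{c}))$ to $\mathrm{loc}_{\mf{l}}(\kappa_{K,N}(\mf{n}/\mf{l};\boldsymbol{c}))$. Next I would unwind Definition~\ref{defKD}, writing $D_{\mf{n}}=D_{\mf{l}}D_{\mf{n}/\mf{l}}$ and $d:=D_{\mf{n}/\mf{l}}c_K(\mf{n})$, so that, after applying the norm lift $\widetilde{\mathrm{N}}_K$ and descending via $(\mathrm{Res}^{(\mf{n})}_{K,N,\rho})^{-1}$, the two localisations are computed from $D_{\mf{l}}d$ and from $D_{\mf{n}/\mf{l}}c_K(\mf{n}/\mf{l})$ restricted to a decomposition group at a place $w$ of $K\langle\mf{n}\rangle$ above $\mf{l}$. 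Locally this tower factors as an unramified part $K\langle\mf{n}/\mf{l}\rangle_w/K_\mf{l}$ followed by the totally ramified cyclic extension $K\langle\mf{n}\rangle_w/K\langle\mf{n}/\mf{l}\rangle_w$ of degree $\#H_\mf{l}$, whose Galois group is generated by the element $\sigma_\mf{l}$ normalised in Definition~\ref{dfnsigmaell}. The computation is then driven by two inputs: the distribution relation $(\mathrm{ES2})$ of Definition~\ref{defES}, which gives $\mathrm{Cor}_{K\langle\mf{n}\rangle/K\langle\mf{n}/\mf{l}\rangle}(c_K(\mf{n}))=(1-N(\mf{l})^{-1}\rho(\Fr_\mf{l})\Fr_\mf{l}^{-1})c_K(\mf{n}/\mf{l})$ together with the congruence $N(\mf{l})^{-1}\rho(\Fr_\mf{l})\equiv 1\pmod{\pi^N}$ forced by $\mf{l}\in\mca{P}_{K,N}(\rho)$; and the telescoping identity $(\sigma_\mf{l}-1)D_\mf{l}=\#H_\mf{l}-N_{H_\mf{l}}$, where $N_{H_\mf{l}}:=\sum_{j}\sigma_\mf{l}^{j}$ and $\#H_\mf{l}=p^{N_\mf{l}}\equiv 0\pmod{\pi^N}$, again because $\mf{l}\in\mca{P}_{K,N}(\rho)$, so that $(\sigma_\mf{l}-1)D_\mf{l}\equiv -N_{H_\mf{l}}$ modulo $\pi^N$. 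Combining these, one evaluates a $1$-cocycle representing $\kappa_{K,N}(\mf{n};\boldsymbol{c})$ on a lift of the ramified generator $\sigma_\mf{l}$ and finds that its singular part at $\mf{l}$ equals the value of the finite part of $\kappa_{K,N}(\mf{n}/\mf{l};\boldsymbol{c})$ on $\Fr_\mf{l}$, which is $\phi^{\mf{l}}_{N}(\kappa_{K,N}(\mf{n}/\mf{l};\boldsymbol{c}))$ by definition.

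The hard part will not be any single step but the bookkeeping of normalisations, so that the identity holds on the nose rather than up to a $p$-adic unit: the choice of $\sigma_\mf{l}$ via $\varpi^{\sigma_\mf{l}-1}\equiv\zeta_{p^{N_\mf{l}}}$, the identification of $H^1_s(K_\mf{l},W)$ and $H^1_f(K_\mf{l},W)$ with $\mca{O}/\pi^N\mca{O}$ through the evaluation maps of Lemma~\ref{fs_free}, the choice of norm lift $\widetilde{\mathrm{N}}_K$, and the explicit cocycle computations over the mixed ramified/unramified local tower all have to be compatible. This is precisely what is arranged in Rubin's proof of \cite{Ru5} Theorems~4.5.1 and~4.5.4, so concretely I would only need to check that (C1)--(C3) supply the cohomological vanishing his argument relies on --- notably $H^0(K,k\otimes\rho)=0$ and the consequent injectivity of the relevant restriction maps, which also makes $H^1_f(K_\mf{l},W)$ and $H^1_s(K_\mf{l},W)$ free of rank one --- and that $\mca{P}_{K,N}(\rho)$ satisfies the defining conditions of his Kolyvagin primes; then his proof carries over essentially verbatim.
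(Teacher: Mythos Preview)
Your proposal is correct and in fact goes well beyond what the paper does: the paper simply cites \cite{Ru5} Theorems~4.5.1 and~4.5.4 without giving any argument, whereas you sketch the actual cocycle computation behind those theorems and check that hypotheses (C1)--(C3) and the definition of $\mca{P}_{K,N}(\rho)$ place us squarely in Rubin's framework. There is nothing to compare at the level of strategy; your write-up is a faithful unpacking of the cited result.
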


\subsection{Application of the Chebotarev density theorem}\label{ssCheb}

Recall that we have fixed a collection of embeddings
$\{ \mf{l}_{\overline{K}}\colon \overline{K} 
\hookrightarrow \overline{K}_\mf{l} \}_\mf{l}$ 
satisfying the condition (Chb) in \S 1.
We can deduce the following lemma by 
the standard arguments using 
the choice (Chb).

\begin{prop}\label{Cheb}
Let $N$ and $N_0$ be integers satisfying $N \ge N_0 >0$, 
and  $F \in \mca{IF}$ any element.
Let $\mf{n} \in \mathcal{N}^{\mathrm{w.o.}}_{F,N}(\rho )$.
Suppose that the following are given:
\begin{itemize}
\item an element $[c] \in 
\mathrm{Ev}_{N}^* (G_{\Omega_{N}})
\subseteq X_{N}(K,\rho)$.
\item an $\mca{O}/\pi^{N_0}\mca{O}$-submodule $W$ 
of $H^1(K,(\mca{O}/\pi^{N_0}\mca{O})\otimes \rho)$ 
of finite order;
\item an $\mca{O}/\pi^{N_0}\mca{O}$-linear map
$\psi  \colon W \longrightarrow \mca{O}/\pi^{N_0}\mca{O}$.
\end{itemize}
Then, there exist infinitely many 
$\mf{q} \in \mca{P}_{F,N}(\rho)$ 
satisfying the following.
\begin{enumerate}[{\rm (i)}]
\item The prime $\mf{q}$ splits completely
in $F \langle \mf{n} \rangle /K$.
\item We have  
\(
\mathrm{Ev}^*_{N,X}(\mathrm{Fr}_{\mf{q}})
=[c].
\)
\item The group $W$ is contained 
in the kernel of $(-)^{\mf{q},s}_{N_0}$, 
and there exists an element $u \in \mca{O}^\times$
satisfying 
\(
\psi(x)= u\phi^{\mf{q}}_{N_0}(x)
\)
for any $x \in W$.
\end{enumerate}
\end{prop}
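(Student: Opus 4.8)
The plan is to encode all three conditions (i)--(iii), together with the requirement $\mf{q}\in\mca{P}_{F,N}(\rho)$, into a single condition on the Frobenius conjugacy class of $\mf{q}$ in one large finite Galois extension of $K$, and then to apply the density property (Chb).

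First I would set up the relevant fields. Put $\Omega:=\Omega_N$. Let $\Omega^{*}/\Omega$ be the finite subextension of $\overline{K}/\Omega$ through which $\mathrm{Ev}^{*}_{N,X}$ factors on $G_{\Omega}$; this is a finite Galois extension of $K$ because $X_N(K,\rho)$ is finite, and $\Gal(\Omega^{*}/\Omega)$ carries a $\Gal(\Omega/K)$-action through $\bar\chi_{\mathrm{cyc}}\bar\rho^{-1}$. Let $\Omega_W/\Omega$ be the extension obtained by adjoining the values of the cocycles of a finite generating set of $W$: by (C2) the restrictions of these cocycles to $G_{\Omega}$ are homomorphisms into $(\mca{O}/\pi^{N_0}\mca{O})\otimes\rho\cong\mca{O}/\pi^{N_0}\mca{O}$, so $\Omega_W/\Omega$ is finite abelian, carries a $\Gal(\Omega/K)$-action through $\bar\rho$, and comes with a pairing $\Gal(\Omega_W/\Omega)\times W\to\mca{O}/\pi^{N_0}\mca{O}$. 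Finally set $M:=F\langle\mf{n}\rangle\cdot\Omega^{*}\cdot\Omega_W$, which contains $\Omega$ and $F\langle\mca{O}_K\rangle$. Using Remark~\ref{evlfs} and the fact that the Chebotarev primes produced by (Chb) are unramified in $M/K$, the conditions on a prime $\mf{q}$ with Frobenius $\sigma:=(\mf{q}_M,M/K)$ become: $\sigma|_{F\langle\mf{n}\rangle}=1$, which yields (i) and the splitting part of $\mf{q}\in\mca{P}_{F,N}(\rho)$; $\sigma|_{\Omega}=1$, which forces $N(\mf{q})\equiv1$ and $\rho(\Fr_\mf{q})\equiv1\pmod{\pi^N}$, hence $I_\mf{q}\subseteq\pi^N\mca{O}$ and $\mf{q}\in\mca{P}_{F,N}(\rho)$, and which also makes $W\subseteq\Ker(-)^{\mf{q},s}_{N_0}$ automatic since then the cocycles of $W$ are unramified at $\mf{q}$; $\mathrm{Ev}^{*}_{N,X}(\sigma)=[c]$, i.e. $\sigma|_{\Omega^{*}}$ lies in a prescribed coset; and $\sigma|_{\Omega_W}$ pairs with $W$ as $u^{-1}\psi$ for some $u\in\mca{O}^\times$.

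Next I would construct such a $\sigma$ and apply (Chb). Since $[c]\in\mathrm{Ev}^{*}_{N,X}(G_{\Omega})$ by hypothesis, pick $\gamma\in G_{\Omega}$ with $\mathrm{Ev}^{*}_{N,X}(\gamma)=[c]$; its image in $\Gal(\Omega^{*}/\Omega)$ is the desired coset representative. Extending $\psi$ to an $\mca{O}$-linear functional on $H^1(K,(\mca{O}/\pi^{N_0}\mca{O})\otimes\rho)$ (possible since $\mca{O}/\pi^{N_0}\mca{O}$ is self-injective, as in the proof of Lemma~\ref{liftinghom}), Lemma~\ref{lemevsurj} gives that the $\mca{O}$-span of $\mathrm{Ev}_{N_0}(G_{\Omega})$ is all of $\Hom_\mca{O}(H^1(K,(\mca{O}/\pi^{N_0}\mca{O})\otimes\rho),\mca{O}/\pi^{N_0}\mca{O})$; restricting to $W$ and tracking the normalization of the finite--singular comparison map in Lemma~\ref{fs_free} produces $\tau\in G_{\Omega}$ whose image in $\Gal(\Omega_W/\Omega)$ pairs with $W$ as $u^{-1}\psi$ for a suitable unit $u$. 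It then remains to amalgamate $1$ (on $F\langle\mf{n}\rangle$), $\gamma$ (on $\Omega^{*}$) and $\tau$ (on $\Omega_W$) into one element $\sigma\in\Gal(M/K)$ trivial on $\Omega$. This rests on the linear disjointness over $\Omega$ of the three fields $F\langle\mf{n}\rangle$, $\Omega^{*}$, $\Omega_W$: $F\langle\mf{n}\rangle\Omega/K$ is abelian and ramified at the primes dividing $\mf{n}\notin\Sigma$, where the other two are unramified, whereas $\Gal(\Omega^{*}/\Omega)$ and $\Gal(\Omega_W/\Omega)$ carry $\Gal(\Omega/K)$-actions through the characters $\bar\chi_{\mathrm{cyc}}\bar\rho^{-1}$ and $\bar\rho$, which differ by (C1)--(C2) (here the one-dimensionality of $\rho$ is essential). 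A standard compositum argument with these facts produces the required $\sigma$, and (Chb) then furnishes infinitely many primes $\mf{q}$ with $(\mf{q}_M,M/K)=\sigma$; discarding the finitely many lying in $\Sigma$, each remaining $\mf{q}$ belongs to $\mca{P}_{F,N}(\rho)$ and satisfies (i)--(iii).

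The main obstacle is the bookkeeping in the last step: verifying the linear disjointness of $F\langle\mf{n}\rangle$, $\Omega^{*}$ and $\Omega_W$ over $\Omega$ (the only place where (C1)--(C2) and the one-dimensionality of $\rho$ are genuinely used), and keeping careful track of the unit $u$ in (iii), whose presence is intrinsic, since $\mathrm{Ev}_{N_0}$ realizes $\Hom_\mca{O}(-,\mca{O}/\pi^{N_0}\mca{O})$ only after an $\mca{O}$-linear extension and the comparison map of Lemma~\ref{fs_free} is defined only up to a unit. The remaining verifications are routine adaptations of the Chebotarev arguments of \cite{Ru5} \S\S 3, 7 (compare also the treatments in \cite{Oh1} and \cite{Oh2}).
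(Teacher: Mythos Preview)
Your proposal is correct and follows essentially the same approach as the paper: define the three auxiliary extensions of $\Omega_N$ (your $F\langle\mf{n}\rangle\Omega$, $\Omega^*$, $\Omega_W$ are the paper's $L_3$, $L_1$, $L_2$), establish their linear disjointness over $\Omega_N$ via the distinct Jordan--H\"older characters $1$, $\bar\chi_{\mathrm{cyc}}\bar\rho^{-1}$, $\bar\rho$ (using (C1) and (C2)), assemble a suitable $\sigma$ from the pieces supplied by Lemma~\ref{lemevsurj}, and apply (Chb). One small point: your auxiliary claim that $\Omega_W$ is unramified at the primes dividing $\mf{n}$ is not justified by the hypotheses (nothing constrains the ramification of classes in $W$), but this is harmless since the ``abelian'' part of your argument---i.e., the trivial $\Gal(\Omega/K)$-action on $\Gal(F\langle\mf{n}\rangle\Omega/\Omega)$ versus the nontrivial actions on the other two---already yields the needed disjointness, exactly as in the paper.
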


\begin{proof}
Let $L_1$ be the maximal subfield of 
$\overline{K}$
fixed by the kernel of the evaluation map
$\mathrm{Ev}_{N,X}^*\colon G_{\Omega_N}\longrightarrow 
X_{N}(K,\rho)$, 
and $L_2$  that
fixed by the kernel of  
\[
\mathrm{Ev}_{N_0,W}\colon
G_{\Omega_N}\longrightarrow  
\Hom_{\bb{Z}_p}\left(W,  
\mca{O}/\pi^{N_0}\mca{O}
\right);\ \sigma \longmapsto 
\mathrm{Ev}_{N_0}(\sigma)\vert_W.
\]
We put $L_3:=F \langle \mf{n} \rangle 
\cdot \Omega_N(\mu_{p^N})$. 
The extensions 
$L_1$, $L_2$ and $L_3$ are Galois over $K$, 
and 
$\Gal(L_1/\Omega_N)$, $\Gal (L_2/\Omega_N)$
and $\Gal (L_3/\Omega_N)$
are abelian $p$-groups equipped with 
the actions of $\Gal(\Omega_N/K)$.
We define the composite field
$L:=L_1\cdot L_2 \cdot F(\mu_{p^N})$.
Note that the extension $L/K$ is Galois.
By (C1) and (C2),
the fields $L_1$, $L_2$ and $L_3$  
are linearly disjoint over $\Omega_N$. 
Indeed, the Jordan--H\"older constituents 
of the $\mca{O}[\Gal(\Omega_N/K)]$-module 
$\Gal(L_1/\Omega_N)\otimes_{\bb{Z}_p} \mca{O}$ 
(resp.\ $\Gal(L_2/\Omega_N)\otimes_{\bb{Z}_p} \mca{O}$ and 
$\Gal(L_3 /\Omega_N)\otimes_{\bb{Z}_p} \mca{O}$) 
are isomorphic to 
$k(\bar{\chi}_{\mathrm{cyc}}\bar{\rho}^{-1})$
(resp.\ $k(\bar{\rho})$ and $k$).
Hence by Lemma \ref{lemevsurj},
we have an element 
$\sigma \in \Gal(L/L_3 )$ satisfying
$\mathrm{Ev}_{N,X}^*(\sigma)=[c]$ and 
$u\mathrm{Ev}_{N,W}(\sigma)=\psi$
for some $u \in \mca{O}^\times$.
(Note that since $W$ is a cyclic $\mca{O}$-module, 
there exists a unit $u \in \mca{O}^\times$
such that $u^{-1}\psi \in \mathrm{Ev}_{N_0}(G_{\Omega_{N}})$.)
Let $\Sigma'$ be a finite set of 
prime ideals of $\mca{O}_K$ contained in $\Sigma$
consisting of that contained in $\Sigma$,
that dividing $\mf{n}$, 
and that ramified in $L/K$.
By the choice of the collection
$\{ \mf{l}_{\overline{K}}\colon \overline{K} 
\hookrightarrow \overline{K}_\mf{l} \}_\mf{l}$, 
there exist infinitely many primes $\mf{q}$
not contained in $\Sigma'$ 
such that the arithmetic Frobenius 
at $\mf{q}_L$ in $\Gal(L/K)$ coincides with $\sigma$.
Such primes $\mf{q}$  
are contained in $\mca{P}_{F,N}(\rho)$, 
and satisfy (i), (ii) and (iii).
\end{proof}

\subsection{Proof of Proposition \ref{propgrlev}}\label{sspfprpgrlev}

Now let us complete the proof of 
Proposition \ref{propgrlev}.
We assume that the order of $X(K,\rho)$ is finite, 
and let $E$ be the length of the $\mca{O}$-module $X$.
Fix $F \in \mca{IF}$.
We put $N> 2E$. For each pair 
$(m_1,m_2) \in \bb{Z}^2_{>0}$ 
with $m_1 \ge m_2$, 
let $\mathrm{pr}_{m_1,m_2} \colon 
\mca{O}/\pi^{m_1}\mca{O} 
\longrightarrow \mca{O}/\pi^{m_2}\mca{O}$
be the projection.

Fix $i \in \bb{Z}_{\ge 0}$.
Let $\mf{n} \in \mca{N}_{F,N}^{\mathrm{w.o}}(\rho)$
be an element satisfying $\epsilon (\mf{n})=i$, and
take any element 
$\boldsymbol{c} \in Z_{\mf{a}}\subseteq Z^{\mf{n}}$.
We denote by $Y$ the quotient $\mca{O}$-module of 
$X(K,\rho)$
by the submodule generated by the subset
$\{ \mathrm{Ev}_{N}^*(\mathrm{Fr}_{\mf{l}})
\mathrel{\vert} \mf{l} \in \mathrm{Prime}( \mf{n} ) \}$.
The $\mca{O}$-module $Y$ 
is decomposed into a direct sum
of finitely many cyclic $\mca{O}$-submodules 
of finite order. We write
\(
Y=\bigoplus_{j=1}^r \mca{O} \bar{\mf{c}}_j 
\)
and $\mca{O} \bar{\mf{c}}_j \simeq \mca{O}/\pi^{e_j}\mca{O}$.
We may assume that 
$e_j \ge e_{j+1}$ for each $j$.
By Lemma \ref{lemevsurj}, 
we may assume that there exists 
the sequence $\{ \mf{c}'_j \}_{j=1}^r \subseteq 
\mathrm{Ev}_{N,X}^* (G_{\Omega_{N}})$ 
satisfying the following property.
\begin{itemize}
\item[(A)] For any $j$, the image of $\mf{c}'_j$ 
in $Y/\sum_{\nu=1}^{j-1} \mca{O} \bar{\mf{c}}_\nu$
coincides with that of $\bar{ \mf{c} }_j$.
\end{itemize}
Take any $\mca{O}$-linear map 
$f \colon H^1(F,(\mca{O}/\pi^N\mca{O})\otimes \rho)
\longrightarrow \mca{O}/\pi^{N}\mca{O}$. 
In order to prove the first assertion of 
Proposition \ref{propgrlev},
it suffices to show that 
\begin{equation}\label{eqgrfitt}
(\mathrm{pr}_{N,N-E} \circ f)
(\kappa_{K,N}(\mf{n};\boldsymbol{c}))
\in 
\Fitt_{\mca{O},0}(Y)\cdot \mf{C}_{r+i,K,N-E}(Z).
\end{equation}
Indeed, by the definition of higher Fitting ideals, 
we can easily show the following.

\begin{lem}\label{lemFittquot}
Let $R$ be a commutative ring, 
and $M$ a finitely presented $R$-module.
Let $s \in \bb{Z}_{>0}$ be any positive integer, and $M'$ 
an $R$-submodule of $M$
generated by $s$ elements $x_1, \dots , x_s \in M$.
Then, we have
\(
\Fitt_{R,0}(M/M') \subseteq 
\Fitt_{R,s}(M).
\)
\end{lem}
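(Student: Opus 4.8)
The plan is to prove the inclusion by the usual device of augmenting a finite presentation of $M$ with columns coming from the chosen generators $x_1,\dots,x_s$ of $M'$, and then analysing the maximal minors of the augmented matrix by Laplace expansion.

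First I would fix a finite presentation $R^m \xrightarrow{\ A\ } R^n \xrightarrow{\ g\ } M \to 0$; adjoining zero columns if necessary, I may assume $m \ge n$, so that the Fitting ideals of $M$ are computed from $A$ as in the definition. For each $i \in \{1,\dots,s\}$ I would pick a vector $\mathbf{a}_i \in R^n$ with $g(\mathbf{a}_i)=x_i$, and let $\widetilde A$ be the $n\times(m+s)$ matrix obtained by appending the columns $\mathbf{a}_1,\dots,\mathbf{a}_s$ to $A$. Since $\Im\widetilde A = \Im A + \sum_i R\mathbf{a}_i$, one checks directly that $\Im\widetilde A = g^{-1}(M')$: the inclusion $\subseteq$ is clear, and conversely if $g(v)=\sum_i r_i x_i$ then $v-\sum_i r_i\mathbf{a}_i \in \Ker g = \Im A \subseteq \Im\widetilde A$. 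Hence $R^{m+s}\xrightarrow{\ \widetilde A\ } R^n \xrightarrow{\ g\ } M/M' \to 0$ is a finite presentation of $M/M'$ with $m+s\ge n$, so $\Fitt_{R,0}(M/M')$ is generated by the $n\times n$ minors of $\widetilde A$, and it suffices to show that each such minor lies in $\Fitt_{R,s}(M)$.

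For the main step I would fix one $n\times n$ submatrix of $\widetilde A$; it uses some number $k$ of the appended columns, with $0\le k\le s$. If $n\le s$ then $\Fitt_{R,s}(M)=R$ and there is nothing to prove, so I may assume $n>s\ge k$. Expanding the determinant of this submatrix by the generalized Laplace expansion along the $k$ appended columns writes it as an $R$-linear combination of $(n-k)\times(n-k)$ minors of $A$. Since $n-k\ge n-s$, each such minor is one of the generators of $\Fitt_{R,k}(M)$, and by the monotonicity $\Fitt_{R,0}(M)\subseteq\Fitt_{R,1}(M)\subseteq\cdots$ of Fitting ideals we get $\Fitt_{R,k}(M)\subseteq\Fitt_{R,s}(M)$. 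Therefore the minor lies in $\Fitt_{R,s}(M)$, giving the desired inclusion.

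The argument is essentially routine; the only points requiring a little care are the independence of the Fitting ideals from the choice of presentation (so that the augmented presentation may be used freely), the degenerate range $n\le s$, and the bookkeeping in the Laplace expansion so that the minors of $A$ that appear are exactly of size $(n-k)\times(n-k)$ and the bound $n-k\ge n-s$ is applied correctly. I do not anticipate a genuine obstacle here.
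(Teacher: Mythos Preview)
Your proposal is correct and follows essentially the same route as the paper: the paper simply notes that the lemma follows from the definition of higher Fitting ideals, and its (suppressed) argument is exactly the one you give---augment a presentation matrix $A$ of $M$ by columns lifting $x_1,\dots,x_s$ to obtain a presentation of $M/M'$, then observe via Laplace expansion that every $n\times n$ minor of the augmented matrix is an $R$-linear combination of $(n-k)\times(n-k)$ minors of $A$ with $k\le s$. Your treatment is in fact more careful than the paper's sketch, since you explicitly handle the degenerate case $n\le s$ and track the number $k$ of appended columns actually used.
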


We put $W_{0}:=\mca{O} \cdot \kappa_{K,N}(\mf{n};\boldsymbol{c})$. 
By Proposition \ref{Cheb}, 
there exists an element $\mf{l}_{1} \in \mca{P}_{F,N}(\rho)$ 
prime to $\mf{a}$ 
satisfying $\mathrm{Ev}_{N}^*(\mathrm{Fr}_{\mf{l}_{1}})=\mf{c}_{1}$
and $f\vert_{W_0} = u_{1}\phi^{\mf{l}_{1}}_{N}\vert_{W_0}$
for some $u_{1} \in \mca{O}^\times$.

Let us take 
$\mf{l}_{2}, \dots , \mf{l}_{r+1} \in \mca{P}_{F,N}(\rho)$ 
not dividing $\mf{a}$ 
satisfying the following properties.
\begin{enumerate}[(a)]
\item For each $j \in \{ 2, \dots, r+1 \}$,
the prime  $\mf{l}_j$ splits completely  
in $F \langle \mf{n}_{j-1} \rangle /K$, 
where we put 
$\mf{n_{j-1}}:=\mf{n} \prod_{\nu=1}^{j-1} \mf{l}_\nu$.
\item For each $j \in \{ 2, \dots, r+1 \}$, 
we have $\mathrm{Ev}_{N}^*(\mathrm{Fr}_{\mf{l}_{j}})=\mf{c}'_{j}$.
(Here, we put $\mf{c}'_{r+1}:=0$.)
\item Let $j \in \{ 2, \dots, r+1 \}$.
Put $\mf{n}_{j-1}:=\mf{n} \prod_{\nu=1}^{j-1}\mf{l}$
and $N_{j-1}:=N - \sum_{\nu= 1}^{j-1} e_\nu$.
We define 
$W_{j-1}:=\mca{O} \kappa_{K,N_{j-1}}(\mf{n}_{j-1};\boldsymbol{c})$.
Then, there exists an element $u_{j} \in \mca{O}^\times$ such that
\[
(\cdot )^{\mf{l}_{j-1},s}_{N_{j-1}}
=
u_j\pi^{e_{j-1}}\phi^{\mf{l}_{j}}_{N_{j-1}}\vert_{W_{j-1}}.
\]
\end{enumerate}
In order to take such a sequence 
$\{ \mf{l}_j \}_{j=1}^{r+1}$, 
we need to prove the following lemma.

\begin{lem}\label{lemdivES}
Let $j$ be an integer with $2 \le j \le r$, and
$\mf{l}_{2}, \dots , \mf{l}_{j}
\in \mca{P}_{F,N}(\rho)$
primes satisfying the conditions
{\rm (a)}, {\rm (b)} and {\rm (c)} above.
Then, there exists an $\mca{O}$-linear map
$\bar{\psi}_j \colon W_j \longrightarrow 
\mca{O}/\pi^{N_j}\mca{O}$
satisfying 
$(\cdot)^{\mf{l}_{j},s}_{N_j}\vert_{W_j}
=\pi^{e_j}\bar{\psi}_j$.
\end{lem}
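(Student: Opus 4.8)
The plan is to reduce, via the finite--singular relation for Kolyvagin derivatives, to the defining properties of the primes $\mf{l}_1,\dots,\mf{l}_j$, and then to pin down the last bit of divisibility by global duality. Since $\mf{l}_j$ divides $\mf{n}_j$ with $\mf{n}_j/\mf{l}_j=\mf{n}_{j-1}$, and all prime factors of $\mf{n}_j$ lie in $\mca{P}_{F,N}(\rho)\subseteq\mca{P}_{K,N_j}(\rho)$, Proposition \ref{kappalfs} applied at level $N_j$ gives
\[
\bigl(\kappa_{K,N_j}(\mf{n}_j;\boldsymbol{c})\bigr)^{\mf{l}_j,s}_{N_j}
=\phi^{\mf{l}_j}_{N_j}\bigl(\kappa_{K,N_j}(\mf{n}_{j-1};\boldsymbol{c})\bigr).
\]
Because $W_j=\mca{O}\cdot\kappa_{K,N_j}(\mf{n}_j;\boldsymbol{c})$ is a cyclic $\mca{O}$--module, it is enough to show that this element of $\mca{O}/\pi^{N_j}\mca{O}$ is divisible by $\pi^{e_j}$; the map $\bar\psi_j$ is then obtained by dividing and lifting. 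Moreover, since $\mf{l}_j\notin\Sigma$ and $\mf{l}_j\nmid\mf{n}_{j-1}$, the class $\kappa_{K,N_j}(\mf{n}_{j-1};\boldsymbol{c})$ lies in the kernel of $(\cdot)^{\mf{l}_j,s}_{N_j}$ by Proposition \ref{lemKDlc}, so by Remark \ref{evlfs}(ii) the right--hand side equals $\mathrm{Ev}_{N_j}(\mathrm{Fr}_{\mf{l}_j})\bigl(\kappa_{K,N_j}(\mf{n}_{j-1};\boldsymbol{c})\bigr)$.

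I would then exploit the two defining constraints on $\mf{l}_j$ together. Condition (c) for the index $j$, evaluated on the generator $\kappa_{K,N_{j-1}}(\mf{n}_{j-1};\boldsymbol{c})$ of $W_{j-1}$ and combined with Proposition \ref{kappalfs}, relates $\phi^{\mf{l}_j}_{N_{j-1}}\bigl(\kappa_{K,N_{j-1}}(\mf{n}_{j-1};\boldsymbol{c})\bigr)$, up to the unit $u_j$ and the factor $\pi^{e_{j-1}}$, to $(\kappa_{K,N_{j-1}}(\mf{n}_{j-1};\boldsymbol{c}))^{\mf{l}_{j-1},s}_{N_{j-1}}=\phi^{\mf{l}_{j-1}}_{N_{j-1}}\bigl(\kappa_{K,N_{j-1}}(\mf{n}_{j-2};\boldsymbol{c})\bigr)$, and iterating this down through $\mf{l}_{j-2},\dots,\mf{l}_1$ (using condition (c) at each intermediate index and, at the bottom, the defining property $f\vert_{W_0}=u_1\phi^{\mf{l}_1}_N\vert_{W_0}$ of $\mf{l}_1$) propagates divisibility by each $\pi^{e_\nu}$. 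The factor $\pi^{e_j}$ itself I would extract from condition (b): by global (Poitou--Tate) duality, $\mathrm{Ev}_{N_j}(\mathrm{Fr}_{\mf{l}_j})(\kappa_{K,N_j}(\mf{n}_{j-1};\boldsymbol{c}))$ is, up to a unit, the value on the image of $\kappa_{K,N_j}(\mf{n}_{j-1};\boldsymbol{c})$ of the functional attached to $\mathrm{Ev}^*_{N,X}(\mathrm{Fr}_{\mf{l}_j})=\mf{c}'_j$ in $X_{N_j}(K,\rho)$ modulo the classes $\mathrm{Ev}^*_{N_j,X}(\mathrm{Fr}_{\mf{q}})$ for $\mf{q}\mid\mf{n}_{j-1}$; by property (A) and the conditions (b) for $\mf{l}_1,\dots,\mf{l}_{j-1}$ that quotient is $Y/\sum_{\nu<j}\mca{O}\bar{\mf{c}}_\nu$, in which $\mf{c}'_j$ maps to $\bar{\mf{c}}_j$, cyclic of order exactly $\pi^{e_j}$, which forces the desired divisibility (the remaining ``singular'' contributions at $\mf{l}_1,\dots,\mf{l}_{j-1}$ being themselves divisible by $\pi^{e_\nu}\ge\pi^{e_j}$ thanks to $e_1\ge\cdots\ge e_r$ and the earlier instances of the lemma).

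The main obstacle is the precision bookkeeping intertwined with the duality. Condition (c) is an identity of homomorphisms on the cyclic $\mca{O}/\pi^{N_{j-1}}\mca{O}$--module $W_{j-1}$, so ``dividing by $\pi^{e_{j-1}}$'' at each stage introduces an indeterminacy, and one must verify that after reduction modulo $\pi^{N_j}$ (recall $N_{j-1}=N_j+e_j$) these indeterminacies remain divisible by $\pi^{e_j}$; this is where the hypothesis $N>2E$ is used, since $Y$ being a quotient of $X(K,\rho)$ gives $\sum_\nu e_\nu\le E$, hence every intermediate level $N_\nu=N-\sum_{\mu\le\nu}e_\mu$ stays strictly above $E$. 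Equally delicate is making the duality step rigorous at the finite level $N_j$ — so that the order of $\bar{\mf{c}}_j$ genuinely governs the divisibility — and confirming that the auxiliary singular contributions at $\mf{l}_1,\dots,\mf{l}_{j-1}$ are harmless; these are the technical cores of the argument.
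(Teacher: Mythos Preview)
Your reduction in the first paragraph — ``it is enough to show that $(\kappa_{K,N_j}(\mf{n}_j;\boldsymbol{c}))^{\mf{l}_j,s}_{N_j}$ is divisible by $\pi^{e_j}$; the map $\bar\psi_j$ is then obtained by dividing and lifting'' — is where the argument breaks. Write $\bar x:=\kappa_{K,N_j}(\mf{n}_j;\boldsymbol{c})$ and let $\pi^s$ generate $\ann_{\mca{O}}(\bar x)$. Knowing $(\bar x)^{\mf{l}_j,s}_{N_j}\in\pi^{e_j}\mca{O}/\pi^{N_j}\mca{O}$ does \emph{not} give a well-defined $\bar\psi_j$: you need a preimage $c$ under multiplication by $\pi^{e_j}$ that is also killed by $\pi^s$, equivalently $(\bar x)^{\mf{l}_j,s}_{N_j}\in\pi^{N_j-s+e_j}\mca{O}/\pi^{N_j}\mca{O}$, which is strictly stronger than divisibility by $\pi^{e_j}$ (and stronger than the automatic $\pi^{N_j-s}$) unless $s=N_j$. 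The paper handles exactly this point by \emph{defining} $\bar\psi_j$ through the lift $x:=\kappa_{K,N_{j-1}}(\mf{n}_j;\boldsymbol{c})$ at level $N_{j-1}$: with $\nu\colon\mca{O}/\pi^{N_j}\mca{O}\xrightarrow{\sim}\pi^{e_j}\mca{O}/\pi^{N_{j-1}}\mca{O}$, set $\bar\psi_j(a\bar x):=\nu^{-1}\bigl((ax)^{\mf{l}_j,s}_{N_{j-1}}\bigr)$, and the entire proof is the verification that $a\bar x=0$ forces $(ax)^{\mf{l}_j,s}_{N_{j-1}}=0$. That verification uses global duality, but not on $\bar x$: one pulls $\pi^m x$ (for a suitable $m$ depending on the exact order $d$ of $x$) back along $H^1(\nu\otimes\rho)$ to an element $\bar y\in\mathrm{Sel}^{p\mf{n}_j}$ at level $N_j$ via Lemma~\ref{lemSelsurj}, and then the annihilator computation $\ann(Y_j/Y_{j-1})=\pi^{e_j}$ together with duality gives $(\bar y)^{\mf{l}_j,s}_{N_j}\in\pi^{e_j}\mca{O}/\pi^{N_j}\mca{O}$, from which one unwinds to $(ax)^{\mf{l}_j,s}_{N_{j-1}}=0$.

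Your second paragraph also goes in the wrong direction. Condition~(c) for index $j$ says $(\cdot)^{\mf{l}_{j-1},s}_{N_{j-1}}=u_j\pi^{e_{j-1}}\phi^{\mf{l}_j}_{N_{j-1}}$ on $W_{j-1}$; iterating this \emph{multiplies} $\phi^{\mf{l}_j}$ by factors $\pi^{e_\nu}$ with $\nu<j$ and relates it to $f(\kappa_{K,N}(\mf{n}))$, but never produces a factor $\pi^{e_j}$ dividing $\phi^{\mf{l}_j}$. (That iteration is exactly what the paper does \emph{after} the lemma, in the proof of Proposition~\ref{propgrlev}, to obtain \eqref{eqgrfitt} and \eqref{eqkappa}.) Your duality sketch has the right scent --- the exponent $e_j$ does come from the order of $\bar{\mf{c}}_j$ in the successive quotient --- but the statement ``$\mathrm{Ev}_{N_j}(\Fr_{\mf{l}_j})(\kappa)$ is, up to a unit, the value of the functional $\mf{c}'_j$ on $\kappa$'' conflates two different pairings; $\kappa_{K,N_j}(\mf{n}_{j-1})$ lies in $\mathrm{Sel}^{p\mf{n}_{j-1}}$, not in the Selmer group dual to $X_{N_j}(K,\rho)$, so $\mf{c}'_j$ does not act on it. The correct mechanism is the Poitou--Tate constraint on the image of $\mathrm{Sel}^{p\mf{n}_j}$ in $H^1_s(K_{\mf{l}_j},\cdot)$, applied to the auxiliary element $\bar y$ above rather than to $\kappa$ directly.
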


\begin{proof}
Put $x:=\kappa_{K,N_{j-1}}(\mf{n}_{j};\boldsymbol{c})$
and $\bar{x}:=\kappa_{K,N_{j}}(\mf{n}_{j};\boldsymbol{c})$.
Let 
\[
\nu \colon \mca{O}/\pi^{N_{j}}\mca{O}
\xrightarrow{\ \simeq \ } \pi^{e_j}\mca{O}/\pi^{N_{j-1}}\mca{O}
\]
be the isomorphism of $\mca{O}$-modules defined by 
$a \mapsto a\pi^{e_j}$.
We shall define the map 
$\bar{\psi}_j \colon 
\overline{W} \longrightarrow 
\mca{O}/\pi^{N_j}\mca{O}$
by $\bar{\psi}_j(a \bar{x})
=\nu^{-1}((ax)^{\mf{l}_j,s}_{N_j})$
for any $a \in \mca{O}$.
Once such $\bar{\psi}_j$ is defined, 
the map $\psi$ clearly satisfies 
the desired condition.
In order to prove Lemma \ref{lemdivES},
it is sufficient to show that
$\bar{\psi}_j$ is well-defined.

Suppose that an element
$a\in \mca{O}$ satisfies  
$a\bar{x}= 0$.
Then, we have 
\[
\pi^{e_j}ax=
H^1(\nu\otimes \rho)(a \bar{x})=0,
\]
where
\(
H^1(\nu\otimes \rho)\colon
H^1(K,(\mca{O}/\pi^{N_{j}}\mca{O})\otimes \rho)
\longrightarrow 
H^1(K,(\mca{O}/\pi^{N_{j-1}}\mca{O})\otimes \rho)
\)
is the map induced by 
$\nu\otimes \rho \colon (\mca{O}/\pi^{N_{j}}\mca{O})\otimes \rho
\longrightarrow (\mca{O}/\pi^{N_{j-1}}\mca{O})\otimes \rho$.
Let $d \in \bb{Z}_{>0}$ be the integer such that
\(
\ann_{\mca{O}}(x;
H^1(K,(\mca{O}/\pi^{N_{j-1}}\mca{O})\otimes \rho))
= \pi^{d} \mca{O}
\).
Then we obtain 
\begin{equation}\label{eqaind-ej}
a \in \pi^{d-e_j}\mca{O}.
\end{equation}
Put $m:=\max \{ d-N_j,0 \}$.
Then, by definition, we have 
$0 \le m \le e_j$.
By Lemma \ref{lemSelsurj},
we have a surjection 
\[
\xymatrix{
H^1(\nu\otimes \rho) \colon
\mathrm{Sel}^{p\mf{n_j}}(K,(\mca{O}/\pi^{N_{j}}\mca{O})\otimes \rho)
\ar@{->>}[r] & 
\mathrm{Sel}^{p\mf{n_j}}(K,(\mca{O}/\pi^{N_{j-1}}\mca{O})\otimes \rho)
[\pi^{N_j}].
}
\]
There exists an element 
$\bar{y} \in \mathrm{Sel}^{p\mf{n_j}}
(K,(\mca{O}/\pi^{N_{j}}\mca{O})\otimes \rho))$
such that $H^1(\nu\otimes \rho)(\bar{y})=\pi^{m}x$. 
We obtain 
\(
\nu((\bar{y})^{\mf{l}_j,s}_{N_j})
=(H^1(\nu\otimes \rho)(\bar{y}))^{\mf{l}_j,s}_{N_{j-1}}
=\pi^{m}(x)^{\mf{l}_j,s}_{N_{j-1}}
\).

For each $\nu \in \{j-1,j \}$, let $Y_\nu$ be the 
$(\mca{O}/\pi^{N_{j}}\mca{O})$-submodule 
of $X_{N_j}(K,\rho)=X(K,\rho)$ generated by 
$\{ \mathrm{Ev}_{N}^*(\mathrm{Fr}_{\mf{l}})
\mathrel{\vert} \mf{l} \mid \mf{n}_\nu \}$.
Since $Y_j/Y_{j-1}$ is 
generated by $\mf{c}_j$, 
we have 
\[
\ann_{(\mca{O}/\pi^{N_{j}}\mca{O})}(Y_j/Y_{j-1})
=\ann_{(\mca{O}/\pi^{N_{j}}\mca{O})}
(\bar{\mf{c}}_j;Y)
=\pi^{e_j}\mca{O}/\pi^{N_{j}}\mca{O}.
\]
Since 
$\bar{y} \in \mathrm{Sel}^{p\mf{n_j}}
(K,(\mca{O}/\pi^{N_{j}}\mca{O})\otimes \rho)$, 
the global duality of Galois cohomology
implies that 
\(
(\bar{y})^{\mf{l}_j,s}_{N_j} \in 
\pi^{e_j} \mca{O}/\pi^{N_{j}}
\). 
(For instance, see Theorem 1.7.3.)
So, we have 
\[
\pi^{m}(x)^{\mf{l}_j,s}_{N_{j-1}} 
\in \pi^{2e_j} \mca{O}/\pi^{N_{j-1}} .
\]
Since $d-N_{j}\le m \le e_j <2e_j$, we have 
\[
(x)^{\mf{l}_j,s}_{N_{j-1}} \in 
\pi^{2e_j+ N_j-d} \mca{O}/\pi^{N_{j-1}}
=\pi^{e_j-d+ N_{j-1}} \mca{O}/\pi^{N_{j-1}}.
\]
Hence by (\ref{eqaind-ej}), 
we obtain $(ax)^{\mf{l}_j,s}_{N_{j-1}}
=a(x)^{\mf{l}_j,s}_{N_{j-1}}=0$.
This implies that the map $\bar{\psi}_j$ 
is well-defined.
\end{proof}

\begin{proof}[Proof of Proposition \ref{propgrlev}]
Let $j \in \{ 1,2,\dots, r \}$.
Suppose that we have taken 
primes $\mf{l}_{1}, \dots, \mf{l}_{j}
\in \mca{P}_{F,N}(\rho)$
satisfying (a), (b) and (c).
Let us take
the next prime $\mf{l}_{j+1}$.
By Proposition \ref{Cheb}, 
we can take a prime ideal $\mf{l}_{j+1} \in \mca{P}_{F,N}(\rho)$ 
splitting completely in $F \langle \mf{n}_j \rangle/K$, and
satisfying 
$\mathrm{Ev}_{N}^*(\mathrm{Fr}_{\mf{l}_{j+1}})=\mf{c}'_{j+1}$
and 
\(
\bar{\psi}
=
u_{j+1}\phi^{\mf{l}_{j+1}}_{N_{j}}\vert_{W_{j}}
\)
for some $u_{j+1} \in \mca{O}^\times$, 
where 
$\bar{\psi}_j \colon W_j \longrightarrow 
\mca{O}/\pi^{N_j}\mca{O}$
is the map introduced in Lemma \ref{lemdivES}.
This prime $\mf{l}_{j+1}$ clearly
satisfies (a), (b) and (c).
By using the sequence  $\{\mf{l}_j\}_j$,
we have 
\begin{align*}
(\mathrm{pr}_{N,N-E} \circ f)
(\kappa_{K,N}(\mf{n};\boldsymbol{c}))
&=u_1 \cdots u_{r+1} \pi^{e_1+ \cdots + e_r} 
\phi^{\mf{l}_{r+1}}_{N-E}
(\kappa_{K,N-E}(\mf{n}_r;\boldsymbol{c})) \\
&\in \pi^{e_1+ \cdots + e_r}\mf{C}_{i+r,K,N-E}(Z) \\
&=\Fitt_{\mca{O},0}(Y)\cdot \mf{C}_{i+r,K,N-E}(Z).
\end{align*}
Hence we obtain (\ref{eqgrfitt}), and 
complete the proof of the first assertion of 
Proposition \ref{propgrlev}.

Let us show the second assertion. 
Suppose that the ideal
$\mf{C}_{0}(K, Z)$ is equal to 
$\Fitt_{\mca{O},0}(X(K,\rho))$. 
We take an integer $N$ satisfying $N>3E$.
Since any ideal of $\mca{O}/\pi^N \mca{O}$ 
is principal, 
we have an element
$\boldsymbol{c} \in Z$ and  
a homomorphism
\[
f_0 \colon H^1(F,(\mca{O}/\pi^N\mca{O})\otimes \rho)
\longrightarrow \mca{O}/\pi^{N}\mca{O}
\]
of $\mca{O}$-modules satisfying 
\[
f_0 (\kappa_{K,N}(\mca{O}_K;\boldsymbol{c}))
\cdot \mca{O}/\pi^{N}\mca{O}
= \Fitt_{\mca{O},0}(X(K,\rho)).
\] 
We put $\mf{n}:=\mca{O}_K$, 
and $Y:=X(K,\rho)$.
We newly write
\(
X(K,\rho)=\bigoplus_{j=1}^r \mca{O} \bar{\mf{c}}_j 
\)
and $\mca{O}\bar{\mf{c}}_j \simeq \mca{O}/\pi^{e_j}\mca{O}$.
We may assume that there exists a sequence 
$\{ \mf{c}'_j \}_{j=1}^r
\subseteq 
\mathrm{Ev}_{N,X}^* (G_{\Omega_{N}})$ 
satisfying the condition (A) for the present 
$\{ \bar{\mf{c}}_j \}_{j=1}^r$.
We take a sequence of 
primes $\{ \mf{l}_j \}_{j=1}^{r+1} \subseteq \mca{P}_{F,N}(\rho)$
satisfying the conditions (a)--(c)
for the new tripe $(\mf{n}(=\mca{O}),\{\mf{c}_j \}_j,f_0)$.
Then, for any $i \in \bb{Z}$ with $0 \le i \le r$, 
we have an equality
\begin{equation}\label{eqkappa}
(\mathrm{pr}_{N_i,N-E} \circ \phi_{N_i}^{\mf{l}_{i+1}})
(\kappa_{K,N}(\mf{n_i};\boldsymbol{c}))
=u'_i \pi^{\sum_{j=i+1}^r e_j}
\phi^{\mf{l}_{r+1}}_{N-E}
(\kappa_{K,N-E}(\mf{n}_r;\boldsymbol{c}))
\end{equation}
in $\mca{O}/\pi^{N-2E}\mca{O}$,
where $u'_i$ is a unit of $\mca{O}$.
Since we assume that $\mf{C}_{0}(K,Z)$ is equal to 
$\Fitt_{\mca{O},0}(X(K,\rho))$, 
it follows from the equality (\ref{eqkappa}) for $i=0$ 
that the element 
$\phi^{\mf{l}_{r+1}}_{N-E}
(\kappa_{K,N-E}(\mf{n}_r;\boldsymbol{c}))$
belongs to $(\mca{O}/\pi^N\mca{O})^\times$.
So, by the equality (\ref{eqkappa}), we obtain 
\[
\mf{C}_{i}(K, Z) \supseteq
\Fitt_{\mca{O},i}(X(K,\rho)) 
\]
for any $i \in \bb{Z}_{\ge 0}$.
Hence by combining with the first assertion, 
we obtain the second assertion of 
Proposition \ref{propgrlev}.
\end{proof}

\subsection{Proof of Theorem \ref{thm+P}}\label{ssthmP}

Let $\psi \in \widehat{\Delta}$ 
be a character
satisfying $\psi \vert_{D_{\Delta,\mf{p}}} \ne 1$
for any prime $\mf{p}$ above $p$.
If $K_0$ contains $\mu_p$, 
we also assume that $\psi 
\ne \omega\psi^{-1}$
and $\psi \vert_{D_{\Delta,\mf{p}}} 
\ne \omega \vert_{{D_{\Delta,\mf{p}}}} $
for any prime $\mf{p}$  above $p$.
Recall that we put 
$\mca{O}_\psi:=\bb{Z}_p[\mathrm{Im} \psi]$ 
and  $X_\psi=X(\chi_{\mathrm{cyc}}\psi^{-1})$
in \S \ref{secintro}.
Here, we shall prove Theorem \ref{thm+P},
namely the assertion that 
for any $i \in \bb{Z}_{\ge 0}$, 
we have 
\[
\Fitt_{\Lambda_\psi, i}(X_\psi)+ I(\Gamma)
= \mf{C}_{i,\psi}^{\mathrm{ell}}+ I(\Gamma).
\]

First, let us recall a result related 
to the analytic class number formula.
Let $C$ be the $\bb{Z}_p[\Delta]$-subgroup of 
$E:=\mca{O}_{K_0}[p^{-1}]^\times 
\otimes_{\bb{Z}} \bb{Z}_p$
consisting of elliptic units.
Namely, $C_\psi$ is an $\bb{Z}_p[\Delta]$-submodule
generated by all the roots of unity contained in $K_0$
and ${c^{\mf{a}}_{\mf{g}}}(K;\mca{O})_\psi$ 
for any pair $(\mf{a},\mf{g})$ satisfying 
$(\mathrm{I})_1$ and $(\mathrm{I})_2$.
Note that we have a natural isomorphism 
$E_\chi \simeq (\mca{O}_{K_0}^\times 
\otimes_{\bb{Z}} \bb{Z}_p)_\psi$, 
which is written by $\mca{E}(K_0)^{\psi}$ 
in \cite{Ru3},
since $\psi \vert_{D_{\Delta,\mf{p}}}$ is non-trivial
for any prime $\mf{p}$ above $p$. 
We also note that our $C_\psi$ 
coincides with the $\mca{O}_\psi$-submodule
$\mca{C}(K_0)^{\psi}$ of $\mca{E}(K_0)^{\psi}$
in the notation of \cite{Ru3}.
Recall that we write  
$A(K_0)_{,\psi}:=X(K,\chi_{\mathrm{cyc}}\psi^{-1})$
in the notation of \S \ref{secintro}.
By Proposition \ref{propgrlev} for 
$i=0$ and $\rho=\chi_{\mathrm{cyc}}{\psi'}^{-1}$
(with general non-trivial 
$\psi' \in \widehat{\Delta}$)
combined with the analytic class number formula
described in terms of elliptic units, 
we have 
\begin{equation}\label{eqcnf}
\Fitt_{\mca{O}_\psi,0}(A(K_0)_\psi)=\mf{C}_{0}(K, Z^{\mathrm{ell}}_\psi).
\end{equation}
(See \cite{Ru3} Theorem 1.)

\begin{proof}[Proof of Theorem \ref{thm+P}]
Let $i \in \bb{Z}_{\ge 0}$. 
By Lemma \ref{lemXred} (for $\rho=1$), 
the image of $\Fitt_{\Lambda_\psi,i}(X_\psi)$ 
in $\mca{O}_\psi=\Lambda_\psi/I(\Gamma)$ 
coincides with $\Fitt_{\mca{O}_\psi,0}(A(K_0)_\psi)$.
By Corollary \ref{corC_iIm}, 
the image of $\mf{C}_{i,\psi}^{\mathrm{ell}}$
in $\Lambda_\psi/I(\Gamma)$ 
coincides with 
$\bigcap_{F \in \mca{IF}}\mf{C}_{i}^{\mathrm{ell},F}(K)_\psi$.
Note that 
by the second assertion of Proposition \ref{propgrlev}
and (\ref{eqcnf}), we have
\(
\Fitt_{\mca{O}_\psi,i}(A(K_0)_\psi)=
\mf{C}_{i}^{\mathrm{ell},F}(K)_\psi
\)
for any $F \in \mca{IF}$. 
Hence the image of $\mf{C}_{i,\psi}^{\mathrm{ell}}$
in $\Lambda/I(\Gamma)$ 
coincides with that of 
of $\Fitt_{\Lambda_\psi,i}(X_\psi)$.
\end{proof}

\section{Proof of Theorem \ref{thmmainthm}}\label{secpf}
Here, let us complete 
the  proof of Theorem \ref{thmmainthm}.
Let $\psi \in \widehat{\Delta}$ 
be a character
satisfying $\psi \vert_{D_{\Delta,\mf{p}}} \ne 1$
for any prime $\mf{p}$ above $p$.
If $K_0$ contains $\mu_p$, 
we also assume that $\psi 
\ne \omega\psi^{-1}$
and $\psi \vert_{D_{\Delta,\mf{p}}} 
\ne \omega \vert_{{D_{\Delta,\mf{p}}}} $
for any prime $\mf{p}$  above $p$.
We put $\mca{O}_\psi:=\bb{Z}_p[\mathrm{Im} \psi]$, 
and $\Lambda_\psi:=\mca{O}_\psi[[\Gamma ]]$. 
Then, for any $i \in \bb{Z}_{\ge 0}$,
we have defined an ideal
$\mathfrak{C}_{i,\psi}^{\mathrm{ell}}$
of $\Lambda_\psi$ in Definition \ref{theideal2}. 
Our goal is the proof of the inequality
$\mathfrak{C}_{i,\psi}^{\mathrm{ell}} \prec
\Fitt_{\Lambda_\psi, i}(X_\psi)$.

In \ref{ssonevar}, we prove 
Theorem \ref{thmmainthm} for one variable cases, 
and in \ref{sstwovar}, 
Theorem \ref{thmmainthm} for two variable cases.

\subsection{One variable cases}\label{ssonevar}
In this subsection, suppose that $\Gamma \simeq \bb{Z}_p$, 
and fix a topological generator $\gamma$ of $\Gamma$.
Let $\mca{L}$ be 
a finite extension field  of $\bb{Q}_p$,
and $\mca{O}$ the ring of integers in $\mca{L}$.
We identify the Iwasawa algebra 
$\Lambda=\mca{O}[[\Gamma]]$
with the ring $\mca{O}[[T]]$
of formal power series 
by the isomorphism 
$\Lambda \simeq \mca{O}[[T]]$
of $\mca{O}$-algebras 
given by $\gamma \mapsto 1+T$.
Let $\rho \colon \mca{G}
\longrightarrow \mca{O}^\times$
be a character satisfying the assumptions (C1), (C2) and (C3).
We put 
\(
X:=X(K_\infty^{\Delta}/K, \rho)_{\mca{O}}
\).
We fix a finite set $\Sigma$
of primes of $\mca{O}_K$
containing $\Sigma(\rho)$.
Let $Z$ be a non-empty finite subset of 
$\mathrm{ES}_{\Sigma}(K_\infty^\Delta; \rho)_\mca{O}$.
Theorem \ref{thmmainthm} for one variable cases 
follows from 
the following theorem on general characters, 
which is a goal of this subsection.

\begin{thm}\label{thmonevar}
Suppose that the Iwasawa
$\mu$-invariant of $X$ is $0$.
Let $i \in \bb{Z}_{\ge 0}$, and  
$\mf{P}$ a height one prime ideal of $\Lambda$
containing $\Fitt_{\Lambda,i}(X)$. 
We define two integers 
$\alpha=\alpha_i(\mf{P})$ and $\beta=\beta_{i}(\mf{P})$ by
$\Fitt_{\Lambda_{\mf{P}},i}
(X_{\mf{P}} )
 =\mf{P}^{\alpha}\Lambda_{\mf{P}}$ 
and 
$\mf{C}_{i}(K_\infty^\Delta /K, Z)\Lambda_{\mf{P}}
 =\mf{P}^{\beta}\Lambda_{\mf{P}}$
respectively.
Then, we have 
$\beta_{i}({\mf{P}})\ge \alpha_{i}({\mf{P}})$.
\end{thm}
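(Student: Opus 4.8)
The plan is to reduce the statement to the ground-level Proposition~\ref{propgrlev} by specializing at the height one prime $\mf{P}$, following the strategy of \cite{MR} \S5.3 as adapted in \cite{Oh2}. First I would dispose of the case $\mf{P}=(\pi)$: since the $\mu$-invariant of $X$ is zero, $\mf{P}=(\pi)$ cannot contain $\Fitt_{\Lambda,i}(X)$ unless that Fitting ideal is the whole $\mca{O}/\pi\mca{O}$-story is trivial, so we may assume $\mf{P}$ is a height one prime with $\mf{P}\ne(\pi)$. Then $\mf{P}$ corresponds (after possibly enlarging $\mca{O}$, which is harmless by Lemma~\ref{lemOO'} and the behaviour of Fitting ideals under flat base change, Remark~\ref{remFitt}(i)) to a continuous character $\rho'\colon\Gamma\longrightarrow 1+\pi\mca{O}$, i.e. $\mf{P}=(\gamma-\rho'(\gamma))$. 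The key point is that specialization at $\mf{P}$ turns the $\Lambda$-module $X$ into $X(K,\rho\rho')$ by Lemma~\ref{lemXred}(i), and it turns the Euler system $Z$ into $Z\otimes\rho'$ by construction.

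Next I would translate both invariants $\alpha$ and $\beta$ into ground-level quantities. For $\alpha$: by Lemma~\ref{lemXred}(i) we have $X\otimes_\Lambda\Lambda/(\gamma-\rho'(\gamma))\simeq X(K,\rho\rho')$, and since the $\mu$-invariant vanishes this is a finite $\mca{O}$-module; so $\mf{P}^{\alpha}\Lambda_{\mf{P}}=\Fitt_{\Lambda_{\mf{P}},i}(X_{\mf{P}})$ becomes, after passing to the residue field of $\mf{P}$ or rather comparing lengths, $\alpha = \mathrm{length}_{\mca{O}}\Fitt_{\mca{O},i}(X(K,\rho\rho'))$ up to the standard bookkeeping relating the valuation of a principal ideal at $\mf{P}$ to the $\mca{O}$-length of the cokernel of specialization. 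For $\beta$: by the weak specialization compatibility (Lemma~\ref{lemredCitodvr}), the image of $\mf{C}_i(K_\infty^\Delta/K,Z)$ in $\Lambda/(\gamma-\rho'(\gamma))=\mca{O}$ is contained in $\mf{C}_i(K,Z\otimes\rho')$. Hence $\mf{P}^{\beta}\Lambda_{\mf{P}}\subseteq$ the ideal whose image in $\mca{O}$ is (contained in) $\mf{C}_i(K,Z\otimes\rho')$, which gives $\beta \ge \mathrm{length}_{\mca{O}}$ of the principal ideal generated by $\mf{C}_i(K,Z\otimes\rho')$ — more precisely, $\beta\ge v_{\mf{P}}$ of whatever $\mf{C}_i$ is, and the containment bounds this below by the corresponding ground-level length. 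Now I invoke Proposition~\ref{propgrlev} for the character $\rho\rho'$ (which satisfies (C1)--(C3) since $\rho$ does and $\rho'$ is valued in $1+\pi\mca{O}$): it gives $\mf{C}^F_i(K,Z\otimes\rho')\subseteq\Fitt_{\mca{O},i}(X(K,\rho\rho'))$ for all $F\in\mca{IF}$, and in particular for $F=K$. Combining, $\beta\ge\mathrm{length}_\mca{O}\,\mf{C}_i(K,Z\otimes\rho')\ge\mathrm{length}_\mca{O}\,\Fitt_{\mca{O},i}(X(K,\rho\rho'))=\alpha$. Wait — I need the inequality in the right direction: larger ideal means smaller length, so $\mf{C}_i(K,Z\otimes\rho')\subseteq\Fitt_{\mca{O},i}$ gives $\mathrm{length}\,\mf{C}_i \ge \mathrm{length}\,\Fitt$, i.e. $\beta\ge\alpha$, as desired.

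The main obstacle I anticipate is the precise dictionary between $\mf{P}$-adic valuations of ideals in $\Lambda_{\mf{P}}$ and $\mca{O}$-lengths after specialization, together with the finiteness hypotheses needed to run Proposition~\ref{propgrlev}. Specifically, Proposition~\ref{propgrlev} requires $\#X(K,\rho\rho')<\infty$; this is exactly where the hypothesis $\mu(X)=0$ (so that $X/\mf{P}X$ is finite for every height one $\mf{P}\ne(\pi)$, by the structure theory of $\Lambda$-modules) enters, and I would need to check that the set of ``bad'' primes where specialization is ill-behaved — those dividing $\cha_\Lambda(X)$ with multiplicity, or where $X$ has a nonzero finite submodule propagating — is handled correctly; but since we only need the \emph{inequality} $\beta\ge\alpha$ and not an equality, the subtle failures of exactness in Lemma~\ref{lemXred}(i) work in our favour (cokernels of specialization only make $\alpha$ smaller while the Euler system bound on $\beta$ is unaffected). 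A secondary technical point is justifying the base change to a larger $\mca{O}$ so that $\mf{P}$ becomes $(\gamma - u)$ for $u\in 1+\pi\mca{O}$; this uses Lemma~\ref{lemOO'} for the $\mf{C}_i$ side and the compatibility of Fitting ideals with flat base change (Remark~\ref{remFitt}(i)) for the $X$ side, and one checks both $\alpha$ and $\beta$ are unchanged (up to the ramification factor, which cancels). Modulo these bookkeeping issues, the proof is a clean specialization argument reducing everything to Proposition~\ref{propgrlev}.
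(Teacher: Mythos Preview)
Your approach has a genuine gap at the core step. You propose to specialize directly at $\mf{P}$, asserting that $X/\mf{P}X$ is finite whenever $\mf{P}\ne(\pi)$ and $\mu(X)=0$. This is false precisely in the case of interest: if $\mf{P}$ divides $\cha_\Lambda(X)$ --- which it must when $\mf{P}\supseteq\Fitt_{\Lambda,0}(X)$, and which is the generic situation for $\mf{P}\supseteq\Fitt_{\Lambda,i}(X)$ --- then $X\otimes_\Lambda \Lambda/\mf{P}$ has a nontrivial $\mca{O}$-free part (think of $\Lambda/(f^e)\otimes_\Lambda \Lambda/(f)\simeq \mca{O}$ when $f$ is linear). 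Hence $X(K,\rho\rho')$ is infinite and Proposition~\ref{propgrlev} does not apply. Your parenthetical that ``$X/\mf{P}X$ is finite for every height one $\mf{P}\ne(\pi)$, by the structure theory'' is simply wrong; structure theory says the opposite for primes in the support of $X$.

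The paper's fix is the Mazur--Rubin perturbation trick: rather than specialize at $\mf{P}=(T-c)$ itself, specialize at the sequence $\mf{P}_n:=(T-c-\pi^n)$ for $n\ge 1$. These are infinitely many distinct height one primes, so all but finitely many avoid $\cha_\Lambda(X)$; for those $n$, $X\otimes_\Lambda \Lambda/\mf{P}_n\simeq X(K,\rho\rho'_n)$ is finite and Proposition~\ref{propgrlev} applies. The link back to $\alpha$ and $\beta$ is then asymptotic (Lemma~\ref{lemasymineq}): writing $\Fitt_{\mca{O},i}(X(K,\rho\rho'_n))=\pi^{\bar\alpha(n)}\mca{O}$ one has $\bar\alpha(n)\sim \alpha_i(\mf{P})\cdot n$, and similarly the image of $\mf{C}_i$ modulo $\mf{P}_n$ gives $\bar\beta(n)\prec \beta_i(\mf{P})\cdot n$. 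Proposition~\ref{propgrlev} gives $\bar\alpha(n)\le\bar\beta(n)$ for each $n$, and dividing by $n$ yields $\alpha_i(\mf{P})\le\beta_i(\mf{P})$. Note also that your proposed dictionary ``$\alpha=\mathrm{length}_{\mca{O}}\Fitt_{\mca{O},i}(X(K,\rho\rho'))$'' would not be an equality even in the finite case (pseudo-null submodules contribute bounded error), which is another reason the asymptotic formulation $C(n)\sim Cn$ is needed rather than an exact identity.
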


\begin{proof}
We shall prove Theorem \ref{thmonevar}
by using the method developed in \cite{MR} \S 5.3. 
Since the $\mu$-invariant of $X$ is $0$,
we may assume that 
$\mf{P} \ne \pi \Lambda$, namely, 
the ideal $\mf{P}$
is a principal ideal 
generated by 
an irreducible distinguished polynomial
$f(T) \in \mca{O}[T]$.
Let $c \in \overline{\mca{L}}$ be 
a root of $f(T)$, and put $\mca{L}':=\mca{L}(c)$. 
By Lemma \ref{lemOO'}, 
we may replace $\mca{L}$ by $\mca{L}'$, 
and let $f(T)=T-c$.
For each $n \in \bb{Z}_{>0}$, 
we put $f_n(T)=T-c-\pi^n$, and 
$\mf{P}_n:= f_n \Lambda$.

\begin{dfn}
Let $\{ x_n \}_{n \in \bb{Z}_{\ge 0}}$ and 
$\{ y_n \}_{n \in \bb{Z}_{\ge 0}}$
be sequences of real numbers.
We write $x_n \prec y_n$ 
if and only if 
$\limsup_{n \to \infty} (y_n -x_n) <\infty$.
We write $x_n \sim y_n$ if and only if 
we have $x_n \prec y_n$  and 
$y_n \prec x_n$.
\end{dfn}

By the structure theorem of 
finitely generated torsion $\Lambda$-modules, 
we can deduce the following lemma immediately.  

\begin{lem}\label{lemasymineq}
Let $Y$ be a finitely generated $\Lambda$-module, 
and $C$ a non-negative integer satisfying
$\Fitt_{\Lambda_{\mf{P}},i}(Y)=\mf{P}^C \Lambda_{\mf{P}}$.
For each $n \in \bb{Z}_{>0}$, we define 
$C(n) \in \bb{Z}_{\ge 0}$ by
\[
\Fitt_{\mca{O},i}(Y \otimes_{\Lambda} \Lambda/\mf{P}_n)
=\pi^{C(n)}\mca{O}.
\]
Then, we have $C(n) \sim Cn$.
\end{lem}

Let $\rho'_n\colon 
\Gamma \longrightarrow 1+ \pi \mca{O}$
the unique continuous character 
satisfying $\rho'_n(\gamma)=1+c+\pi^n$.
By Lemma \ref{lemXred} (i), 
we have
\(
X\otimes_{\Lambda} (\Lambda /\mf{P}_n )
\simeq X(K,\rho\rho'_n)
\)
and by Lemma \ref{lemredCitodvr}, 
the image of 
$\mf{C}_i(K^\Delta_\infty/K, Z)$
in $\Lambda/(T-c)=\mca{O}$ 
is contained in 
$\mf{C}_i(K, Z \otimes \rho'_n)$. 
We put 
\begin{align*}
\Fitt_{\Lambda, i}(X(K,\rho\rho'_n))
&=\pi^{\bar{\alpha}(n)}, \\
\mf{C}_i(K, Z \otimes \rho'_n)
&=\pi^{\bar{\beta}(n)}.
\end{align*}
By Proposition \ref{propgrlev} and 
Lemma \ref{lemasymineq},
we have 
\[
\alpha_{i}({\mf{P}}) n \sim 
\bar{\alpha}(n) \le 
\bar{\beta}(n)
\prec 
\beta_{i}({\mf{P}})n.
\]
Hence we obtain 
$\alpha_{i}({\mf{P}})
\le \beta_{i}({\mf{P}})$.
This completes the proof.
\end{proof}

\subsection{Two variable cases}\label{sstwovar}

Here, let us consider the two-variable cases, that is, 
the case when
$\Gamma \simeq \bb{Z}_p^2$.
Here, we put $\mca{O}:=\mca{O}_\psi=\bb{Z}_p[\mathrm{Im} \psi]$,
and 
\(
X:=X_\psi=X(K_\infty^\Delta /K, \chi_{\mathrm{cyc}}\psi^{-1}).
\)
Let $Z:=Z^{\mathrm{ell}}_\psi$ be the set of 
Euler systems of elliptic units
introduced in Definition \ref{defEU},
and we write
\(
\mf{C}_{i}:=\mathfrak{C}_{i,\psi}^{\mathrm{ell}}
= \mf{C}_i(K_\infty^\Delta /K, Z)_{\mca{O}}
\)
for each $i \in \bb{Z}_{\ge 0}$.
In order to prove Theorem \ref{thmmainthm}, 
it is sufficient to show that
\(
\mf{C}_i
\prec \Fitt_{\Lambda,i}(X)
\)
for any $i \in \bb{Z}_{\ge 0}$.

We fix topological generators $\gamma_1$ and $\gamma_2$ of 
the group $\Gamma \simeq \bb{Z}_p^2$.
We define  
\[
\mca{E}:=\{ (a_1,a_2) \in \bb{Z}_p^2 \mathrel{\vert} 
a_1\bb{Z}_p+a_2\bb{Z}_p=\bb{Z}_p \}.
\]
By definition, for any $(a_1,a_2) \in \mca{E}$,
the closed subgroup $H$ topologically generated by 
$\gamma_1^{a_1}\gamma_2^{a_2}$  satisfies 
$H \simeq \bb{Z}_p$ and 
$\Gamma/H \simeq \bb{Z}_p$.
Conversely, a closed subgroup $H$
satisfying $H \simeq \bb{Z}_p$ and 
$\Gamma/H \simeq \bb{Z}_p$
is generated by an element 
$\gamma_1^{a_1}\gamma_2^{a_2}$
with $(a_1,a_2) \in \mca{E}$.

\begin{lem}\label{lemforlemforred}
Let $(a_1,a_2)$ and $(b_1,b_2)$ 
be any elements of $\mca{E}$.
Suppose that 
the ideal $(\gamma_1^{a_1}\gamma_2^{a_2} -1,\pi)$
of $\Lambda$ coincides with 
$(\gamma_1^{b_1}\gamma_2^{b_2} -1,\pi )$.
Then, there exists an element $e \in \bb{Z}_p^\times$
satisfying $(b_1,b_2)= (ea_1,ea_2)$.
\end{lem}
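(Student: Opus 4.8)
The plan is to reduce modulo $\pi$ and to recognize both ideals as kernels of relative augmentation maps, which remember the whole procyclic subgroup generated by $\gamma_1^{a_1}\gamma_2^{a_2}$, not merely its ``tangent direction''; a naive comparison of linear terms in a power-series ring would only recover $(a_1,a_2)$ up to $k^\times$ and is therefore too weak.

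First I would pass to the residue characteristic. Both ideals $(\gamma_1^{a_1}\gamma_2^{a_2}-1,\pi)$ and $(\gamma_1^{b_1}\gamma_2^{b_2}-1,\pi)$ contain $\pi\Lambda$, so they coincide in $\Lambda$ if and only if their images coincide in $\bar\Lambda:=\Lambda/\pi\Lambda=k[[\Gamma]]$. Set $h_a:=\gamma_1^{a_1}\gamma_2^{a_2}$ and $h_b:=\gamma_1^{b_1}\gamma_2^{b_2}$, viewed as elements of $\Gamma$ inside $1+\mf{m}\subseteq\bar\Lambda^\times$ via the $\bb{Z}_p$-module structure of $\Gamma$. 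Then the hypothesis reads $(h_a-1)\bar\Lambda=(h_b-1)\bar\Lambda$.

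Next, since $(a_1,a_2)\in\mca{E}$, I would complete $h_a$ to a topological $\bb{Z}_p$-basis $\{h_a,h_a'\}$ of $\Gamma$. Then the closed subgroup $H_a:=\overline{\langle h_a\rangle}$ is a $\bb{Z}_p$-direct summand of $\Gamma$, $\bar\Lambda$ is a power-series ring in the variable $h_a-1$ over $k[[\Gamma/H_a]]$, and consequently $(h_a-1)\bar\Lambda$ is precisely the kernel of the canonical surjection $\bar\Lambda=k[[\Gamma]]\twoheadrightarrow k[[\Gamma/H_a]]$; the same description holds with $a$ replaced by $b$. Now for every $h\in H_b$ we have $h-1\in(h_b-1)\bar\Lambda=(h_a-1)\bar\Lambda$, so $h$ maps to $1$ in $k[[\Gamma/H_a]]$. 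Since the composite $\Gamma\to\Gamma/H_a\hookrightarrow 1+\mf{m}\subseteq k[[\Gamma/H_a]]^\times$ is injective ($1+\mf{m}$ in a power-series ring over $k$ is torsion-free), this forces $h\in H_a$. Hence $H_b\subseteq H_a$, and by symmetry $H_a=H_b$.

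Finally, $h_a$ and $h_b$ are both topological generators of the same procyclic group $H_a\simeq\bb{Z}_p$, so $h_b=h_a^{e}$ for a unique $e\in\bb{Z}_p^\times$; as $\Gamma$ is $\bb{Z}_p$-free this gives $(b_1,b_2)=(ea_1,ea_2)$, as desired. The step I expect to require the most care is the identification of $(h_a-1)\bar\Lambda$ with the relative augmentation ideal: one must handle exponents $a_i\in\bb{Z}_p$ (not merely in $\bb{Z}$) through the $\bb{Z}_p$-module structure of $1+\mf{m}$, and verify the injectivity of $\Gamma/H_a\hookrightarrow k[[\Gamma/H_a]]^\times$ in characteristic $p$; both are routine once the topological group structure is used systematically.
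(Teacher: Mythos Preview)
Your proof is correct and follows essentially the same approach as the paper: both reduce modulo $\pi$, complete $\gamma_1^{a_1}\gamma_2^{a_2}$ to a new topological basis of $\Gamma$, and identify the ideal $(h_a-1)$ with the kernel of the projection $k[[\Gamma]]\twoheadrightarrow k[[\Gamma/H_a]]$. The only cosmetic difference is that the paper argues by computing the quotient ring $\Lambda/(\gamma'_1-1,{\gamma'_2}^{b'_2}-1,\pi)\simeq k[[\Gamma_2]]/({\gamma'_2}^{b'_2}-1)$ and comparing it to $k[[\Gamma_2]]$ to force $b'_2=0$, whereas you phrase the same step group-theoretically via the injectivity of $\Gamma/H_a\hookrightarrow k[[\Gamma/H_a]]^\times$ to conclude $H_b\subseteq H_a$; these are equivalent formulations of the same observation.
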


\begin{proof}
Take topological generators $\gamma'_1,\gamma'_2$ of $\Gamma$
satisfying $\gamma'_1=\gamma_1^{a_1}\gamma_2^{a_2}$.
Let $(b'_1,b'_2) \in \mca{E}$ be the unique element
satisfying ${\gamma'_1}^{b'_1}{\gamma'}_2^{b'_2}
=\gamma_1^{b_1}\gamma_2^{b_2}$.
Then, we have 
\[
\Lambda/(\gamma'_1 -1 ,\pi)
=\Lambda/(\gamma_1^{a_1}\gamma_2^{a_2} -1 , 
\gamma_1^{b_1}\gamma_2^{b_2} -1,\pi )
=\Lambda/(\gamma'_1-1, {\gamma'_2}^{b'_2}-1,\pi).
\]
Let $\Gamma_2$ be the closed subgroup of $\Gamma$
topologically generated by $\gamma'_2$.
Then, we have  
\[
\Lambda/(\gamma'_1 -1,\pi )
\simeq k[[\Gamma_2]]
\simeq k[[x]];\ \gamma'_2 \longleftrightarrow 1+x.
\] 
On the other hand, 
the $\mca{O}$-algebra
\(
\Lambda/(\gamma'_1-1, {\gamma'_2}^{b'_2}-1,\pi)
\simeq k[[\Gamma_2]]/({\gamma'_2}^{b'_2}-1)
\)
is isomorphic to $k[[\Gamma_2]]$
if and only if $b'_2=0$. 
Hence we obtain 
$(b_1,b_2)= (b'_1a_1,b'_1a_2)$.
Moreover, since $(b'_1,0) \in \mca{E}$, 
we also have $b'_1 \in \bb{Z}_p^\times$.
This completes the proof of 
Lemma \ref{lemforlemforred}.
\end{proof}

\begin{lem}\label{lemforred}
Let $\mca{I}$ and $\mca{J}$ 
be ideals of $\Lambda$ 
whose heights at least two.
Then, there exists an element
$(a_1,a_2; u) \in \mca{E} \times (1 + \pi \mca{O})$
such that 
the images of $\mca{I}$ and $\mca{J}$ 
in $\Lambda /(\gamma_1^{a_1}\gamma_2^{a_2} -u ) $
are ideals of 
$\Lambda /(\gamma_1^{a_1}\gamma_2^{a_2} -u ) $
whose heights are at least two.
\end{lem}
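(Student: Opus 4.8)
The plan is to pick $h:=\gamma_1^{a_1}\gamma_2^{a_2}-u$ so that it avoids a certain finite set of height-two primes of $\Lambda$. First recall that, since $\Gamma\simeq\bb{Z}_p^2$, the ring $\Lambda=\mca{O}[[\Gamma]]$ is isomorphic to $\mca{O}[[S,T]]$, a three-dimensional regular local ring, hence catenary, Cohen--Macaulay and factorial; and that for any $(a_1,a_2;u)\in\mca{E}\times(1+\pi\mca{O})$ the element $h$ is a nonzero nonunit (it lies in the maximal ideal, since $\gamma_1^{a_1}\gamma_2^{a_2}\equiv 1\equiv u$ there, and it is nonzero because $(a_1,a_2)\ne(0,0)$). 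Thus $h$ is a nonzerodivisor, $\bar\Lambda:=\Lambda/(h)$ is a two-dimensional Cohen--Macaulay local ring, and since $\Lambda$ is factorial the principal ideal $(h)$ is unmixed of height one, so $\bar\Lambda$ is moreover equidimensional and catenary. In such a ring height and the dimension of the quotient add up to the Krull dimension, so a routine localization argument gives, for any ideal $\mca{a}\subseteq\Lambda$ with $\operatorname{ht}\mca{a}\ge 2$: the image of $\mca{a}$ in $\bar\Lambda$ has height $\ge 2$ if and only if $h\notin\mf{q}$ for every height-two prime $\mf{q}$ minimal over $\mca{a}$. As $\Lambda$ is Noetherian, the collection of all height-two primes minimal over $\mca{I}$ or over $\mca{J}$ is finite; call it $\{\mf{q}_1,\dots,\mf{q}_r\}$. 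It therefore suffices to produce $(a_1,a_2;u)\in\mca{E}\times(1+\pi\mca{O})$ with $h\notin\mf{q}_i$ for all $i$.

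The key is to split the $\mf{q}_i$ according to whether they contain $\pi$. If $\pi\notin\mf{q}_i$, then $\mf{q}_i\cap\mca{O}=(0)$, so $\mca{O}$ injects into the domain $\Lambda/\mf{q}_i$; hence for each fixed $(a_1,a_2)$ at most one value of $u\in 1+\pi\mca{O}$ can satisfy $\gamma_1^{a_1}\gamma_2^{a_2}-u\in\mf{q}_i$, since two such $u$ would differ by an element of $\mf{q}_i\cap\mca{O}$. If $\pi\in\mf{q}_i$, then $u-1\in\pi\mca{O}\subseteq\mf{q}_i$, so $h\in\mf{q}_i$ if and only if $\gamma_1^{a_1}\gamma_2^{a_2}-1\in\mf{q}_i$, a condition on $(a_1,a_2)$ alone. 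Reducing modulo $\pi$, the ring $k[[\Gamma]]=\Lambda/\pi\Lambda$ is a two-dimensional regular local ring (a factorial domain), $\bar{\mf{q}}_i:=\mf{q}_i/\pi\Lambda$ is a principal height-one prime $(\phi_i)$, and $\gamma_1^{a_1}\gamma_2^{a_2}-1$ is itself prime in $k[[\Gamma]]$: indeed $(a_1,a_2)\in\mca{E}$ makes $H:=\overline{\langle\gamma_1^{a_1}\gamma_2^{a_2}\rangle}$ a direct summand of $\Gamma$, so $k[[\Gamma]]/(\gamma_1^{a_1}\gamma_2^{a_2}-1)\cong k[[\Gamma/H]]\cong k[[x]]$ is a domain. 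Consequently $\gamma_1^{a_1}\gamma_2^{a_2}-1\in(\phi_i)$ forces $(\phi_i)=(\gamma_1^{a_1}\gamma_2^{a_2}-1)$ in $k[[\Gamma]]$, equivalently $(\gamma_1^{a_1}\gamma_2^{a_2}-1,\pi)=\mf{q}_i$ in $\Lambda$; and by Lemma \ref{lemforlemforred} there is at most one line $\bb{Z}_pa_1+\bb{Z}_pa_2$ for which this holds. Hence each $\mf{q}_i$ with $\pi\in\mf{q}_i$ excludes at most one class in $\bb{P}^1(\bb{Z}_p)$.

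To finish, since $\bb{P}^1(\bb{Z}_p)$ is infinite, first choose $(a_1,a_2)\in\mca{E}$ whose class lies outside the finitely many bad classes coming from the $\mf{q}_i$ with $\pi\in\mf{q}_i$; then $h\notin\mf{q}_i$ for all such $i$, independently of $u$. With $(a_1,a_2)$ now fixed, each of the finitely many $\mf{q}_i$ with $\pi\notin\mf{q}_i$ excludes at most one value of $u$; since $1+\pi\mca{O}$ is infinite, choose $u\in 1+\pi\mca{O}$ outside that finite set. Then $h=\gamma_1^{a_1}\gamma_2^{a_2}-u\notin\mf{q}_i$ for every $i$, and the criterion of the first paragraph, applied to $\mca{I}$ and to $\mca{J}$, yields the lemma. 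I expect the main point requiring care to be the case $\pi\in\mf{q}_i$: one must verify that the reduction mod $\pi$ of $\gamma_1^{a_1}\gamma_2^{a_2}-1$ stays a prime element of $k[[\Gamma]]$ and that distinct lines give non-associate such primes --- this is exactly where the hypothesis $(a_1,a_2)\in\mca{E}$ and Lemma \ref{lemforlemforred} enter.
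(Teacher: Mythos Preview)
Your proof is correct and follows essentially the same approach as the paper's own proof. The paper also works with the finite set of height-two primes minimal over $\mca{I}$ and $\mca{J}$ (denoted $\mathrm{Assoc}(\mca{I})\cup\mathrm{Assoc}(\mca{J})$ there), first chooses $(a_1,a_2)\in\mca{E}$ avoiding the finitely many $\bb{Z}_p^\times$-classes coming from primes of the form $(\gamma_1^{b_1}\gamma_2^{b_2}-1,\pi)$ via Lemma~\ref{lemforlemforred}, and then shows that for each remaining prime $\mf{q}$ at most one $u\in 1+\pi\mca{O}$ can satisfy $\gamma_1^{a_1}\gamma_2^{a_2}-u\in\mf{q}$; your explicit dichotomy according to whether $\pi\in\mf{q}_i$ makes the same two-step choice but organizes the case analysis slightly more transparently.
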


\begin{proof}
In order to prove Lemma \ref{lemforred}, 
we shall introduce some notations.
Let $I$ be an ideal of $\Lambda$.
We denote by $\mathrm{Assoc}(I)$
be the set of associated ideals of
the $\Lambda$-module $\Lambda/I$
whose height is two.
Since the ring $\Lambda$ is Noetherian, 
the sets $\mathrm{Assoc}(I)$
is a finite set consisting of 
all minimal prime ideals containing $I$. 
We define the subset  $\mathrm{Assoc}^0(I)$
of $\mathrm{Assoc}(I)$ 
to be the collection of all the elements 
written in the form $(\gamma_1^{a_1}\gamma_2^{a_2}-1, \pi)$
for some $(a_1,a_2) \in \mca{E}$.

For each $\mf{p} \in \mathrm{Assoc}^0(\mca{I}) \cup 
\mathrm{Assoc}^0(\mca{I})$, 
we fix an element $(a_1(\mf{p}),a_2(\mf{p})) \in \mca{E}$ 
satisfying $\mf{p} =
(\gamma_1^{a_1(\mf{p})}\gamma_2^{a_2(\mf{p})}-1, \pi)$.
Since $\# \bb{P}^1(\bb{Z}_p)= \infty$, we can 
take an element $(a_1,a_2 ) \in \mca{E}$
not contained in 
\[
\bigcup_{\mf{p} \in \mathrm{Assoc}^0(\mca{I}) \cup 
\mathrm{Assoc}^0(\mca{J})} 
\bb{Z}_p^\times \cdot
(a_1(\mf{p}),a_2(\mf{p})).
\]
Let $\mf{q} \in \mathrm{Assoc}(\mca{I}) \cup 
\mathrm{Assoc}(\mca{J})$
be any element, and put 
\[
S(\mf{q}):=\{ u \in 1+ \pi \mca{O}
\mathrel{\vert} 
\gamma_1^{a_1}\gamma_2^{a_2}-u \in \mf{q}
\}.
\]
Let us show the following claim.

\begin{claim}\label{claimSle1}
We have  $\# S(\mf{q}) \le 1$.
\end{claim}

\begin{proof}[Proof of Claim \ref{claimSle1}]
Suppose that there exist 
two elements $u_0,u_1 \in 1+\pi\mca{O}$ satisfying 
$\gamma_1^{a_1}\gamma_2^{a_2}-u_i \in \mf{q}$ for 
each $i=1,2$.
Then we have $u_1 -u_0 \in \mf{q}$.
If $u_1 \ne u_0$, then we have
$\mf{q} \in \mathrm{Assoc}^0(\mca{I}) \cup 
\mathrm{Assoc}^0(\mca{J})$. 
However, by Lemma \ref{lemforlemforred}
and the choice of $(a_1,a_2) \in \mca{E}$,  
the prime ideal $\mf{q}$ never
belongs to $\mathrm{Assoc}^0(\mca{I}) \cup 
\mathrm{Assoc}^0(\mca{J})$.
Hence we deduce that $u_1 = u_0$. 
\end{proof}

Let us complete the proof of Lemma \ref{lemforred}.
We put 
\[
S:= \bigcup_{\mf{q} \in \mathrm{Assoc}(\mca{I}) \cup 
\mathrm{Assoc}(\mca{J})} 
S(\mf{q}).
\]
It follows from Claim \ref{claimSle1}
that $S$ is a finite set.
Let $u \in (1+ \pi \mca{O}) \setminus S$
be any element. 
Then, by definition, for any $\mf{q} \in \mathrm{Assoc}(\mca{I}) \cup 
\mathrm{Assoc}(\mca{J})$, 
we have $\gamma_1^{a_1}\gamma_2^{a_2}-u \notin \mf{q}$. 
This implies that
the images of $\mca{I}$ and $\mca{J}$ 
in $\Lambda /(\gamma_1^{a_1}\gamma_2^{a_2} -u ) $
are ideals of 
$\Lambda /(\gamma_1^{a_1}\gamma_2^{a_2} -u ) $
whose heights are at least two.
Hence we obtain Lemma \ref{lemforred}.
\end{proof}

\begin{proof}[Proof of Theorem \ref{thmmainthm}]
Let $i \in \bb{Z}_{\ge 0}$.
We fix  $F,G \in \Lambda$
satisfying 
$\Fitt_{\Lambda,i(X_\psi)} \sim F \Lambda$
and $\mf{C}_{i} \sim G \Lambda$.
By the inequality (\ref{eqoh1thm})
following from \cite{Oh1} Theorem 1.1,
there exists an element $A \in \Lambda$
satisfying $F=AG$.
Put $\mca{I}:=F^{-1}\Fitt_{\Lambda,i(X_\psi)}$
and $\mca{I}:=G^{-1}\mf{C}_{i,\psi}$.
Let $(a_1,a_2;u) \in \mca{E} \times (1+ \pi \mca{O})$
be as in the the assertion of Lemma \ref{lemforred},
and let $\bar{F}$ (resp.\ $\bar{G}$ and $\bar{A}$) 
denotes the image of $F$ (resp.\ $G$ and $A$) in 
$\overline{\Lambda}:=
\Lambda/(\gamma_1^{a_1}\gamma_2^{a_2}-u)$.
We have 
$\mca{I}\overline{\Lambda} 
\sim \mca{J}\overline{\Lambda} 
\sim  \overline{\Lambda}$.
We fix topological generators 
$\gamma'_1,\gamma'_2$ of $\Gamma$
satisfying $\gamma'_1=\gamma_1^{a_1}\gamma_2^{a_2}$.
There exists a unique continuous character
$\rho' \colon \Gamma \longrightarrow 1+\pi \mca{O}
\subseteq \mca{O}^\times$
satisfying $\psi'(\gamma'_1)=u$ and $\psi'(\gamma'_2)=1$.
By Lemma \ref{lemXred} (ii) and
Lemma \ref{lemredCi} imply that we have 
\begin{align*}
\bar{F}\overline{\Lambda} & \sim
\Fitt_{\overline{\Lambda},i}
(X(K_\infty^{\Delta\times \Gamma_2}/K, 
\chi_{\mathrm{cyc}}\psi^{-1}\rho')), \\
\bar{G}\overline{\Lambda}
& \prec
\mf{C}_i(K_\infty^{\Delta\times \Gamma_2}/K, 
Z\otimes \rho') .
\end{align*}
By Theorem \ref{thmonevar} for 
$\rho:=\chi_{\mathrm{cyc}}\psi^{-1}\rho'$, 
there exist an element 
$\bar{B} \in \overline{\Lambda}$
satisfying 
$\bar{G}=\bar{B} \bar{F}$.
So, we obtain 
$\bar{G}=\bar{B}\bar{A}\bar{G}$.
Since $\overline{\Lambda}$ is 
an integral domain, we have 
$\bar{B}\bar{A}= \bar{1}$.
This implies that $A \in \Lambda^\times$.
Hence $F\Lambda= G \Lambda$.
\end{proof}

\begin{rem}
In Ochiai's article \cite{Oc},
he used  linear elements (of the ring of power series)
in the specialization arguments.
Instead of linear elements, 
we use continuous characters of 
$\Gamma \simeq \bb{Z}_p^2$
in order to keep the Iwasawa algebra 
to be the completed group ring 
after the specialization.
\end{rem}

\begin{rem}
The inequality (\ref{eqoh1thm}) 
allows us to simplify the arguments 
in this subsection significantly.
In general, 
without the inequality like (\ref{eqoh1thm}), 
we need to take infinitely many specializations
in order to reduce a two variable problem to 
one variable problems. 
(For instance, see \cite{Oc} Proposition 3.6.)
However, we have just seen that 
thanks to the inequality (\ref{eqoh1thm}),
we may take {\em only one} good height one prime 
in order to reduce the problem in the two-variable case
to that in the one-variable case.
\end{rem}

\section{Burns--Kurihara--Sano's ideals and $\mf{C}_i^{\mathrm{ell}}$}\label{seccomparison}

All the notation of this section follows 
\S \ref{secintro}.
We fix an imaginary quadratic field $K$, 
and the extension $K_\infty /K$.

Under the assumption of 
the equivariant Tamagawa number conjecture
in a certain form,
by using Rubin--Stark elements, 
Burns, Kurihara and Sano
constructed certain  ideals 
which is equal to the higher Fitting ideals
of an $S$-truncated, $T$-modified Selmer group.
In this section, we compare our ideals 
$\mf{C}_i^{\mathrm{ell}}(F/K)$ with 
the ideals constructed by 
Burns--Kurihara--Sano.
When $p$ is a prime number splitting in $K/\bb{Q}$, 
and not dividing $\# \mathrm{Cl}(K)$, 
we shall prove that 
$\mf{C}_i^{\mathrm{ell}}(F/K)_\psi$
coincides with the $\psi$-part of 
Burns--Kurihara--Sano's ideal 
for certain characters $\psi \in \widehat{\Delta}$.

In \S \ref{ssconj}, we introduce
the $p$-part of 
three conjectures 
(restricted to our setting): 
the leading term conjecture $\boldsymbol{\mathrm{LT}}(M/K)_p$, 
the Rubin--Stark conjecture $\boldsymbol{\mathrm{RS}}(M/K,S,T,V)_p$ and 
the Mazur--Rubin--Sano conjecture $\boldsymbol{\mathrm{MRS}}(L/M/K,T,W)_p$, 
where $M$ and $L$ are certain abelian extension fields of $K$
satisfying $M \subseteq L$, and 
$S,T,V$ and $W$ are subsets of $P_K$ satisfying certain conditions. 
Note that $\boldsymbol{\mathrm{LT}}(M/K)_p$ is a conjecture  
equivalent to the $p$-part of 
the equivariant Tamagawa number conjecture 
(\cite{BF} Conjecture 4 (iv))
for the pair $(h^0(\mathrm{Spec}(M), \bb{Z}[\Gal(M/K)]))$. 
As we review later, 
Burns, Kurihara and Sano proved that
the conjectures $\boldsymbol{\mathrm{RS}}(M/K,S,T,V)_p$ and 
$\boldsymbol{\mathrm{MRS}}(L/M/K,T,W)_p$ 
follow from $\boldsymbol{\mathrm{LT}}(L/K)_p$.
(See \cite{BKS} Theorem 5.11 and
Theorem 5.15.)
In \S \ref{ssETNC/K}, we recall 
Bley's result (\cite{Bl} Theorem 4.2) 
on the equivariant 
Tamagawa number conjecture over
imaginary quadratic fields.
In \S \ref{ssFitBKS}, we recall 
the results (\cite{BKS} Corollary 1.7) 
on higher Fitting ideals by 
Burns, Kurihara and Sano.
In \S \ref{sspfC=Theta}, 
we compare our ideals 
$\mf{C}_i^{\mathrm{ell}}(F/K)$ with 
the ideals constructed by 
Burns--Kurihara--Sano.

\subsection{Equivariant Tamagawa number conjecture and related conjectures}\label{ssconj}

Here, we set the general notation 
used in this section.
Let $M/K$ be a finite abelian extension.
We put $G:=\Gal(M/K)$.
For any subset $Q$ of $P_K$, 
we denote by $Q_M$ the set of all places of $M$
lying above a place contained in $Q$.
Let $S$ be a finite set of places of $K$ containing
all infinite places and all ramified places in $M/K$.
Let $T$ be a non-empty finite set 
of finite places of $K$
such that 
\[
\mca{O}_{M,S,T}^\times :=\{ 
u \in \mca{O}_{M,S}^\times \mathrel{\vert}
u \equiv 1 \ \mathrm{mod}\ w \
\text{for all $w \in T_M$}
\}
\] 
is a torsion free $\bb{Z}$-module.
We denote by $\mathrm{Cl}^T_S(M)$ 
the quotient group 
of the ray class group of $M$ modulo $\prod_{w \in T_M} w$
by the subgroup generated by all the classes 
of the prime ideals in $S_M$.

We define $Y_{M,S}:=\bigoplus_{w \in S_M} \bb{Z} \cdot w$ 
to by a free abelian group generated by $S_M$. 
We put 
\(
X_{M,S}:=\Ker (Y_{M,S} \xrightarrow{\Sigma} \bb{Z})
\), 
where $\Sigma$ is the total sum of coefficients.
(Do not confuse $X_{M,S}$ with the Iwasawa module $X_\psi$.)
The abelian groups $X_{M,S}$ and $Y_{M,S}$
have $\bb{Z}[G]$-module structure naturally.
After taking $\bb{R} \otimes_{\bb{Z}} (-)$, 
we have an isomorphism 
\[
\lambda_{M,S}\colon \bb{R}\mca{O}_{M,S,T}^\times 
\xrightarrow{\ \simeq \ } \bb{R}X_{M,S};\ 
u \longmapsto - \sum_{w \in S_M} \log \left| u \right|_w
\] 
called the regulator map.

Let $A$ be a commutative ring.
We denote by $\mca{P}(A)$
the category of graded invertible $A$-modules 
in the sense of \cite{KM} Chapter I.
We denote by $\mca{D}^{pis}(A)$
be the subcategory of the derived category
$\mca{D}(\mca{Mod}_R)$ of $R$-module
whose objects are perfect complexes, and
whose morphisms are quasi-isomorphisms.
In \cite{KM}, the determinant functor
\(
\det_A \colon \mca{D}^{pis}(A) \longrightarrow 
\mca{P}(A).
\)
is defined.
For details, see \cite{KM} Chapter I, 
in particular Definition 1.

Let us recall the definition of 
$L$-functions which we consider.
Let $\widehat{G}:=\Hom(G, \overline{\bb{Q}})$
be any character, 
and we regard $\chi$ as a $\bb{C}$-valued character
via the fixed embedding 
$\overline{\bb{Q}} \hookrightarrow \bb{C}$.
Then, we define 
\[
L_{K,S,T}(\chi,s):=
\prod_{v \in T}(1-\chi(\Fr_v)Nv^{1-s})
\prod_{v \notin S}(1-\chi(\Fr_v)Nv^{-s})^{-1}.
\]
Note that this product absolutely converge 
in the domain defined by $\mathrm{Re}(s)>1$.
The function 
$L_{K,S,T}(\chi,s)$ 
can be meromorphically continued to
the whole plane $\bb{C}$, and holomorphic 
on $\bb{C} \setminus \{ 1 \}$.
Let $r_{\chi,S}$ be the vanishing order of
$L_{K,S,T}(\chi,s)$ at $s=0$.
For each positive integer $r \le r_{\chi,S}$, 
we define
\[
L^{(r)}_{K,S,T}(\chi,0):= \lim_{s \to 0} 
s^{r_{\chi,S}} L_{K,S,T}(\chi,s).
\]
Then, we put
\[
\theta^{(r)}_{M/K,S,T}(0):= \sum_{\chi \in \widehat{G}}
L^{(r)}_{K,S,T}(\chi^{-1},0) e_\chi \in \bb{C}[G],
\]
where $e_\chi$
is the idempotent of $\bb{C}[G]$ 
corresponding to 
the $\chi$-component.
We define `the leading coefficient' 
$\theta^*_{M/K,S,T}(0)$ at $s=0$ by
\[
\theta^*_{M/K,S,T}(0):= \sum_{\chi \in \widehat{G}}
L^{(r_{\chi,S})}_{K,S,T}(\chi^{-1},0) e_\chi \in \bb{C}[G].
\]
Note that 
$\theta^{(r)}_{M/K,S,T}(0)$ and 
$\theta^*_{M/K,S,T}(0)$ belongs to 
$\bb{R}[G]$.

\subsubsection{Leading term conjecture}
First, we introduce a variant of Tamagawa number conjecture called
the leading term conjecture.

Let $R\Gamma_{c,T}((\mca{O}_{M,S})_{\mca{W}},\bb{Z})$
be the complex of $\bb{Z}[G]$-modules called 
the `Weil-\'etale cohomology' complex 
in the sense of \cite{BKS} Proposition 2.4.
We define 
\[
R\Gamma_{T}((\mca{O}_{M,S})_{\mca{W}},\bb{G}_m)
:= R\Hom(
R\Gamma_{c,T}((\mca{O}_{M,S})_{\mca{W}},\bb{Z}),
\bb{Z})[-2].
\]
Here, we endow this complex 
with the contragredient action of $G$.
In our setting, the complex 
$R\Gamma_{T}((\mca{O}_{M,S})_{\mca{W}},\bb{G}_m)$
is  perfect, 
and concentrated in degrees zero and one:
\[
H^i_T((\mca{O}_{M,S})_{\mca{W}},\bb{G}_m) =
\begin{cases}
\mca{O}_{M,S,T}^\times & (i=0), \\
\mca{S}_{S,T}^{\mathrm{tr}}(\bb{G}_m/K) & (i=1),
\end{cases}
\]
where $\mca{S}_{S,T}^{\mathrm{tr}}(\bb{G}_m/M)$
is a $\bb{Z}[G]$-module which has a canonical exact sequence
\[
0 \longrightarrow 
\mathrm{Cl}^T_S(M) 
\longrightarrow 
\mca{S}_{S,T}^{\mathrm{tr}}(\bb{G}_m/M)
\longrightarrow X_{M,S} \longrightarrow 0.
\]
(For details, 
see \cite{BKS} Proposition 2.4 (iii), (iv) and 
Remark 2.7.)
In particular, after taking $\bb{R} \otimes_{\bb{Z}} (-)$, 
we have the regulator map
\[
\lambda_{M,S} \colon 
\bb{R}H^0_T((\mca{O}_{M,S})_{\mca{W}},\bb{G}_m) 
=\bb{R}\mca{O}_{M,S,T}^\times
\xrightarrow{\ \simeq \ }
\bb{R}X_{M,S}=
\bb{R}H^1_T((\mca{O}_{M,S})_{\mca{W}},\bb{G}_m). 
\]
By using this isomorphism, we can define the map
\[
\vartheta_{\lambda_{M,S}} \colon 
\bb{R}\det_G(R\Gamma_{T}((\mca{O}_{M,S})_{\mca{W}},\bb{G}_m))
\longrightarrow \bb{R}[G]  
\]
to be the composite 
\begin{align*}
\bb{R}\det_G(R\Gamma_{T}((\mca{O}_{M,S})_{\mca{W}},\bb{G}_m))
\xrightarrow{\ \simeq \ } & \bigotimes_{j \in \bb{Z}} 
\det_{\bb{R}[G]}^{(-1)^j}
\bb{R}H^j_T((\mca{O}_{M,S})_{\mca{W}},\bb{G}_m) \\
=\hspace{-1mm} =&  \det_{\bb{R}[G]}
\bb{R}H^0_T((\mca{O}_{M,S})_{\mca{W}},\bb{G}_m) \\
& \hspace{3mm} 
\otimes \det_{\bb{R}[G]}^{-1}
\bb{R}H^1_T((\mca{O}_{M,S})_{\mca{W}},\bb{G}_m) \\
\xrightarrow{\ \simeq \ } 
& \det_{\bb{R}[G]}
\bb{R}H^1_T((\mca{O}_{M,S})_{\mca{W}},\bb{G}_m) \\
& \hspace{3mm} 
\otimes \det_{\bb{R}[G]}^{-1}
\bb{R}H^1_T((\mca{O}_{M,S})_{\mca{W}},\bb{G}_m) \\
\xrightarrow{\ \simeq \ } 
& \bb{R}[G],
\end{align*}
where we regard $\bb{R}[G]$ as a module 
of degree zero.

Let 
$z_{M/K,S,T} \in 
\bb{R}\det_G(R\Gamma_{T}((\mca{O}_{M,S})_{\mca{W}},\bb{G}_m))$
be the unique element satisfying 
\[
\vartheta_{\lambda_{M,S}}(z_{M/K,S,T})=\theta^*_{M/K,S,T}(0).
\]
The element $z_{M/K,S,T}$
is called the zeta element of $\bb{G}_m$
for $(M/K,S,T)$. 
(See \cite{BKS} Definition 3.5.)
The following conjecture $\boldsymbol{\mathrm{LT}}(M/K)$ 
is the $p$-part of the leading term conjecture.

\begin{conj}[\cite{BKS} Conjecture 3.6, $\boldsymbol{\mathrm{LT}}(M/K)_p$] 
We have
\[
\bb{Z}_{(p)}[G] \cdot z_{M/K,S,T}=
\bb{Z}_{(p)}\det_{\bb{Z}[G]}
(R\Gamma_T((\mca{O}_{M,S})_{\mca{W}},\bb{G}_m)). 
\]
\end{conj}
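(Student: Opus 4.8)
The plan is the following. The conjecture $\boldsymbol{\mathrm{LT}}(M/K)_p$ is not a new result but the $p$-part of the equivariant Tamagawa number conjecture for $(h^0(\mathrm{Spec}(M)),\bb{Z}[G])$ in its leading-term form, recalled from \cite{BKS}; for general $(K,p)$ it is a conjecture and this paper invokes it only as a hypothesis. In the range where it is actually used — $p$ splitting completely in $K/\bb{Q}$ and $p\nmid\#\mathrm{Cl}(K)$ — it is the theorem of Bley \cite{Bl}, and I would prove it by the Iwasawa-theoretic descent strategy of Burns--Greither \cite{BG} (with Flach's \cite{Fl} refinements), using Rubin's main conjecture over imaginary quadratic fields as the main input.

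First I would pass to the $\bb{Z}_p^2$-extension $K_\infty/K$ with $\mca{G}=\Gal(K_\infty/K)=\Delta\times\Gamma$ and formulate the ``equivariant Iwasawa main conjecture'': the graded invertible $\bb{Z}_p[[\mca{G}]]$-module $\det_{\bb{Z}_p[[\mca{G}]]}(R\Gamma_T((\mca{O}_{K_\infty,S})_{\mca{W}},\bb{G}_m))$ is generated by the inverse limit $\varprojlim_{M'} z_{M'/K,S,T}$ of zeta elements over the finite subextensions $M'\subseteq K_\infty$. Since $p\nmid\#\Delta$ the group ring splits over the $G_{\bb{Q}_p}$-orbits of characters $\psi$, so it suffices to work over each $\Lambda_\psi=\mca{O}_\psi[[\Gamma]]$.

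On the algebraic side, the cohomology of $R\Gamma_T((\mca{O}_{K_\infty,S})_{\mca{W}},\bb{G}_m)$ is controlled by the inverse limit of global units in degree $0$ and, in degree $1$, by the unramified Iwasawa module $X$ together with $\varprojlim X_{M',S}$; thus the $\psi$-part of the determinant equals $\cha_{\Lambda_\psi}(X_\psi)$ up to explicit, invertible local Euler factors. On the analytic side the zeta element is pinned down by $\vartheta_\lambda(z)=\theta^*(0)$, and Kronecker's limit formula — the analytic class number formula expressed via elliptic units, cf.\ \cite{Ru3} — rewrites the leading terms at $s=0$ of the relevant Hecke $L$-functions in terms of the elliptic units ${_\mf{a}}z_{\mf{g}}$ and the regulator map $\lambda$. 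The bridge between the two descriptions is Rubin's main conjecture $\cha_{\Lambda_\psi}(X_\psi)=\cha_{\Lambda_\psi}((\mca{U}_\infty/\mca{C}_\infty)_\psi)$, with $\mca{C}_\infty$ the module of elliptic units; this yields the equality of the two invertible $\Lambda_\psi$-modules up to a pseudo-null discrepancy, and the hypotheses $p$ split, $p\nmid\#\mathrm{Cl}(K)$ force the relevant Iwasawa modules to have vanishing $\mu$-invariant and no nonzero finite submodule, so the discrepancy vanishes and one obtains the equality on the nose over $\Lambda_\psi$, hence over $\bb{Z}_p[[\mca{G}]]$.

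The last step is to descend from $K_\infty$ to the finite layer $M$: base-change the perfect complex along $\bb{Z}_p[[\mca{G}]]\to\bb{Z}_p[G]$ and verify that the descent (Bockstein) spectral sequence degenerates, so that the generator of the determinant over $\bb{Z}_{(p)}[G]$ is exactly $z_{M/K,S,T}$ and not a non-unit multiple of it. I expect this descent to be the main obstacle: it is precisely the point where the ``up to pseudo-null'' character of the main conjecture must be upgraded to the ``on the nose'' equality of $\bb{Z}_{(p)}[G]$-determinants demanded here, and it is what forces the hypotheses on $p$ — one needs $X$ to carry no nonzero finite submodule and one needs the semilocal structure at $p$ (a product of copies of $\bb{Q}_p$ when $p$ splits) in order to control the trivial-zero contributions. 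When $p$ is non-split or $p\mid\#\mathrm{Cl}(K)$ this argument breaks down and the conjecture remains open, which is exactly why the present paper treats such cases by the ``non-arithmetic specialization'' method rather than by $(\mathrm{ETNC})_p$.
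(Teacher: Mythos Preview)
Your reading is correct: the paper does not prove this statement at all. It is displayed as a \emph{Conjecture} (the $p$-part of the leading term conjecture, taken verbatim from \cite{BKS}), and the paper's role is only to recall it as a hypothesis. The paper then quotes Bley's result (Theorem~\ref{etnc/K}, i.e.\ \cite{Bl} Theorem~4.2) as a black box establishing the conjecture when $p$ splits in $K$ and $p\nmid\#\mathrm{Cl}(K)$; no argument for it is given or sketched in the paper itself.

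So there is nothing to compare against: the paper has no proof of $\boldsymbol{\mathrm{LT}}(M/K)_p$. Your proposal goes further than the paper by outlining Bley's Burns--Greither-style descent from Rubin's two-variable main conjecture, which is indeed the shape of the argument in \cite{Bl}. That sketch is reasonable as background, but it is extraneous to what the paper actually does with this statement, and of course it does not (and cannot, at present) address the general conjecture for non-split $p$ or $p\mid\#\mathrm{Cl}(K)$ --- a point you already flag correctly.
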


\begin{rem}
The validity of $\boldsymbol{\mathrm{LT}}(M/K)_p$
does not depends on $S$ and $T$.
(See \cite{BKS} Proposition 3.4.)
\end{rem}

\begin{rem}
If $\boldsymbol{\mathrm{LT}}(M/K)_p$ holds
then $\boldsymbol{\mathrm{LT}}(M'/K')_p$ also holds
for any extension $M'/K'$ 
satisfying $K \subseteq K' \subseteq M' \subseteq M$.
(See \cite{BF} Proposition 4.1.)
\end{rem}

\subsubsection{Rubin--Stark conjecture}

We denote by $P_K(M)$ be the set of all the places of $K$
splitting in $M/K$.
We fix a subset $V=\{ v_1, \dots, v_r \} 
\subseteq S \cap P_K(M)$, and 
put $S=\{ v_0, v_1, \dots, v_r \}$.
For each $v_i \in S$, we fix a place $w_i \in S_M$ 
above $v_i$.
The regulator map $\lambda_{M,S}$ induces an isomorphism
\[
\wedge^r\lambda_{M,S} \colon
\bb{R} \bigwedge_{\bb{R}[G]}^r \mca{O}_{M,S,T}^\times
=\bigwedge_{\bb{R}[G]}^r \bb{R}\mca{O}_{M,S,T}^\times
\longrightarrow \bigwedge_{\bb{R}[G]}^r \bb{R}X_{M,S}
=\bb{R}\bigwedge_{\bb{R}[G]}^r X_{M,S}.
\]
We define the $r$-th order Rubin-Stark element 
to be the unique element  
\[
\epsilon^V_{M/K,S,T} \in
\bb{R} \bigwedge_{\bb{R}[G]}^r \mca{O}_{M,S,T}^\times
\]
such that
\[
(\wedge^r\lambda_{M,S})
(\epsilon^V_{M/K,S,T})
=\theta^{(r)}_{M/K,S,T}(0) 
\bigwedge_{i=1}^r(w_i - w_0)
\in \bb{R}\bigwedge_{\bb{R}[G]}^r X_{M,S}.
\]

In order to review the statement of
Rubin--Stark conjecture, 
we need to introduced by the notion of 
``exterior power biduals".

\begin{dfn}[\cite{BS} Definition 2.1.]\label{defEPB}
Let $R$ be a commutative ring, 
and  $X$ an $R$-module.
Then for each $i \in \bb{Z}$, we define
\[
\bigcap_{R}^i X:= 
\Hom_R \left( 
\bigwedge_{R}^i 
\Hom_R(X,R) , R 
\right)
\]
The $R$-module
$\bigcap_{R}^i X$
is called {\em the $i$-th exterior biduals of $X$}. 
Note that we have a natural map
\[
\xi_X^i \colon 
\bigwedge_{R}^i X \longrightarrow \bigcap_{R}^i X;\ 
\bigwedge_{\nu=1}^i x_\nu \longmapsto \left( 
\bigwedge_{\nu=1}^i \Phi_\nu
\mapsto \det (\Phi_\mu(x_\nu))_{1 \le \mu,\nu \le i}
\right).
\]
\end{dfn}

\begin{rem}
Let $R$ be a Noetherian commutative ring, 
and  $X$ a finitely generated $R$-module.
If $X$ is a projective $R$-module, 
then the map $\xi_X^i$ becomes an isomorphism.
{\rm (See \cite{BS} Lemma A.1.)}
In particular, If $R$ isomorphic to the group ring 
$\bb{Z}[H]$ of an abelian group $H$, then 
$\bigcap_{R}^i X$ is regarded as 
an $R$-lattice of 
$\bigwedge_{\bb{Q}R}^i \bb{Q} X\ 
(=\bb{Q}\bigwedge_{R}^i X)$
containing the image of 
$\bigwedge_{R}^i X$
via the natural embedding.
The lattice $\bigcap_{R}^i X$
is called \textit{Rubin lattice}.
\end{rem}

\begin{rem}
The $R$-module 
$\bigcap_{R}^i X$
is first introduced 
by Rubin  in \cite{Ru4}
in a different manner from 
Definition \ref{defEPB} 
when $R$ is an $O$-order of 
a semisimple $Q$-algebra
where $O$ is a Dedekind domain, 
and $Q$ is the quotient field of $O$.
(Note that $\bigcap_{R}^i X$ is denoted by 
$\bigwedge_{0}^i X$ in \cite{Ru4}.)
In such cases, 
Rubin defined it as an $O$-lattice of 
$Q\bigcap_{R}^i X$.
In \cite{BS}, 
Burns and Sano generalized
Rubin' lattice over general commutative rings
by using ``exterior power biduals".
In this paper, 
we follow Burns--Sano's definition.
Note that later, we need to treat the case when 
$R$ is a group ring over $\bb{Z}/p^N\bb{Z}$.
\end{rem}

The following conjecture
$\boldsymbol{\mathrm{RS}}(M/K,S,T,V)_p$
is called Rubin--Stark conjecture.
 
\begin{conj}[\cite{BKS} Conjecture 5.1, 
$\boldsymbol{\mathrm{RS}}(M/K,S,T,V)_p$]
We have
\[
\epsilon^V_{M/K,S,T} 
\in \bb{Z}_{(p)}\bigcap_{\bb{Z}[G]}^r \mca{O}_{K,S,T}^\times.
\] 
\end{conj}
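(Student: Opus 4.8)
The statement $\boldsymbol{\mathrm{RS}}(M/K,S,T,V)_p$ is recalled here because in every situation in which we shall use it it is in fact a theorem, so the plan is not to prove it ab initio but to explain why it holds. First I would deduce it from the $p$-part of the leading term conjecture $\boldsymbol{\mathrm{LT}}(M/K)_p$ by the reduction of Burns--Kurihara--Sano (\cite{BKS} Theorem 5.11), and then invoke Bley's theorem (\cite{Bl} Theorem 4.2): under our standing hypotheses, that $p$ splits completely in $K/\bb{Q}$ and $p\nmid\#\mathrm{Cl}(K)$, $\boldsymbol{\mathrm{LT}}(M/K)_p$ holds for every finite abelian $M/K$, since any such $M$ lies in a ray class field of $K$. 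Hence $\boldsymbol{\mathrm{RS}}(M/K,S,T,V)_p$ holds unconditionally in the setting of Theorem \ref{thmTheta=Crough}.

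The core of the reduction is to identify the Rubin--Stark element with the image of the zeta element $z_{M/K,S,T}$ under an explicit isomorphism of graded invertible modules. Write $R\Gamma_T$ for the Weil-\'etale complex $R\Gamma_T((\mca{O}_{M,S})_{\mca{W}},\bb{G}_m)$. Since every $v_i\in V$ splits completely in $M/K$, the places of $M$ above $v_i$ form a $G$-torsor, so for a chosen $w_i\mid v_i$ the valuation $\mathrm{ord}_{w_i}$ exhibits the relevant part of $Y_{M,S}$, hence of $H^1$ of $R\Gamma_T$, as a free rank-one $\bb{Z}[G]$-module. Assembling $\mathrm{ord}_{w_1},\dots,\mathrm{ord}_{w_r}$ together with the functional $w\mapsto -(\text{coefficient of }w_0)$ on $X_{M,S}$ and the determinant formalism of \cite{KM}, one obtains a canonical isomorphism between a truncation of $\det_{\bb{Z}[G]}(R\Gamma_T)$ and $\bigcap_{\bb{Z}[G]}^r\mca{O}_{M,S,T}^\times$. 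Over $\bb{R}[G]$ this isomorphism matches $\vartheta_{\lambda_{M,S}}$ with $\wedge^r\lambda_{M,S}$, so it carries $z_{M/K,S,T}$, characterised by $\vartheta_{\lambda_{M,S}}(z_{M/K,S,T})=\theta^*_{M/K,S,T}(0)$, to the unique element with regulator image $\theta^{(r)}_{M/K,S,T}(0)\bigwedge_{i=1}^r(w_i-w_0)$, namely to $\epsilon^V_{M/K,S,T}$. Here one uses that $r_{\chi,S}\ge r$ for every $\chi\in\widehat{G}$ -- a consequence of $V\subseteq S\cap P_K(M)$ -- which forces both $\epsilon^V_{M/K,S,T}$ and the relevant projection of $z_{M/K,S,T}$ to be supported on the idempotents with $r_{\chi,S}=r$, where $\theta^{(r)}$ and $\theta^*$ agree.

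It then remains to check integrality at $p$, i.e.\ that the comparison sends $\bb{Z}_{(p)}\det_{\bb{Z}[G]}(R\Gamma_T)$ into $\bb{Z}_{(p)}\bigcap_{\bb{Z}[G]}^r\mca{O}_{M,S,T}^\times$. This uses that $\mca{O}_{M,S,T}^\times$ is $\bb{Z}$-free by the defining property of $T$, that $R\Gamma_T$ is a perfect complex concentrated in degrees $0$ and $1$, and that for split $v_i$ the map $\mathrm{ord}_{w_i}$ is split surjective onto $\bb{Z}[G]$, so that the natural maps $\xi^i$ from $\bigwedge^i$ to the exterior-power biduals $\bigcap^i$ are compatible with $\otimes_{\bb{Z}}\bb{Z}_{(p)}$ and with the graded-determinant isomorphisms. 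Granting this, $\boldsymbol{\mathrm{LT}}(M/K)_p$ -- the equality $\bb{Z}_{(p)}[G]\cdot z_{M/K,S,T}=\bb{Z}_{(p)}\det_{\bb{Z}[G]}(R\Gamma_T)$ -- yields $\epsilon^V_{M/K,S,T}\in\bb{Z}_{(p)}\bigcap_{\bb{Z}[G]}^r\mca{O}_{M,S,T}^\times$ immediately. The main obstacle is precisely this last, purely homological bookkeeping: organising the graded-determinant identities and Rubin's biduals so that the passage from the real reformulation -- essentially the analytic class number formula read off through $\vartheta_{\lambda_{M,S}}$ -- to the integral $\bb{Z}_{(p)}$-statement is clean and compatible with the splitting conditions on $V$ and with the choice of $T$. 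All of this is carried out in \cite{BKS}, and no input beyond $\boldsymbol{\mathrm{LT}}(M/K)_p$, and hence in our setting beyond Bley's theorem, is needed.
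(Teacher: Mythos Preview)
Your reading of the situation is correct: this is stated in the paper as a \emph{conjecture}, not as a theorem with a self-contained proof. The paper does not attempt to prove $\boldsymbol{\mathrm{RS}}(M/K,S,T,V)_p$; immediately after stating it, it simply quotes \cite{BKS} Theorem~5.11 to the effect that $\boldsymbol{\mathrm{LT}}(M/K)_p$ implies $\boldsymbol{\mathrm{RS}}(M/K,S,T,V)_p$, and later invokes Bley's theorem (\cite{Bl} Theorem~4.2) to secure $\boldsymbol{\mathrm{LT}}(M/K)_p$ when $p$ splits in $K$ and $p\nmid\#\mathrm{Cl}(K)$. Your first paragraph reproduces exactly this logical structure, so at the level of what the paper actually does you are in complete agreement.

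Where you go beyond the paper is in your second and third paragraphs, where you sketch the mechanism of the Burns--Kurihara--Sano reduction itself: realising $\epsilon^V_{M/K,S,T}$ as the image of the zeta element $z_{M/K,S,T}$ under a determinant-functor isomorphism built from the split valuations $\mathrm{ord}_{w_i}$, and then reading off $\bb{Z}_{(p)}$-integrality from the statement of $\boldsymbol{\mathrm{LT}}(M/K)_p$. This is a fair outline of the argument in \cite{BKS}, but the paper under review makes no attempt to reproduce or summarise it---it treats \cite{BKS} Theorem~5.11 as a black box. So your proposal is not wrong, but it supplies considerably more detail than the paper provides; if your aim is to match the paper's own treatment, the first paragraph alone suffices.
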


Burns--Kurihara--Sano proved that
the $p$-part of the leading term conjecture
implies the $p$-part of the Rubin--Stark conjecture.

\begin{thm}[\cite{BKS} Theorem 5.11]
If the conjecture $\boldsymbol{\mathrm{LT}}(M/K)_p$
holds, then the conjecture
$\boldsymbol{\mathrm{RS}}(M/K,S,T,V)_p$
also holds for any $(S,T,V)$.
\end{thm}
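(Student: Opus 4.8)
The plan is to deduce $\boldsymbol{\mathrm{RS}}(M/K,S,T,V)_p$ from $\boldsymbol{\mathrm{LT}}(M/K)_p$ by transporting the integrality information carried by the zeta element $z_{M/K,S,T}$ through an explicit $\bb{Z}[G]$-linear ``Rubin--Stark map''. Write $C:=R\Gamma_T((\mca{O}_{M,S})_{\mca{W}},\bb{G}_m)$; it is a perfect complex concentrated in degrees $0$ and $1$, with $H^0(C)=\mca{O}_{M,S,T}^\times$ torsion free and $H^1(C)=\mca{S}^{\mathrm{tr}}_{S,T}(\bb{G}_m/M)$ an extension of $X_{M,S}$ by $\mathrm{Cl}^T_S(M)$. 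The hypothesis on $V$ is used here: since each $v_i\in V$ ($1\le i\le r$) splits completely in $M/K$, the fixed places $w_0,\dots,w_r$ determine a canonical $\bb{Z}[G]$-linear surjection $H^1(C)\twoheadrightarrow X_{M,S}\twoheadrightarrow\bigoplus_{i=1}^{r}\bb{Z}[G]\,(w_i-w_0)$ onto a free $\bb{Z}[G]$-module of rank $r$ (note that complete splitting also forces $r_{\chi,S}\ge r$ for every character $\chi$, so this surjection exists rationally in each component). Representing $C$ by a two-term complex of finitely generated projective $\bb{Z}[G]$-modules and using the splitting of this free rank-$r$ quotient, one extracts from $\det_{\bb{Z}[G]}C$ a canonical homomorphism
\[
\Psi_V\colon \det_{\bb{Z}[G]}C\longrightarrow \bigcap_{\bb{Z}[G]}^{r}\mca{O}_{M,S,T}^\times
\]
(a $p$-integral incarnation of the exterior-power/Rubin--Stark map attached to a complex in \cite{BS}). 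I expect the main obstacle to be exactly this construction: one must verify that $\Psi_V$ is independent of the chosen projective representative, that it is genuinely $\bb{Z}[G]$-linear and compatible with $\bb{Z}_{(p)}$-localization, and that it truly lands in the exterior power bidual (the ``Rubin lattice'') $\bigcap_{\bb{Z}[G]}^{r}\mca{O}_{M,S,T}^\times$ rather than merely in $\bb{Q}\bigwedge_{\bb{Q}[G]}^{r}\mca{O}_{M,S,T}^\times$. This requires careful bookkeeping of the non-projectivity of $H^0(C)$ and $H^1(C)$ and a precise grip on the definition of $\bigcap^{r}$, and is the technical heart of the argument.

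Next I would identify $\Psi_V(z_{M/K,S,T})$ by applying $\bb{R}\otimes_{\bb{Z}}(-)$, where everything becomes projective over the semisimple algebra $\bb{R}[G]$: the regulator $\lambda_{M,S}$ identifies $\bb{R}H^0(C)$ with $\bb{R}H^1(C)$, hence $\vartheta_{\lambda_{M,S}}$ identifies $\bb{R}\det_{\bb{Z}[G]}C$ with $\bb{R}[G]$. Unwinding the definitions, $\Psi_V\otimes\bb{R}$ fits into the commutative diagram relating it to $\vartheta_{\lambda_{M,S}}$, to $\wedge^{r}\lambda_{M,S}$, and to the truncation onto the characters of vanishing order exactly $r$: concretely $(\wedge^{r}\lambda_{M,S})(\Psi_V(x))=\vartheta_{\lambda_{M,S}}(x)\cdot e_r\bigwedge_{i=1}^{r}(w_i-w_0)$ for every $x$, where $e_r\in\bb{R}[G]$ is the idempotent supported on those characters. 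Since $\vartheta_{\lambda_{M,S}}(z_{M/K,S,T})=\theta^*_{M/K,S,T}(0)$ and $e_r\,\theta^*_{M/K,S,T}(0)=\theta^{(r)}_{M/K,S,T}(0)$, this yields $(\wedge^{r}\lambda_{M,S})(\Psi_V(z_{M/K,S,T}))=\theta^{(r)}_{M/K,S,T}(0)\bigwedge_{i=1}^{r}(w_i-w_0)$, which by the defining property of the Rubin--Stark element means $\Psi_V(z_{M/K,S,T})=\epsilon^V_{M/K,S,T}$.

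Finally, $\boldsymbol{\mathrm{LT}}(M/K)_p$ asserts $\bb{Z}_{(p)}[G]\cdot z_{M/K,S,T}=\bb{Z}_{(p)}\det_{\bb{Z}[G]}C$, so applying the $\bb{Z}_{(p)}$-linear map $\Psi_V$ gives
\[
\epsilon^V_{M/K,S,T}=\Psi_V(z_{M/K,S,T})\in \Psi_V\big(\bb{Z}_{(p)}\det_{\bb{Z}[G]}C\big)\subseteq \bb{Z}_{(p)}\bigcap_{\bb{Z}[G]}^{r}\mca{O}_{M,S,T}^\times ,
\]
which is precisely $\boldsymbol{\mathrm{RS}}(M/K,S,T,V)_p$. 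Since the construction of $\Psi_V$ and the computation above are uniform in the admissible triple $(S,T,V)$, and since $\boldsymbol{\mathrm{LT}}(M/K)_p$ is independent of the choice of $(S,T)$, the conclusion holds for every $(S,T,V)$.
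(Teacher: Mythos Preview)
The paper does not give its own proof of this statement: it is quoted verbatim as \cite{BKS} Theorem 5.11 and used as a black box, with no argument supplied here. So there is nothing in the present paper to compare your proposal against.

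For what it is worth, your outline is essentially the strategy carried out in \cite{BKS}: one builds a canonical $\bb{Z}[G]$-linear ``projection'' from $\det_{\bb{Z}[G]}R\Gamma_T((\mca{O}_{M,S})_{\mca{W}},\bb{G}_m)$ to the Rubin lattice $\bigcap^{r}_{\bb{Z}[G]}\mca{O}_{M,S,T}^\times$ using the free rank-$r$ quotient of $H^1$ coming from the split places in $V$, checks over $\bb{R}$ that the zeta element is sent to $\epsilon^V_{M/K,S,T}$, and then reads off the $p$-integrality from $\boldsymbol{\mathrm{LT}}(M/K)_p$. You correctly flag that the delicate point is the construction of $\Psi_V$ and the verification that its target is the exterior power bidual rather than merely the rational exterior power; in \cite{BKS} this is handled via the formalism of \cite{BS}. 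If you want to turn your sketch into an actual proof, that is where the work lies, but it is not reproduced in this paper.
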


\subsubsection{Mazur--Rubin--Sano conjecture}\label{sssMRS}
Here, we shall introduce the assertion of 
(the $p$-part of) Mazur--Rubin--Sano conjecture
in our setting.
Recall that we have fixed an extension $K_\infty/K$ in \S \ref{secintro}.
Let $F \in \mca{IF}$ be any element. 
We put $M:=K_0 F\langle \mca{O}_K \rangle$, 
and $G:=\Gal(M/K)$.
We take any $N \in \bb{Z}_{>0}$ and 
any $\mf{n} \in 
\mca{N}_{F,N}^{\mathrm{wo}}(\chi_{\mathrm{cyc}}\psi^{-1})
\setminus \{ \mca{O}_K \}$.
We write $\mf{n}=\mf{l}_1 \cdots \mf{l}_i$, 
where $\mf{l}_\nu$ is a prime of $\mca{O}_K$ 
for each $\nu$, and put
$W:=\mathrm{Prime}_K(\mf{n})$
We define $L:=M \langle \mf{n} \rangle$. 
We put $\widetilde{G}:=\Gal(L/K)$, and 
\[
H:=\Gal(L/M)=H_{\mf{n}}
=H_{\mf{l}_1} \times \cdots H_{\mf{l}_i}.
\] 
Suppose that $S$ 
consists of 
all infinite places and all ramified places in $M/K$.
We put $S':=S \cup W$.
We also put $U:=\{ \infty \}$, and $V:= U \cup W$.
Note that we have $U =S' \cap P_K(L) $, 
and $V =S' \cap P_K(M) $.
Here, we assume that both
$\boldsymbol{\mathrm{RS}}(M/K,S',T,V)_p$ and 
$\boldsymbol{\mathrm{RS}}(L/K,S',T,U)_p$ hold.

Recall that for each prime $\mf{l}$, 
we fixed an embedding 
$\mf{l}_{\overline{K}}\colon \overline{K}
\hookrightarrow \overline{K}_{\mf{l}}$, 
and denoted by $\mf{l}_M$ 
the prime of $\mca{O}_M$ corresponding to
$\mf{l}_{\overline{K}} \vert_M$.
For each $\mf{l}_\nu \in W$, 
we write $D_{\mf{l}_\nu}$ be 
the decomposition subgroup of $\widetilde{G}$ 
at $\mf{l}_\nu$.
Since $\mf{n}$ is well-ordered, we may assume that
\[
H_{\mf{l}_\nu} \subseteq D_{\mf{l}_\nu} 
\subseteq \prod_{j=\nu}^{i} H_{\mf{l}_j}
\subseteq H.
\]
For each subgroup $G' \subseteq \widetilde{G}$,
we denote by $I(G')$ the augmentation ideal $\bb{Z}[G']$.
We write $\mca{I}_{\mf{l}_\nu}:= 
I(D_{\mf{l}_\nu})\bb{Z}_p[\widetilde{G}]$ and
$\mca{I}_H := I(H)\bb{Z}_p[\widetilde{G}]$.
We write $I_{\mf{l}_\nu}:= 
I(D_{\mf{l}_\nu})\bb{Z}_p[H]$ and
$I_H := I(H)\bb{Z}_p[H]$.
We define an ideal $J_W$ of $\bb{Z}[H]$ by 
\(
J_W:= \prod_{\nu=1}^i I_{\mf{l}_\nu}
\), and 
$\mca{J}_W:=J_W\bb{Z}_p[\widetilde{G}]$.
Let $\mathrm{rec}_{(\mf{l}_\nu)_M}\colon 
M_{(\mf{l}_\nu)_M}^\times 
\longrightarrow D_{\mf{l}_\nu}$ 
be the local reciprocity map, 
and define the $\bb{Z}[G]$-linear map
\[
\mathrm{Rec}_{\mf{l}_\nu} \colon \mca{O}_{M,S',T}^\times 
\otimes_{\bb{Z}} \bb{Z}_p
\longrightarrow (\mca{I}_{\mf{l}_\nu})_H
:=\mca{I}_{\mf{l}_\nu}/\mca{I}_H
\mca{I}_{\mf{l}_\nu};\ 
a \longmapsto \sum_{\tau \in G/H} \tau^{-1}
(\mathrm{rec}_{(\mf{l}_\nu)_M}(\tau a)-1). 
\]
Then, the maps $\mathrm{Rec}_{\mf{l}_\nu}$ 
induce the $\bb{Z}[\widetilde{G}]$-linear map
\[
\mathrm{Rec}_W \colon \bb{Z}_p
\bigcap_{\bb{Z}[G]}^{i+1} \mca{O}_{M,S',T}^\times 
\longrightarrow 
\bigcap_{\bb{Z}[G]}^{1} \mca{O}_{M,S',T}^\times 
\otimes_{\bb{Z}[G]} \mca{J}_W/\mca{I}_H\mca{J}_W
=\mca{O}_{M,S,T}^\times 
\otimes_{\bb{Z}} (J_W)_H,
\]
where we regard $(J_W)_H:=J_W/I_HJ_W$
as a trivial $G$-module.
We have a natural injective $\bb{Z}[G]$-linear map
\[
\nu \colon \mca{O}_{M,S',T}^\times 
\otimes_{\bb{Z}} (J_W)_H 
\hookrightarrow 
\mca{O}_{L,S',T}^\times 
\otimes_{\bb{Z}} \bb{Z}_p[H]/I(H)J_W,
\]
where we declare that 
the action of $G$ on $\bb{Z}_p[H]/I(H)J_W$ is also trivial.
We define 
a $\bb{Z}[G]$-linear map
\[
\mca{N}_H \colon 
\mca{O}_{M,S',T}^\times \otimes_{\bb{Z}} \bb{Z}_p
\longrightarrow 
\mca{O}_{M,S',T}^\times 
\otimes_{\bb{Z}} \bb{Z}_p[H]/I(H)J_W
\]
by $\mca{N}_H(a) = 
\sum_{\sigma \in H} \sigma a \otimes \sigma^{-1}$.

In our setting, the assertion of 
the Mazur--Rubin--Sano conjecture is as follows.

\begin{conj}[\cite{BKS} Conjecture 5.4, 
$\boldsymbol{\mathrm{MRS}}(L/M/K,T,W)_p$]
We have 
\[
\mca{N}_H \left( \epsilon^V_{M/K,S',T} \right)
\in \mathrm{Im}(\nu),
\]
and it holds that 
\[
\mca{N}_H \left( \epsilon^U_{L/K,S',T} \right)
=(-1)^{i} \cdot 
\nu\left(
\mathrm{Rec}_W 
\left( \epsilon^{V}_{M/K,S',T} \right)
\right).
\]
\end{conj}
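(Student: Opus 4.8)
The plan is not to establish $\boldsymbol{\mathrm{MRS}}(L/M/K,T,W)_p$ from scratch, but to deduce it from the $p$-part of the leading term conjecture $\boldsymbol{\mathrm{LT}}(L/K)_p$, along the lines of \cite{BKS} Theorem~5.15. Note first that the statement presupposes $\boldsymbol{\mathrm{RS}}(M/K,S',T,V)_p$ and $\boldsymbol{\mathrm{RS}}(L/K,S',T,U)_p$ (so that the Rubin--Stark elements $\epsilon^V_{M/K,S',T}$ and $\epsilon^U_{L/K,S',T}$ are integral and $\mca{N}_H$ can be applied); these also follow from $\boldsymbol{\mathrm{LT}}(L/K)_p$. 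In the situation relevant to \S\ref{seccomparison}, namely $p$ splitting completely in $K/\bb{Q}$ with $p\nmid\#\mathrm{Cl}(K)$, the input $\boldsymbol{\mathrm{LT}}(L/K)_p$ is itself a theorem of Bley (\cite{Bl} Theorem~4.2), so the conjecture becomes unconditional in that range. Thus what remains is the translation between the two statements, which I sketch below.

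First I would express both Rubin--Stark elements through zeta elements: $\epsilon^V_{M/K,S',T}$ and $\epsilon^U_{L/K,S',T}$ are, by definition, the unique elements sent by the relevant exterior powers of the regulator isomorphisms $\lambda_{M,S'}$ and $\lambda_{L,S'}$ to the corresponding leading-term coefficients of the $L$-functions $L_{K,S',T}(\chi,s)$, multiplied by the standard wedges of differences of the fixed places $w_j-w_0$. The key analytic input is the comparison of these leading terms along $W$: since each $\mf{l}_\nu\in W$ splits completely in $M/K$, passing from $M$ to $L$ multiplies $L_{K,S',T}(\chi,s)$, for a character $\chi$ of $\Gal(L/K)$ non-trivial on $H_{\mf{l}_\nu}$, by precisely the extra Euler factor $\prod_{\nu=1}^{i}(1-\chi(\mathrm{Fr}_{\mf{l}_\nu})N\mf{l}_\nu^{-s})$, each factor of which raises the order of vanishing at $s=0$ by one. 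Differentiating produces a factor built from $\log N\mf{l}_\nu$ together with the image of $\mathrm{Fr}_{\mf{l}_\nu}$ in $H_{\mf{l}_\nu}$, which under local reciprocity is exactly what $\mathrm{Rec}_{\mf{l}_\nu}$ (hence $\mathrm{Rec}_W$) records; pushing this identity through the exterior power biduals (Rubin lattices) at the two levels, and tracking the chosen places $w_j$, yields the asserted relation between the two norms up to a universal sign, which one checks equals $(-1)^i$.

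The role of $\boldsymbol{\mathrm{LT}}(L/K)_p$ is to upgrade this rational identity to an integral one. It identifies the zeta element $z_{L/K,S',T}$ with a $\bb{Z}_{(p)}[\Gal(L/K)]$-generator of $\det_{\bb{Z}[\Gal(L/K)]}(R\Gamma_T((\mca{O}_{L,S'})_{\mca{W}},\bb{G}_m))$; taking $R\Gamma(H,-)$ with $H=\Gal(L/M)$ yields the analogous statement over $M$ together with a compatibility of the two determinant modules. From these one reads off both that $\mca{N}_H(\epsilon^V_{M/K,S',T})$ lies in $\mathrm{Im}(\nu)$ and that the two sides of the main equality lie in the lattice $\mca{O}_{L,S',T}^\times\otimes_{\bb{Z}}\bb{Z}_p[H]/I(H)J_W$, whereupon the rational identity of the previous step pins them down uniquely.

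The main obstacle I expect is the sign and orientation bookkeeping: aligning the fixed data --- the places $w_i$ over $v_i$, the generators $\sigma_{\mf{l}_\nu}$ of $H_{\mf{l}_\nu}$ from Definition~\ref{dfnsigmaell}, and the ordering of the primes dividing $\mf{n}$ forced by well-orderedness --- so that $\mathrm{Rec}_{\mf{l}_\nu}$, $\nu$ and $\mca{N}_H$ are normalised coherently, and then verifying that the sign coming out of the iterated finite-singular comparison is indeed $(-1)^i$. The other delicate point is the integrality half (membership in $\mathrm{Im}(\nu)$), which genuinely needs the full strength of $\boldsymbol{\mathrm{LT}}(L/K)_p$ rather than mere rationality of Rubin--Stark elements; there I would follow \cite{BKS} \S5.3 closely, exploiting the behaviour of the Weil--\'etale complexes of $L$ and $M$ under $R\Gamma(H,-)$.
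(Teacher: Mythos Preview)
The statement you are trying to prove is recorded in the paper as a \emph{conjecture}, not as a theorem; the paper offers no proof of its own. Immediately after stating it, the paper simply cites \cite{BKS} Theorem~5.15 for the implication $\boldsymbol{\mathrm{LT}}(L/K)_p \Rightarrow \boldsymbol{\mathrm{MRS}}(L/M/K,T,W)_p$, and later (in \S\ref{ssETNC/K}) invokes Bley's theorem (\cite{Bl} Theorem~4.2) to make $\boldsymbol{\mathrm{LT}}(L/K)_p$ unconditional when $p$ splits in $K$ and $p\nmid\#\mathrm{Cl}(K)$. Your proposal follows exactly this route --- reduce to $\boldsymbol{\mathrm{LT}}(L/K)_p$ via \cite{BKS} Theorem~5.15, then quote Bley --- and additionally sketches some of the internal mechanics of the BKS argument (the analytic comparison of Euler factors at the primes in $W$, the passage to integrality via the determinant of the Weil--\'etale complex, and the sign bookkeeping). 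Since the paper does not reprove any of this but merely cites it, there is nothing to compare beyond noting that your strategy and the paper's citation point to the same source; your extra sketch of the BKS proof is supplementary rather than an alternative.
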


The following lemma implies that 
the map $\mca{N}_H$ is closely related to 
the Kolyvagin operator $D_{\mf{n}} \in \bb{Z}[H]$.

\begin{lem}[\cite{BS} Lemma 4.27 and Lemma 4.28]\label{lemsnDn}
We put 
\[
J_{W}^{\circ}:= 
\left(
\prod_{\nu=1}^i I(H_\nu) 
\right)
\bb{Z}_p[H].
\]
For each ideal $\mf{d}$ of $\mca{O}_K$
dividing the ideal $\mf{n}$, we denote by 
\[
\pi_{\mf{d}}\colon \bb{Z}_p[H]/
I_HJ_{W}
\longrightarrow \bb{Z}_p[H]/
I_HJ_{W}
\]
the endomorphism induced by 
the projection map
$H=H_{\mf{n}} \longrightarrow H_{\mf{d}} \subseteq H$.
We define the endomorphism $s_{\mf{n}}$ 
on $\bb{Z}_p[H]/I_HJ_W$ by 
\(
s_{\mf{n}}:= \sum_{\mf{d} \mid \mf{n}}
(-1)^{\epsilon (\mf{n}/\mf{d})} \pi_{\mf{d}}
\).
Then, the following hold.
\begin{enumerate}[{\rm (i)}]
\item The endomorphism $s_{\mf{n}}$ 
is a projector 
of $J_{W}^{\circ}/
I_HJ^{\circ}_{W}$
in the following sense: 
the image of  $s_{\mf{n}}$
coincides with 
$J^{\circ}_{W}/I_HJ^{\circ}_{W}$, 
and the restriction of $s_{\mf{n}}$ on 
$J_{W}/I_HJ_{W}$
is the identity map.
\item Let $X$ be a $\bb{Z}_p[H]$-module.
Then for any $x \in X$, the map 
\[
\mathrm{id}_X \otimes s_{\mf{n}} \colon 
X \otimes_{\bb{Z}_p} \bb{Z}_p[H] \longrightarrow 
X \otimes_{\bb{Z}_p} \bb{Z}_p[H]
\] 
sends the element
$\sum_{\sigma \in H} \sigma x \otimes \sigma^{-1}$
to 
\[
(-1)^{\epsilon({\mf{n})}}D_{\mf{n}} x 
\otimes \prod_{j=1}^i (\sigma_{\mf{l}_j}-1)
\in X \otimes_{\bb{Z}_p[H]} 
J^{\circ}_{W}/I_HJ^{\circ}_{W},
\]
where $D_{\mf{n}}$ is the Kolyvagin operator. 
\end{enumerate}
\end{lem}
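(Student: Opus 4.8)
The plan is to reduce the whole statement to the case of a single prime by exploiting the decomposition $\bb{Z}_p[H]=\bigotimes_{j=1}^i\bb{Z}_p[H_{\mf{l}_j}]$ as $\bb{Z}_p$-algebras. The first step is to record the factorization
\[
s_{\mf{n}}=\sum_{\mf{d}\mid\mf{n}}(-1)^{\epsilon(\mf{n}/\mf{d})}\pi_{\mf{d}}
=\bigotimes_{j=1}^i\bigl(\mathrm{id}_{\bb{Z}_p[H_{\mf{l}_j}]}-\varepsilon_{\mf{l}_j}\bigr),
\]
where $\varepsilon_{\mf{l}_j}$ is the augmentation of $\bb{Z}_p[H_{\mf{l}_j}]$ followed by the inclusion $\bb{Z}_p\hookrightarrow\bb{Z}_p[H_{\mf{l}_j}]$: writing $\mf{d}=\prod_{j\in J}\mf{l}_j$ for $J\subseteq\{1,\dots,i\}$, the projector $\pi_{\mf{d}}$ is the tensor product of $\mathrm{id}$ over $j\in J$ with $\varepsilon_{\mf{l}_j}$ over $j\notin J$, and $(-1)^{\epsilon(\mf{n}/\mf{d})}=\prod_{j\notin J}(-1)$, so summing over $\mf{d}$ collapses to the displayed tensor product. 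Since each $\varepsilon_{\mf{l}_j}$ is an idempotent ring endomorphism, each factor $\mathrm{id}-\varepsilon_{\mf{l}_j}$ is a $\bb{Z}_p$-linear idempotent whose image is the augmentation ideal of $\bb{Z}_p[H_{\mf{l}_j}]$; hence $s_{\mf{n}}$ is a $\bb{Z}_p$-linear idempotent on $\bb{Z}_p[H]$ whose image is $\prod_j I(H_{\mf{l}_j})\bb{Z}_p[H]=J_{W}^{\circ}$, and for $\mf{d}\ne\mf{n}$ one has $\pi_{\mf{d}}(J_{W}^{\circ})=0$ (one of the tensor factors is killed), so $s_{\mf{n}}$ is the identity on $J_{W}^{\circ}$.

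For assertion (i) it then remains to pass to the quotient by $I_HJ_W$. Here the delicate point is the comparison between the ``true'' ideal $J_W=\prod_\nu I(D_{\mf{l}_\nu})\bb{Z}_p[H]$ and the ``naive'' ideal $J_{W}^{\circ}$. Because $\mf{n}$ is well-ordered and each $\mf{l}_\nu$ splits completely in $M/K$, the decomposition group $D_{\mf{l}_\nu}$ is $H_{\mf{l}_\nu}$ times a cyclic group generated by a Frobenius lift $\phi_\nu\in\prod_{j>\nu}H_{\mf{l}_j}$; I would use this to show that $\pi_{\mf{d}}$ maps $J_W$ into $I_HJ_W$ whenever $\mf{d}\ne\mf{n}$, so that $s_{\mf{n}}$ induces a well-defined idempotent on $\bb{Z}_p[H]/I_HJ_W$ which, by the previous paragraph, has image $J_{W}^{\circ}/I_HJ_{W}^{\circ}$ and restricts to the identity on $J_W/I_HJ_W$. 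Granting the bookkeeping with $J_W$, $J_{W}^{\circ}$, $I_H$ and the $D_{\mf{l}_\nu}$, this is the assertion of (i); as explained below, that bookkeeping is the real content.

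For assertion (ii) I would compute directly using the factorization. Writing a general $\sigma\in H$ as $\prod_j\sigma_{\mf{l}_j}^{a_j}$ and applying $\mathrm{id}_X\otimes s_{\mf{n}}$ to $\sum_{\sigma\in H}\sigma x\otimes\sigma^{-1}$ yields $\sum_{a_1,\dots,a_i}\bigl(\prod_j\sigma_{\mf{l}_j}^{a_j}\bigr)x\otimes\prod_j\bigl(\sigma_{\mf{l}_j}^{-a_j}-1\bigr)$. From the telescoping identity $\sigma_{\mf{l}_j}^{-a}-1=-\sum_{b=1}^{a}\sigma_{\mf{l}_j}^{-b}(\sigma_{\mf{l}_j}-1)\equiv -a(\sigma_{\mf{l}_j}-1)$ modulo $I(H_{\mf{l}_j})^2$, together with the fact that replacing the $j$-th factor produces an error lying in $I(H_{\mf{l}_j})^2\prod_{k\ne j}I(H_{\mf{l}_k})\subseteq I_HJ_{W}^{\circ}$, one gets modulo $I_HJ_{W}^{\circ}$ the congruence $\prod_j(\sigma_{\mf{l}_j}^{-a_j}-1)\equiv(-1)^i\bigl(\prod_j a_j\bigr)\prod_j(\sigma_{\mf{l}_j}-1)$; summing then collapses the left-hand tensor factor to $(-1)^i\bigl(\prod_j\sum_a a\sigma_{\mf{l}_j}^a\bigr)x=(-1)^{\epsilon(\mf{n})}D_{\mf{n}}x$, giving exactly the claimed element of $X\otimes_{\bb{Z}_p[H]}J_{W}^{\circ}/I_HJ_{W}^{\circ}$.

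The step I expect to be the main obstacle is not either computation but the bookkeeping of the nested ideals $I(H_{\mf{l}_j})$, $I(D_{\mf{l}_\nu})$, $I_H$, $J_W$ and $J_{W}^{\circ}$: one must check that every error term produced by the congruences in the previous paragraph genuinely lies in $I_HJ_{W}^{\circ}$, and --- more delicately --- that $\pi_{\mf{d}}(J_W)\subseteq I_HJ_W$ for $\mf{d}\ne\mf{n}$, which hinges on the precise way the Frobenius lifts $\phi_\nu$ sit inside $\prod_{j>\nu}H_{\mf{l}_j}$ as dictated by the well-orderedness of $\mf{n}$. These are precisely the points established in \cite{BS} Lemmas~4.27 and~4.28, whose proofs run along the lines sketched here.
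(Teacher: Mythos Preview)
The paper does not give its own proof of this lemma: the statement is quoted verbatim with the attribution ``[\cite{BS} Lemma~4.27 and Lemma~4.28]'' and no argument is supplied. Your sketch is therefore not to be compared against anything in the paper, and it is a faithful outline of how the result is established in \cite{BS}: the tensor factorization $s_{\mf{n}}=\bigotimes_j(\mathrm{id}-\varepsilon_{\mf{l}_j})$ and the telescoping congruence $\sigma_{\mf{l}_j}^{-a}-1\equiv -a(\sigma_{\mf{l}_j}-1)$ modulo $I(H_{\mf{l}_j})^2$ are exactly the two ingredients, and you correctly flag that the only non-formal content is the ideal bookkeeping distinguishing $J_W$ from $J_W^{\circ}$ via the well-orderedness hypothesis.
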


Burns--Kurihara--Sano proved that
the $p$-part of the leading term conjecture
implies the $p$-part of 
the Mazur--Rubin--Sano conjecture.

\begin{thm}[\cite{BKS} Theorem 5.15]
If the conjecture $\boldsymbol{\mathrm{LT}}(L/K)_p$
holds, then the conjecture
$\boldsymbol{\mathrm{MRS}}(L/M/K,T,W)_p$
also holds for any $T$.
\end{thm}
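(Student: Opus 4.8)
The plan is to deduce $\boldsymbol{\mathrm{MRS}}(L/M/K,T,W)_p$ from $\boldsymbol{\mathrm{LT}}(L/K)_p$ by comparing, inside the equivariant determinant of the Weil--\'etale cohomology complex of $\bb{G}_m$, the zeta elements at the two levels $M$ and $L$, and then translating the resulting identity into a statement about Rubin--Stark elements. First I would observe that $\boldsymbol{\mathrm{LT}}(L/K)_p$ implies $\boldsymbol{\mathrm{LT}}(M/K)_p$ by the functoriality recalled above, so that by \cite{BKS} Theorem 5.11 both $\boldsymbol{\mathrm{RS}}(M/K,S',T,V)_p$ and $\boldsymbol{\mathrm{RS}}(L/K,S',T,U)_p$ hold; in particular $\epsilon^V_{M/K,S',T}$ and $\epsilon^U_{L/K,S',T}$ lie in the appropriate exterior power biduals, so that both sides of the desired congruence already make sense $p$-integrally. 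It then remains to establish the membership $\mca{N}_H(\epsilon^V_{M/K,S',T}) \in \mathrm{Im}(\nu)$ and the displayed equality modulo $I(H)J_W$.

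The crucial step is a comparison of complexes. Write $R\Gamma_L := R\Gamma_T((\mca{O}_{L,S'})_{\mca{W}},\bb{G}_m)$ and $R\Gamma_M := R\Gamma_T((\mca{O}_{M,S'})_{\mca{W}},\bb{G}_m)$. There is a canonical isomorphism $R\Gamma_L \otimes^{\bb{L}}_{\bb{Z}_p[\widetilde{G}]} \bb{Z}_p[G] \simeq R\Gamma_M$, and, more precisely, a decreasing filtration of $R\Gamma_L \otimes_{\bb{Z}_p[\widetilde{G}]} \bb{Z}_p[\widetilde{G}]/\mca{I}_H\mca{J}_W$ whose graded pieces are built from $R\Gamma_M$ tensored with the augmentation quotients $(\mca{I}_{\mf{l}_\nu})_H$. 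Under the reduction $\bb{Z}_p[\widetilde{G}] \to \bb{Z}_p[G]$, the generator of $\det_{\bb{Z}_p[\widetilde{G}]} R\Gamma_L$ supplied by $\boldsymbol{\mathrm{LT}}(L/K)_p$ passes to the generator of $\det_{\bb{Z}_p[G]} R\Gamma_M$ supplied by $\boldsymbol{\mathrm{LT}}(M/K)_p$, and the content of $\boldsymbol{\mathrm{MRS}}$ is precisely that the ``Bockstein homomorphism'' attached to this filtration is computed, in degree one, by the product $\prod_\nu \mathrm{Rec}_{\mf{l}_\nu}$ of local reciprocity maps. Here Lemma \ref{lemsnDn} enters: applying $\mca{N}_H$ and then the projector $s_{\mf{n}}$ extracts the Kolyvagin-derivative component $D_{\mf{n}}(-) \otimes \prod_j(\sigma_{\mf{l}_j}-1)$, which is exactly the piece that registers the successive connecting maps; translating the resulting determinant identity back through the regulator isomorphisms $\wedge^r\lambda_{M,S'}$ and $\wedge^{r+i}\lambda_{L,S'}$ then yields both assertions of the conjecture, with the sign $(-1)^i$.

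The main obstacle will be the explicit identification of the degree-one transition map in Weil--\'etale cohomology with $\prod_\nu \mathrm{Rec}_{\mf{l}_\nu}$, together with the exact bookkeeping of the sign $(-1)^i$. Concretely, one has to: (a) describe $H^1_T((\mca{O}_{L,S'})_{\mca{W}},\bb{G}_m) \otimes_{\bb{Z}_p[\widetilde{G}]} \bb{Z}_p[\widetilde{G}]/\mca{I}_H$ as a modification of $H^1_T((\mca{O}_{M,S'})_{\mca{W}},\bb{G}_m)$ by terms recording ramification at the primes $\mf{l}_\nu$, using the localisation sequence at the primes dividing $\mf{n}$ and the fact that each $\mf{l}_\nu$ splits completely in $M/K$; (b) trace the local Artin map through the induced connecting homomorphism; and (c) keep track of the determinant-functor signs, each of the $i$ primes contributing a factor $(-1)$ because the relevant graded piece lives in cohomological degree one rather than zero. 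Steps (b) and (c) are essentially formal once (a) is available, but (a) requires a careful analysis of the compactly supported Weil--\'etale complex and its behaviour under enlarging the set of places from $S$ to $S' = S \cup W$, which is the technical heart of the matter; I would carry this out following \cite{BKS} \S 5 together with the preparatory computations of \cite{BS} \S 4.
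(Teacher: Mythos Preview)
The paper does not prove this statement at all: it is quoted verbatim as \cite{BKS} Theorem 5.15 and used as a black box, with no proof or sketch supplied. So there is no ``paper's own proof'' to compare against; your outline is a plausible summary of the strategy in \cite{BKS}, but within the present paper the correct response is simply to cite that reference.
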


\subsection{Conjectures over imaginary quadratic fields}\label{ssETNC/K}

In \cite{Bl}, Bley proved 
the equivariant Tamagawa number conjecture
over imaginary quadratic fields in certain cases.
Here, we review this work briefly.

Keep $K$ to denote an imaginary quadratic field.
Bley proved the following theorem.

\begin{thm}[\cite{Bl} Theorem 4.2]\label{etnc/K}
Let $p$ be a prime number splitting in $K/\bb{Q}$,
and not dividing $\# \mathrm{Cl}(K)$.
Then, for any abelian extension $M/K$
the conjecture
$\boldsymbol{\mathrm{LT}}(M/K)_p$
holds. 
\end{thm}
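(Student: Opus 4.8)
The plan is to run the now-standard Iwasawa-descent argument for the equivariant Tamagawa number conjecture — the imaginary quadratic analogue of Burns--Greither's treatment \cite{BG} of abelian extensions of $\bb{Q}$ — with the Euler system of elliptic units replacing cyclotomic units. First, using the facts recorded above that the validity of $\boldsymbol{\mathrm{LT}}(M/K)_p$ is independent of $S$ and $T$ and is inherited by intermediate extensions $K \subseteq K' \subseteq M' \subseteq M$ (see \cite{BF} Proposition 4.1), it suffices to prove $\boldsymbol{\mathrm{LT}}(M/K)_p$ for $M$ running over a cofinal family of finite abelian extensions of $K$. I would enlarge $M$ so that it lies inside a ray class field of $K$ whose conductor is divisible by the primes above $p$ and so that $\Gal(M/K) \simeq \Delta \times P$ with $\#\Delta$ prime to $p$ and $P$ a finite $p$-group; here $p \nmid \#\mathrm{Cl}(K)$ is used to keep the prime-to-$p$ part of the class group under control and to make $\bb{Z}_p[\Gal(M/K)]$ decompose, after $\Delta$-isotypic decomposition, into a product of local rings. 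Since $K$ is imaginary quadratic it carries a $\bb{Z}_p^2$-extension unconditionally (Leopoldt being vacuous), and I would embed $M$ into a two-variable Iwasawa tower $M_\infty/M$.

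Next I would set up the equivariant Iwasawa main conjecture. Let $\Lambda := \bb{Z}_p[[\Gal(M_\infty/K)]]$ and form the perfect complex $C_\infty$ of $\Lambda$-modules whose derived base change along $\Lambda \to \bb{Z}_p[\Gal(M/K)]$ recovers $R\Gamma_T((\mca{O}_{M,S})_{\mca{W}},\bb{G}_m)$ (and likewise at every finite layer), with $H^0(C_\infty)$ the inverse limit of the groups $\mca{O}_{M_n,S,T}^\times$ and $H^1(C_\infty)$ assembled from the inverse limits of $\mathrm{Cl}^T_S(M_n)$ and of $X_{M_n,S}$. The Euler system of elliptic units furnishes, by norm compatibility, an element $z_\infty$ of the relevant component of $\det_\Lambda C_\infty$; that $z_\infty$ is actually a $\Lambda$-basis of that module would follow from Rubin's two-variable main conjecture for imaginary quadratic fields (which holds unconditionally when $p$ splits; see \cite{Ru5}), equating the characteristic ideal of the relevant class-group Iwasawa module with that of $\mca{U}_\infty/\mca{C}_\infty$, together with its refinement from characteristic ideals to determinant (Fitting) ideals over $\Lambda$. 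For this refinement the hypothesis $p \nmid \#\mathrm{Cl}(K)$ removes the ambiguity coming from the bottom layer and, combined with Gillard's vanishing of the pertinent $\mu$-invariant (again for $p$ split), guarantees that the Iwasawa modules involved have no nonzero finite $\Lambda$-submodule and specialize well.

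The final step is descent to finite level together with the identification of the zeta element. One base-changes $C_\infty \otimes^{\bb{L}}_\Lambda \bb{Z}_p[\Gal(M/K)]$ and checks, using $\mu = 0$ and the absence of finite submodules, that the higher $\mathrm{Tor}$-terms vanish, so that the image of $z_\infty$ is a $\bb{Z}_p[\Gal(M/K)]$-basis of $\det_{\bb{Z}_p[\Gal(M/K)]} R\Gamma_T((\mca{O}_{M,S})_{\mca{W}},\bb{G}_m)$. It then remains to identify this image with the analytic zeta element $z_{M/K,S,T}$, i.e.\ with $\theta^*_{M/K,S,T}(0)$ via $\vartheta_{\lambda_{M,S}}$: the localization of the elliptic-unit element at the primes above $p$ computes the $p$-adic $L$-function (Coates--Wiles homomorphisms, Coleman maps, de Shalit's $p$-adic Kronecker limit formula, see \cite{dS}), the archimedean leading term $\theta^{(r)}_{M/K,S,T}(0)$ is computed by the first-order complex Kronecker limit formula in terms of the same elliptic units, and the two are reconciled by the explicit reciprocity law comparing the $p$-adic and the complex regulators.

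The main obstacle is the interaction of these last two steps: upgrading Rubin's characteristic-ideal statement to a genuinely equivariant determinant statement and controlling the cohomological descent through the specialization $\Lambda \to \bb{Z}_p[\Gal(M/K)]$. This is precisely where both hypotheses are essential — $p$ split makes Rubin's main conjecture unconditional and forces $\mu = 0$, while $p \nmid \#\mathrm{Cl}(K)$ kills the ambiguity at the bottom layer; in the inert or ramified cases one obtains $\boldsymbol{\mathrm{LT}}(M/K)_p$ only up to the $\mu$-invariant or up to an unknown unit. Establishing the precise compatibility between the Iwasawa-theoretic elliptic-unit element and the archimedean leading term, via explicit reciprocity, is likewise the technically heaviest ingredient.
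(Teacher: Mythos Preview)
The paper does not prove this statement at all: it is quoted verbatim as \cite{Bl} Theorem 4.2 and used as a black box, with no argument given. So there is nothing in the paper to compare your proposal against.

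That said, your sketch is a fair outline of what Bley actually does in \cite{Bl}: reduction to a cofinal family, $\Delta$-isotypic decomposition using $p\nmid\#\Delta$, passage to the Iwasawa tower, input of Rubin's two-variable main conjecture together with Gillard's $\mu=0$ (both requiring $p$ split), and descent to finite level with the identification of the limit elliptic-unit element with the archimedean leading term via the Kronecker limit formula. One caution on your wording: the hypothesis $p\nmid\#\mathrm{Cl}(K)$ is not what makes $\bb{Z}_p[\Gal(M/K)]$ decompose after character projection---that comes from $p\nmid\#\Delta$---but is used rather to control the bottom of the tower and to ensure the main conjecture input is available in the required equivariant form. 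If you intend to include a proof here rather than a citation, you would need to fill in the determinant-level refinement and the descent control carefully; otherwise, simply citing \cite{Bl} as the paper does is the appropriate move.
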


Let $p$ be a prime number, and
$K_\infty /K$ be the-extension fixed in \S 1.
For each non-zero ideal $\mf{a}$ of $\mca{O}_K$,
we define 
$w(\mf{a}):=\# \{ \zeta \in 
(\mca{O}_K^\times)_{\mathrm{tor}} 
\mathrel{\vert} \zeta \equiv 1 \mod \mf{a} \}$.
We have an explicit description of
Stark units, namely 
the first rank Rubin--Stark element
with $V=\{ \infty \}$, 
in terms of elliptic units
in certain situations.

\begin{thm}\label{thmellRS}
Let $\psi \in \widehat{\Delta}$ 
be a character satisfying 
$\bar{\psi}\vert_{D_{\Delta,\mf{p}}} \ne 1$
for any prime ideal $\mf{p}$ of $\mca{O}_K$ above $p$.
We assume that 
the conductor $\mf{f}$ of $K_0/K$
satisfies $w (\mf{f})=1$.
Take any element $F \in \mca{IF}$, 
and put $M:=K_0F$.
Let $S$ be the set of places of $\mca{O}_K$
consisting of all places dividing $p\infty$
and all unramified places in $K_0/K$.
Let $\mf{l}$ be a prime of $\mca{O}_K$
not contained in $S$, and 
satisfying $w(\mf{l})=1$.
Let $\mf{n}$ be a square-free ideal of $\mca{O}_K$
prime to all elements of $S \cup \{ \mf{l} \}$, 
and splitting completely in $K/\bb{Q}$. 
We put $S' := S \cup \mathrm{Prime}_K(\mf{n})$.
Then, we have
\[
\epsilon^{\{ \infty \}}_{M 
\langle \mf{n} \rangle/K,S',\{ \mf{l} \}, \psi}
= u \cdot c^{\mf{l}}_{\mf{f}}(F;\mf{n})
\in (\mca{O}_{M \langle \mf{n} \rangle,
S' ,\{ \mf{l} \} }^\times \otimes \bb{Z}_p)_\psi 
\]
for some $u \in \bb{Z}_p [\Gal(M \langle \mf{n} \rangle/K)]^\times$, 
where the scalar action of $\bb{Z}_p
[\Gal(M \langle \mf{n} \rangle/K)]$
on the unit group 
$\mca{O}^\times_{M \langle \mf{n} 
\rangle ,S',\{ \mf{l} \}} \otimes \bb{Z}_p$
is written in multiplicative manner.
\end{thm}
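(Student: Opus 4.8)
The plan is to identify both sides with the same element by computing archimedean regulators, exploiting that $V=\{\infty\}$ forces everything into rank one. Since $K$ is imaginary quadratic, its unique archimedean place $\infty$ splits completely in every finite extension, whereas every prime above $p$ (by $\bar\psi\vert_{D_{\Delta,\mf{p}}}\ne1$, which forces $\psi$ nontrivial on the decomposition group at $p$), every prime dividing $\mf{n}$ (it ramifies in $K\langle\mf{n}\rangle/K$) and every prime ramifying in $K_0/K$ fails to split completely in $M\langle\mf{n}\rangle/K$; hence $S'\cap P_K(M\langle\mf{n}\rangle)=\{\infty\}=V$, so $r:=|V|=1$ and the first order Rubin--Stark element is legitimately defined, being the unique element $\epsilon^{\{\infty\}}_{M\langle\mf{n}\rangle/K,S',\{\mf{l}\},\psi}$ of $(\mca{O}_{M\langle\mf{n}\rangle,S',\{\mf{l}\}}^\times\otimes_{\bb{Z}}\bb{Q})_\psi$ with
\[
\lambda_{M\langle\mf{n}\rangle,S'}\bigl(\epsilon^{\{\infty\}}_{M\langle\mf{n}\rangle/K,S',\{\mf{l}\},\psi}\bigr)
=\theta^{(1)}_{M\langle\mf{n}\rangle/K,S',\{\mf{l}\}}(0)_\psi\cdot(w_\infty-w_0),
\]
where $w_\infty$ lies above $\infty$, $w_0$ above an auxiliary $v_0\in S'\setminus\{\infty\}$, and the subscript $\psi$ denotes the $\psi$-component. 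Since $\lambda_{M\langle\mf{n}\rangle,S'}$ is injective on $(\mca{O}_{M\langle\mf{n}\rangle,S',\{\mf{l}\}}^\times\otimes_{\bb{Z}}\bb{Q})_\psi$ (Dirichlet), it then suffices to show that $\lambda_{M\langle\mf{n}\rangle,S'}(c^{\mf{l}}_{\mf{f}}(F;\mf{n})_\psi)$ equals $u'\cdot\theta^{(1)}_{M\langle\mf{n}\rangle/K,S',\{\mf{l}\}}(0)_\psi\cdot(w_\infty-w_0)$ for a \emph{unit} $u'$, after which one takes $u=(u')^{-1}$.

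First I would check that $c^{\mf{l}}_{\mf{f}}(F;\mf{n})_\psi$ lands in $(\mca{O}_{M\langle\mf{n}\rangle,S',\{\mf{l}\}}^\times\otimes_{\bb{Z}}\bb{Z}_p)_\psi$: because $p$ splits in $K/\bb{Q}$ the level $p\mf{f}\mf{n}\mf{h}$ is divisible by two distinct primes, so by Proposition \ref{propell}~(i) the elliptic unit ${_\mf{l}}z_{p\mf{f}\mf{n}\mf{h}}$, hence its norm $c^{\mf{l}}_{\mf{f}}(F;\mf{n})$, is a genuine global unit; together with $\bar\psi\vert_{D_{\Delta,\mf{p}}}\ne1$ (cf.\ Remark \ref{remCiell}) and $w(\mf{l})=1$ (which makes $\mca{O}_{M\langle\mf{n}\rangle,S',\{\mf{l}\}}^\times$ torsion free) this gives the claim. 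Next I would compute the regulator. Note $M\langle\mf{n}\rangle=K_0F\langle\mf{n}\rangle\subseteq K(p\mf{f}\mf{n}\mf{h})$, so $c^{\mf{l}}_{\mf{f}}(F;\mf{n})$ is just the norm of ${_\mf{l}}z_{p\mf{f}\mf{n}\mf{h}}$ down to $M\langle\mf{n}\rangle$ inside a fixed ray class conductor, which corresponds to restriction of characters and introduces no Euler factors, since the prime divisors of $p\mf{f}\mf{n}\mf{h}$ are exactly the finite primes of $S'$. The second Kronecker limit formula (see, e.g., \cite{dS}), combined with the relation between ${_\mf{l}}z$ and the bare elliptic unit in Proposition \ref{propell}~(iii), then yields, for each character $\chi$ of $\Gal(M\langle\mf{n}\rangle/K)$ with $\chi\vert_\Delta=\psi$, an identity of the shape
\[
\sum_{\sigma}\chi(\sigma)\log\bigl\lvert {_\mf{l}}z_{p\mf{f}\mf{n}\mf{h}}^{\,\sigma}\bigr\rvert
= c_\chi\cdot L^{(1)}_{K,S',\{\mf{l}\}}(\chi^{-1},0),
\]
where the auxiliary ideal $\mf{a}=\mf{l}$ in ${_\mf{l}}z$ realizes exactly the $T=\{\mf{l}\}$-modification of the $L$-function, and $c_\chi$ is an explicit constant (a product of $-\chi(\Fr_\mf{l})$, a power of $6$, and $w(p\mf{f}\mf{n}\mf{h})$). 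Assembling these over all $\chi$ (and noting that in the components with $r_{\chi,S'}\ge2$ both sides vanish since $L^{(1)}(\chi^{-1},0)=0$) gives the displayed equality with $u'=(c_\chi)_\chi$, an element of $\mca{O}_\psi[\Gal(M\langle\mf{n}\rangle/K)]$ which is a unit as soon as each $c_\chi$ is a $p$-adic unit.

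The main obstacle is precisely this last point: writing down the exact form of the (twisted) second Kronecker limit formula and verifying that its constant $(c_\chi)_\chi$, together with the local terms at the primes above $p$ entering the computation, is a unit of the group ring. The numerical factor is a $p$-adic unit because $p\nmid\#\Delta\cdot\#(\mca{O}_{K_0}^\times)_{\mathrm{tor}}$ and because $w(\mf{f})=1$ forces $w(p\mf{f}\mf{n}\mf{h})=1$; the $p$-local terms act invertibly on the $\psi$-component precisely because $p$ splits in $K/\bb{Q}$ and $\bar\psi\vert_{D_{\Delta,\mf{p}}}\ne1$; and the factor $-\chi(\Fr_\mf{l})$ is a root of unity. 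All of this is a careful but essentially bookkeeping exercise, and the hypotheses $w(\mf{f})=w(\mf{l})=1$, the splitting of $p$, and the non-triviality of $\bar\psi$ at $p$ are exactly what makes it go through. (For the $p$-integrality of $\epsilon^{\{\infty\}}$ alone one could instead invoke $\boldsymbol{\mathrm{LT}}(M\langle\mf{n}\rangle/K)_p$ from Theorem \ref{etnc/K} together with the implication $\boldsymbol{\mathrm{LT}}\Rightarrow\boldsymbol{\mathrm{RS}}$, but the identification above in fact reproves it in the $\psi$-component.)
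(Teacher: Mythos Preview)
Your approach matches the paper's: the paper gives no proof but simply remarks that the statement follows from Proposition~\ref{propell} together with equality~(10) of \cite{Bl} (page~90), and the latter is precisely the form of the second Kronecker limit formula you invoke. You are filling in the details the paper leaves to citation.

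One imprecision worth flagging: you twice invoke the hypothesis that $p$ splits in $K/\bb{Q}$, which is not among the assumptions of Theorem~\ref{thmellRS} (it enters only later, in Theorem~\ref{thmTheta=C}). It is not needed here. For the containment $c^{\mf{l}}_{\mf{f}}(F;\mf{n})_\psi\in(\mca{O}_{M\langle\mf{n}\rangle,S',\{\mf{l}\}}^\times\otimes\bb{Z}_p)_\psi$ you only need the elliptic unit to be an $S'$-unit, and Proposition~\ref{propell}~(i) already gives this since every prime dividing $p\mf{f}\mf{n}\mf{h}$ lies in $S'$ (recall $\mf{h}$ is supported above $p$ because $K_\infty^\Delta/K$ is unramified outside $p$); there is no need to force $p\mf{f}\mf{n}\mf{h}$ to have two distinct prime divisors. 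Likewise, no extra $p$-local correction arises in the regulator comparison: the finite primes dividing $p\mf{f}\mf{n}\mf{h}$ coincide with the finite primes in $S'$, so the Euler factors removed on the $L$-function side and the primes appearing in the level of the elliptic unit already match, and the condition $\bar\psi\vert_{D_{\Delta,\mf{p}}}\ne 1$ alone suffices to handle the $\psi$-component.
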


\begin{rem}
The existence of the Stark units 
(namely the integrality of the Rubin--Stark elements 
for $V=\{ \infty \}$) 
of (arbitrary) abelian extension fields 
of imaginary quadratic fields 
is proved in \cite{St}.
Note that Theorem \ref{thmellRS} follows
form (for instance)  
Proposition \ref{propell} and 
\cite{Bl} the equality (10) in page 90.
\end{rem}

\subsection{Higher Fitting ideals and Rubin--Stark elements}\label{ssFitBKS}

Here, we recall the work on higher Fitting ideals
by Burns, Kurihara and Sano in \cite{BKS}
focused on our setting.
We use the same notation as above.

As in \S \ref{secintro}, we fix an extension
$K_\infty /K$.
Fix $F \in \mca{IF}$, and put $M:=K_0F$.
Suppose that $S$ consists of 
all infinite places and all ramified places in $M/K$.
Let $T$ be a non-empty finite set 
of finite places of $K$
such that 
\[
\mca{O}_{M,S,T}^\times :=\{ 
u \in \mca{O}_{M,S}^\times \mathrel{\vert}
u \equiv 1 \ \mathrm{mod}\ v \
\text{for all $v \in T$}
\}
\] 
is a torsion-free $\bb{Z}$-module.
We define 
$A^T(M):= \mathrm{Cl}^T(M) 
\otimes_{\bb{Z}}\bb{Z}_p$.
As in \S \ref{secintro}, we put 
$A(M):=A^{\emptyset}(M)$.

Let $\psi \in \widehat{\Delta}$
be a non-trivial faithful character 
such that $\psi \vert_{D_{\Delta,v}} \ne 1$
for any finite place $v$ in $S$.
We put $R_\psi:=\mca{O}_\psi[\Gal(F/K)]$, and 
\(
\mca{H}_\psi:= \Hom_{R_\psi}((\mca{O}_{M,S,T}^\times 
\otimes_{\bb{Z}} \bb{Z}_p)_\psi, R_\psi).
\)
By the assumption on $\psi$, 
we have $R_\psi$-isomorphisms
$A^T(M)_\psi \simeq (\mathrm{Cl}_S^T(M) 
\otimes_{\bb{Z}}\bb{Z}_p)_\psi$ and 
$(\mca{O}_{M,T}^\times\otimes_{\bb{Z}} \bb{Z}_p)_\psi
\simeq 
(\mca{O}_{M,S,T}^\times\otimes_{\bb{Z}} \bb{Z}_p)_\psi$.

For any $i \in \bb{Z}_{\ge 0}$ and any subset $Q$ of $P_K$,
we define $\mca{V}^{Q}_{i}(M/K)$ to be the set of 
all the subset $W$  of $P_K(M)$
satisfying $\# W=i$ and $W \cap Q = \emptyset$.
We put $U:=\{ \infty \}=S \cap P_K(M)$. 
(Note that by the assumption on $\psi$,
the primes above $p$ 
do not split completely in $M/K$.)

\begin{dfn}\label{defTheta}
For any $i \in \bb{Z}_{\ge 0}$ 
and any subset $\mca{V}'$ of $\mca{V}^{S \cup T}_i(M/K)$, 
we define
\[
\Theta^{\mathrm{RS}}_{S,T,i}(M/K;\mca{V}')
:= \left\{
\Phi \left(
\epsilon^{U \cup W}_{M/K,S,T,\psi}
\right)
\mathrel{\bigg\vert} W \in \mca{V}', \ 
\Phi \in \bigwedge^{i+1}_{R_\psi}\mca{H}_\psi
\right\}.
\]
We write 
$\Theta^{\mathrm{RS}}_{S,T,i}(M/K):=
\Theta^{\mathrm{RS}}_{S,T,i}
(M/K;\mca{V}_i^{S \cup T}(M/K))$.
\end{dfn}

Burns, Kurihara and Sano obtained the following result.

\begin{thm}[\cite{BKS} Corollary 1.7]\label{thmBKSFitt}
Let $\psi \in \widehat{\Delta}$
be a non-trivial faithful character 
such that $\psi \vert_{D_{\Delta,v}} \ne 1$
for any finite place $v$ in $S$.
Suppose that $\boldsymbol{\mathrm{LT}}(M/K)_p$
holds after taking 
$(-)\otimes_{\bb{Z}[\Delta]}\mca{O}_\psi$.
Then, for any $i \in \bb{Z}_{\ge 0}$, we have
\[
\Fitt_{R_\psi,i}(A^T(M)_\psi)
=\Theta^{\mathrm{RS}}_{S,T,i}(M/K)_\psi.
\]
\end{thm}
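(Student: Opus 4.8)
The plan is to deduce the statement directly from \cite{BKS} Corollary 1.7 by specializing their general framework to the case $k=K$, $L=M=K_0F$, and by passing to the $\psi$-isotypic component via the idempotent $e_\psi\in\bb{Z}_p[\Delta]$ (which is legitimate since $p\nmid\#\Delta$, so $\bb{Z}_p[\Delta]$ is a product of the rings $\mca{O}_\psi$ and every object below is $\bb{Z}_p[\Gal(M/K)]$-linear). First I would record the content of \cite{BKS} Corollary 1.7 in the present normalization: for a finite abelian $L/k$ with $\mca{G}=\Gal(L/k)$, a finite set $S$ of places of $k$ containing $S_\infty(k)\cup S_{\mathrm{ram}}(L/k)$, and an auxiliary finite set $T$ disjoint from $S$ for which $\mca{O}_{L,S,T}^\times$ is $\bb{Z}$-free, the $p$-part $\boldsymbol{\mathrm{LT}}(L/k)_p$ of the leading term conjecture implies that for every $j\ge 0$ the Fitting ideal $\Fitt_{\bb{Z}_p[\mca{G}],j}(\mathrm{Cl}^T_S(L)\otimes_{\bb{Z}}\bb{Z}_p)$ coincides with the $\bb{Z}_p[\mca{G}]$-ideal generated by all elements $\Phi(\epsilon^{U_0\cup W}_{L/k,S,T})$, where $U_0:=S\cap P_k(L)$, $W$ runs over the $j$-element subsets of $P_k(L)$ disjoint from $S\cup T$ (the set $S$ being tacitly enlarged to $S\cup W$ in order to make sense of the Rubin--Stark element), and $\Phi$ runs over $\bigwedge^{|U_0|+j}_{\bb{Z}_p[\mca{G}]}\Hom_{\bb{Z}_p[\mca{G}]}(\mca{O}_{L,S,T}^\times\otimes\bb{Z}_p,\bb{Z}_p[\mca{G}])$ acting through the exterior-power-bidual pairing $\xi$ of Definition \ref{defEPB}. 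The integrality of these higher-rank Rubin--Stark elements is the relevant case of $\boldsymbol{\mathrm{RS}}$, which follows from $\boldsymbol{\mathrm{LT}}$ by \cite{BKS} Theorem 5.11 (the validity of $\boldsymbol{\mathrm{LT}}$ being independent of $S$), and the derivative/descent input used by \cite{BKS} is the relevant case of $\boldsymbol{\mathrm{MRS}}$, which follows from $\boldsymbol{\mathrm{LT}}$ by \cite{BKS} Theorem 5.15.

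Next I would check that the hypotheses hold for $(K,M=K_0F,S,T)$: by assumption $S$ consists of $\{\infty\}$ together with the primes ramified in $M/K$, and $T$ is chosen so that $\mca{O}_{M,S,T}^\times$ is $\bb{Z}$-free; the hypothesis of the theorem supplies $\boldsymbol{\mathrm{LT}}(M/K)_p$ after applying $(-)\otimes_{\bb{Z}[\Delta]}\mca{O}_\psi$, and since $\boldsymbol{\mathrm{LT}}$ descends along $K\subseteq M$ (\cite{BF} Proposition 4.1) the full strength of \cite{BKS} is available on the $\psi$-component. Applying $e_\psi$ to the displayed equality and translating notation yields the claim once the following dictionary is set up, using $\Gal(K_0F/K)=\Delta\times\Gal(F/K)$: $e_\psi(\bb{Z}_p[\Gal(M/K)])=\mca{O}_\psi[\Gal(F/K)]=R_\psi$; $\mca{H}_\psi=\Hom_{R_\psi}((\mca{O}_{M,S,T}^\times\otimes\bb{Z}_p)_\psi,R_\psi)$ is the $e_\psi$-component of $\Hom_{\bb{Z}_p[\Gal(M/K)]}(\mca{O}_{M,S,T}^\times\otimes\bb{Z}_p,\bb{Z}_p[\Gal(M/K)])$, as $\Hom$ commutes with the direct-summand projection $e_\psi$, and likewise $\bigwedge^{i+1}_{R_\psi}\mca{H}_\psi$ is the $e_\psi$-component of the corresponding exterior power; $U_0=S\cap P_K(M)=\{\infty\}=U$, because the archimedean place of the imaginary quadratic field $K$ is complex and hence splits completely in $M$, while every finite $v\in S$ has $\psi\vert_{D_{\Delta,v}}\ne 1$ and so does not split completely in $M$ on the $\psi$-component; and $\epsilon^{U\cup W}_{M/K,S,T,\psi}$ is the $e_\psi$-projection of the rank-$(|U|+i)=(i+1)$ Rubin--Stark element, so $\Theta^{\mathrm{RS}}_{S,T,i}(M/K)_\psi$ of Definition \ref{defTheta} is exactly the $\psi$-ideal produced by \cite{BKS} Corollary 1.7 at level $j=i$. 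Finally, the isomorphisms $\mathrm{Cl}^T_S(M)_\psi\simeq A^T(M)_\psi$ and $(\mca{O}^\times_{M,T}\otimes\bb{Z}_p)_\psi\simeq(\mca{O}^\times_{M,S,T}\otimes\bb{Z}_p)_\psi$ recorded above Definition \ref{defTheta} (valid precisely because $\psi$ is faithful, non-trivial and $\psi\vert_{D_{\Delta,v}}\ne 1$ for finite $v\in S$) turn the left-hand side into $\Fitt_{R_\psi,i}(A^T(M)_\psi)$.

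The one place where genuine care is needed is the bookkeeping linking \cite{BKS}'s formalism to the objects fixed in \S\ref{ssFitBKS}. Concretely, \cite{BKS} compute a Fitting ideal of the transpose Selmer module $\mca{S}^{\mathrm{tr}}_{S,T}(\bb{G}_m/M)$, which on the $\psi$-component is an extension of $(X_{M,S}\otimes\bb{Z}_p)_\psi$ by $A^T(M)_\psi$; one must check that $(X_{M,S}\otimes\bb{Z}_p)_\psi$ is \emph{free} of rank one over $R_\psi$ — which is the content of the fact that exactly one place of $S$ ($v=\infty$) splits completely in $M$ on this component — so that the rank shift built into the exterior power $\bigwedge^{|U_0|+j}$ is precisely what makes the result an assertion about $\Fitt_{R_\psi,\bullet}$ of $A^T(M)_\psi$ rather than of the Selmer module itself, with no stray extra shift. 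One must also reconcile the Rubin-lattice conventions of \cite{BKS} and \cite{BS} (the modules $\bigcap^{i+1}$) with the map $\xi$ of Definition \ref{defEPB}, and check that $e_\psi$ commutes with the formation of these exterior biduals over $R_\psi$. No new mathematical idea is required beyond this dictionary together with the already-cited inputs $\boldsymbol{\mathrm{LT}}\Rightarrow\boldsymbol{\mathrm{RS}}$ and $\boldsymbol{\mathrm{LT}}\Rightarrow\boldsymbol{\mathrm{MRS}}$.
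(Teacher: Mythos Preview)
Your proposal is correct and matches the paper's approach: the paper does not supply an independent proof but simply cites \cite{BKS} Corollary 1.7, and your write-up is precisely the dictionary needed to specialize that result to the present setting (imaginary quadratic base, $U=\{\infty\}$, passage to the $\psi$-component via $e_\psi$, and the identification $\mathrm{Cl}^T_S(M)_\psi\simeq A^T(M)_\psi$). The only remark is that the paper treats this as a black-box citation, so your additional bookkeeping (freeness of $(X_{M,S}\otimes\bb{Z}_p)_\psi$, compatibility of $e_\psi$ with exterior biduals) is more detail than the paper itself records, but it is the correct justification.
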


Suppose that 
$\psi \ne \omega$ as characters of $G_{K}$.
Then, the Chebotarev density theorem implies that
there exists a finite place $v$ of $K$
with $w(v)=1$, 
not contained in $S$ and satisfying 
$\psi\vert_{G_{K_v}} \ne \omega\vert_{G_{K_v}}$.
We can take $T$ as a singleton containing such $v$. 
Then, we obtain $A^T(M)_\psi=A(M)_\psi$.
By combining Theorem \ref{thmBKSFitt} with
Proposition \ref{etnc/K}, 
we obtain the following corollary.

\begin{cor}\label{corFittRS}
Suppose that $p$ splits completely in $K/\bb{Q}$, 
and $p$ is prime to $\# \mathrm{Cl}(K)$.
Let $F \in \mca{IF}$ be any element.
Let $\psi \in \widehat{\Delta}$
be a non-trivial faithful character 
such that $\psi \vert_{D_{\Delta,v}} \ne 1$
for any finite place $v$ in $S$.
Moreover we assume that if $K_0$ contains $\mu_p$, 
then $\psi \ne  \omega$.
$\psi \ne \omega$ as characters of $G_{K}$.
Let $\mf{l}$ be an ideal of $\mca{O}_K$
with $w(v)=1$, 
not contained in $S$ and satisfying 
$\psi\vert_{G_{K_{\mf{l}}}} \ne \omega\vert_{G_{K_{\mf{l}}}}$.
We put $T:=\{ \mf{l} \}$. 
Let $N$ be any positive integer, and put
\(
\mca{V}_{i}^{\mf{l}}(F;N):=
\{ \mathrm{Prime}_K(\mf{n}) \mathrel{\vert} 
\mf{n} \in 
\mca{N}_{F,N}^{\mathrm{wo}}(\chi_{\mathrm{cyc}}\psi^{-1}) \}
\cap \mca{V}^{S \cup \{ \mf{l} \}}_{i}
\). 
Then, for any $i \in \bb{Z}_{\ge 0}$, we have
\[
\Fitt_{R_\psi,i}(A(K_0F)_\psi)
=\Theta^{\mathrm{RS}}_{S,T,i}(K_0F/K; 
\mca{V}_{i}^{\mf{l}}(F;N))_\psi
\]
\end{cor}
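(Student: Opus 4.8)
The plan is to feed Bley's theorem on the leading term conjecture into Burns--Kurihara--Sano's computation of higher Fitting ideals, and then to carry out two reductions: from the $T$-modified class group to the class group, and from the full index family to $\mca{V}_i^{\mf{l}}(F;N)$. Put $M:=K_0F$ and $G:=\Gal(M/K)$; since $K_0\cap F=K$ we have $G=\Delta\times\Gal(F/K)$, so $\bb{Z}_{(p)}[G]$ decomposes along $\widehat{\Delta}$ and the $p$-part of $\boldsymbol{\mathrm{LT}}(M/K)$ is the conjunction of its $\psi$-components. First I would invoke Theorem \ref{etnc/K}: since $p$ splits in $K/\bb{Q}$ and $p\nmid\#\mathrm{Cl}(K)$, the conjecture $\boldsymbol{\mathrm{LT}}(M/K)_p$ holds, hence it holds after $(-)\otimes_{\bb{Z}[\Delta]}\mca{O}_\psi$. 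The hypotheses imposed on $\psi$ contain precisely those required by Theorem \ref{thmBKSFitt}, so I obtain
\[
\Fitt_{R_\psi,i}(A^T(M)_\psi)=\Theta^{\mathrm{RS}}_{S,T,i}(M/K)_\psi\qquad(i\in\bb{Z}_{\ge 0}).
\]

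Next I would pass from $A^T(M)_\psi$ to $A(M)_\psi$, as recorded in the paragraph preceding the statement. The choice $w(\mf{l})=1$ makes $T=\{\mf{l}\}$ an admissible modulus, and the condition $\psi\vert_{G_{K_{\mf{l}}}}\ne\omega\vert_{G_{K_{\mf{l}}}}$ is equivalent to $N\mf{l}-\psi(\Fr_{\mf{l}})\in\mca{O}_\psi^\times$; such an $\mf{l}$ exists by Chebotarev precisely because we assume $\psi\ne\omega$ whenever $\mu_p\subseteq K_0$ (the constraint being vacuous otherwise). In the canonical exact sequence
\[
\bigl(\mca{O}_M/\textstyle\prod_{w\mid\mf{l}}w\bigr)^\times\otimes_{\bb{Z}}\bb{Z}_p\longrightarrow A^T(M)\longrightarrow A(M)\longrightarrow 0
\]
the leftmost module is built from the $p$-parts of the residue-field unit groups $(\mca{O}_M/w)^\times$ for $w\mid\mf{l}$, on which the Frobenius at $\mf{l}$ acts through raising to the $N\mf{l}$-th power; since $N\mf{l}-\psi(\Fr_{\mf{l}})$ is a unit of $\mca{O}_\psi$, its $\psi$-component vanishes, so $A^T(M)_\psi\xrightarrow{\ \simeq\ }A(M)_\psi$ and $\Fitt_{R_\psi,i}(A(K_0F)_\psi)=\Theta^{\mathrm{RS}}_{S,T,i}(M/K)_\psi$.

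Finally it remains to check $\Theta^{\mathrm{RS}}_{S,T,i}(M/K)_\psi=\Theta^{\mathrm{RS}}_{S,T,i}(M/K;\mca{V}_i^{\mf{l}}(F;N))_\psi$ for every $N\in\bb{Z}_{>0}$. One containment is formal: $\mca{V}_i^{\mf{l}}(F;N)\subseteq\mca{V}_i^{S\cup\{\mf{l}\}}(M/K)$ and the construction $\Theta^{\mathrm{RS}}_{S,T,i}$ is monotone in the index family. For the other containment I would revisit the proof of Theorem \ref{thmBKSFitt}: the ideal $\Fitt_{R_\psi,i}(A^T(M)_\psi)$ is generated by finitely many elements $\Phi(\epsilon^{U\cup W}_{M/K,S,T,\psi})$ with $\Phi\in\bigwedge^{i+1}_{R_\psi}\mca{H}_\psi$, whose index sets $W$ may be chosen so that $\{\Fr_v\}_{v\in W}$ realizes a prescribed generating family of an appropriate quotient of $A^T(M)_\psi$. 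By the Chebotarev argument (compare Proposition \ref{Cheb}) such $W$ can always be taken with $\mf{n}:=\prod_{v\in W}v\in\mca{N}_{F,N}^{\mathrm{wo}}(\chi_{\mathrm{cyc}}\psi^{-1})$, because the three conditions defining $\mca{N}_{F,N}^{\mathrm{wo}}$ (each prime divisor splits completely in $F\langle\mca{O}_K\rangle/K$, satisfies $I_{\mf{l}_\nu}\subseteq\pi^N\mca{O}$, and the factorization is well-ordered) amount to prescribing Frobenius in a fixed finite extension of $K$, and can be imposed simultaneously with splitting completely in $M/K$, which places $W$ in $\mca{V}_i^{S\cup\{\mf{l}\}}(M/K)$. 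The main obstacle will be exactly this last step: one must extract from Burns--Kurihara--Sano's argument the precise genericity satisfied by the index sets that contribute to the Fitting ideal, and match it with the defining conditions of $\mca{N}_{F,N}^{\mathrm{wo}}$; once this is done, the three displayed equalities combine to give the corollary.
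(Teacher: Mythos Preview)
Your proposal is correct and follows essentially the same route as the paper: the paper derives the corollary from Bley's Theorem~\ref{etnc/K} combined with Theorem~\ref{thmBKSFitt}, notes (in the paragraph preceding the statement) that the choice of $T=\{\mf{l}\}$ forces $A^T(M)_\psi=A(M)_\psi$, and then explains in the remark immediately following that the index family can be cut down to $\mca{V}_i^{\mf{l}}(F;N)$ via the Chebotarev argument of \cite{Oh1} Proposition~3.15 (the ``over $F$'' version of Proposition~\ref{Cheb}). Your exact-sequence justification of $A^T(M)_\psi\simeq A(M)_\psi$ is more explicit than the paper's one-line claim, but the logic is the same.
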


\begin{rem}
In the arguments in proof of 
\cite{BKS} Theorem 7.9 using Chebotarev density theorem, 
the sets $V'$ are taken from $\mca{V}'$. 
However, after taking the $\psi$-part, 
more careful arguments implies that it suffices 
to take the sets $V'$ from $\mca{V}_{i}^{\mf{l}}(F;N)$. 
(For instance, see \cite{Oh1} Proposition 3.15, 
which is the ``over $F$ version" of 
Proposition \ref{Cheb}.)
Consequently, we can use 
$\mca{V}_{i}^{\mf{l}}(F;N)$
in the assertion of Corollary \ref{corFittRS}
in stead of $\mca{V}_{i}^{S \cup T}(M/K)$.
\end{rem}

\subsection{Comparison of ideals}\label{sspfC=Theta}

Let $K_\infty /K$ be as in \S \ref{secintro}.
Fix $F \in \mca{IF}$, and put $M:=K_0F$.
We denote by $S$ the subset of $P_K$ consisting of  
all infinite places and all ramified places in $M/K$.
We put $U:= \{ \infty \}=S \cap P_K(M)$.
Here, let us show the following theorem, 
which says that the ideal
$\mf{C}^{\mathrm{ell}}_{i}(F/K)_\psi$ 
coincides with
$\Theta^{\mathrm{RS}}_{S,T,i}(M/K)$
in the situation of Corollary \ref{corFittRS}.

\begin{thm}\label{thmTheta=C}
Suppose that $p$ splits completely in $K/\bb{Q}$, 
and $p$ is prime to $\# \mathrm{Cl}(K)$.
Let $\psi \in \widehat{\Delta}$
be a non-trivial faithful character 
such that $\psi \vert_{D_{\Delta,v}} \ne 1$
for any finite place $v$ in $S$.
Moreover we assume that if $K_0$ contains $\mu_p$, 
then $\psi \ne  \omega$.
Let $\mf{l}$ be an ideal of $\mca{O}_K$
with $w(v)=1$, 
not contained in $S$ and satisfying 
$\psi\vert_{G_{K_{\mf{l}}}} \ne \omega\vert_{G_{K_{\mf{l}}}}$.
We put $T;=\{ \mf{l} \}$. 
Then, for any $i \in \bb{Z}_{\ge 0}$ and 
any $F \in \mca{IF}$, we have
\[
\mf{C}^{\mathrm{ell}}_{i}(F/K)_\psi
=\Theta^{\mathrm{RS}}_{S,T,i}(M/K)_\psi.
\]
\end{thm}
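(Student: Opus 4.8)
The plan is to establish both inclusions by an explicit ``translation of generators'', carried out at each finite level $N$ and each well-ordered $\mf{n}$, and then passed to the limit. First I would normalise the two sides. Since $\psi$ is faithful, non-trivial and $\psi\vert_{D_{\Delta,v}}\ne 1$ for $v\in S$, Remark \ref{remCigen} lets us replace $Z^{\mathrm{ell}}_\psi$ by the single Euler system $\{\boldsymbol{c}^{\mf{l}}_{\mf{f},\psi}\}$; and by Corollary \ref{corFittRS} together with the remark following it, $\Theta^{\mathrm{RS}}_{S,T,i}(M/K)_\psi$ may be computed using only the sets $W=\mathrm{Prime}_K(\mf{n})$ with $\mf{n}\in\mca{N}^{\mathrm{wo}}_{F,N}(\chi_{\mathrm{cyc}}\psi^{-1})$ --- precisely the index set of Definition \ref{theideal0}. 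Because $\psi$ is faithful and the defining condition $I_{\mf{l}}\subseteq\pi^N\mca{O}$ (for $\rho=\chi_{\mathrm{cyc}}\psi^{-1}$) forces $\psi(\mathrm{Fr}_{\mf{l}})\equiv 1\ (\mathrm{mod}\ \pi^N)$, every such $\mf{l}$ splits completely in $M=K_0F$, so $W\subseteq P_K(M)$ and, using $\boldsymbol{\mathrm{RS}}(M/K,\cdot)_p$ (valid by Bley's Theorem \ref{etnc/K} via \cite{BKS} Theorem 5.11), the higher-rank Rubin--Stark element $\epsilon^{U\cup W}_{M/K,S,T,\psi}$ is a well-defined element of $\bigcap^{\#W+1}_{R_\psi}(\mca{O}^\times_{M,S,T}\otimes_{\bb{Z}}\bb{Z}_p)_\psi$.

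The heart of the matter, for a fixed well-ordered $\mf{n}=\mf{l}_1\cdots\mf{l}_j$ with $W=\mathrm{Prime}_K(\mf{n})$, is the following chain. By Theorem \ref{thmellRS} the rank-one Rubin--Stark element $\epsilon^{\{\infty\}}_{M\langle\mf{n}\rangle/K,S',\{\mf{l}\},\psi}$ equals, up to a unit of $\bb{Z}_p[\Gal(M\langle\mf{n}\rangle/K)]$, the elliptic unit $c^{\mf{l}}_{\mf{f}}(F;\mf{n})_\psi$ entering $\boldsymbol{c}^{\mf{l}}_{\mf{f},\psi}$. Since $\boldsymbol{\mathrm{LT}}(M\langle\mf{n}\rangle/K)_p$ holds (Theorem \ref{etnc/K}), the Mazur--Rubin--Sano conjecture $\boldsymbol{\mathrm{MRS}}(M\langle\mf{n}\rangle/M/K,T,W)_p$ holds; applying the projector $s_{\mf{n}}$ to its displayed identity and using Lemma \ref{lemsnDn} (ii) --- which converts $\mca{N}_H$ into the Kolyvagin operator $D_{\mf{n}}$ --- I would obtain, after the identifications of \S\ref{secES}, an equality
\[
\mathrm{Rec}_W\big(\epsilon^{U\cup W}_{M/K,S,T,\psi}\big)
= \pm\, D_{\mf{n}}\, c^{\mf{l}}_{\mf{f}}(F;\mf{n})_\psi \otimes \textstyle\prod_{\nu=1}^{j}(\sigma_{\mf{l}_\nu}-1),
\]
whose right-hand side is precisely the collection of singular residues $\big(\kappa_{F,N}(\mf{n};\boldsymbol{c}^{\mf{l}}_{\mf{f},\psi})\big)^{\mf{l}_\nu,s}_{N}$ of the Kolyvagin derivative. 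It then remains to match linear functionals: a pure-wedge generator $\Phi=\phi_0\wedge\cdots\wedge\phi_j$ of $\bigwedge^{j+1}_{R_\psi}\mca{H}_\psi$ with $\phi_1,\dots,\phi_j$ chosen to be the reciprocity functionals at $\mf{l}_1,\dots,\mf{l}_j$ satisfies $\Phi(\epsilon^{U\cup W}_{M/K,S,T,\psi})=\phi_0\big(\mathrm{Rec}_W(\epsilon^{U\cup W}_{M/K,S,T,\psi})\big)=f\big(\kappa_{F,N}(\mf{n};\boldsymbol{c}^{\mf{l}}_{\mf{f},\psi})\big)$, where $f$ is the $R_{F,N}$-linear form on $H^1(F,(\mca{O}_\psi/\pi^N\mca{O}_\psi)\otimes\chi_{\mathrm{cyc}}\psi^{-1})$ corresponding to $\phi_0$ under Kummer theory; conversely, starting from $f$ one reconstructs $\phi_0$ by extending $f$ to the full unit group via self-injectivity of $R_{F,N}$ (as in the proof of Lemma \ref{liftinghom}). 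Passing to the limit over $N$ gives $\mf{C}^{\mathrm{ell}}_{i}(F/K)_\psi=\Theta^{\mathrm{RS}}_{S,T,i}(M/K)_\psi$.

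The main obstacle will be the bookkeeping in this last matching step --- identifying $R_\psi$-linear forms on the $(S,T)$-unit group of $M$ with $R_{F,N}$-linear forms on $H^1(F,(\mca{O}_\psi/\pi^N\mca{O}_\psi)\otimes\chi_{\mathrm{cyc}}\psi^{-1})$ compatibly with all the auxiliary data: one must verify that $\mathrm{Rec}_{\mf{l}_\nu}$ corresponds (up to units, which is all that the ideals see) to the finite--singular comparison and localisation maps $\phi^{\mf{l}_\nu}_{N}$, $(-)^{\mf{l}_\nu,s}_{N}$ of \S\ref{ssl-fs}, that the passage between the $S$- and $(S\cup W)$-normalisations of the Rubin--Stark element is harmless, and that the exterior-bidual (``Rubin lattice'') formalism of Definition \ref{defEPB} is well enough behaved modulo $\pi^N$ for the two generator sets to coincide exactly rather than up to a bounded power of $\pi$. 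A secondary point is checking that the identification $\epsilon^{\{\infty\}}=u\cdot c^{\mf{l}}_{\mf{f}}$ of Theorem \ref{thmellRS} is compatible with the norm and corestriction maps, so that the level-$F$ descent in the definition of $\kappa_{F,N}$ matches the use of $\mca{N}_H$ on the Rubin--Stark side.
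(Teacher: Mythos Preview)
Your outline handles the inclusion $\mf{C}^{\mathrm{ell}}_{i}(F/K)_\psi\subseteq\Theta^{\mathrm{RS}}_{S,T,i}(M/K)_\psi$ essentially as the paper does: given $f$, extend it to a functional $\phi_0$ on the unit group, wedge with the reciprocity maps $\mathrm{Rec}_{\mf{l}_\nu}$ at the primes of $W=\mathrm{Prime}_K(\mf{n})$, and invoke $\boldsymbol{\mathrm{MRS}}$ together with Lemma~\ref{lemsnDn} to recognise the result as $\Phi(\epsilon^{U\cup W})$. That direction is fine.

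The gap is in the reverse inclusion. In the definition of $\Theta^{\mathrm{RS}}_{S,T,i}$ the functional $\Phi$ ranges over \emph{all} of $\bigwedge^{i+1}_{R_\psi}\mca{H}_\psi$, not only over wedges of the special form $\phi_0\wedge\mathrm{Rec}_{\mf{l}_1}\wedge\cdots\wedge\mathrm{Rec}_{\mf{l}_i}$ with the $\mf{l}_\nu$ taken from $W$ itself. Your sentence ``with $\phi_1,\dots,\phi_j$ chosen to be the reciprocity functionals at $\mf{l}_1,\dots,\mf{l}_j$'' is precisely where the argument breaks: for a generic $\Phi=\phi_0\wedge\cdots\wedge\phi_i$ there is no reason the $\phi_\nu$ ($\nu\ge 1$) should coincide with $\mathrm{Rec}_{\mf{l}_\nu}$ on the unit group, and without that identification the $\boldsymbol{\mathrm{MRS}}$ identity gives you nothing about $\Phi(\epsilon^{U\cup W})$.

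The paper closes this gap with a substantial extra argument. Given arbitrary $\overline{\Phi}_1,\dots,\overline{\Phi}_{i+1}$, one uses Chebotarev to find \emph{new} primes $\mf{q}_1,\dots,\mf{q}_{i+1}$ (prime to $\mf{n}$) such that $\overline{\Phi}_\nu$ agrees with $\overline{\mathrm{Rec}}_{\mf{q}_\nu,N,\psi}$ on $\mathcal{U}_{\mf{n},N,\psi}$, and such that there exist auxiliary elements $z_\nu\in M^\times$ with $(z_\nu)_\psi=(\mf{q}_\nu\mf{l}_\nu^{-1})_\psi$ and controlled local behaviour. One must then relate $\epsilon^{U\cup W}$ to the Rubin--Stark element attached to $U\cup\{\mf{q}_1,\dots,\mf{q}_i\}$; this is done by swapping one prime at a time (Lemma~\ref{lemVV}), each swap producing an error term which is the image of a \emph{lower-rank} Rubin--Stark element and hence lies in $\Theta_{i-1}$. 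An induction on $i$ then absorbs these errors into $\mf{C}_{i-1}\subseteq\mf{C}_i$, and only after this change of primes does the $\boldsymbol{\mathrm{MRS}}$/Kolyvagin-derivative identification apply, now at $\mf{r}=\mf{q}_1\cdots\mf{q}_i$ rather than at $\mf{n}$. None of this machinery is present in your proposal, and without it the inclusion $\Theta\subseteq\mf{C}$ does not follow.
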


Let us introduce some notation. 
We put $G:=\Gal(M/K)$, and put $R:=\bb{Z}_p[G]$.
Let $\psi \in \widehat{\Delta}$ and 
$T:=\{ \mf{l} \}$ 
be as in Theorem \ref{thmTheta=C}.
We put $R_\psi :=\mca{O}_\psi[\Gal(F/K)]$.
Let $\nu \in \bb{Z}_{>0} \cup \{ \infty \}$
be any element.
We  put 
$R_{\nu}:=R/p^\nu R$, and  
$R_{\nu,\psi}:=R_\psi/p^\nu R_\psi$.
Let $N \in \bb{Z}_{>0}$ be any element, 
and $\mf{n} \in 
\mca{N}_{F,N}^{\mathrm{wo}}(\chi_{\mathrm{cyc}}\psi^{-1})$ 
an element prime to $\mf{l}$.
We put 
\begin{align*}
\mathcal{U}_{\mf{n}, \nu}:=& 
\mca{O}_{M,
S\cup \mathrm{Prime}_K(\mf{n}),T}^\times 
\otimes_{\bb{Z}} \bb{Z}_p/p^\nu \bb{Z}_p, \\
\widetilde{\mathcal{U}}\langle \mf{n} \rangle_{\nu}:=& 
\mca{O}_{M \langle \mf{n} \rangle ,
S,T}^\times 
\otimes_{\bb{Z}} \bb{Z}_p /p^\nu \bb{Z}_p.
\end{align*}
Note that since we assume that $p \mid \# \mathrm{Cl}(\mca{O}_K)$, 
we have 
$\widetilde{\mathcal{U}}\langle \mca{O}_K 
\rangle_{\nu,\psi}=\mathcal{U}_{\mca{O}_K,\nu,\psi}$.
For any $R_{\nu}$-module $A$, 
we write $A^{*_\nu}:=\Hom_{R_{\nu}}
(A,R_{\nu})$.

Let us consider the modulo $p^\nu$ reduction of 
the ideal $\Theta^{\mathrm{RS}}_{S,T,i}(M/K)$. 
We need the following lemma.

\begin{lem}\label{lemhomsurj}
Let $X$ be a $\bb{Z}_p[G]$-module 
which is torsion free as a $\bb{Z}_p$-module.
We put 
$\overline{X}_{\nu}:=U^\times 
\otimes_{\bb{Z}} \bb{Z}_p/p^{\nu}\bb{Z}_p$.
for any $\nu \in \bb{Z}_{>0} \cup \{ \infty \}$.
Then, for any $\nu \in \bb{Z}_{>0}$, 
the modulo $p^\nu$ map
\(
X^{*_\infty}
\longrightarrow 
(\overline{X}_{\nu})^{*_\nu} 
\)
becomes a surjection.
In particular, we have 
an $R_{\nu}$-isomorphism
\(
X^{*_\infty} \otimes_{\bb{Z}}\bb{Z}/p^\nu \bb{Z}
\simeq 
(\overline{X}_{\nu})^{*_\nu}
\).
\end{lem}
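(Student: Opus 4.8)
The statement asserts that for a $\bb{Z}_p[G]$-module $X$ which is $\bb{Z}_p$-torsion-free, the natural reduction map $X^{*_\infty} = \Hom_{R_\infty}(X, R_\infty) \to (\overline X_\nu)^{*_\nu} = \Hom_{R_\nu}(X \otimes \bb{Z}/p^\nu, R_\nu)$ is surjective. (Here I read $\overline X_\nu$ as $X \otimes_{\bb{Z}} \bb{Z}_p/p^\nu\bb{Z}_p$; the displayed ``$U^\times$'' in the lemma statement seems to be a typo for $X$.) The key structural input is that $R_\infty = \bb{Z}_p[G]$ is a Gorenstein ring — indeed $G$ is a finite group of order prime to nothing in particular, but $\bb{Z}_p[G]$ is always Gorenstein of dimension one, and each quotient $R_\nu = (\bb{Z}/p^\nu)[G]$ is a finite Gorenstein (in fact self-injective, i.e. quasi-Frobenius) ring, because $\Hom_{\bb{Z}/p^\nu}(R_\nu, \bb{Z}/p^\nu) \cong R_\nu$ as an $R_\nu$-module (the group ring of a finite group over a self-injective base is self-injective). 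This is exactly the kind of fact already used in the paper (cf. the remark in the proof of Lemma \ref{liftinghom} that $R_{F_2,N}$ is an injective module over itself).

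First I would reduce to a finite level. Since $X^{*_\infty}$ and $(\overline X_\nu)^{*_\nu}$ both only depend on $X$ through its image in $X \otimes \bb{Z}/p^{\nu'}$ for $\nu'$ large enough (more precisely, $\Hom_{R_\infty}(X, R_\infty) \to \Hom_{R_\infty}(X, R_\infty/p^\nu R_\infty)$ is surjective once we know $R_\infty \to R_\nu$ admits the lifting property we want), I would first establish: for $\nu' \geq \nu$ the map $\Hom_{R_{\nu'}}(X \otimes \bb{Z}/p^{\nu'}, R_{\nu'}) \to \Hom_{R_\nu}(X \otimes \bb{Z}/p^\nu, R_\nu)$ is surjective, and then pass to the limit over $\nu'$ using that $X^{*_\infty} = \varprojlim_{\nu'} \Hom_{R_{\nu'}}(X\otimes\bb{Z}/p^{\nu'}, R_{\nu'})$ (this limit identification uses $\bb{Z}_p$-torsion-freeness of $X$ and that $R_\infty$ is $p$-adically complete and $\bb{Z}_p$-free). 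The crux is therefore the finite-level surjectivity, and there it suffices by dévissage to treat the single step $\nu' = \nu+1$.

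For that single step: let $f \colon X \otimes \bb{Z}/p^\nu \to R_\nu$ be $R_\nu$-linear. Pick a finite presentation $R_{\nu+1}^a \xrightarrow{A} R_{\nu+1}^b \to X\otimes\bb{Z}/p^{\nu+1} \to 0$; reducing mod $p^\nu$ gives a presentation of $X \otimes \bb{Z}/p^\nu$. Composing $f$ with $R_{\nu}^b \twoheadrightarrow X\otimes\bb{Z}/p^\nu$ gives a column vector in $R_\nu^b$ killed (on the left) by $A^t$; I must lift it to a vector in $R_{\nu+1}^b$ killed by $A^t$, i.e. I need the mod-$p^\nu$ reduction map $\Ker(A^t \colon R_{\nu+1}^b \to R_{\nu+1}^a) \to \Ker(A^t \colon R_\nu^b \to R_\nu^a)$ to be surjective. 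This follows because $R_{\nu+1}$ is self-injective: the module $R_{\nu+1}$ is injective over itself, so $\Hom_{R_{\nu+1}}(-, R_{\nu+1})$ is exact, hence applying it to $0 \to \Im A^t \to R_{\nu+1}^b$ and comparing with the $\bb{Z}/p$-version, plus the snake lemma on $0 \to R_\nu \xrightarrow{p^\nu} R_{\nu+1} \to R_1 \to 0$ tensored appropriately, gives the lifting. Alternatively and more cleanly: $\Hom_{R_\nu}(M, R_\nu) = \Hom_{R_{\nu+1}}(M, R_{\nu+1})$ for any $R_{\nu+1}$-module $M$ killed by $p^\nu$, because $R_\nu \cong R_{\nu+1}[p^\nu]$ and $R_{\nu+1}$ is self-injective so $R_{\nu+1}[p^\nu] = \Hom_{R_{\nu+1}}(R_{\nu+1}/p^\nu, R_{\nu+1})$; thus any $f\colon X\otimes\bb{Z}/p^\nu \to R_\nu$ is canonically a map into $R_{\nu+1}[p^\nu] \subseteq R_{\nu+1}$, which extends over the inclusion $X\otimes\bb{Z}/p^\nu \hookrightarrow X\otimes\bb{Z}/p^{\nu+1}$?—no, that inclusion goes the wrong way, so one really does need the injectivity/exactness argument on the presentation level.

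The main obstacle I expect is precisely pinning down this last point rigorously: one is not extending a functional along a monomorphism but lifting a cocycle through a surjection of presentations, and the cleanest formulation is (i) $R_{\nu+1}$ is self-injective (a quasi-Frobenius ring), so $R_{\nu+1}$-duality is exact on all $R_{\nu+1}$-modules; (ii) hence from the presentation $R_{\nu+1}^a \xrightarrow{A} R_{\nu+1}^b \to X\otimes\bb{Z}/p^{\nu+1} \to 0$ the dual $0 \to (X\otimes\bb{Z}/p^{\nu+1})^{*_{\nu+1}} \to R_{\nu+1}^b \xrightarrow{A^t} R_{\nu+1}^a$ is exact and stays exact after $-\otimes_{R_{\nu+1}} R_\nu$ precisely when $\Tor_1^{R_{\nu+1}}(\Coker A^t, R_\nu)$ vanishes — which it does not in general, so one instead argues that $\Ker(A^t) \to \Ker(\bar A^t)$ is surjective by a direct diagram chase using that $R_{\nu+1} \xrightarrow{p^\nu} R_{\nu+1}$ has kernel and cokernel both equal to $R_\nu$-free modules. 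Once the finite-level surjectivity is in hand, the ``in particular'' clause is immediate: surjectivity of $X^{*_\infty} \to (\overline X_\nu)^{*_\nu}$ together with the obvious factorization through $X^{*_\infty}\otimes\bb{Z}/p^\nu$ and a dimension/length count (or the injectivity coming from $R_\infty$ being $\bb{Z}_p$-free and $X$ finitely generated) yields the claimed isomorphism $X^{*_\infty}\otimes_{\bb{Z}}\bb{Z}/p^\nu\bb{Z} \cong (\overline X_\nu)^{*_\nu}$.
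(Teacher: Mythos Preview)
Your proposal has a genuine gap at the crucial step. You correctly identify that $R_\nu$ is self-injective, but you never actually complete the finite-level lifting $\Hom_{R_{\nu+1}}(M_{\nu+1},R_{\nu+1})\twoheadrightarrow\Hom_{R_\nu}(M_\nu,R_\nu)$: you yourself note that the relevant $\Tor$ (or equivalently $\Ext^1_{R_{\nu+1}}(M_{\nu+1},R_1)$) does not vanish in general, and then defer to an unspecified ``direct diagram chase''. That chase is not routine; the obstruction class in $\Ext^1_{R_{\nu+1}}(M_{\nu+1},R_1)$ is genuinely nonzero for typical $X$ when $p\mid\#G$, so you would need to show the \emph{connecting map} vanishes, and none of your stated ingredients force this. (One correct salvage is to work directly over $R=\bb{Z}_p[G]$: from $0\to R\xrightarrow{p^\nu}R\to R_\nu\to 0$ the obstruction lives in $\Ext^1_R(X,R)$, and this vanishes because $R$ is Gorenstein of dimension one and a finitely generated $\bb{Z}_p$-torsion-free module is maximal Cohen--Macaulay over $R$. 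But you do not say this.)

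The paper's proof avoids all of this by a one-line structural trick for group rings: for any $\nu\in\bb{Z}_{>0}\cup\{\infty\}$, the ``coefficient at the identity'' projection $\mathrm{pr}_\nu\colon R_\nu\to\bb{Z}_p/p^\nu\bb{Z}_p$ induces a bijection
\[
P_\nu\colon\Hom_{R_\nu}(\overline X_\nu,R_\nu)\xrightarrow{\ \sim\ }\Hom_{\bb{Z}_p/p^\nu\bb{Z}_p}(\overline X_\nu,\bb{Z}_p/p^\nu\bb{Z}_p),\qquad f\mapsto \mathrm{pr}_\nu\circ f,
\]
with explicit inverse $h\mapsto\bigl(x\mapsto\sum_{g\in G}h(g^{-1}x)g\bigr)$. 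This converts the problem into showing that $\Hom_{\bb{Z}_p}(X,\bb{Z}_p)\to\Hom_{\bb{Z}/p^\nu}(\overline X_\nu,\bb{Z}/p^\nu)$ is surjective, which is immediate since $X$ is $\bb{Z}_p$-free. In other words, the paper reduces $R$-linear duality to $\bb{Z}_p$-linear duality via Frobenius reciprocity for the group ring, sidestepping entirely the Ext computation that your approach runs into.
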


\begin{proof}
Let $\nu' \in \bb{Z}_{> 0} \cup \{ \infty \}$
be any element.  
We define a projection map 
\[
\mathrm{pr}_{\nu'} \colon R_{\nu'}
\longrightarrow \bb{Z}_p/p^{\nu'}\bb{Z}_p;\ 
\sum_{g \in G} a_g g \longmapsto a_{e_G},
\]
where $e_G$ denotes the identity element of $G$,
and define 
\[
P_{\nu'} \colon 
(\overline{X}_{\nu'})^{*_{\nu'}} \longrightarrow 
\Hom_{\bb{Z}_p/p^{\nu'}\bb{Z}_p}(\overline{X}_{\nu'}, 
\bb{Z}_p/p^{\nu'}\bb{Z}_p);\ f \longmapsto \mathrm{pr}_{\nu'} \circ f.
\]
Note that $P_{\nu'}$ is a bijection since the map 
\begin{align*}
Q \colon (\overline{X}_{\nu'})^{*_{\nu'}}  & \longrightarrow 
\Hom_{\bb{Z}_p/p^{\nu'}\bb{Z}_p}(\overline{X}_{\nu'},
\bb{Z}_p/p^{\nu'}\bb{Z}_p) \\ 
h & \longmapsto \left(
x \longmapsto \sum_{g} h(g^{-1}x)g 
\right)
\end{align*}
becomes the inverse of $P_{\nu'}$.
Since $X$ is 
a torsion free $\bb{Z}_p$-module, 
we have a commutative diagram 
\[
\xymatrix{
X^{*_\infty}  \ar[d]_{\mod p^\nu}
\ar[r]^(0.3){P_\infty}_(0.3){\simeq} &
\Hom_{\bb{Z}_p}(X,\bb{Z}_p), 
\bb{Z}_p) \ar@{->>}[d]^{\mod p^\nu} \\
(\overline{X}_{\nu})^{
*_{\nu}}\ar[r]^(0.3){P_\nu}_(0.3){\simeq}  & 
\Hom_{\bb{Z}_p/p^{\nu}\bb{Z}_p}(\overline{X}_{\nu}, 
\bb{Z}_p/p^{\nu}\bb{Z}_p),
}
\]
whose right vertical map is surjective.
This implies the assertion of 
Lemma \ref{lemhomsurj}.
\end{proof}

\begin{cor}\label{corEPBmodp}
We have a natural $R_{\nu}$-isomorphism  
\[
\left( 
\bigcap^{i+1}_R \mathcal{U}_{\mf{n}, \infty}
\right) \otimes_{\bb{Z}_p} \bb{Z}_p/p^\nu \bb{Z}_p
\simeq \bigcap^{i+1}_{R_{\nu}} 
\mathcal{U}_{\mf{n}, \nu}.
\]
\end{cor}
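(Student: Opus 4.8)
The plan is to strip off the two outer functorial operations in the definition of an exterior bidual by routine base-change arguments, thereby reducing the claim to a single non-formal point. Write $X:=\mathcal{U}_{\mf{n},\infty}$; by construction it is finitely generated over $R=\bb{Z}_p[G]$, torsion free over $\bb{Z}_p$, and $X\otimes_{\bb{Z}_p}\bb{Z}_p/p^\nu\bb{Z}_p=\mathcal{U}_{\mf{n},\nu}$. First I would invoke Lemma~\ref{lemhomsurj} for $X$: its last assertion produces a natural $R_\nu$-isomorphism $\Hom_R(X,R)\otimes_{\bb{Z}_p}\bb{Z}_p/p^\nu\bb{Z}_p\xrightarrow{\ \sim\ }\Hom_{R_\nu}(\mathcal{U}_{\mf{n},\nu},R_\nu)$, induced by reduction modulo $p^\nu$. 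Since the formation of exterior powers commutes with arbitrary base change, applying $\bigwedge^{i+1}_{R_\nu}$ and setting $V:=\bigwedge^{i+1}_R\Hom_R(X,R)$ identifies $\bigwedge^{i+1}_{R_\nu}\Hom_{R_\nu}(\mathcal{U}_{\mf{n},\nu},R_\nu)$ with $V\otimes_{\bb{Z}_p}\bb{Z}_p/p^\nu\bb{Z}_p$. Hence $\bigcap^{i+1}_{R_\nu}\mathcal{U}_{\mf{n},\nu}=\Hom_{R_\nu}(V\otimes_{\bb{Z}_p}\bb{Z}_p/p^\nu\bb{Z}_p,R_\nu)$, while $\bigcap^{i+1}_R\mathcal{U}_{\mf{n},\infty}=\Hom_R(V,R)$, and the whole statement is equivalent to the natural reduction map
\[
\Hom_R(V,R)\otimes_{\bb{Z}_p}\bb{Z}_p/p^\nu\bb{Z}_p\ \longrightarrow\ \Hom_{R_\nu}\bigl(V\otimes_{\bb{Z}_p}\bb{Z}_p/p^\nu\bb{Z}_p,\ R_\nu\bigr)
\]
being an isomorphism, i.e.\ to base-changing the outer $\Hom$ along the non-flat surjection $R\twoheadrightarrow R_\nu=R/p^\nu R$.

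This is the hard part, since $\Hom_R(-,R)$ does not commute with non-flat base change for an arbitrary module: applying $\Hom_R(V,-)$ to $0\to R\xrightarrow{\,p^\nu\,}R\to R_\nu\to 0$ shows the displayed map is injective with cokernel $\Ext^1_R(V,R)[p^\nu]$, so I would be reduced to proving that $\Ext^1_R(V,R)$ has no $p^\nu$-torsion. As $\Hom_R(V,R)=\Hom_R(V/V_{\mathrm{tors}},R)$ with $V/V_{\mathrm{tors}}$ torsion free over $\bb{Z}_p$, this amounts to controlling the $\bb{Z}_p$-torsion of $V=\bigwedge^{i+1}_R\Hom_R(\mathcal{U}_{\mf{n},\infty},R)$ --- ideally, showing it vanishes.

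To settle that point I would exploit the structure of $\mathcal{U}_{\mf{n},\infty}$: it is the degree-zero cohomology of the perfect ``Weil-\'etale'' complex $R\Gamma_T((\mca{O}_{M,S\cup\mathrm{Prime}_K(\mf{n})})_{\mca{W}},\bb{G}_m)$, which is concentrated in degrees $0$ and $1$ (cf.\ \cite{BKS} Proposition~2.4), so that $\mathcal{U}_{\mf{n},\infty}=\Ker(d\colon P^0\to P^1)$ for a map of finite free $R$-modules $d$ obtained after $\otimes_{\bb{Z}}\bb{Z}_p$. Dualizing, and using that $R=\bb{Z}_p[G]$ is Gorenstein of Krull dimension $1$ (whence $\Ext^{\ge 2}_R(-,R)=0$, so that $\Ext^1_R(\Ker d,R)=0$ and $\Hom_R(\mathcal{U}_{\mf{n},\infty},R)$ is the cokernel of the dual map up to a finite module measuring $\Ext^1_R(\Coker d,R)$), one obtains presentations of $\Hom_R(\mathcal{U}_{\mf{n},\infty},R)$, of its exterior powers, and of $\Hom_R$ of those, which are compatible with reduction mod $p^\nu$, and from these one reads off the needed torsion statement. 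Alternatively one may decompose $R=\prod_\psi R_\psi$ and argue over the Gorenstein local rings $R_\psi=\mca{O}_\psi[\Gal(F/K)]$, whose mod-$p^\nu$ reductions are self-injective, as already noted in the proof of Lemma~\ref{liftinghom}. I expect this last step --- verifying that the outer $\Hom$ indeed base-changes, equivalently the $p$-torsion behaviour of the pertinent $\Ext^1$ --- to be the only genuine obstacle; the reduction to it is formal bookkeeping with Lemma~\ref{lemhomsurj} and the functoriality of $\bigwedge$ and $\Hom$.
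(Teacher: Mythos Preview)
Your overall strategy coincides with the paper's: reduce the inner $\Hom$ modulo $p^\nu$ via Lemma~\ref{lemhomsurj} applied to $\mathcal{U}_{\mf{n},\infty}$, push through $\bigwedge^{i+1}$ by base change, then handle the outer $\Hom$. Where you diverge is in the last step. The paper does not invoke $\Ext^1$, the Weil-\'etale complex, or Gorenstein duality for $R$; it simply applies Lemma~\ref{lemhomsurj} a \emph{second} time, now with input $V=\bigwedge^{i+1}_R(\mathcal{U}_{\mf{n},\infty})^{*_\infty}$, to obtain $\bigl(\bigcap^{i+1}_R\mathcal{U}_{\mf{n},\infty}\bigr)\otimes\bb{Z}/p^\nu\simeq(V\otimes\bb{Z}/p^\nu)^{*_\nu}$ directly. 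Your entire second paragraph is thus replaced by a one-line rerun of the same lemma.

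Your instinct that this outer step is the delicate one is well placed: Lemma~\ref{lemhomsurj} requires its input module to be $\bb{Z}_p$-torsion-free, and the paper's stated justification (``the $R$-module $\bigcap^{i+1}_R\mathcal{U}_{\mf{n},\infty}$ is torsion free'') is literally about $V^{*_\infty}$, not about $V$ itself. So you have correctly located a point of care that the paper glosses over. However, your proposed resolution via the perfect-complex presentation of $\mathcal{U}_{\mf{n},\infty}$ and $\Ext^{\ge2}_R(-,R)=0$ remains only a sketch, and the clause ``As $\Hom_R(V,R)=\Hom_R(V/V_{\mathrm{tors}},R)$ \ldots\ this amounts to controlling the $\bb{Z}_p$-torsion of $V$'' is not a valid reduction: replacing $V$ by $V/V_{\mathrm{tors}}$ changes the target $\Hom_{R_\nu}(V\otimes\bb{Z}/p^\nu,R_\nu)$, since reduction mod $p^\nu$ does not commute with killing $\bb{Z}_p$-torsion. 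If you wish to close the gap rigorously, the clean target is to show $V$ itself is $\bb{Z}_p$-torsion-free; once that is in hand, Lemma~\ref{lemhomsurj} applies verbatim and your $\Ext^1$ machinery becomes redundant.
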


\begin{proof}
By the definition of exterior power biduals,
the $R$-module 
$\bigcap^{i+1}_R \mathcal{U}_{\mf{n}, \infty}$ 
is torsion free. 
So, by Lemma \ref{lemhomsurj}, we have
\[
\left( 
\bigcap^{i+1}_R \mathcal{U}_{\mf{n}, \infty}
\right) \otimes \bb{Z}/p^\nu \bb{Z}
\simeq \left( 
\left( 
\bigwedge^{i+1}_R 
(\mathcal{U}_{\mf{n}, \infty})^{*_\infty} 
\right) \otimes \bb{Z}/p^\nu \bb{Z}
\right)^{*_\nu}
\]
Since the exterior power commutes with the base change, 
we have 
\[
\left( 
\left( 
\bigwedge^{i+1}_R 
(\mathcal{U}_{\mf{n}, \infty})^{*_\infty} 
\right) \otimes \bb{Z}/p^\nu \bb{Z}
\right)^{*_\nu}
\simeq 
\left( 
\bigwedge^{i+1}_{R_{\nu}} 
\left( 
(\mathcal{U}_{\mf{n}, \infty})^{*_\infty} 
\otimes \bb{Z}/p^\nu \bb{Z}\right)
\right)^{*_\nu}
\]
Finally, since $\mca{O}_{M,S',T}^\times \otimes \bb{Z}_p$ 
is a torsion free, we obtain
\[
\left( 
\bigwedge^{i+1}_{R_{\nu}}
\left( 
(\mathcal{U}_{\mf{n}, \infty})^{*_\infty} 
\otimes \bb{Z}/p^\nu \bb{Z}\right)
\right)^{*_\nu}
\simeq \bigcap^{i+1}_{R_{\nu}} 
\mathcal{U}_{\mf{n}, \nu},
\]
by Lemma \ref{lemhomsurj}.
Hence we obtain the isomorphism as desired.
\end{proof}

We put $W:=\mathrm{Prime}_K(\mf{n})$.
We define $V:=U \cup W$, and $S':=S \cup W$.
By Corollary \ref{corEPBmodp}, 
the image of the Rubin--Stark element 
$\epsilon^{V}_{M/K,
S',T,\psi}$
by modulo $p^\nu$ reduction
can be naturally regarded as 
an element of 
$\bigcap^{i+1}_{R_{\nu}} 
\mathcal{U}_{\mf{n}, \nu}$.

\begin{dfn}
We define 
\[
\Theta_\nu(\mf{n})
:= \left\{
\overline{\Phi} \left(\epsilon^{V}_{M/K,
S',T,\psi}
\right)
\mathrel{\bigg\vert} 
\overline{\Phi} \in \bigwedge^{i+1}_{R_{\nu,\psi}}
\overline{\mca{H}}_{\mf{n},\nu,\psi}
\right\},
\]
where we write 
\(
\overline{\mca{H}}_{\mf{n},\nu,\psi}:= 
\Hom_{R_{\nu,\psi}}
\left(
\mathcal{U}_{\mf{n}, \nu, \psi}, R_{\nu,\psi}
\right)
\).
We define an ideal $\Theta_{i,N,\nu}$ of 
$R_{\nu,\psi}$
to be the one generated by 
\[
\bigcup_{\mf{n} \in 
\mca{N}_{F,N}^{\mathrm{wo}}(\chi_{\mathrm{cyc}}\psi^{-1})}
\Theta_\nu(\mf{n}).
\]
\end{dfn}

Since 
$\mca{O}_{M,S,T}^\times 
\otimes_{\bb{Z}} \bb{Z}_p$ is a 
torsion free $\bb{Z}_p$-module,
we immediately obtain the following lemma 
from Lemma \ref{lemhomsurj}.

\begin{lem}\label{corhomsurj}
The image of $\Theta_\infty (\mf{n})$ 
in $R_\psi$ coincides with 
$\Theta_\nu (\mf{n})$.
\end{lem}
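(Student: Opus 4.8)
The plan is to deduce Lemma \ref{corhomsurj} directly from Lemma \ref{lemhomsurj} and Corollary \ref{corEPBmodp}, using only the formal fact that $(i+1)$-st exterior powers are right exact and commute with base change. Precisely, writing $\mca{H}_{\mf{n},\infty,\psi} := \Hom_{R_\psi}(\mathcal{U}_{\mf{n}, \infty, \psi}, R_\psi)$, so that $\Theta_\infty(\mf{n})$ is the set of values $\Phi(\epsilon^{V}_{M/K,S',T,\psi})$ for $\Phi \in \bigwedge^{i+1}_{R_\psi}\mca{H}_{\mf{n},\infty,\psi}$, and denoting by $\mathrm{pr}\colon R_\psi \twoheadrightarrow R_{\nu,\psi}$ the reduction modulo $p^\nu$, I will show that $\mathrm{pr}\bigl(\Theta_\infty(\mf{n})\bigr) = \Theta_\nu(\mf{n})$ inside $R_{\nu,\psi}$, which is the content of the lemma.

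First I would record two compatibilities with reduction modulo $p^\nu$. Since $\mca{O}_{M,S\cup\mathrm{Prime}_K(\mf{n}),T}^\times \otimes_{\bb{Z}}\bb{Z}_p$ is a torsion-free $\bb{Z}_p$-module, Lemma \ref{lemhomsurj} provides an $R_{\nu,\psi}$-isomorphism $\mca{H}_{\mf{n},\infty,\psi}\otimes_{\bb{Z}}\bb{Z}/p^\nu\bb{Z} \simeq \overline{\mca{H}}_{\mf{n},\nu,\psi}$; applying $\bigwedge^{i+1}$ and using that this functor commutes with the base change $R_\psi \to R_{\nu,\psi}$ yields a surjection
\[
\bigwedge^{i+1}_{R_\psi}\mca{H}_{\mf{n},\infty,\psi}
\twoheadrightarrow
\bigwedge^{i+1}_{R_{\nu,\psi}}\overline{\mca{H}}_{\mf{n},\nu,\psi} .
\]
Secondly, by Corollary \ref{corEPBmodp} the image of the Rubin--Stark element $\epsilon^{V}_{M/K,S',T,\psi}$ under reduction is exactly the element of $\bigl(\bigcap^{i+1}_{R_\nu}\mathcal{U}_{\mf{n},\nu}\bigr)_\psi$ used in the definition of $\Theta_\nu(\mf{n})$, and for any $\Phi$ as above with reduction $\overline{\Phi}$ one has $\mathrm{pr}\bigl(\Phi(\epsilon^{V}_{M/K,S',T,\psi})\bigr) = \overline{\Phi}\bigl(\overline{\epsilon^{V}_{M/K,S',T,\psi}}\bigr)$; this identity is immediate from the functoriality (in the base ring) of the determinant map $\xi^{i+1}$ of Definition \ref{defEPB}.

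Granting these, both inclusions are formal. For $\mathrm{pr}(\Theta_\infty(\mf{n})) \subseteq \Theta_\nu(\mf{n})$: given $\Phi$, its reduction $\overline{\Phi}$ lies in $\bigwedge^{i+1}_{R_{\nu,\psi}}\overline{\mca{H}}_{\mf{n},\nu,\psi}$, so $\mathrm{pr}(\Phi(\epsilon^V_{M/K,S',T,\psi})) = \overline{\Phi}(\overline{\epsilon^V_{M/K,S',T,\psi}}) \in \Theta_\nu(\mf{n})$. For the reverse: given $\overline{\Phi} \in \bigwedge^{i+1}_{R_{\nu,\psi}}\overline{\mca{H}}_{\mf{n},\nu,\psi}$, the displayed surjection lets me lift it to some $\Phi \in \bigwedge^{i+1}_{R_\psi}\mca{H}_{\mf{n},\infty,\psi}$, and then $\overline{\Phi}(\overline{\epsilon^V_{M/K,S',T,\psi}}) = \mathrm{pr}(\Phi(\epsilon^V_{M/K,S',T,\psi})) \in \mathrm{pr}(\Theta_\infty(\mf{n}))$. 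The only step calling for any care is the surjectivity on exterior powers — i.e.\ that a class in $\bigwedge^{i+1}_{R_{\nu,\psi}}\overline{\mca{H}}_{\mf{n},\nu,\psi}$ really lifts — but since $\bigwedge^{i+1}$ is right exact and commutes with base change, this reduces to the surjectivity of $\mca{H}_{\mf{n},\infty,\psi}\to\overline{\mca{H}}_{\mf{n},\nu,\psi}$ already supplied by Lemma \ref{lemhomsurj}, so no real obstacle arises; the proof is essentially a diagram chase through these two cited results.
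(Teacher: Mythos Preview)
Your proof is correct and follows essentially the same approach the paper intends: the paper states that the lemma follows immediately from Lemma~\ref{lemhomsurj} (torsion-freeness of $\mca{O}_{M,S',T}^\times\otimes\bb{Z}_p$ giving surjectivity on duals, hence on their exterior powers), with Corollary~\ref{corEPBmodp} already recorded to make sense of the reduced Rubin--Stark element. You have simply spelled out the two-step diagram chase that the paper leaves implicit.
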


On the one hand, 
by Corollary \ref{corEPBmodp},
we have 
\[
\Theta^{\mathrm{RS}}_{S,T,i}(M/K; 
\mca{V}_{i}^{\mf{l}}(F;N))_\psi
=\varprojlim_\nu \Theta_{i,N,\nu}
\]
for any $N \in \bb{Z}_{>0}$.
On the other hand, 
by Corollary \ref{corFittRS},
we have 
\[
\Theta^{\mathrm{RS}}_{S,T,i}(M/K)_\psi
=\Theta^{\mathrm{RS}}_{S,T,i}(M/K; 
\mca{V}_{i}^{\mf{l}}(F;N))_\psi
\]
for any $N \in \bb{Z}_{>0}$.
Therefore, we obtain
\[
\Theta^{\mathrm{RS}}_{S,T,i}(M/K)_\psi
= \varprojlim_{\nu,N} \Theta_{i,N,\nu}
= \varprojlim_{N} \Theta_{i,N,N}.
\]
By Remark \ref{remCigen}, we have
\(
\mf{C}_{i,F,N}(Z^{\mathrm{ell}}_\psi )=
\mf{C}_{i,F,N}( \{\mf{c}_{\mf{f},\psi}^{\mf{l}} \})
\)
for any $i \in \bb{Z}_{\ge 0}$ and 
any $N \in \bb{Z}_{> 0}$.
In order to prove 
Theorem \ref{thmTheta=C}, 
it suffices to show that 
\(
\mf{C}_{i,F,N}(\{ 
\boldsymbol{c}^{\mf{l}}_{\mf{f}} \})
=
\Theta_{i,N,N}
\)
for any $i \in \bb{Z}_{\ge 0}$ 
and $N \in \bb{Z}_{>0}$.
So, it is sufficient to prove that 
\begin{align}\label{eqThetanvsCn}
\mf{C}_{F,N}(\mf{n}; 
\boldsymbol{c}^{\mf{l}}_{\mf{f}}) 
& \subseteq 
\Theta_N(\mf{n}), \\
\Theta_{i,N,N} \label{eqThetanvsCn2}
& \subseteq 
\mf{C}_{i,F,N}(\{ 
\boldsymbol{c}^{\mf{l}}_{\mf{f},\psi} \})
\end{align}
for any $N \in \bb{Z}_{>0}$ and 
any $\mf{n} \in 
\mca{N}_{F,N}^{\mathrm{wo}}(\chi_{\mathrm{cyc}}\psi^{-1})$.

\begin{proof}[Proof of Theorem \ref{thmTheta=C}]
First, 
let us show the inequality (\ref{eqThetanvsCn}).
Again, we fix any $N \in \bb{Z}_{>0}$ and 
any $\mf{n} \in 
\mca{N}_{F,N}^{\mathrm{wo}}(\chi_{\mathrm{cyc}}\psi^{-1})$.
We write $\mf{n}=\mf{l}_1 \cdots \mf{l}_i$, 
where $\mf{l}_j$ is a prime of $\mca{O}_K$ 
for each $j$.
Here, we assume that $p \nmid \# \mathrm{Cl}(K)$.
So, we have a natural decomposition
\[
H:=\Gal(M \langle \mf{n} \rangle /M)
\simeq H_{\mf{l}_1} \times \cdots \times H_{\mf{l}_i}.
\]
We put $\widetilde{G}:=\Gal(M \langle \mf{n} \rangle /K)$.
Then, we have $G=\widetilde{G}/H$.
Similarly 
to as above, 
we define $\mathcal{U}_{\mf{n}, \nu}$,
$\widetilde{\mathcal{U}} \langle \mf{n} \rangle_\nu$, 
$W$, $V$ and $S'$. 
We define an integer $N(\mf{n})$ by
\(
\bigotimes_{j=1}^i H_{\mf{l}_j}
\simeq \bb{Z}/p^{N(\mf{n})}\bb{Z}
\).
Note that we have $N(\mf{n}) \ge N$.

By Theorem \ref{etnc/K}, 
the conjecture 
$\boldsymbol{\mathrm{MRS}}
(M \langle \mf{n} \rangle/M/K,T,W)_p$ holds.
The assumption that $\psi \vert_{D_{\Delta,v}} \ne 1$
for any finite place $v$ contained in $S$
implies that if $\psi$ is unramified at $\mf{p}$, 
then $1 - \psi(\Fr_{\mf{p}}) \in \mca{O}_\psi^\times$.
So, by Theorem \ref{thmellRS}, 
there exists a unit $u'_\psi \in R_\psi[H_{\mf{n}}]^\times$
such that 
\(
\epsilon^{U}_{M 
\langle \mf{n} \rangle/K,S',T,\psi}
= u'_\psi \cdot 
c^{\mf{l}}_{\mf{f}}(F;\mf{n})_\psi
\in \widetilde{\mathcal{U}} \langle \mf{n} 
\rangle_{\infty,\psi}
\).
By Lemma \ref{lemsnDn}, we have
\begin{equation}\label{eqsnDne}
\left( \mca{N}_H \left( 
\epsilon^{\{ \infty \}}_{M 
\langle \mf{n} \rangle/K,S',\{ \mf{l} \}}
\right) \right)
= (-1)^{\epsilon({\mf{n})}} u'_\psi D_{\mf{n}} 
c^{\mf{l}}_F(\mf{n})  
\otimes \prod_{j=1}^i (\sigma_{\mf{l}_j}-1)
\end{equation}
Since the conjecture $\boldsymbol{\mathrm{MRS}}
(M \langle \mf{n} \rangle/M/K,T,W)$
holds, the left hand side of 
(\ref{eqsnDne})
is fixed by the action of $H$.

In order to prove (\ref{eqThetanvsCn}), 
we need to study the map $\mathrm{Rec}_W$
appearing  in the statement of 
$\boldsymbol{\mathrm{MRS}}
(M \langle \mf{n} \rangle/M/K,T,W)$.
Let $\mf{l}_j \in W$ be any element, and 
denote by $D_{\mf{l}_j}$
the decomposition subgroup of 
$\widetilde{G}$ at $\mf{l}_j$.
In Definition \ref{dfnsigmaell}, we have fixed 
a generator $\sigma_{\mf{l}_j}$ of 
the cyclic group $H_{\mf{l}_j}$.
The projection map map
$D_{\mf{l}_j} \longrightarrow H_{\mf{l}_j}
\subseteq D_{\mf{l}_j}$ induces the surjection
\(
\xymatrix{
\pi_{\mf{l}_j} \colon
I_{\mf{l}_j}/I_{\mf{l}_j}I_H
\ar@{->>}[r] &
I(H_{\mf{l}_j})/I(H_{\mf{l}_j})^2
}
\).
We have an isomorphism
\[
r_{\mf{l}_j} \colon 
I(H_{\mf{l}_j})/I(H_{\mf{l}_j})^2
\xrightarrow{\ \simeq \ } 
H_{\mf{l}_j};\ \sigma-1 \longmapsto \sigma
\]
of abelian groups.
We define $\overline{\mathrm{Rec}}_{\mf{l}_j,N,\psi}
\colon \mathcal{U}_{\mf{n}, N, \psi} \longrightarrow R_{N,\psi}$
to be the unique map which make the diagram
\[
\xymatrix{
\mathcal{U}_{\mf{n}, N, \psi} \ar[rrr]^(0.4){\mathrm{Rec}_{\mf{l}_j,\psi}
\ \mathrm{mod}\ p^N} 
\ar[rrrd]_{\overline{\mathrm{Rec}}_{\mf{l}_j,N,\psi}
\otimes \sigma_{\mf{l}_j}}
& & & R_{N,\psi} 
\otimes I_{\mf{l}_j}/I_{\mf{l}_j}I_H 
\ar[d]^{\mathrm{id} 
\otimes (r_{\mf{l}_j} \circ \pi_{\mf{l}_j}) } \\
& & & R_{N,\psi}\otimes H_{\mf{l}_j}
}
\]
commute, where $\mathrm{Rec}_{\mf{l}_j,\psi} \ \mathrm{mod}\ p^N$ 
denotes the modulo $p^N$ reduction of 
the $\psi$-component of 
the map $\mathrm{Rec}_{\mf{l}_j}$ defined in \S \ref{sssMRS}.
These maps induce the natural map
\begin{align*}
\bigwedge^i_{j=1} \overline{\mathrm{Rec}}_{\mf{l}_j,N,\psi} 
\colon
\bigcap_{R_{N,\psi}}^{i+1} \mathcal{U}_{\mf{n}, N, \psi} 
\longrightarrow &
\bigcap_{R_{N,\psi} }^{1} 
\mathcal{U}_{\mf{n}, N, \psi} 
= \Hom_{R_{N,\psi}} \left(
\overline{\mca{H}}_{\mf{n},N, \psi}, 
R_{N,\psi} \right)
\end{align*}
by sending an element $u \in 
\bigcap_{R_{N,\psi}}^{i+1} \mathcal{U}_{\mf{n}, N, \psi} $
to the map 
\[
\left((\mathcal{U}_{\mf{n}, N, \psi} )^{*_N} 
\ni \varphi \longmapsto \left(
\bigwedge^i_{j=1} 
\overline{\mathrm{Rec}}_{\mf{l}_j,N,\psi} 
\wedge \varphi\right)(u) \in R_{N,\psi}
\right)  
\]
Let $J_W^\circ$ be as in  \ref{lemsnDn}.
Then, we have an isomorphism
\[
r_\mf{n} \colon 
J_{W}^\circ /J_{W}^\circ I_H 
\xrightarrow{\ \simeq \ } 
\bigotimes_{j=1}^i H_{\mf{l}_j};\ 
(\sigma_1-1)\cdots (\sigma_i-1) 
\longmapsto  \bigotimes_j \sigma_j.
\]
By the definition 
of the $\mathrm{Rec}_{W}$ 
and Corollary \ref{corEPBmodp}, 
we obtain the commutative diagram
\[
\xymatrix{ 
\bigcap_{R_{N,\psi}}^{i+1} \mathcal{U}_{\mf{n}, N, \psi} 
\ar[rrrd]_{(\mf{s}_{\mf{n}} \circ \mathrm{Rec}_{W})\ 
\mathrm{mod}\ p^N \hspace{10mm}\ } 
\ar[rrr]^(0.4){\bigwedge^i_{j=1} 
\overline{\mathrm{Rec}}_{\mf{l}_j,N,\psi}} & & & 
\left(
\bigcap_{R_{N,\psi}}^{1}
\mathcal{U}_{\mf{n}, N, \psi}
\right)
\otimes_{\bb{Z}} 
\bigotimes_{j=1}^i H_{\mf{l}_j} \\
& & &
\left(
\bigcap_{R_{N,\psi}}^{1}
\mathcal{U}_{\mf{n}, N, \psi} \right)
\otimes_{\bb{Z}} J_{W}^\circ /J_{W}^\circ I_H 
\ar[u]^{\simeq}_{\mathrm{id}\otimes r_{\mf{n}}}. 
}
\]

Now, let us complete the proof of (\ref{eqThetanvsCn}).
Let $f\colon H^1(F, (\bb{Z}/p^N\bb{Z})\otimes 
\chi_{\mathrm{cyc}}\psi^{-1}) \longrightarrow 
R_{N,\psi}$
be any $R_{N,\psi}$-linear map, 
and denote by $f_0$ the restriction of $f$
to $\mathcal{U}_{\mf{n}, N, \psi}$.
By Hochschild--Serre spectral sequence,
we have an natural injection  
\(
\iota_{\mf{n},N,\psi} 
\colon \mathcal{U}_{\mf{n}, N, \psi}
\hookrightarrow 
\widetilde{\mathcal{U}} 
\langle \mf{n} \rangle_{N,\psi}^H
\).
Recall that we have 
an $R_{N,\psi}[H]$-isomorphism
\[
\nu_H \colon R_{N,\psi} 
\xrightarrow{\ \simeq \ } R_{N,\psi}[H];\ 
x \longmapsto N_Hx . 
\]
By the injectivity of the 
$R_{N,\psi}[H]$-module $R_{N,\psi}[H]$, 
there exists an $R_{N,\psi}[H]$-linear map 
$g \colon \widetilde{\mathcal{U}} 
\langle \mf{n} \rangle_{N,\psi} \longrightarrow 
R_{N,\psi}[H]$ which makes the diagram
\[
\xymatrix{
\widetilde{\mathcal{U}} 
\langle \mf{n} \rangle_{N,\psi}
\ar@{-->}[r]^(0.5){g} & R_{N,\psi}[H] \\
\mathcal{U}_{\mf{n}, N, \psi}
\ar@{^{(}->}[u]^{\iota_{\mf{n},N,\psi}} 
\ar[ru]_{\nu_H \circ f_0}&
}
\]
commute.
Then, we obtain
\begin{align*}
\nu_H \left(
\bigwedge_{j=1}^i 
\overline{\mathrm{Rec}}_{\mf{l}_j,N,\psi}
\wedge f_0
\left(
\epsilon^{V }_{M/K,S',T,\psi}
\right)
\right)
&=g \left(
\mca{N}_H \left(
\epsilon^{U }_{M 
\langle \mf{n} \rangle/K,S',T}
\right)
\right) \\
& =g \left(
 (-1)^{\epsilon({\mf{n})}} u'_\psi D_{\mf{n}} 
c^{\mf{l}}_{\mf{f}}(F;\mf{n})_\psi 
\right) \\
&= \bar{u}'_\psi \cdot
(\nu_H \circ f_0) \left( 
\kappa_{F,N}
(\mf{n}; 
\mf{c}^{\mf{l}}_{\mf{f}})
\right),
\end{align*}
where $\bar{u}'_\psi$ denotes the image of 
$u'_\psi \in R_{\psi}[H]^\times$ 
in $R_{N,\psi}^\times$.
Hence we obtain 
\[
f_0 \left( 
\kappa_{F,N}
(\mf{n}; 
\mf{c}^{\mf{l}}_{\mf{f},\psi})
\right)
= (\bar{u}'_\psi)^{-1} \cdot 
\left(
\bigwedge_{j=1}^i 
\overline{\mathrm{Rec}}_{\mf{l}_j,N,\psi}
\wedge f_0
\right)
\left(
\epsilon^{V }_{M/K,S',T,\psi}
\right)
\in \Theta_N(\mf{n}).
\]
This implies the inequality (\ref{eqThetanvsCn}).

Next, let us show (\ref{eqThetanvsCn2}).
Again, we fix $\mf{n}=\mf{l}_1 \cdots \mf{l}_i \in 
\mca{N}_{F,N}^{\mathrm{wo}}(\chi_{\mathrm{cyc}}\psi^{-1})$.
We take arbitrary maps 
$\overline{\Phi}_\nu \in \overline{\mca{H}}_{\mf{n},N, \psi}$
with $1 \le \nu \le i+1$.

We follow the arguments in the proof of 
\cite{BKS} Theorem 9.6. 
By the Chebotarev density theorem, 
we can take an ideal $\mf{r}= \mf{q}_1 
\cdots \mf{q}_{i+1} \in 
\mca{N}_{F,N}^{\mathrm{wo}}(\chi_{\mathrm{cyc}}\psi^{-1})$, 
which is prime to $\mf{n}$ satisfying the following properties.
\begin{enumerate}[(i)]
\item For any $\nu \in \bb{Z}$ with $1 \le \nu \le i$,
the class $[ \mf{l}'_{\nu} ]_\psi$
in $A(M)_\psi$ coincides with  $[ \mf{l}_{\nu} ]_\psi$.
\item There exists a sequence 
$\{ z_\nu \}_{\nu=1}^i \subseteq 
M^\times$ 
satisfying the following
properties. 
\begin{itemize}
\item We have $(z_\nu \mca{O}_M)_{\psi}
=(\mf{q}_\nu\mf{l}_\nu^{-1})_\psi \in 
(I_F \otimes _{\bb{Z}} \bb{Z}_p)_\psi$
for any $\nu \in \bb{Z}$ with $1 \le \nu \le i$, 
where $I_F$ denotes the group of 
the fractional ideals of $\mca{O}_F$.
\item We have $\overline{\mathrm{Rec}}_{\mf{q}_\mu,N,\psi}(z_\nu)=0$
for any distinct $\mu$ and $\nu$.
\end{itemize} 
\item For any $\nu \in \bb{Z}$ with $1 \le \nu \le i+1$, 
we have
\(
\overline{\Phi}_\nu (x)= 
\overline{\mathrm{Rec}}_{\mf{q}_\nu,N,\psi}(x)
\)
for any $x \in {U}_{\mf{n}, N, \psi}$.
\end{enumerate}
(See \cite{Oh1} Proposition 3.15.)

For each $j \in \bb{Z}$ with $1\le j \le i$, we put 
$\mf{n}_j;=\prod_{\nu=1}^j \mf{q}_\nu 
\prod_{\mu=j+1}^i \mf{l}_\mu$, and 
define  $W_j:= \mathrm{Prime}_K(\mf{n}'_j)$.
We put $S'_j:=S \cup W_j$, and 
$V_j:=U \cup W_j$.
We also put $\mf{n}_0:=\mf{n}$, 
and define $S'_0:=S'$ and $V_0:=V$.

\begin{lem}\label{lemVV}
For any $j \in \bb{Z}$ with $0 \le j \le i$, we have
\begin{equation}\label{eqindepsilonPhiRec}
\left(
\bigwedge_{\nu=1}^i \overline{\Phi}_\nu
\right)
\left(
\epsilon^{V }_{M/K,S',T}
\right)
\equiv 
\left(
\bigwedge_{\nu=1}^{i+1} 
\overline{\mathrm{Rec}}_{\mf{q}_\nu,N,\psi}
\right)
\left(
\epsilon^{V_j }_{M/K,S'_j,T}
\right)
\mod \Theta_{i,N,N}.
\end{equation}
\end{lem}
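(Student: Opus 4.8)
The plan is to prove Lemma \ref{lemVV} by induction on $j$, moving one prime at a time from $\mf{n}$ to $\mf{r}$ while controlling the change in the pairing $\left(\bigwedge \overline{\mathrm{Rec}}_{\mf{q}_\nu,N,\psi}\right)\left(\epsilon^{V_j}_{M/K,S'_j,T}\right)$ modulo $\Theta_{i,N,N}$. The base case $j=0$ is trivial once we use property (iii) above, which says $\overline{\Phi}_\nu$ and $\overline{\mathrm{Rec}}_{\mf{q}_\nu,N,\psi}$ agree on $\mathcal{U}_{\mf{n},N,\psi}$, hence the two sides of (\ref{eqindepsilonPhiRec}) are literally equal (not just congruent) for $j=0$. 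Note that here I must be careful about what ``$\epsilon^{V_j}_{M/K,S'_j,T}$'' means as an element of $\bigcap^{i+1}_{R_{N,\psi}}$ of the appropriate unit module: by Corollary \ref{corEPBmodp} and Lemma \ref{lemhomsurj} the modulo-$p^N$ reduction of the Rubin--Stark element lives in the Rubin lattice we need, so the pairings make sense.

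The inductive step $j-1 \to j$ is the heart of the argument. Here I would use the distribution relation for Rubin--Stark elements relating $\epsilon^{V_{j-1}}_{M/K,S'_{j-1},T}$ and $\epsilon^{V_j}_{M/K,S'_j,T}$: these differ by swapping the prime $\mf{l}_j$ for $\mf{q}_j$ in the ``truncation/modification'' data. The key input is property (i), which guarantees the ideal classes $[\mf{l}_j]_\psi$ and $[\mf{q}_j]_\psi$ agree in $A(M)_\psi$, so there is an element $z_j \in M^\times$ with $(z_j)_\psi = (\mf{q}_j \mf{l}_j^{-1})_\psi$ as in property (ii). This $z_j$ is precisely what lets us transfer between the two Rubin--Stark elements: in the $S$-arithmetic unit group $\mathcal{U}_{\mf{n}_j,N,\psi}$ the element $z_j$ becomes a genuine unit, and the Rubin--Stark element for $V_j$ is obtained from that for $V_{j-1}$ by wedging in $z_j$ (up to a unit), with the regulator/order-of-vanishing bookkeeping absorbing the local factors at $\mf{l}_j$ and $\mf{q}_j$. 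The second bullet of property (ii), $\overline{\mathrm{Rec}}_{\mf{q}_\mu,N,\psi}(z_j)=0$ for $\mu \ne j$, ensures that inserting $z_j$ into the exterior power $\bigwedge_{\nu} \overline{\mathrm{Rec}}_{\mf{q}_\nu,N,\psi}$ only affects the $\nu=j$ slot. The remaining discrepancy is then of the form $\overline{\mathrm{Rec}}_{\mf{q}_j,N,\psi}$ (or $\overline{\mathrm{Rec}}_{\mf{l}_j,N,\psi}$) applied to the ``wrong'' generator, and I claim this discrepancy lies in $\Theta_{i,N,N}$: it is itself a value $\Phi(\epsilon^{V'}_{M/K,S'',T,\psi})$ for $V''$ another admissible set coming from an element of $\mca{N}^{\mathrm{wo}}_{F,N}$, namely $\mf{n}_{j-1}$ or an intermediate ideal, hence by definition belongs to the generating set of $\Theta_{i,N,N}$.

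The main obstacle will be making the distribution/transfer relation precise at the level of Rubin lattices modulo $p^N$: one must track how $\epsilon^{V}$ transforms under adding a prime to $S'$ and simultaneously to $V$ (equivalently, to the ``$W$'' part), and verify that the combinatorial sign $(-1)^{i}$ and the local Euler-type factors do not obstruct the congruence. This is exactly the kind of bookkeeping carried out in \cite{BKS} Theorem 9.6 and \cite{Oh1}, so I would cite those and adapt them, being careful that after passing to the $\psi$-component the relevant local factors $1-\psi(\Fr_{\mf{p}})$ and $N(\mf{q}_\nu)-\psi(\Fr_{\mf{q}_\nu})$ are units in $\mca{O}_\psi$ (which holds by the hypotheses on $\psi$, as already noted in Remark \ref{remCigen}). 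A secondary technical point is the well-ordered hypothesis on $\mf{n}$ and on $\mf{r}$: it is needed so that the chain $\mf{n}=\mf{n}_0, \mf{n}_1, \dots, \mf{n}_i$ stays inside $\mca{N}^{\mathrm{wo}}_{F,N}(\chi_{\mathrm{cyc}}\psi^{-1})$ at every stage, which is what makes each intermediate discrepancy an honest generator of $\Theta_{i,N,N}$; I would verify this using property (i) together with the defining property of well-ordered ideals, exactly as in the choice of the $\mf{l}_j$ in the proof of Proposition \ref{propgrlev}.

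Once Lemma \ref{lemVV} is established, the case $j=i$ of (\ref{eqindepsilonPhiRec}), combined with the commutative diagram relating $\bigwedge_j \overline{\mathrm{Rec}}_{\mf{q}_j,N,\psi}$ to $\mathfrak{s}_{\mf{n}} \circ \mathrm{Rec}_{W}$ and the identity (\ref{eqsnDne}) expressing $\mathcal{N}_H(\epsilon^{U}_{M\langle\mf{r}\rangle/K,S'_i,T})$ in terms of $D_{\mf{r}}\, c^{\mf{l}}_{\mf{f}}(F;\mf{r})$, shows that $\left(\bigwedge_\nu \overline{\Phi}_\nu\right)\left(\epsilon^V_{M/K,S',T}\right)$ is congruent modulo $\Theta_{i,N,N}$ to a value of the form $f_0\big(\kappa_{F,N}(\mf{r};\mf{c}^{\mf{l}}_{\mf{f},\psi})\big)$, which lies in $\mf{C}_{i,F,N}(\{\boldsymbol{c}^{\mf{l}}_{\mf{f},\psi}\})$ since $\epsilon(\mf{r})=i+1 \le i+1$; this is precisely inclusion (\ref{eqThetanvsCn2}). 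Together with (\ref{eqThetanvsCn}) proved above, this completes the proof of Theorem \ref{thmTheta=C}.
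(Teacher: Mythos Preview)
Your proposal is correct and follows the same inductive route as the paper. The one place where your sketch is imprecise is the transition formula: you cannot obtain $\epsilon^{V_j}$ from $\epsilon^{V_{j-1}}$ by ``wedging in $z_j$'', since both lie in the same rank-$(i{+}1)$ Rubin lattice. The paper instead drops to a common rank-$i$ element $\epsilon^{V_j^\circ}$ with $W_j^\circ := W_{j-1}\setminus\{\mf{l}_j\} = W_j\setminus\{\mf{q}_j\}$: writing $m$ for the order of $[\mf{l}_j]$ in $\mathrm{Cl}^T(M)$ and $(a)=\mf{l}_j^m$, one has $\epsilon^{V_{j-1}} = \pm\tfrac{1}{m}\,a \wedge \epsilon^{V_j^\circ}$, and (after passing to the $\psi$-part and checking via the regulator) $\epsilon^{V_j}_\psi = \pm(\tfrac{1}{m}a + z_j)_\psi \wedge \epsilon^{V_j^\circ}_\psi$. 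The difference $\pm z_j \wedge \epsilon^{V_j^\circ}$, after applying $\bigwedge_\nu \overline{\mathrm{Rec}}_{\mf{q}_\nu,N,\psi}$ and using the orthogonality in property~(ii), collapses to $\pm\overline{\mathrm{Rec}}_{\mf{q}_j,N,\psi}(z_j)\cdot \Phi'(\epsilon^{V_j^\circ})$ with $\Phi'$ an $i$-fold wedge, which lies in $\Theta_{i,N,N}$---exactly the discrepancy you anticipated.
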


\begin{proof}[Proof of Lemma \ref{lemVV}]
Let us prove 
the congruence (\ref{eqindepsilonPhiRec})
by induction on $j$.
When $j=0$, the congruence (\ref{eqindepsilonPhiRec}) 
follows from the property (c) 
of $\mf{r}$.

Let $j_0 \in \bb{Z}$ with $1 \le j \le i$, and 
suppose the congruence (\ref{eqindepsilonPhiRec}) 
holds for $j=j_0-1$.
Then, let us prove the congruence 
(\ref{eqindepsilonPhiRec}) for $j=j_0$.
We denote by $m$
the order of the ideal class 
$[\mf{l}_{j_0}]$ in 
$\mathrm{Cl}^{T}(M)$, 
and let $a \in 
\mca{O}_{M,S'_{j_0}, T}^\times$ 
be such that 
$[\mf{l}_{j_0}]^m=(a)$.
We put $W_{j_0}^{\circ}:=W_{j_0}
\setminus \{ \mf{q}_{j_0} \}=W_{j_0-1}
\setminus \{ \mf{l}_{j_0} \}$.
We set $S^{\circ}_{j_0}
:=S \cup W_{j_0}^{\circ}$, 
and $V^{\circ}_{j_0}
:=U \cup W_{j_0}^{\circ}$.
Then, by the characterization of 
the Rubin--Stark elements, 
we have
\[
\epsilon^{{V_{j_0-1}} }_{M/K,S'_{j_0-1},T}
= \frac{(-1)^{j_0}}{m}\cdot 
a \wedge \epsilon^{V^{\circ}_{j_0}} 
_{M/K,S^{\circ}_{j_0}, T}
\in \bb{Q} \bigcap^{1+i}_{R}
\mca{O}_{M,S'_{j_0-1}, T}^\times.
\]
By using this presentation, we obtain
the equality
\begin{align}\label{leftepsilongocha}
&\epsilon^{ V_{j_0-1} }_{M/K,S'_{j_0-1},T}
+(-1)^{j_0}\cdot z_{j_0} \wedge 
\epsilon^{V^{\circ}_{j_0} }_{M/K,S^\circ _{j_0},T} \\
=& \label{rightepsilongocha}
(-1)^{j_0}\left(
\frac{1}{m}\cdot a+z_{j_0} \right) 
\wedge \epsilon^{V^\circ_{j_0} }_{
M/K,S^\circ_{j_0},T},
\end{align}
in $\bb{Q} \bigcap^{1+i}_{\bb{Z}[G]}
\mca{O}_{M, S^\circ_{j_0} \cup 
\widetilde{W}, T}^\times$, 
where $\widetilde{W}$ is a finite subset of $P_K$ 
such that $a$ and $z_{j_0}$
belong to
$\mca{O}_{M, S^\circ_{j_0} \cup 
\widetilde{W}, T}^\times$, 
and satisfying $S^\circ_{j_0} \cap \widetilde{W}=\emptyset$.  
Fix an isomorphism $\bb{C} \simeq \overline{\bb{Q}}_p$, 
and regard $\psi$ as a $\bb{C}$-valued character.
The $\psi$-component of 
the right hand side (\ref{rightepsilongocha})
belongs to 
\[
(\bb{C} \bigcap^{i+1}_{\bb{Z}[G]}
\mca{O}_{M,S'_{j_0}, T}^\times)_\psi
=\bigcap^{i+1}_{\bb{C}[\Gal(F/K)]}
(\bb{C}\mca{O}_{M,S'_{j_0}, T}^\times)_\psi
\]
since we have
\[
\left(
\frac{1}{m}\cdot a+x  
\right)_\psi
\in \Ker \left(
\bigoplus_{w \mid v \in \widetilde{W}^\circ} 
\mathrm{ord}_w
\colon \bb{C}
\mca{O}_{M,S^\circ_{j_0} \cup 
\widetilde{W}, T}^\times \rightarrow 
\bb{C} X_{M,\widetilde{W}}
\right)_\psi
=\left(
\bb{C}\mca{O}_{M,S'_{j_0}, T}^\times
\right)_\psi,
\]
where we put $\widetilde{W}^\circ:=\widetilde{W} 
\setminus \{ \mf{q}_{j_0} \}$.
By computing the image of (\ref{rightepsilongocha}) 
via the regulator,
we deduce that the $\psi$-component of 
(\ref{rightepsilongocha})
coincides with that of the Rubin--Stark element
\[
\epsilon^{{V_{j_0}} }_{M/K,S'_{j_0},T,\psi}
\in \left(
\bb{C} \bigcap^{i+1}_{\bb{Z}[G]}
\mca{O}_{M,S'_{j_0}, T}^\times
\right)_\psi.
\]
On the other hand, the left hand side 
(\ref{leftepsilongocha})
belongs to 
$\bb{Z}_p\bigcap^{i+1}_{\bb{Z}[G]}
\mca{O}_{M,S^\circ_{j_0} \cup \widetilde{W},T}^\times$, so we can 
apply the map 
$\left(
\bigwedge_{\nu=1}^{i+1} 
\overline{\mathrm{Rec}}_{\mf{q}_\nu,N,\psi}
\right)$ to (\ref{leftepsilongocha}).
By the property (b) of $\mf{r}$, 
we obtain 
\begin{align*}
&\left(
\bigwedge_{\nu=1}^{i+1} 
\overline{\mathrm{Rec}}_{\mf{q}_\nu,N,\psi}
\right)
\left(
\epsilon^{V_{j_0-1}}_{M/K,S'_{j_0-1},T,\psi}
+(-1)^{j_0}\cdot z_{j_0,\psi} \wedge \epsilon^{
V^{\circ}_{j_0}}_{M/K,S^{\circ}_{j_0},T,\psi}
\right) \\
=&\left(
\bigwedge_{\nu=1}^{i+1} 
\overline{\mathrm{Rec}}_{\mf{q}_\nu,N,\psi}
\right)
\left(
\epsilon^{V_{j_0-1} }_{M/K,S'_{j_0-1},T,\psi}
\right) \\
& \pm 
\overline{\mathrm{Rec}}_{\mf{q}_{j_0},N,\psi}
 (z_{j_0,\psi}) 
\cdot 
\left(
\bigwedge_{\mu=1}^{j_0-1} 
\overline{\mathrm{Rec}}_{\mf{q}_{\mu},N,\psi}
\wedge
\bigwedge_{\nu=j_0+1}^{i+1}
\overline{\mathrm{Rec}}_{\mf{q}_{\nu},N,\psi}
\right) \left(
\epsilon^{
V^{\circ}_{j_0}}_{M/K,S^{\circ}_{j_0},T,\psi}
\right) \\
\equiv &
\left(
\bigwedge_{\nu=1}^{i+1} 
\overline{\mathrm{Rec}}_{\mf{q}_\nu,N,\psi}
\right)
\left(
\epsilon^{V_{j_0-1} }_{M/K,S'_{j_0-1},T,\psi}
\right)
\mod \Theta_{i,N,N}.
\end{align*} 
This congruence and 
the induction  hypothesis  imply 
(\ref{eqindepsilonPhiRec}) 
for $j=j_0$.
\end{proof}

Let us show (\ref{eqThetanvsCn2}) 
by induction on $i$. 
When $i=0$, the inequality 
(\ref{eqThetanvsCn2})  clearly holds.

Let $i_0 \in \bb{Z}_{\ge 1}$, and 
suppose that (\ref{eqThetanvsCn2}) for $i=i_0-1$ holds.
We take any element $\mf{n} \in 
\mca{N}_{F,N}^{\mathrm{wo}}(\chi_{\mathrm{cyc}}\psi^{-1})$
with $\epsilon (\mf{n})=i_0$.
In order to prove (\ref{eqThetanvsCn2}) for $i=i_0$,
it suffices to show that 
\(
\Theta_{N}(\mf{n}) \subseteq 
\mf{C}_{i_0,F,N}(\{ 
\boldsymbol{c}^{\mf{l}}_{\mf{f},\psi} \})
\).

Let $\mf{r}$ be as in Lemma \ref{lemVV} 
(corresponding to present $\mf{n}$),
and put $H':=H_{\mf{r}}$.
Similarly to the proof of 
(\ref{eqThetanvsCn}), 
there exists an $R_{N,\psi}[H']$-linear map 
$h \colon \widetilde{\mathcal{U}} 
\langle \mf{r} \rangle_{N,\psi}
\longrightarrow 
R_{N,\psi}[H']$ which makes the diagram
\[
\xymatrix{
\widetilde{\mathcal{U}} 
\langle \mf{r} \rangle_{N,\psi}
\ar@{-->}[r]^(0.5){h} & R_{N,\psi}[H'] \\
\mathcal{U}_{ \mf{r}, N,\psi}
\ar@{^{(}->}[u]^{\iota_{\mf{r},N,\psi}} \ar[ru]_{\nu_{H'} \circ 
\overline{\mathrm{Rec}}_{\mf{q}_{i+1},N,\psi} }
}
\]
commute.
So, by By Lemma \ref{lemsnDn} and By Lemma \ref{lemVV}, 
we obtain
\begin{align*}
\left(
\bigwedge_{\nu=1}^{i+1} \overline{\Phi}_\nu
\right)
\left(
\epsilon^{V }_{M/K,S',T,\psi}
\right)
& \equiv 
\left(
\bigwedge_{\nu=1}^{i+1}
\overline{\mathrm{Rec}}_{\mf{q}_\nu,N,\psi}
\right)
\left(
\epsilon^{V_i }_{M/K,S'_i,T}
\right) 
\mod \mf{C}_{i,F,N}(\{ 
\boldsymbol{c}^{\mf{l}}_{\mf{f},\psi} \}) \\
& \equiv
\overline{\mathrm{Rec}}_{\mf{q}_{i+1},N,\psi} 
\left( 
\kappa_{F,N}
(\mf{r}; 
\mf{c}^{\mf{l}}_{\mf{f},\psi})
\right) 
\mod \mf{C}_{i,F,N}(\{ 
\boldsymbol{c}^{\mf{l}}_{\mf{f},\psi} \}) \\
&  \equiv 0
\mod \mf{C}_{i,F,N}(\{ 
\boldsymbol{c}^{\mf{l}}_{\mf{f},\psi} \}).
\end{align*}
Hence we obtain (\ref{eqThetanvsCn2}).
This completes the proof of Theorem \ref{thmTheta=C}.
\end{proof}

\end{document}